\DeclareFontFamily{U}{mathx}{\hyphenchar\font45}
\DeclareFontShape{U}{mathx}{m}{n}{
      <5> <6> <7> <8> <9> <10>
      <10.95> <12> <14.4> <17.28> <20.74> <24.88>
      mathx10
      }{}
\DeclareSymbolFont{mathx}{U}{mathx}{m}{n}
\DeclareMathAccent{\widecheck}{\mathalpha}{mathx}{"71}
\newcommand{\eqnsection}{
\renewcommand{\theequation}{\thesection.\arabic{equation}}
   \makeatletter
   \csname  @addtoreset\endcsname{equation}{section}
   \makeatother}
\def\e{{\mathbb E}}
\def\p{{\mathbb P}}
\def\P{{\bf P}}
\def\E{{\bf E}}
\def\Q{{\bf Q}}
\def\N{{\mathbb N}}
\def\T{{\mathbb T}}
\def\L{{\mathbb L}}
\def\1{{\mathds{1}}}
\def\en{\mathcal{E}}
\def\d{\, \mathrm{d}}
\def\d{\mathtt{d}}
\def\B{\mathfrak{B}}
\def\D{\mathfrak{D}}
\def\L{\mathfrak{L}}
\def\eL{{\overline{L}}}
\def\mS{{\underline{S}}}
\def\mV{{\underline{V}}}
\def\MV{{\overline{V}}}
\def\MS{{\overline{S}}}
\def\Hinf{\mathcal{H}}
\def\Ren{\mathcal{R}}
\newtheorem{theo}{Theorem}[section]
\newtheorem{prop}[theo]{Proposition}
\newtheorem{lem}[theo]{Lemma}
\newtheorem{cor}[theo]{Corollary}
\newtheorem{rem}[theo]{Remark}
\newtheorem{fact}[theo]{Fact}
\newcommand{\R}{\mathbb{R}}
\renewcommand{\T}{\mathbb{T}}
\newcommand{\calL}{\mathcal{L}}
\newcommand{\calF}{\mathcal{F}}
\newcommand{\ind}[1]{\mathbf{1}_{\left\{ #1 \right\}}}
\renewcommand{\epsilon}{\varepsilon}
\def\cb{{\bf c}}
\def\Cb{{\bf C}}
\def\cf{{\bf f}}
\newcommand{\V}{\mathbb{V}ar}
\title{\bf Heavy range of the randomly biased walk on Galton-Watson trees in the slow movement regime}
\author{%Pierre Andreoletti\footnote{Laboratoire MAPMO - C.N.R.S. UMR 7349 - F\'ed\'eration Denis-Poisson, Universit\'e d'Orl\'eans(France). 
% \newline \vspace{0.1cm} \hspace{0.2cm} $\dag$Institut Camille Jordan - C.N.R.S. UMR 5208 - Universit\'e Claude Bernard Lyon 1
%(France). 
%\newline \vspace{0.1cm}    MSC 2000  60K37; 60J80 ; 60G50. \newline \vspace{0.5cm} \textit{Key words :  random walks, range, random environment, branching random walk} } $\ $,$\ $ Xinxin Chen$^\dag$  
Xinxin Chen\footnote{Institut Camille Jordan - C.N.R.S. UMR 5208 - Universit\'e Claude Bernard Lyon 1 (France) Supported by ANR MALIN}}
\begin{document}

\baselineskip=17pt
\setcounter{page}{1}

\maketitle
%\listoftodos
\begin{abstract}
We consider the randomly biased random walk on trees in the slow movement regime as in \cite{HuShi16}, whose potential is given by a branching random walk in the boundary case. We study the heavy range up to the $n$-th return to the root, i.e., the number of edges visited more than $k_n$ times. For $k_n=n^\theta$ with $\theta\in(0,1)$, we obtain the convergence in probability of the rescaled heavy range, which improves one result of \cite{AD18+}. 
\bigskip

\noindent{\bfseries MSC:} 60K37, 60J80, 60G50\\
\noindent{\bfseries Keywords:} randomly biased random walk, branching random walk, Seneta-Heyde norming.
\end{abstract}

\section{Introduction}\label{intro}
Let $\T$ be a supercritical Galton-Watson tree rooted at $\rho$. And to any vertex $x\in\T\setminus\{\rho\}$, we assign a random bias $A_x\geq0$. For any vertex $x\in\T$, denote its parent by $x^*$ and denote its children by $x^1, x^2, \cdots, x^{N_x}$ where $N_x$ denotes the number of its children which could be $0$ if there is none. %For the root $\rho$, we add artificially a vertex $\rho^*$ to be its parent. 
Now for given $\en:=\{\T, (A_x)_{x\in\T\setminus\{\rho\}}\}$, let $(X_n)_{n\geq0}$ be a nearest-neighbour random walk on $\T$, started from $X_0=\rho$, with the biased transition probabilities: for any $x,y\in\T$,
\begin{equation}\label{transitionprob}
p^\en(x,y)=\begin{cases}
%\p^\mathcal{E}(X_{n+1}=x_j\vert X_n=x)=
\frac{A_{x^j}}{1+\sum_{i=1}^{N_x}A_{x^i}}, \textrm{ if } y=x^j \textrm{ for some } j\in\{1,\dots, N_x\}\\
%\p^\mathcal{E}(X_{n+1}=x^*\vert X_n=x)=
\frac{1}{1+\sum_{i=1}^{N_x}A_{x^i}};\textrm{ if } y=x^*.
\end{cases}
\end{equation}
For convenience, to the root $\rho$, we add artificially a vertex $\rho^*$ to be its parent and let \eqref{transitionprob} holds also for $x=\rho$ with $p^\en(\rho^*,\rho)=1$. Obviously, this is a random walk in random environment. In particular, when $A_x$ equals some constant $\lambda>0$ for any $x$, this is known as $\lambda$-biased random walk on Galton-Watson tree, which was introduced and deeply studied by Lyons \cite{Lyons90, Lyons92} and Lyons, Pemantle and Peres \cite{LPP95, LPP96}.

In our setting, we assume that $\{A_{x^1},\cdots, A_{x^{N_x}}\},\ x\in\T$ are i.i.d. copies of the point process $\mathcal{A}=\{A_1,\cdots, A_{N}\}$ where $N\in\mathbb{N}$ represents the offspring of the Galton-Watson tree $\T$. Let $\P$ denote the probability measure of the environment $\en$. Given the environment $\en$, denote the quenched probability by $\p^\en$. Then $\p(\cdot):=\int\p^\en(\cdot)\P(d\en)$ denotes the annealed probability. We always assume $\E[N]>1$ so that $\T$ is supercritical, i.e. $\T$ survives with positive probability. Let $\P^*(\cdot)=\P(\cdot\vert\T\textrm{ survives })$ and $\p^*(\cdot)=\p^*(\cdot\vert \T\textrm{ survives} )$.

In this setting, we could describe the environment $\en$ by a branching random walk. For any $x\in\T$, let $|x|$ be its generation, i.e., the graph distance between the root $\rho$ and $x$. For any $x,y\in\T$, we write $x\leq y$ if $x$ is an ancestor of $y$ and $x<y$ if $x\leq y$ and $x\neq y$. Let $[\![\rho, x]\!]$ be the ancestral line of $x$, that is, the set of vertices on the unique shortest path from $\rho$ to $x$. In this ancestral line, for any $0\leq i\leq |x|$, let $x_i$ be the ancestor of $x$ in the $i$-th generation; in particular, $x_0=\rho$ and $x_{|x|}=x$. Then, define
\[
V(x):=-\sum_{i=1}^{|x|}\log A_{x_i}, \forall x\in\T\setminus\{\rho\},
\]
with $V(\rho):=0$. Usually, $(V(x), x\in\T)$ is viewed as the potential of the random walk. Immediately, we see that $(V(x), x\in\T)$ is a branching random walk whose law is governed by that of $\mathcal{L}:=\{V(x), |x|=1\}$. Note that $\mathcal{A}$ is distributed as $\{e^{-V(x)},|x|=1\}$.

From now on, we write the environment by this branching random walk, i.e., $\en=(V(x), x\in\T)$. Then, the transition probabilities of the random walk $(X_n)_{n\geq0}$ can be written as follows
\begin{equation}
\begin{cases}
\p^\mathcal{E}(X_{n+1}=x^*\vert X_n=x)=\frac{e^{-V(x)}}{e^{-V(x)}+\sum_{y: y^*=x}e^{-V(y)}}\\
\p^\mathcal{E}(X_{n+1}=y\vert X_n=x)=\frac{e^{-V(y)}\ind{y^*=x}}{e^{-V(x)}+\sum_{z: z^*=x}e^{-V(z)}}.
\end{cases}
\end{equation}

Throughout the paper, we assume that the branching random walk is in the boundary case, that is,
\begin{equation}\label{boundarycase}
\E\left[\sum_{|x|=1}e^{-V(x)}\right]=1,\qquad \E\left[\sum_{|x|=1}V(x)e^{-V(x)}\right]=0.
\end{equation}
We also assume the following integrability condition: there exists certain $\delta_0>0$ such that
\begin{equation}\label{Integrability}
\E\left[\sum_{|x|=1}e^{-(1+\delta_0)V(x)}\right]+\E\left[\sum_{|x|=1}e^{\delta_0 V(x)}\right]<\infty.
\end{equation}
In addition, we assume that 
\begin{equation}\label{MomCond}
\E[N^2]+\E\left[\left(\sum_{|u|=1}(1+V_+(u))^2e^{-V(u)}\right)^2\right]<\infty,
\end{equation}
where $V_+(u):=\max\{V(u),0\}$. Immediately, one sees that $\sigma^2:=\E[\sum_{|u|=1}V(u)^2 e^{-V(u)}]\in(0,\infty)$. We take $\sigma=\sqrt{\sigma^2}$.

The criteria of recurrence/transience for random walks on trees is established by Lyons and Pemantle \cite{LyonPema}, which shows that the walk $(X_n)_{n\geq0}$ is recurrent under \eqref{boundarycase}. Further, Faraud \cite{Faraud} proved that the walk is null recurrent under \eqref{boundarycase} and \eqref{Integrability}. Hu and Shi studied the walk under these assumptions, and showed in \cite{HS07} that if $\T$ is regular tree, then a.s., asymptotically, $\max_{0\leq i\leq n}|X_i|=\Theta((\log n)^3)$.
%\begin{equation}
%0<\liminf_{n\rightarrow\infty}\frac{\max_{0\leq i\leq n}|X_i|}{(\log n)^3}\leq \limsup_{n\rightarrow\infty}\frac{\max_{0\leq i\leq n}|X_i|}{(\log n)^3}<\infty.
%\end{equation}
So the walk is called in a regime of \textbf{slow movement}. Later, under \eqref{boundarycase} and \eqref{Integrability}, Faraud, Hu and Shi proved in \cite{FHS11}, on the survival of $\T$, a.s. ,
\begin{equation}
\lim_{n\rightarrow\infty}\frac{\max_{0\leq i\leq n}|X_i|}{(\log n)^3}=Cst.
\end{equation}
Further, Hu and Shi obtained in \cite{HuShi16} that $\frac{|X_n|}{(\log n)^2}$ converges weakly. The spread and the range of this walk have been studied in \cite{AD14} and \cite{AC18}. In this paper, we study the heavy range of the walk in this slow regime.

Define the edge local time for the edge $(x^*,x)$ as follows
\[
\eL_x(n):=\sum_{k=0}^n\ind{X_{k-1}=x^*, X_k=x}, \forall n\geq1.
\]
 Let $\tau_0:=0$ and
\[
\tau_n:=\inf\{k>\tau_{n-1}: X_{k-1}=\rho^*, X_k=\rho\}, \forall n\geq1.
\]
Then $\eL_\rho(\tau_n)=n$. It can be seen from \cite{HuShi16+} that $\max_{x\in\T}\eL_x(\tau_n)$ is of order $n$ in probability. %Moreover, it is known in \cite{HS16} that 
%\begin{equation}\label{cvgtn}
%\frac{\tau_n}{n\log n}\xrightarrow[n\rightarrow\infty]{\textrm{ in } \p^*} \frac{4}{\sigma^2}D_\infty.
%\end{equation}
For any $\theta\in(0,1)$, define
\[
R^{\geqslant n^\theta}(\tau_n):=\sum_{x\in\T}\ind{\eL_x(\tau_n)\geq n^\theta}.
\]
We are interested in this so-called heavy range, which was first considered by Andreoletti and Diel \cite{AD18+}. They show that in any recurrent case, under $\p^*$, in probability, $R^{\geqslant n^\theta}(\tau_n)=n^{\xi_\theta+o(1)}$ where $\xi_\theta>0$ is a constant depending on the regimes and on $\theta$. In the sub-diffusive and diffusive regimes, our upcoming paper with de Raph\'elis \cite{CdR19+} will prove the convergence in law of $\frac{R^{\geqslant n^\theta}(\tau_n)}{n^{\xi_\theta}}$ under the annealed and quenched probability.  In the slow movement regime, it is given in \cite{AD18+} that $\xi_\theta=1-\theta$. We obtain the convergence in probability of $\frac{R^{\geqslant n^\theta}(\tau_n)}{n^{1-\theta}}$ under $\p^*$ in this paper.

Let us state the main result as follows.
\begin{theo}\label{main}
For any $\theta\in(0,1)$, the following convergence in probability holds:
\begin{equation}
\frac{R^{\geqslant n^\theta}(\tau_n)}{n^{1-\theta}}\xrightarrow[n\rightarrow\infty]{\textrm{ in }\p^*} \Lambda(\theta) D_\infty,
\end{equation}
where $D_\infty>0$ is the $\P^*$-a.s. limit of the derivative martingale $(D_n:=\sum_{|x|=n}V(x)e^{-V(x)})_{n\geq0}$
and $\Lambda(\theta)$ is a positive real number whose value is given in \eqref{cst} later.
\end{theo}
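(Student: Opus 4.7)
The plan is to carry out a first- and second-moment analysis of $R^{\ge n^\theta}(\tau_n)$, relying on the fact that, conditional on the environment $\en$, the family $(\eL_x(\tau_n))_{x\in\T}$ is a tree-indexed branching Markov chain -- a Ray--Knight identity already used in \cite{HuShi16,AC18}. Given $\eL_x(\tau_n)=m$, the children local times $(\eL_{x^i}(\tau_n))_{1\le i\le N_x}$ are distributed as the number of crossings of the edges $(x,x^i)$ produced by $m$ independent excursions leaving $x$, with parameters governed by the potential increments $V(x^i)-V(x)$. Iterating this identity along generations and controlling the multiplicative fluctuations should yield a quenched approximation of the form
\[
\eL_x(\tau_n) = n\, e^{-V(x)}\, Y_n(x)\,\bigl(1+o_{\p}(1)\bigr),
\]
valid on a ``dominating'' range of depths, where $Y_n(x)$ is a nonnegative functional of the subtree rooted at $x$ that converges a.s.\ to some limit $Y_\infty(x)$. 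Under this representation, the event $\{\eL_x(\tau_n)\ge n^\theta\}$ becomes, up to negligible error, the event $\{V(x)-\log Y_\infty(x)\le (1-\theta)\log n\}$.

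The first moment is then handled by the many-to-one lemma for branching random walks in the boundary case. After the associated change of measure, $\e[R^{\ge n^\theta}(\tau_n)]$ is expressed as a functional of a centred one-dimensional random walk $(S_k)_{k\ge0}$ whose increments are distributed as $V(x)$ at $|x|=1$ under the tilted law; the dominant generations are of order $\log n$. Combined with the Seneta--Heyde convergence $\sqrt{k}\sum_{|x|=k}e^{-V(x)}\to \sqrt{2/(\pi\sigma^2)}\,D_\infty$ (valid under \eqref{boundarycase}--\eqref{Integrability}), this should yield
\[
\frac{1}{n^{1-\theta}}\,\e\!\left[R^{\ge n^\theta}(\tau_n)\mathbf{1}_{\{\T\text{ survives}\}}\right]\;\longrightarrow\;\Lambda(\theta)\, \e\!\left[D_\infty\mathbf{1}_{\{\T\text{ survives}\}}\right],
\]
with $\Lambda(\theta)$ identified as the explicit integral appearing in \eqref{cst}.

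To upgrade this to convergence in probability towards the \emph{random} limit $\Lambda(\theta) D_\infty$, I plan to bound the variance by splitting the pair sum $\sum_{x,y\in\T}\mathbf{1}_{\{\eL_x(\tau_n)\ge n^\theta,\,\eL_y(\tau_n)\ge n^\theta\}}$ according to the common ancestor $z=x\wedge y$. Conditionally on $\en$ and on $\eL_z(\tau_n)$, the local-time processes on the two subtrees below $z$ containing $x$ and $y$ are independent, so each pair reduces to a product of two first-moment estimates integrated against the law of $\eL_z(\tau_n)$. The moment condition \eqref{MomCond} then ensures that the diagonal contribution dominates and produces exactly $\Lambda(\theta)^2\e[D_\infty^2\mathbf{1}_{\{\T\text{ survives}\}}] n^{2(1-\theta)}$. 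Together with the first moment this gives $L^2(\p^*)$-convergence, hence convergence in probability, of $n^{\theta-1}R^{\ge n^\theta}(\tau_n)$ to $\Lambda(\theta) D_\infty$, and the random factor $D_\infty$ appears precisely because it is the martingale limit carrying the generation-$k$ mass $\sum_{|x|=k}V(x)e^{-V(x)}$ surviving in the leading order.

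The main obstacle is the quenched approximation step: the identity $\eL_x(\tau_n)\approx ne^{-V(x)} Y_n(x)$ must hold uniformly over the polynomially many vertices contributing to $R^{\ge n^\theta}(\tau_n)$, including the atypical low points of the branching random walk where the walk could make anomalously many visits. Treating these exceptional vertices requires spine decompositions for the boundary-case branching random walk -- in the spirit of \cite{FHS11,HuShi16} -- combined with the full strength of \eqref{Integrability}--\eqref{MomCond}. A parallel uniformity issue appears in the second-moment step for pairs of vertices whose common ancestor is unusually close to the root, and is handled by the same tools.
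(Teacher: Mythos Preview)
Your approach has a genuine structural gap: the approximation $\eL_x(\tau_n)\approx ne^{-V(x)}Y_n(x)$ with a convergent $Y_n(x)$ cannot hold on the full set of vertices that contribute to $R^{\ge n^\theta}(\tau_n)$. The paper's decisive observation is to split the heavy range according to $E_x^{(n)}$, the number of excursions from $\rho^*$ that visit $x$, writing $R^{\ge n^\theta}(\tau_n)=R^{\ge n^\theta}(\tau_n,1)+\sum_{j\ge2}R^{\ge n^\theta}(\tau_n,j)$. The two pieces behave completely differently. For $j\ge2$ one indeed has a law-of-large-numbers picture and the contribution reduces to counting environment vertices with $\MV(x)\lesssim\log n$ and $V(x)\lesssim(1-\theta)\log n$, giving $\Lambda_0(\theta)D_\infty$. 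But the $j=1$ term comes from vertices with $\MV(x)\gtrsim\log n$: these satisfy $na_x\ll1$, so they are \emph{not} visited at all with high probability, and when they are visited (in a single excursion) they accumulate a geometric number of crossings with mean $H_x\approx e^{\MV(x)-V(x)}$. The relevant vertices have $\MV(x)-V(x)\approx\theta\log n$, and for them $\eL_x(\tau_n)$ is $0$ with probability $1-O(ne^{-\MV(x)})$ and of order $n^\theta$ otherwise---a Bernoulli--geometric law that no multiplicative correction $Y_n(x)$ of $ne^{-V(x)}$ can capture. This single-excursion piece contributes the separate constant $\Lambda_1(\theta)$, and the limit is $\Lambda(\theta)=\Lambda_0(\theta)+\Lambda_1(\theta)$; your scheme would miss $\Lambda_1(\theta)$ entirely.

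Two further points. First, the dominant generations are of order $(\log n)^2$, not $\log n$: the associated centred random walk $(S_k)$ needs time $\Theta((\log n)^2)$ to reach levels of order $\log n$. Second, the $L^2$ route is not available: under \eqref{boundarycase}--\eqref{MomCond} there is no reason for $D_\infty\in L^2$, so your variance computation cannot terminate in $\Lambda(\theta)^2\e[D_\infty^2]$. The paper instead proves convergence in probability of the relevant environment functionals (Lemmas~\ref{largepart} and \ref{SHnorm1ex}) via the conditional Laplace-transform method of Boutaud--Maillard, which gives a generalised Seneta--Heyde norming without any second-moment assumption on $D_\infty$; for the $j=1$ part the quenched variance is bounded (Lemma~\ref{1exvariance}) only after restricting to the stopping line $\L_{\gamma_n}$, which is another ingredient absent from your plan.
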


\begin{rem}
Note that for $\theta=0$, the total range up to $\tau_n$ has been studied in \cite{AC18} and is of order $n$ in probability $\p^*$.
\end{rem}
Under \eqref{boundarycase}, $D_n$ is a martingale with respect to the natural filtration $\{\mathcal{F}_n;  n\geq0\}$ with $\mathcal{F}_n:=\sigma(V(u); |u|\leq n)$. Under \eqref{Integrability}, it converges a.s. towards some non-degenerate limit according to Theorem of \cite{BigKyp}. Moreover, $\P(D_\infty>0)=\P(\T\textrm{ survives})$ under \eqref{Integrability}.

\subsection{Sketch of proofs and organisation of the paper}
Write $\eL_x^{(n)}$ for $\eL_x(\tau_n)$. In addition, up to the $n$-th return to $\rho^*$, define the number of excursions visiting $x$ by
\[
E_x^{(n)}:=\sum_{k=1}^n\ind{\exists j\in (\tau_{k-1}, \tau_k], X_j=x}, \forall n\geq1.
\]
Then define
\[
R^{\geqslant n^\theta}(\tau_n, j):=\sum_{x\in\T}\ind{\eL_x(\tau_n)\geq n^\theta, E_x^{(n)}=j}, \forall 1\leq j\leq n.
\]
It is clear that
\[
R^{\geqslant n^\theta}(\tau_n)=\sum_{j=1}^n R^{\geqslant n^\theta}(\tau_n, j)= \sum_{j= 2}^nR^{\geqslant n^\theta}(\tau_n, j)+ R^{\geqslant n^\theta}(\tau_n, 1).
\]
It is known in \cite{AD18+} that under $\p^*$, 
\[
\frac{\log R^{\geqslant n^\theta}(\tau_n)}{\log n}\xrightarrow[n\to\infty]{\p^*}1-\theta.
\]
We are going to treat $ \sum_{j=2}^nR^{\geqslant n^\theta}(\tau_n, j)$ and $ R^{\geqslant n^\theta}(\tau_n, 1)$ separately and show the convergences in probability of 
\[
\frac{1}{n^{1-\theta}}\sum_{j= 2}^nR^{\geqslant n^\theta}(\tau_n, j) \textrm{ and } \frac{1}{n^{1-\theta}}R^{\geqslant n^\theta}(\tau_n, 1),
\]
under the annealed probability $\p^*$. In fact, we have the following results.

\begin{prop}\label{mEx}
For any $\theta\in(0,1)$, the following convergence in probability holds:
\begin{equation}\label{Mex}
\frac{1}{n^{1-\theta}}\sum_{j= 2}^nR^{\geqslant n^\theta}(\tau_n, j) \xrightarrow[n\to\infty]{\p^*} \Lambda_0(\theta) D_\infty,
\end{equation}
where
\begin{equation}\label{Mexconstant}
\Lambda_0(\theta):=\frac{\sqrt{2}}{\sqrt{\pi}\sigma^2}\int_0^\infty\mathcal{C}_0(\frac{\theta}{\sqrt{u}},\frac{1-\theta}{\sqrt{u}})\frac{\d u}{u}%\cb_\Ren\cb_+\int_0^\infty \P(\overline{R}_1\leq \frac{1}{\sigma\sqrt{u}}\vert R_1=\frac{1-\theta}{\sigma\sqrt{u}})\varphi(\frac{1-\theta}{\sigma\sqrt{u}})\frac{\d u}{\sigma u}\in(0,\infty)
\end{equation}
with $\mathcal{C}_0$ defined in \eqref{defconstantC}.%$(R_t)_{t\geq0}$ a 3-dim Bessel process started from $0$, $\overline{R}_1:=\sup_{t\in[0,1]}R_t$ and $\varphi(t):=te^{-t^2/2}\ind{t\ge 0}$.
\end{prop}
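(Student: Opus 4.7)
The plan is to exploit the excursion structure. Conditionally on the environment $\en$, the excursions over the successive intervals $(\tau_{k-1},\tau_k]$ are i.i.d., and each visits a fixed vertex $x$ with a common quenched probability
\[
q(x)\;:=\;\p^{\en}\bigl(\exists\, t\in(0,\tau_1]:X_t=x\bigr),
\]
which a standard effective-resistance computation expresses explicitly in terms of the potentials on $[\![\rho,x]\!]$. Thus $E_x^{(n)}$ is $\mathrm{Binomial}(n,q(x))$ under $\p^{\en}$, and conditionally on $\{E_x^{(n)}=j\}$ the local time $\eL_x(\tau_n)$ has the law of a sum $L_1+\cdots+L_j$ of $j$ i.i.d.\ quenched geometric variables whose parameter is again an explicit function of the potentials on $[\![\rho,x]\!]$ and of the off-spine conductances at $x$. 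This reduces $R^{\geqslant n^\theta}(\tau_n,j)$ to an additive functional of the branching random walk and of these conditionally independent geometric sums.

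Applying the many-to-one identity to $\sum_{j\ge 2}\E[R^{\geqslant n^\theta}(\tau_n,j)]$ turns the sum over $x\in\T$ into an expectation against a centred one-dimensional walk $(S_k)$ with variance $\sigma^2$:
\[
\E\Big[\sum_{j\ge 2}R^{\geqslant n^\theta}(\tau_n,j)\Big]\;=\;\sum_{m\ge 1}\e\!\left[e^{S_m}\,\Phi_{m,n}(S_0,\ldots,S_m)\right],
\]
where $\Phi_{m,n}$ encodes the quenched probability of $\{\eL_x(\tau_n)\ge n^\theta,\,E_x^{(n)}\ge 2\}$ given the ancestral potentials. In the slow regime, the dominant contribution should come from depths $m\asymp u(\log n)^2$, $u>0$: on this scale the walk conditioned to stay non-negative converges, after time-rescaling by $(\log n)^2$ and space-rescaling by $\log n$, to a Brownian meander, the overshoot $\max_{y\le x}V(y)-V(x)$ is of order $\log n$, and the quenched mean of each geometric $L_i$ given a hit is typically $\asymp n/j$. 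A change of variables $m=u(\log n)^2$ then produces both the prefactor $\frac{\sqrt 2}{\sqrt\pi\sigma^2}$ (coming from the meander density at $0$) and the invariant measure $\d u/u$, while the limit of $\sum_{j\ge 2}\p^{\en}(\mathrm{Bin}(n,q)=j,\,L_1+\cdots+L_j\ge n^\theta)$ identifies the integrand with $\mathcal{C}_0(\theta/\sqrt u,(1-\theta)/\sqrt u)$.

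To upgrade the first-moment asymptotics to convergence in probability and produce the factor $D_\infty$, I would apply the standard spine-decomposition/peeling: freeze the tree up to a level $m_0$ (first fixed, then sent to $\infty$ after $n$), note that the subtrees rooted at level $m_0$ contribute independently to $R^{\geqslant n^\theta}(\tau_n,j)$ for $j\ge 2$, and check that each one's contribution, normalised by $n^{1-\theta}$, is essentially proportional to the Seneta--Heyde weight $V(u)e^{-V(u)}$. Summing over $|u|=m_0$ and using the a.s.\ convergence $D_{m_0}\to D_\infty$ produces the $D_\infty$ factor in \eqref{Mex}. A complementary many-to-two (second moment) computation, whose dominant pairs $(x,y)$ are those with most recent common ancestor at depth $o((\log n)^2)$, should give $\mathrm{Var}\bigl(n^{-(1-\theta)}\sum_{j\ge2}R^{\geqslant n^\theta}(\tau_n,j)\,\big|\,\mathcal{F}_{m_0}\bigr)\to 0$, completing the argument.

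The main obstacle is the uniform-in-$j$ control of the joint law of $(E_x^{(n)},\eL_x(\tau_n))$: small $j$ (rare hits, many visits per excursion) and large $j\asymp n$ (frequent hits, few visits per excursion) are both a priori dangerous, and although the mass concentrates near $j\asymp n^{1-\theta}$, a sharp uniform-in-$j$ tail estimate on $\p^{\en}(L_1+\cdots+L_j\ge n^\theta)$, tight enough to integrate against $e^{S_m}$ and the Brownian meander density, has to be established via concentration for sums of environment-dependent geometric variables. This is the step where the integrability hypotheses \eqref{Integrability} and \eqref{MomCond} will be most needed, and where the bulk of the technical work is expected to lie.
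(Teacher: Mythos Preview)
Your plan has the right large-scale structure, but you miss the paper's central simplification and propose a concentration step that differs from (and is riskier than) the paper's.

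First, the ``main obstacle'' you identify --- uniform-in-$j$ control of the joint law of $(E_x^{(n)},\eL_x(\tau_n))$ --- is not solved in the paper but \emph{bypassed}. The key observation (Lemma~\ref{smallpart}) is that the random-walk event $\{\eL_x(\tau_n)\ge n^\theta,\ E_x^{(n)}\ge 2\}$ is, up to errors that are $o_{\p}(n^{1-\theta})$ after summing over $x$, equivalent to the pure environment event $\{x\in A_n\}:=\{\MV(x)\lesssim \log n,\ V(x)\lesssim (1-\theta)\log n\}$. The mechanism is a sharp dichotomy via elementary large deviations for sums of geometrics (Lemma~\ref{sumGeo}): if $x\in A_n(-a_n,-b)$ then $H_x\ll n^\theta$ and $ne^{-V(x)}\gg n^\theta$, forcing quenched probability $1-o(1)$; if $x\notin A_n^+(a_n,b)$ then either $\MV(x)>\log n+a_n$ (so $na_x\ll 1$, hence $E_x^{(n)}\le 1$ with high probability) or $V(x)>(1-\theta)\log n+b$ (so the expected local time is $\ll n^\theta$). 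After this reduction there is no $j$-sum left: one simply counts vertices of the BRW lying in $A_n$, and $\mathcal{C}_0(\theta/\sqrt u,(1-\theta)/\sqrt u)$ emerges from the one-dimensional probability $\P(\mS_m\ge 0,\ \MS_m-S_m\le\theta\log n,\ S_m\le(1-\theta)\log n)$ after many-to-one and the rescaling $m=u(\log n)^2$, not from any limit of geometric tails. (Incidentally, your claim that the quenched mean of each $L_i$ given a hit is $\asymp n/j$ is incorrect: that mean is $H_x$, depending only on the ancestral potentials.)

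Second, for concentration the paper does not use a many-to-two computation but the Laplace-transform method of \cite{BM19}: one sandwiches $\E[e^{-\lambda\chi}\mid\mathcal{F}_{k_0}]$ using Jensen from below and $e^{-\lambda t}\le 1-\lambda e^{-\lambda\delta}t$ for $t\in[0,\delta]$ from above, which requires only the first-moment asymptotics $\E_x[\hat\chi_i]\to C_i\Ren(x)e^{-x}$ together with a \emph{truncated} first-moment estimate (equation~\eqref{1momchi-}). A direct second moment is delicate in the boundary case: the many-to-two sum picks up pairs $(z,z')$ whose common ancestor has small $V$, and inspecting the proof of \eqref{1momchi-} shows that the $\wedge 1$ truncation there is essential --- without it the bound on $UB_2^{\ge}$ blows up. So your claim $\mathrm{Var}(\cdot\mid\mathcal{F}_{m_0})\to 0$ would at minimum require the same truncation, at which point the Laplace-transform route is both shorter and cleaner.
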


\begin{prop}\label{1Ex}
For any $\theta\in(0,1)$, the following convergence in probability holds:
\begin{equation}
\frac{1}{n^{1-\theta}}R^{\geqslant n^\theta}(\tau_n, 1)\xrightarrow[n\to\infty]{\p^*} \Lambda_1(\theta) D_\infty,
\end{equation}
where
\begin{equation}
\Lambda_1(\theta):=\cb_\Ren\int_0^\infty \mathcal{G}(\frac{1}{\sqrt{s}},\frac{\theta}{\sqrt{s}})\frac{\d s}{s},
\end{equation}
with $\mathcal{G}(a,b)$ defined in \eqref{defconstantg} and $\cb_\Ren$ defined in \eqref{cvgrenewalf}.
\end{prop}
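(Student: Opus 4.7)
The plan is to prove the stated convergence by the first--and--second moment method, combining a quenched analysis of each vertex with the many-to-one lemma for the branching random walk $V$ and Seneta--Heyde type renewal estimates. Writing
\[
R^{\geqslant n^\theta}(\tau_n, 1) = \sum_{x \in \T} \ind{E_x^{(n)} = 1,\ \eL_x^{(n)} \geq n^\theta},
\]
I would work conditionally on the environment $\en$, letting $p_\en(x)$ be the probability that a single excursion from $\rho$ visits $x$ and $\beta_\en(x)$ the probability that the walk, starting from $x$, reaches $\rho^*$ before returning to $x$. Since the $n$ excursions are i.i.d.\ under $\p^\en$, $E_x^{(n)} \sim \mathrm{Binomial}(n, p_\en(x))$, and at leading order the edge local time collected by a single excursion that hits $x$ is geometric with parameter $\beta_\en(x)$. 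Consequently
\[
\p^\en\!\bigl(E_x^{(n)} = 1,\ \eL_x^{(n)} \geq n^\theta\bigr) \sim n\,p_\en(x)\,\exp\!\bigl(-n\,p_\en(x) - n^\theta \beta_\en(x)\bigr),
\]
so only vertices with $n\,p_\en(x)$ and $n^\theta \beta_\en(x)$ both of order $1$ contribute to the sum.

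The electrical interpretation of the biased walk (conductances $e^{-V}$) expresses $p_\en(x)$ and $\beta_\en(x)$ as explicit functionals of the potential $V$ restricted to $[\![\rho, x]\!]$ together with the subtrees branching off it. Under the size-biased change of measure the spinal potential becomes a centred random walk of variance $\sigma^2$. The vertices relevant in the slow-movement regime live at generation $|x| \asymp s(\log n)^2$ with $s \in (0, \infty)$, and on this time scale the rescaled pair $(n\,p_\en(x), n^\theta \beta_\en(x))$ converges to a Brownian functional whose joint distribution is encoded by $\mathcal{G}$. Taking the annealed expectation and applying the many-to-one lemma generation by generation, $\sum_{|x|=k}$ becomes an expectation along a single centred random walk; a Seneta--Heyde normalisation produces the constant $\cb_\Ren$ of \eqref{cvgrenewalf}, and the almost sure convergence $D_k \to D_\infty$ together with integration over the time scale $s$ yields
\[
\E^*\!\bigl[R^{\geqslant n^\theta}(\tau_n, 1)\bigr] \sim n^{1-\theta}\, \cb_\Ren \int_0^\infty \mathcal{G}\!\Bigl(\tfrac{1}{\sqrt{s}}, \tfrac{\theta}{\sqrt{s}}\Bigr) \frac{\d s}{s}\; \E^*[D_\infty].
\]

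To promote the first-moment estimate to convergence in probability towards $\Lambda_1(\theta) D_\infty$, I would compute the second moment and split pairs $(x, y)$ according to the generation of their most recent common ancestor $x \wedge y$. The diagonal contribution is of order $n^{1-\theta} = o(n^{2(1-\theta)})$ for $\theta \in (0, 1)$ and is negligible. For pairs with $|x \wedge y|$ small on the $(\log n)^2$ scale, a two-spine many-to-one lemma factorises the contribution into two independent single-spine first moments and produces $\Lambda_1(\theta)^2\, \E^*[D_\infty^2]$ in the limit. Pairs with a deep common ancestor are controlled by $L^p$ bounds on the derivative martingale supplied by \eqref{MomCond} together with an $e^{-V}$-weighted branching estimate along the common segment.

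The main obstacle will be the uniform Brownian approximation of $\beta_\en(x)$: unlike $p_\en(x)$, it depends on the entire subtree beyond $x$ and so requires estimates in the spirit of \cite{HuShi16+} to show that after rescaling it behaves as a function of the ancestral profile and an independent ``trap-depth'' variable encoding the subtree. A secondary difficulty is the uniform transfer of the first-moment asymptotics in $s$ over the whole range where the integrand of $\Lambda_1(\theta)$ is non-negligible, which demands Seneta--Heyde estimates with good tail behaviour rather than just their limits.
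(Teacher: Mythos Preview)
There are two genuine gaps. First, the parameter governing the edge local time is not a subtree-dependent escape probability. Within a single excursion that reaches $x$, the number of $x^*\!\to x$ crossings is geometric with failure probability $b_x=1-1/H_x$, where $H_x=\sum_{\rho\le y\le x}e^{V(y)-V(x)}$ is a purely ancestral quantity; the exact formula is $\p^\en(\eL_x^{(n)}\ge n^\theta, E_x^{(n)}=1)=na_x(1-a_x)^{n-1}b_x^{n^\theta-1}$, an explicit function of $(V(y))_{\rho\le y\le x}$ alone. Hence the ``main obstacle'' you anticipate (a subtree-dependent $\beta_\en(x)$ requiring trap-depth estimates) simply does not arise, and the many-to-one lemma applies directly.

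Second, the annealed first-and-second moment scheme you describe does not yield convergence towards the \emph{random} limit $\Lambda_1(\theta)D_\infty$. Even if your two-spine computation gives $\e[R^2]/n^{2(1-\theta)}\to\Lambda_1^2\E[D_\infty^2]$, the annealed variance tends to $\Lambda_1^2\mathrm{Var}(D_\infty)>0$, so Chebyshev only produces tightness; you would also need $\e[R\cdot D_\infty]/n^{1-\theta}\to\Lambda_1\E[D_\infty^2]$, which you do not address. The paper proceeds differently. It first restricts to vertices in $\B_n^+\cap\D_n\cap\L_{\gamma_n}$ (the stopping-line bound $\max_{y\le x}H_y<n/(\log n)^\gamma$ being essential), then proves via a \emph{quenched} variance estimate (Lemma~\ref{1exvariance}) that the restricted sum $\Xi_n$ concentrates around its quenched mean $\e^\en[\Xi_n]\approx n^{1-\theta}\sum e^{-V(x)}\cf(n^\theta/H_x)\ind{\cdots}$. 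Finally, $\e^\en[\Xi_n]/n^{1-\theta}\to\Lambda_1 D_\infty$ is established by a conditional-Laplace Seneta--Heyde argument in the style of \cite{BM19}: one conditions on $\mathcal{F}_{k_0}$, computes the conditional Laplace transform, and lets $k_0\to\infty$ so that the derivative martingale $D_{k_0}\to D_\infty$ enters. This conditioning step (or an equivalent cross-moment computation) is the missing ingredient in your plan.
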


Theorem \ref{main} follows directly from Propositions \ref{mEx} and \ref{1Ex} with 
\begin{equation}\label{cst}
\Lambda(\theta)=\Lambda_0(\theta)+\Lambda_1(\theta).
\end{equation}
%\begin{align}\label{cst}
%\Lambda(\theta)=\Lambda_0(\theta)+\Lambda_1(\theta)=&\cb_\Ren\int_0^\infty \P(\overline{R}_1\leq \frac{1}{\sigma\sqrt{u}}\vert R_1=\frac{1-\theta}{\sigma\sqrt{u}})\varphi(\frac{1-\theta}{\sigma\sqrt{u}})\frac{\d u}{\sigma^2 u}\times(\int_0^\infty\frac{\mathcal{C}_{t^{-1/2}, t^{-1/2}}}{t}dt+1)\nonumber\\
%=&\Lambda_0(\theta)\times(\int_0^\infty\frac{\mathcal{C}_{t^{-1/2}, t^{-1/2}}}{t}dt+1),
%\end{align}
%where the finiteness of $\int_0^\infty\frac{\mathcal{C}_{t^{-1/2}, t^{-1/2}}}{t}dt$ has been verified in Lemma A.1 of  \cite{AC18}. 
Here the finiteness of $\Lambda_0(\theta)$ can be checked immediately as $\Lambda_0(\theta)\leq \frac{\sqrt2}{\sqrt{\pi}\sigma^2}\int_0^\infty \varphi(\frac{1-\theta}{\sigma\sqrt{u}})\frac{\d u}{\sigma u}<\infty$. For $\Lambda_1(\theta)$, by change of variables $r=s(1-u)$ and $t=su$, one sees that
\begin{align*}
\Lambda_1(\theta)=&\cb_\Ren\frac{\cb_-}{\sigma}\int_0^\infty \int_0^1 \mathcal{C}_{\frac{1}{\sqrt{su}},\frac{1}{\sqrt{su}}}\mathcal{C}_0(\frac{1-\theta}{\sqrt{s(1-u)}},\frac{\theta}{\sqrt{s(1-u)}})\frac{\d u}{(1-u)u} \frac{\d s}{s}\\
=&\cb_\Ren\frac{\cb_-}{\sigma}\int_0^\infty\mathcal{C}_0(\frac{1-\theta}{\sqrt{r}}, \frac{\theta}{\sqrt{r}})\frac{\d r}{r}\int_0^\infty \mathcal{C}_{\frac{1}{\sqrt{t}},\frac{1}{\sqrt{t}}}\frac{\d t}{t}=\frac{\cb_\Ren\cb_-\sqrt{\pi}\sigma}{\sqrt{2}}\Lambda_0(1-\theta)\int_0^\infty \mathcal{C}_{\frac{1}{\sqrt{t}},\frac{1}{\sqrt{t}}}\frac{\d t}{t},
\end{align*}
where the finiteness of $\int_0^\infty\frac{\mathcal{C}_{t^{-1/2}, t^{-1/2}}}{t}dt$ has been verified in Lemma A.1 of  \cite{AC18}. 

Let us do some basic calculations here. For any $x\in\T\cup\{\rho^*\}$, let $T_x$ be the first hitting time at $x$:
\[
T_x:=\inf\{k\ge 0: X_k=x\}.
\]
Then, it is known that
\begin{align}
a_x:=&\p^\mathcal{E}_\rho(T_x< T_{\rho^*})=\frac{1}{\sum_{y\in[\![\rho, x]\!]}e^{V(y)}}=\frac{e^{-V(x)}}{H_x},\\
b_x:=&\p^\mathcal{E}_{x^*}(T_x< T_{\rho^*})=1-\frac{1}{H_x},\\
\textrm{ where }H_x:=&\sum_{y\in[\![\rho, x]\!]}e^{V(y)-V(x)}.\nonumber
\end{align}
As a consequence, for any fixed $x\in\T$,
\[
\p^\en_\rho(\overline{L}_x(\tau_1)=0)=1-a_x\textrm{ and }\p^\en_\rho(\overline{L}_x(\tau_1)\geq k)=a_x b_x^{k-1}, \forall k\ge1.
\]
Then by Markov property, under $\p^\en_\rho$, $(\overline{L}_x(\tau_{n+1})-\overline{L}_x(\tau_{n}))_{n\geq1}$ are i.i.d. random variables distributed as $\overline{L}_x(\tau_1)$. Moreover, $E_x^{(n)}$ is a Binomial random variable with parameters $n$ and $a_x$. Let
\[
\MV(x):=\max_{y\in[\![\rho, x]\!]} V(y)\textrm{ and }\mV(x):=\min_{y\in[\![\rho, x]\!]} V(y), \forall x\in\T.
\]
Then $a_x\leq e^{-\MV(x)}$, $H_x\leq e^{\MV(x)-V(x)}$.

To get $\sum_{j= 2}^nR^{\geqslant n^\theta}(\tau_n, j)$, we need to consider the individuals $x\in\T$ such that $\{\eL_x(\tau_n)\geq n^\theta, E_x^{(n)}\geq2\}$. As $a_x\leq e^{-\MV(x)}$, the individuals with $\MV(x)\gtrsim \log n$ would be visited in at most one excursion with high probability. We thus take $\{x\in\T: \MV(x)\lesssim \log n\}$. On the other hand, $\e^\en[\eL_x(\tau_n)]=ne^{-V(x)}$. So, the event $\{V(x)\lesssim (1-\theta)\log n\}$ involves also. Actually, we could verify that with high probability under $\p^*$,
\[
\sum_{j= 2}^nR^{\geqslant n^\theta}(\tau_n, j)\approx \sum_{x\in\T}\ind{\MV(x)\lesssim \log n, V(x)\lesssim (1-\theta)\log n}.
\]
The asymptotic of the latter will be treated in Lemma \ref{largepart}.

To get $R^{\geqslant n^\theta}(\tau_n, 1)$, we are going to compare it with its quenched expectation. We see that 
\[
\e^\en[R^{\geqslant n^\theta}(\tau_n, 1)]=\sum_{x\in\T} na_x(1-a_x)^{n-1}b_x^{n^\theta-1}\approx n^{1-\theta}\sum_{x\in\T}e^{-V(x)}(\frac{n^\theta}{H_x}e^{-\frac{n^\theta}{H_x}})
\]
as we only need to count the individuals with $\{\MV(x)\gtrsim \log n\}$ so that they are visited only by one excursion with high probability. Here we take $\{\MV(x)-V(x)\approx \theta \log n\}$ so that $H_x=\Theta(n^\theta)$ as $H_x$ and $e^{\MV(x)-V(x)}$ are comparable. In additional, it is known in \cite{HuShi16} that up to $\tau_n$, with high probability, the walker has not visited the stopping line $\{x\in\T: \max_{\rho\leq y<x}H_y<\gamma_n\leq H_x\}$ with $\gamma_n=\frac{n}{(\log n)^\gamma}$ for any $\gamma>0$. By bounding the quenched variance of $R^{\geqslant n^\theta}(\tau_n, 1)$, we could verify that with high probability,
\[
R^{\geqslant n^\theta}(\tau_n, 1)\approx n^{1-\theta}\sum_{x\in\T}e^{-V(x)}(\frac{n^\theta}{H_x}e^{-\frac{n^\theta}{H_x}})\ind{\MV(x)\gtrsim \log n, \MV(x)-V(x)\approx \theta \log n, \max_{\rho\leq y\leq x}H_y<r_n}.
\]
The asymptotic of the latter will be given in Lemma \ref{SHnorm1ex}.

The rest of the paper is organised as follows. In section \ref{facts}, we state some basic facts on the branching random walk and the biased random walk. In section \ref{Exs}, we study $ \sum_{j=2}^nR^{\geqslant n^\theta}(\tau_n, j)$ and prove Proposition \ref{mEx} by choosing the suitable environment. In section \ref{Ex1}, we prove Proposition \ref{1Ex}. Next, Section \ref{SHnorm} is devoted to proving the generalised Seneta-Heyde norming results: Lemmas \ref{SHnorm1ex} and \ref{largepart}, by applying the new method introduced by \cite{BM19}. In Section \ref{lems}, we complete the proofs of the technical lemmas. 

In this paper, we use $(c_i)_{i\geq0}$ and $(C_i)_{i\ge0}$ for positive constants which may change from line to line. And we write $f(n)\sim g(n)$ when $\frac{f(n)}{g(n)}\rightarrow 1$ as $n\to\infty$.
%\section{Rough ideas}

\section{Preliminary results}\label{facts}
In this section, we state some facts and lemmas which will be used later. 

\subsection{Many-to-One Lemma}
Recall that $\P$ is the law of the branching random walk $(V(u),u\in\T)$ started from $V(\rho)=0$. Let $\P_a((V(u), u\in\T)\in\cdot)=\P((a+V(u), u\in\T)\in \cdot)$ for any $a\in\R$. Let $E_a$ be the corresponding expectation. Then the following lemma holds because of \eqref{boundarycase}.
\begin{lem}[Many-to-One]
For any $n\geq1$, $a\in\R$ and any measurable function $f: \mathbb{R}^n\rightarrow \mathbb{R}_+$, we have
\begin{equation}
\E_a\left[\sum_{|u|=n}e^{-V(u)}f(V(u_1), \cdots, V(u_n))\right]=e^{-a}\E\left[f(S_1+a,\cdots,S_n+a)\right],
\end{equation}
where $(S_n)_{n\geq0}$ is one dimensional centred random walk with i.i.d. increments and $S_0=0$. %Moreover, $\E[S_1]=0$ and $\E[S_1^2]=\sigma^2\in(0,\infty)$.
\end{lem}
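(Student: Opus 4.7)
The plan is to proceed by induction on $n$ after first reducing to the case $a=0$. Since $\P_a$ is just the law of $V+a$ under $\P=\P_0$, both sides of the identity equal $e^{-a}$ times their $a=0$ counterparts (evaluated on the walk shifted by $a$), so it suffices to establish
\begin{equation*}
\E\left[\sum_{|u|=n}e^{-V(u)}f(V(u_1),\ldots,V(u_n))\right]=\E\bigl[f(S_1,\ldots,S_n)\bigr]
\end{equation*}
for every $n\ge 1$ and every measurable $f:\R^n\to\R_+$, where the common law $\mu$ of the i.i.d.\ increments of $(S_k)_{k\ge0}$ is defined by $\int g\,d\mu:=\E\bigl[\sum_{|x|=1}e^{-V(x)}g(V(x))\bigr]$. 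The two identities in \eqref{boundarycase} are exactly what makes $\mu$ a centred probability law: the first gives $\mu(\R)=1$, and the second gives $\int s\,\mu(ds)=0$, so $(S_k)$ is indeed a well-defined centred random walk.

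The base case $n=1$ is nothing but the definition of $\mu$. For the inductive step $n\to n+1$, I would split the sum according to the first generation:
\begin{equation*}
\sum_{|u|=n+1}e^{-V(u)}f(V(u_1),\ldots,V(u_{n+1}))=\sum_{|v|=1}\sum_{\substack{|w|=n+1\\ w>v}}e^{-V(w)}f(V(v),V(w_2),\ldots,V(w_{n+1})).
\end{equation*}
By the branching property, conditionally on $\mathcal{F}_1$ each subtree rooted at a vertex $v$ with $|v|=1$ is an independent copy of the original BRW with its potential shifted by $V(v)$. Factoring out that shift and applying the induction hypothesis at level $n$ to the function $(y_1,\ldots,y_n)\mapsto f(V(v),V(v)+y_1,\ldots,V(v)+y_n)$ gives
\begin{equation*}
\E\left[\sum_{\substack{|w|=n+1\\ w>v}}e^{-V(w)}f(V(v),V(w_2),\ldots,V(w_{n+1}))\,\Big|\,\mathcal{F}_1\right]=e^{-V(v)}\,h(V(v)),
\end{equation*}
where $h(y):=\E\bigl[f(y,y+S_1,\ldots,y+S_n)\bigr]$ and $(S_k)$ is taken independent of $\mathcal{F}_1$.

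Summing over $|v|=1$ and taking expectation, another application of the $n=1$ case (now to $h$) gives
\begin{equation*}
\E\left[\sum_{|v|=1}e^{-V(v)}h(V(v))\right]=\E[h(S_1')]=\E\bigl[f(S_1',\,S_1'+S_1,\ldots,S_1'+S_n)\bigr],
\end{equation*}
with $S_1'$ distributed as $\mu$ and independent of $(S_k)_{k\ge 1}$. Since the increments of a random walk are i.i.d., $(S_1',S_1'+S_1,\ldots,S_1'+S_n)\stackrel{d}{=}(S_1,\ldots,S_{n+1})$, which closes the induction. I do not expect a serious obstacle: the whole argument is bookkeeping around the branching property and the definition of $\mu$, with \eqref{boundarycase} entering only through making $\mu$ a centred probability law, and the assumption $f\ge 0$ ensuring that all sums and conditional expectations can be manipulated freely by Tonelli.
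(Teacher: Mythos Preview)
Your argument is correct: the reduction to $a=0$ via the definition of $\P_a$, the definition of the increment law $\mu$ using \eqref{boundarycase}, and the induction splitting at the first generation via the branching property are all sound, and Tonelli handles the interchange of sums and expectations since $f\ge 0$. The paper itself does not give a proof of this lemma---it merely states it as a known consequence of \eqref{boundarycase}---so there is nothing to compare; your write-up is exactly the standard proof one would supply if asked to justify it.
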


Moreover, by \eqref{Integrability}, $\E[S_1^2]=\sigma^2\in(0,\infty)$ and
\begin{equation}\label{expmomS}
\E[e^{-\delta_0 S_1}]+\E[e^{(1+\delta_0)S_1}]<\infty.
\end{equation}
For any $n\geq0$, let $\MS_n:=\max_{0\leq k\leq n}S_k$ and $\mS_n:=\min_{0\leq k\leq n}S_k$. More estimates and rescaling results on the random walk $(S_n)_{n\in\N}$ can be found in Appendix \ref{A2}.

\subsection{Lyons' change of measure}
Define the additive martingale with respect to the natural filtration $\{\calF_n\}_{n\geq0}$ by
\[
W_n:=\sum_{|u|=1}e^{-V(u)},\forall n\geq0.
\]
Under \eqref{boundarycase}, this is a non-negative martingale  and it converges $\P$-a.s. to zero according to \cite{Lyons97}. By Kolmogorov extension theorem, for any $\R$, we can define a probability measure $\Q_a$ on $\calF_\infty:=\lor_{n\geq0}\calF_n$ such that
\[
\frac{d\Q_a}{d\P_a}\vert_{\mathcal{F}_n}:=e^a\sum_{|u|=n}e^{-V(u)}, \forall n\geq0.
\]
Let $\E_{\Q_a}$ denote the corresponding expectation and write $\Q$ for $\Q_0$.

Let us introduce a probability measure $\widehat{\Q}_a$ on the space of marked branching random walks so that its marginal distribution is exactly $\Q_a$. Recall that the reproduction law of the branching random walk $(V(x), x\in\T)$ is given by the point process $\calL=\{V(x), |x|=1\}$. Let $\widehat{\calL}$ be the point process having Radon-Nykodim derivative $\sum_{z\in\calL}e^{-z}$ with respect to the law of $\calL$.
%, i.e.,
%\[
%\widehat{Q}(\widehat{\calL}\in\cdot)=\E\left[\sum_{|x|=1}e^{-V(x)}\ind{(V(x), |x|=1)\in\cdot}\right].
%\]
 We start with $w_0$ the root, located at $V(w_0)=0$. At time $1$, it dies and reproduces a random number of individuals whose displacements with respect to $V(w_0)$, viewed as a point process, are distributed as $\widehat{\calL}$. All children of $w_0$ form the first generation, among which we choose $x$ to be $w_1$ with probability proportional to $e^{-V(x)}$. Then recursively, at time $n+1$, the individuals of the $n$-th generation die and reproduce independently their children according to the law of $\calL$ , except $w_n$ which gives birth to its children according to $\widehat{\calL}$. The $w_{n+1}$ is selected among the children of $w_n$ with probability proportional to $e^{-V(u)}$ for each child $u$ of $w_n$. This construction gives us a branching random walk with a marked ray $(w_n)_{n\geq0}$, which is called the spine. The law of this marked branching random walk $(V(x), x\in\T; (w_n)_{n\geq0})$ is denoted by $\widehat{\Q}_0$. Again, $\widehat{\Q}_a$ denotes the law of $(a+V(x), x\in\T; (w_n)_{n\geq0})$ under $\widehat{\Q}_0$. We use $\E_{\widehat{\Q}_a}$ to represent the corresponding expectation and use $\widehat{\Q}$ instead of $\widehat{\Q}_0$ for brevity.
 
It is known that the marginal law of $\widehat{\Q}_a$ on the branching random walk is the same as $\Q_a$ defined above. We also state the following proposition from \cite{Lyons97}, which gives some properties of $\widehat{\Q}_a$.

\begin{prop}\label{spinedecomp}
\begin{itemize}
\item[(i)] \[
\widehat{\Q}_a\left((V(w_0),\cdots, V(w_n))\in \cdot\right)=\P\left((a+S_0,\cdots, a+S_n)\in\cdot\right).
\]
\item[(ii)] For any $|u|=n$,
\[
\widehat{\Q}_a\left(w_n=u\vert \mathcal{F}_n\right)=\frac{e^{-V(u)}}{W_n}.
\]
\end{itemize}
\end{prop}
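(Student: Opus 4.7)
The plan is to establish a single master change-of-measure identity relating expectations under $\widehat{\Q}_a$ and $\P_a$ at the level of the marked tree, and then to read off both (i) and (ii) as direct consequences.

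First, I would verify that for any $n \geq 0$ and any bounded measurable function $F$ on pairs (branching random walk truncated to generation $n$, distinguished vertex of generation $n$),
\begin{equation}\label{sketch-key}
\E_{\widehat{\Q}_a}\bigl[F\bigl((V(x))_{|x|\le n},\, w_n\bigr)\bigr] \;=\; \E_{\P_a}\Bigl[\sum_{|u|=n} e^{a-V(u)}\, F\bigl((V(x))_{|x|\le n},\, u\bigr)\Bigr].
\end{equation}
The proof of \eqref{sketch-key} is a direct computation of the density of the joint law from the construction of $\widehat{\Q}_a$. Along the spine, the Radon--Nikodym density of the reproduction law $\widehat{\mathcal{L}}_k$ with respect to $\mathcal{L}_k$ equals $\sum_{y^*=w_{k-1}} e^{-(V(y)-V(w_{k-1}))}$; off the spine the reproduction coincides with that under $\P_a$; and the selection of $w_k$ among the children of $w_{k-1}$ is made with conditional probability $e^{-(V(w_k)-V(w_{k-1}))}/\sum_{y^*=w_{k-1}} e^{-(V(y)-V(w_{k-1}))}$. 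Multiplying these factors along the spine $w_0 \to w_1 \to \cdots \to w_n$, the normalising sums cancel in a telescoping fashion, leaving
\begin{equation*}
\prod_{k=1}^{n} e^{-(V(w_k)-V(w_{k-1}))} \;=\; e^{-(V(w_n)-V(w_0))} \;=\; e^{\,a - V(w_n)},
\end{equation*}
which is precisely the density claimed in \eqref{sketch-key}.

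Given \eqref{sketch-key}, assertion (ii) is immediate: for $A \in \mathcal{F}_n$ and a fixed vertex $u_0$ with $|u_0|=n$, taking $F = \ind{A}\ind{u=u_0}$ gives $\widehat{\Q}_a(A \cap \{w_n = u_0\}) = \E_{\P_a}[e^{a-V(u_0)}\ind{A}]$; summing over $|u_0|=n$ yields the marginal $\widehat{\Q}_a(A) = e^a\, \E_{\P_a}[W_n\, \ind{A}]$ (which also confirms that the marginal law of $\widehat{\Q}_a$ on $\mathcal{F}_n$ agrees with $\Q_a$), and taking the ratio gives $\widehat{\Q}_a(w_n = u \mid \mathcal{F}_n) = e^{-V(u)}/W_n$. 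Assertion (i) then follows by applying \eqref{sketch-key} with $F((V(x))_{|x|\le n}, u) = f(V(u_0), \ldots, V(u_n))$ and invoking the Many-to-One Lemma:
\begin{equation*}
\E_{\widehat{\Q}_a}\bigl[f(V(w_0), \ldots, V(w_n))\bigr] \;=\; e^a \E_{\P_a}\Bigl[\sum_{|u|=n} e^{-V(u)}\, f(V(u_0), \ldots, V(u_n))\Bigr] \;=\; \E\bigl[f(a+S_0, \ldots, a+S_n)\bigr].
\end{equation*}

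The main obstacle is the careful bookkeeping required to establish \eqref{sketch-key}: one must simultaneously track the size-biased reproduction $\widehat{\mathcal{L}}$ along the spine, the ordinary reproduction $\mathcal{L}$ off the spine, and the probabilistic spine-selection rule, and the key observation is the telescoping cancellation between the reproduction density factors and the spine-selection probabilities. Once \eqref{sketch-key} is in hand, both (i) and (ii) drop out as formal consequences combined with the Many-to-One Lemma.
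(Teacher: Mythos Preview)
The paper does not supply its own proof of this proposition; it is quoted as a result from \cite{Lyons97}. Your argument via the master change-of-measure identity is precisely the standard one (essentially Lyons' original proof), and it is correct: the telescoping cancellation between the size-biased reproduction densities and the spine-selection probabilities is the key mechanism, and both (i) and (ii) follow from the resulting identity together with the Many-to-One Lemma exactly as you describe.
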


For the marked branching random walk $(V(x), x\in\T; (w_n)_{n\geq0})$, let $\Omega(w_j)=\{u\in\T: u^*=w_{j-1}, u\neq w_j\}$ be the collection of brothers of $w_j$ for any $j\geq1$. Let $\mathscr{G}$ be the sigma-field containing all information along the spine, that is,
\[
\mathscr{G}:=\sigma\{(w_k, V(w_k))_{k\geq0}, (u, V(u))_{u\in\cup_{k\geq0}\Omega(w_k)}\}.
\]

\subsection{Quenched probability for edge local times}
Recall that under $\p^\en_\rho$, $(\overline{L}_x(\tau_{n+1})-\overline{L}_x(\tau_{n}))_{n\geq1}$ are i.i.d. random variables distributed as $\overline{L}_x(\tau_1)$. Observe also that $E_x^{(n)}=\sum_{k=1}^n\ind{\overline{L}_x(\tau_{k})-\overline{L}_x(\tau_{k-1})\geq1}$. Let us state the following lemma on the joint distribution of $(\eL_x(\tau_n), E_x^{(n)})$ under the quenched probability.
%This means that Lemma \ref{sumGeo} can be applied for $\overline{L}_x(\tau_n)$ under $\p^\en_\rho$. The proof of Lemma \ref{sumGeo} will be postponed in Section \ref{tools}.

\begin{lem}\label{sumGeo}
Let $a, b\in(0,1)$. Suppose that $(\zeta_i)_{i\geq1}$ are i.i.d. random variables taking values in $\mathbb{N}$ such that
\[
\P(\zeta_1=0)=1-a,\textrm{ and } \P(\zeta_1\geq k)=ab^{k-1}, \forall k\geq1.
\]
\begin{enumerate}
\item If $n^\theta(1-b)\geq (1+\eta)na$ with some $\eta>0$, then there exists $c_\eta>0$ such that for any $n\geq 1$,
\begin{equation}\label{sumGeolarge}
\P\left(\sum_{i=1}^n\zeta_i\geq n^\theta\right)\leq 2 na e^{-c_\eta n^\theta(1-b)},
\end{equation}
and
\begin{equation}\label{sumGeolarge+}
\P\left(\sum_{i=1}^n\zeta_i\geq n^\theta; \sum_{i=1}^n\ind{\zeta_i\geq1}\geq 2\right)\leq 2 (na)^2 e^{-c_\eta n^\theta(1-b)}.
\end{equation}
\item For $A>0$, $0<\lambda<1$ and for any $n\geq1$,
\begin{equation}\label{sumGeosmall}
\P\left(\sum_{i=1}^n\zeta_i\leq A\right)\leq e^{-\lambda(\frac{na}{1+\lambda}-(1-b)A)}.
\end{equation}
\end{enumerate}

\end{lem}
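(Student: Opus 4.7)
My plan is to work conditionally on $K := \sum_{i=1}^n \ind{\zeta_i \geq 1}$. The key observation is that $K$ is Binomial$(n,a)$ and, conditionally on $\{K=k\}$, the sum $\sum_{i=1}^n \zeta_i$ is distributed as the sum of $k$ i.i.d.\ geometric random variables of parameter $1-b$ supported on $\{1,2,\ldots\}$, whose common m.g.f.\ is $\varphi(t):=(1-b)e^t/(1-be^t)$ and whose mean is $1/(1-b)$.

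For the lower tail \eqref{sumGeosmall} I would apply a Chernoff inequality directly to $\sum \zeta_i$. A short computation gives
\[
\E\bigl[e^{-s\zeta_1}\bigr] = 1 - a + a\,\frac{(1-b)e^{-s}}{1-be^{-s}}.
\]
The trick is to choose $s$ so that the fraction collapses: setting $e^s = 1+\lambda(1-b)$ turns it into $1/(1+\lambda)$, whence
\[
\E\bigl[e^{-s\zeta_1}\bigr] = 1 - \frac{a\lambda}{1+\lambda} \;\leq\; \exp\!\Bigl(-\frac{a\lambda}{1+\lambda}\Bigr).
\]
Since $s = \log(1+\lambda(1-b)) \leq \lambda(1-b)$, Markov's inequality yields $\P(\sum \zeta_i \leq A) \leq e^{sA}(\E e^{-s\zeta_1})^n \leq \exp(\lambda(1-b)A - na\lambda/(1+\lambda))$, which is exactly \eqref{sumGeosmall}.

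For the upper tails \eqref{sumGeolarge} and \eqref{sumGeolarge+} I would Chernoff the negative-binomial conditional law: for $t = \alpha(1-b)$ with $\alpha>0$,
\[
\P\!\Bigl(\sum_{i=1}^n \zeta_i \geq n^\theta \,\Big|\, K=k\Bigr) \leq e^{-\alpha(1-b)n^\theta}\,\varphi(\alpha(1-b))^{k}.
\]
An expansion of $\log\varphi(t)$ around $t=0$ (together with $\E\eta_1 = 1/(1-b)$) shows $\log\varphi(\alpha(1-b)) = \alpha + O(\alpha^2)$, so $\varphi(\alpha(1-b))^{k} \leq e^{k(\alpha+O(\alpha^2))}$. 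Integrating against the law of $K$ and using $\E[Ke^{tK}] = nae^{t}(1+a(e^{t}-1))^{n-1}$ for $K \sim \mathrm{Bin}(n,a)$ yields
\[
\sum_{k\geq 1}\P(K=k)\,e^{k(\alpha+O(\alpha^2))} \leq na\,e^{\alpha+O(\alpha^2)}\exp\!\bigl(na(\alpha+O(\alpha^2))\bigr).
\]
The hypothesis $n^\theta(1-b) \geq (1+\eta)na$ then turns the total exponent into $-\alpha\,n^\theta(1-b)\bigl[1-(1+O(\alpha))/(1+\eta)\bigr]$, which for $\alpha=\alpha_\eta$ small enough is bounded above by $-c_\eta\,n^\theta(1-b)$ with $c_\eta := \alpha_\eta\eta/(2(1+\eta)) > 0$; this delivers \eqref{sumGeolarge}. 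The refinement \eqref{sumGeolarge+} is obtained by restricting the sum to $k\geq 2$ and using $\sum_{k\geq 2}(nae^{\alpha+O(\alpha^2)})^k/k! \leq \tfrac{1}{2}(nae^{\alpha+O(\alpha^2)})^2\exp(nae^{\alpha+O(\alpha^2)})$, which produces the required $(na)^2$ prefactor.

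The main obstacle is the calibration of $\alpha$: one must pick it small enough as a function of $\eta$ so that the loss $na(\alpha+O(\alpha^2))$ from summing over $k$ does not swallow the Chernoff gain $\alpha(1-b)n^\theta$. This is precisely where the strict margin $\eta>0$ in the hypothesis is consumed to extract the positive constant $c_\eta$ in the final exponent; everything else is routine once the conditional negative-binomial structure has been isolated.
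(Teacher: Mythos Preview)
Your arguments for \eqref{sumGeosmall} and \eqref{sumGeolarge} are correct. For \eqref{sumGeosmall} you and the paper run the same Chernoff bound with a slightly different parametrization (you pick $s$ so the geometric factor collapses to $1/(1+\lambda)$; the paper takes $s=\lambda(1-b)$ and shows the resulting fraction is $\geq\lambda/(1+\lambda)$). For \eqref{sumGeolarge}, your small-$\alpha$ route works once you note that $\log\varphi(\alpha(1-b))\leq\alpha+C\alpha^2$ with a constant $C$ \emph{uniform in $b\in(0,1)$}; this is true but must be checked, since $b$ is not fixed. The paper instead applies Chernoff directly to $\sum_i\zeta_i$ with a specific large $s=\frac{1+\delta b}{(1+\delta)b}$ and tracks constants explicitly.

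There is, however, a genuine gap in your handling of \eqref{sumGeolarge+}. In passing to $k\geq 2$ you implicitly replace $\P(K=k)$ by the Poisson majorant $(na)^k/k!$ and then invoke $\sum_{k\geq 2}x^k/k!\leq\tfrac12 x^2 e^x$ with $x=nae^{\alpha+O(\alpha^2)}$. That produces a factor $\exp\bigl(nae^{\alpha+O(\alpha^2)}\bigr)=e^{na}\cdot e^{na(e^t-1)}$, whereas your $k\geq 1$ step correctly yielded only $e^{na(e^t-1)}$. Since the hypothesis gives merely $na\leq n^\theta(1-b)/(1+\eta)$ while your Chernoff gain is $e^{-\alpha_\eta(1-b)n^\theta}$ with a \emph{small} $\alpha_\eta$, the stray $e^{na}$ overwhelms the exponent whenever $na$ is large, and the bound diverges. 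The repair is immediate and parallels your own $k\geq 1$ device: use $\ind{K\geq 2}\leq\binom{K}{2}$ together with
\[
\E\!\left[\tbinom{K}{2}e^{tK}\right]=\tfrac12\, n(n-1)(ae^t)^2\bigl(1+a(e^t-1)\bigr)^{n-2}\leq\tfrac12(nae^t)^2\,e^{na(e^t-1)},
\]
which carries the correct exponent $na(e^t-1)=na(\alpha+O(\alpha^2))$ and lets the rest of your argument go through unchanged.
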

The proof of Lemma \ref{sumGeo} will be postponed in Appendix \ref{A1}.

\section{Proof of Proposition \ref{mEx}}\label{Exs}
In this section, we study $\sum_{j=2}^n R^{\geqslant n^\theta}(\tau_n, j)$ and prove Proposition \ref{mEx}.

First, it is proved in \cite{FHS11} that $\max_{1\leq i\leq \tau_n}|X_i|=O((\log n)^3)$, $\p^*$-a.s. So, 
\[
\sum_{j\geq 2}R^{\geqslant n^\theta}(\tau_n, j)=\sum_{|x|\leq c_0(\log n)^3}\ind{\overline{L}_x(\tau_n)\geq n^\theta, E_x^{(n)}\geq 2}+o_n(1), \ \p^*\textrm{-a.s.},
\]
with some large and fixed constant $c_0>0$. On the other hand, it is known that $\P^*$-a.s.,
\[
0\geq \inf_{u\in\T}V(u)>-\infty.
\]
So, we only need to consider $\sum_{|x|\leq c_0(\log n)^3}\ind{\overline{L}_x(\tau_n)\geq n^\theta, E_x^{(n)}\geq 2}\ind{\mV(x)\geq-\alpha}$ for any fixed $\alpha>0$. Now for any $a,b\in\mathbb{R}$, let 
\[
A_n(a, b):=\{x\in\T: \MV(x)-V(x)\leq \theta \log n+a, V(x)\leq(1-\theta)\log n+b\}, \forall n\geq1,
\]
and
\[
A_n^+(a,b):=\{x\in\T: \MV(x)\leq \log n+a, V(x)\leq (1-\theta)\log n+b\}, \forall n\geq1.
\]
Then, we stress that for any $\alpha>0$, $b>0$, $a_n=a\log\log n$ with $a>3$,
\begin{multline*}
\sum_{|x|\leq c_0(\log n)^3}\ind{\mV(x)\ge-\alpha}\ind{x\in A_n(-a_n, -b)}+o_{\p}(n^{1-\theta})\leq 
\sum_{|x|\leq c_0(\log n)^3}\ind{\mV(x)\geq-\alpha}\ind{\overline{L}_x(\tau_n)\geq n^\theta, E_x^{(n)}\geq 2}\\
\leq \sum_{|x|\leq c_0(\log n)^3}\ind{\mV(x)\geq-\alpha}\ind{x\in A_n^+(a_n,b)}+o_{\p}(n^{1-\theta}),
\end{multline*}
because of the following lemma. 
\begin{lem}\label{smallpart}
Let $b>0$, $\alpha>0$. For $a_n=a\log\log n$ with $a>3$, we have
\begin{equation}\label{smallpartbadwalk}
\frac{1}{n^{1-\theta}}\sum_{|x|\leq c_0(\log n)^3}\ind{x\in A_n(-a_n, -b)}\ind{\overline{L}_x(\tau_n)<n^\theta\textrm{ or } E_x^{(n)}\leq 1}\xrightarrow[n\rightarrow\infty]{\textrm{ in }\p} 0,
\end{equation}
and
\begin{equation}\label{smallpartbadV}
\frac{1}{n^{1-\theta}}\sum_{|x|\leq c_0(\log n)^3}\ind{\mV(x)\ge-\alpha}\ind{x\notin A_n^+(a_n,b)}\ind{\overline{L}_x(\tau_n)\geq n^\theta, E_x^{(n)}\geq 2}\xrightarrow[n\rightarrow\infty]{\textrm{ in }\p}0.
\end{equation}
\end{lem}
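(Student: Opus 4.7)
The plan is, for each assertion, to bound the quenched probability of the bad event using Lemma \ref{sumGeo} together with elementary binomial estimates, then convert to an annealed expectation via Many-to-One, and verify that the resulting bound is $o(n^{1-\theta})$; Markov's inequality then gives convergence in probability.

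For \eqref{smallpartbadwalk}, on $\{x \in A_n(-a_n,-b)\}$ the environment is favourable: one has $ne^{-V(x)} \geq e^b n^\theta$ and $H_x \leq |x| e^{\MV(x)-V(x)} \leq c_0(\log n)^3 n^\theta e^{-a_n}$, so both $na_x = ne^{-V(x)}(1-b_x)$ and $n^\theta(1-b_x)=n^\theta/H_x$ are at least $e^{a_n}/(c_0(\log n)^3)$, which is super-polylogarithmic since $a>3$. Applying \eqref{sumGeosmall} with $A=n^\theta$ and $\lambda=1$ to bound $\p^\en(\eL_x(\tau_n)<n^\theta)$, and using the standard tail $\p^\en(E_x^{(n)}\leq 1) \leq (1+na_x)e^{-na_x}$, the quenched probability of the bad event is bounded by $\exp(-c(\log n)^{a-3})$. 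Many-to-One controls the expected count of $A_n(-a_n,-b)$ up to depth $c_0(\log n)^3$:
\[
\E\Big[\sum_{|x|\leq c_0(\log n)^3}\ind{x \in A_n(-a_n,-b)}\Big] \leq \sum_{k\leq c_0(\log n)^3}\E\bigl[e^{S_k}\ind{\MS_k-S_k\leq\theta\log n,\ S_k\leq(1-\theta)\log n - b}\bigr] = O\bigl(e^{-b} n^{1-\theta}(\log n)^3\bigr),
\]
and multiplying by the super-polynomially small quenched probability yields $o(n^{1-\theta})$.

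For \eqref{smallpartbadV}, I decompose $\{x\notin A_n^+(a_n,b)\}$ into $B_1:=\{\MV(x)>\log n+a_n\}$ and $B_2:=\{V(x)>(1-\theta)\log n+b,\ \MV(x)\leq\log n+a_n\}$, and I use the uniform quenched Markov bound
\[
\p^\en\bigl(\eL_x(\tau_n)\geq n^\theta,\ E_x^{(n)}\geq 2\bigr) \leq \frac{\e^\en[\eL_x(\tau_n)\ind{E_x^{(n)}\geq 2}]}{n^\theta} \leq \frac{n^{2-\theta}e^{-V(x)}}{R_x},
\]
where $R_x:=\sum_{y\in[\![\rho,x]\!]}e^{V(y)}=e^{V(x)}H_x\geq e^{\MV(x)}$. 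On $B_1$, using $1/R_x\leq e^{-\MV(x)}$ and Many-to-One, the annealed expectation is bounded by $n^{1-\theta}e^{-a_n}\sum_{k\leq c_0(\log n)^3}\P(\mS_k\geq -\alpha) = O(n^{1-\theta}(\log n)^{3/2-a})$, which is $o(n^{1-\theta})$ since $a>3$.

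The delicate step is $B_2$: the random walk $S$ can reach levels of order $(1-\theta)\log n$ within $(\log n)^3$ steps without paying a useful exponential price, and the estimate above only gives $O(n^{1-\theta}(\log n)^{3/2})$. To get the required decay I plan to invoke \eqref{sumGeolarge+}, whose hypothesis holds on $B_2$ because $n^\theta(1-b_x)/(na_x)=e^{V(x)}n^{\theta-1}>e^b$, so with $\eta = e^b-1$ one has $\p^\en(\eL_x(\tau_n)\geq n^\theta,\ E_x^{(n)}\geq 2)\leq 2(na_x)^2 e^{-c_\eta n^\theta(1-b_x)}$. Writing $u=n^\theta/H_k^S$ (where $H_k^S:=\sum_{j=0}^k e^{S_j-S_k}$), the integrand after Many-to-One becomes $C u^2 e^{-c_\eta u}\cdot e^{-S_k}/n^{2\theta-2}$; the uniform bound $u^2 e^{-c_\eta u}\leq C$ is not sharp enough, so one must combine a two-sided-barrier quasi-stationary/ballot estimate for $S$ in the slab $[-\alpha,\log n+a_n]$ with the fact that on $B_2$ the typical $H_k^S$ concentrates near $e^{\MS_k-S_k}\asymp n^\theta e^{-b}$, which pins $u$ in a narrow window and gains the missing $(\log n)^{-3/2}$ factor. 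An alternative, in the spirit of the stopping-line argument sketched in the introduction for $R^{\geq n^\theta}(\tau_n,1)$, is to insert an auxiliary stopping line $\max_{\rho\leq y\leq x}H_y<r_n$ that confines $B_2$ to a sub-event on which these random-walk estimates are cleaner; quantifying this concentration is the main technical work.
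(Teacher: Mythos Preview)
Your treatment of \eqref{smallpartbadwalk} and of the piece $B_1=\{\MV(x)>\log n+a_n\}$ in \eqref{smallpartbadV} is correct and essentially the paper's argument (the paper decomposes the ``or'' event slightly differently, into $\{\eL_x(\tau_n)<n^\theta\}\cup\{\eL_x(\tau_n)\ge n^\theta,\ E_x^{(n)}=1\}$, but your union bound works just as well). The minor slip $\lambda=1$ in \eqref{sumGeosmall} is harmless: take $\lambda=b\in(0,1)$ as the paper does, which makes $\frac{na_x}{1+\lambda}-(1-b_x)n^\theta\ge\bigl(\frac{e^b}{1+b}-1\bigr)\frac{n^\theta}{H_x}>0$.

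The gap is in your handling of $B_2=\{V(x)>(1-\theta)\log n+b,\ \MV(x)\le\log n+a_n\}$. Your claim that ``on $B_2$ the typical $H_k^S$ concentrates near $e^{\MS_k-S_k}\asymp n^\theta e^{-b}$'' is false: on $B_2$ one only has the \emph{upper} bound $\MS_k-S_k<\theta\log n+a_n-b$, with no lower bound, so $u=n^\theta/H_k^S$ ranges over the whole half-line and is not pinned anywhere. Pinning $u$ near an $O(1)$ value would in any case not produce the missing $(\log n)^{-3/2}$, since $u^2e^{-c_\eta u}$ is $O(1)$ there. The stopping-line alternative you mention is used in the paper only for $E_x^{(n)}=1$ and does not help here.

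What the paper does, and what you are missing, is a further split of $B_2$ according to the value of $\MV(x)$:
\begin{itemize}
\item If $\MV(x)\le\log n-a_n$, then $\MV(x)-V(x)<\theta\log n-a_n-b$, hence $H_x\le c_0(\log n)^{3}n^\theta e^{-a_n}$ and $u=n^\theta/H_x\ge c_0^{-1}(\log n)^{a-3}$. Here one drops $\{E_x^{(n)}\ge2\}$ and uses \eqref{sumGeolarge} to get $\p^\en(\eL_x(\tau_n)\ge n^\theta)\le c\,n^{1-\theta}e^{-V(x)}e^{-c'(\log n)^{a-3}}$; Many-to-One and the trivial $\sum_k 1$ then give $o(n^{1-\theta})$.
\item If $\MV(x)\in[\log n-a_n,\log n+a_n]$, one uses your bound from \eqref{sumGeolarge+}, namely $\p^\en(\cdot)\le c\,n^{2-2\theta}e^{-2V(x)}$, and is reduced to showing
\[
\sum_{k\le c_0(\log n)^3}\E\bigl[e^{(1-\theta)\log n-S_k}\,\ind{\mS_k\ge-\alpha,\ \MS_k\in[\log n-a_n,\log n+a_n],\ S_k\ge(1-\theta)\log n}\bigr]=o(1).
\]
This is where the work is: after discarding $S_k>(1-\theta)\log n+a_n$ (which contributes $(\log n)^{3-a}$), one decomposes over the hitting time of $\MS_k$, applies Markov at that time, and uses the local estimates \eqref{mSSbd}, \eqref{mSMS-Sbd}, \eqref{mSMSlargebd} and \eqref{summSSbd}. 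The narrow window for $\MS_k$ (width $2a_n$) is precisely what makes this sum $o(1)$; without it the sum is of order $\log n$.
\end{itemize}
So the missing idea is not concentration of $H_x$ but the dichotomy: either $\MV(x)$ is well below $\log n$, forcing $u$ large and making the exponential in Lemma~\ref{sumGeo} do all the work, or $\MV(x)$ sits in a window of width $O(\log\log n)$, and the random-walk estimate with that extra constraint closes the argument.
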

It remains to study $\sum_{|x|\leq c_0(\log n)^3}\ind{x\in A_n(-a_n, -b)}$ and $\sum_{|x|\leq c_0(\log n)^3}\ind{x\in A_n^+(a_n, b)}$, which is done in the next lemma.
\begin{lem}\label{largepart}
Let $b>0$. For $a_n=o(\log n)$, we have
\begin{equation}\label{largepartlow}
\frac{\sum_{|x|\leq c_0(\log n)^3}\ind{x\in A_n(-a_n,-b)}}{n^{1-\theta}}\xrightarrow[n\rightarrow\infty]{\textrm{ in }\P^*} D_\infty \Lambda_0(\theta)e^{-b},
\end{equation}
and 
\begin{equation}\label{largepartup}
\frac{\sum_{|x|\leq c_0(\log n)^3}\ind{x\in A_n^+(a_n,b)}}{n^{1-\theta}}\xrightarrow[n\rightarrow\infty]{\textrm{ in }\P^*} D_\infty \Lambda_0(\theta)e^b.
\end{equation}
\end{lem}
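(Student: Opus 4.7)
The two statements \eqref{largepartlow} and \eqref{largepartup} are parallel; I will describe the plan for \eqref{largepartlow}, since \eqref{largepartup} follows by an analogous argument after replacing the constraint on $\MV(x)-V(x)$ by one on $\MV(x)$. Following the strategy of \cite{BM19}, the proof combines a careful first-moment computation for a lower-truncated version with a peeling argument that extracts the random coefficient $D_\infty$.

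\textbf{Step 1 (truncated first moment).} Fix $\beta>0$ and set
\[
W_n^{(\beta)} := \sum_{|x|\leq c_0(\log n)^3}\ind{x\in A_n(-a_n,-b),\ \mV(x)\geq -\beta}.
\]
The Many-to-One lemma yields
\[
\E[W_n^{(\beta)}]=\sum_{k=1}^{c_0(\log n)^3}\E\!\left[e^{S_k}\ind{\mS_k\geq -\beta,\ S_k\leq(1-\theta)\log n-b,\ \MS_k-S_k\leq\theta\log n-a_n}\right].
\]
Writing $S_k=(1-\theta)\log n-b-\eta$ extracts the factor $n^{1-\theta}e^{-b}$; rescaling $k=u(\log n)^2$ and applying the ballot-type local limit theorem for $(S_k,\MS_k,\mS_k)$ under the lower barrier (cf.\ Appendix~A.2) identifies the integrand as the joint density of a Brownian meander and its running maximum at the scaled point $((1-\theta)/(\sigma\sqrt{u}),1/(\sigma\sqrt{u}))$, which is precisely the function $\mathcal{C}_0$ of \eqref{defconstantC}. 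A Riemann-sum argument then gives $\E[W_n^{(\beta)}]\sim n^{1-\theta}e^{-b}\Lambda_0(\theta)\cdot c(\beta)$, with $c(\beta)$ a renewal-type constant coming from the ballot correction.

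\textbf{Step 2 (peeling and identification of $D_\infty$).} To promote the mean asymptotic to convergence in probability with the random coefficient $D_\infty$, I decompose at a fixed generation $m$: $W_n^{(\beta)}=\sum_{|z|=m,\,\mV(z)\geq -\beta}W_n^{(\beta),z}$, where $W_n^{(\beta),z}$ is the same sum restricted to descendants of $z$. Conditionally on $\calF_m$, the $W_n^{(\beta),z}$ are independent, each an analogue of $W_n$ on the subtree rooted at $z$ with barrier shifted by $V(z)$. Applying Step~1 to each subtree gives
\[
\frac{W_n^{(\beta),z}}{n^{1-\theta}e^{-b}\Lambda_0(\theta)}\xrightarrow[n\to\infty]{\p}(V(z)+\beta)e^{-V(z)}
\]
uniformly for $V(z)$ in a bounded window, the factor $V(z)+\beta$ being the value of the renewal function of the strict descending ladder at the shifted barrier. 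A second-moment bound via the spine decomposition of Proposition~\ref{spinedecomp}, controlled by \eqref{MomCond}, supplies the concentration; this is the peeling mechanism of \cite{BM19}. Summing over $z$ yields
\[
\frac{W_n^{(\beta)}}{n^{1-\theta}e^{-b}\Lambda_0(\theta)}\xrightarrow[n\to\infty]{\p}\sum_{|z|=m,\,\mV(z)\geq-\beta}(V(z)+\beta)e^{-V(z)}=:D_m^{(\beta)}.
\]
Letting $m\to\infty$ and then $\beta\to\infty$, $D_m^{(\beta)}\to D_\infty$ in probability by the standard convergence of the barrier-truncated derivative martingale under \eqref{Integrability}, together with the fact that $\inf_u V(u)>-\infty$ $\P^*$-almost surely on survival. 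The residual $W_n - W_n^{(\beta)}$, supported on $\{\inf_u V(u) < -\beta\}$, is negligible in probability uniformly in $n$ thanks to the same a.s.\ boundedness and a crude first-moment estimate.

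\textbf{Main obstacle.} The heart of the difficulty is Step~2. Convergence of expectations does not entail convergence in probability towards the desired $D_\infty$-dependent limit, because the derivative martingale $(D_n)$ is signed and is not uniformly integrable; indeed, without the lower-barrier truncation $\mV\geq-\beta$, the first moment $\E[W_n]$ even carries a spurious $\log n$ factor coming from deeply negative spine excursions that do not influence typical samples. Only after truncation does the ballot estimate produce the weight $(V(z)+\beta)$, which is the algebraic signature of the derivative martingale. Controlling the second moment uniformly in the truncation parameter and in $m$, through the peeling argument of \cite{BM19}, is the technical crux of the proof.
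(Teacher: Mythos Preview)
Your overall architecture (lower truncation, decomposition at a fixed generation $m$, first moment giving the renewal weight $\Ren(V(z))e^{-V(z)}$, and then passing to $D_\infty$ as $m\to\infty$) is exactly the skeleton the paper uses. Step~1 and the final limit $D_m^{(\beta)}\to D_\infty$ are fine.

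The gap is in Step~2, and it is a real one. Your displayed claim
\[
\frac{W_n^{(\beta),z}}{n^{1-\theta}e^{-b}\Lambda_0(\theta)}\xrightarrow[n\to\infty]{\p}(V(z)+\beta)e^{-V(z)}
\]
asserts that the contribution of each subtree concentrates around its \emph{conditional mean}. This is false: the subtree rooted at $z$ is itself a branching random walk, and the quantity you are summing over it has the same structure as the original problem, so its limit (if the lemma holds) is proportional to the \emph{random} derivative martingale of that subtree, not to the deterministic number $(V(z)+\beta)e^{-V(z)}$. No second-moment bound can produce concentration to a constant here, because the limiting fluctuation is genuine. Relatedly, the ``peeling mechanism of \cite{BM19}'' is not a second-moment method; attributing it as such misidentifies the key idea.

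What the paper (following \cite{BM19}) does instead is to work with the \emph{conditional Laplace transform} $\E[e^{-\lambda\tilde\chi_i}\mid\calF_{k_0}]$. Conditional independence turns this into a product over $|u|=k_0$, and each factor is controlled by first-moment information only: Jensen gives the lower bound $\exp\{-\lambda\E_{V(u)}[\hat\chi_i]\}$, while the upper bound uses a further truncation $\hat\chi_i^{(\delta)}$ (forcing each generation's contribution below $\delta/(Br^2)$) together with $e^{-\lambda t}\le 1-\lambda_\delta t$ on $[0,\delta]$. Both bounds then reduce to the two first-moment estimates of Lemma~\ref{firstmomentchi}: the convergence $\E_x[\hat\chi_i]\to C_i(A,B)\Ren(x)e^{-x}$ and the crucial tail estimate $\E_x[\hat\chi_i-\hat\chi_i^{(\delta)}]=o_x(1)\Ren(x)e^{-x}$. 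The latter is proved via the spine decomposition but with a built-in $\wedge 1$ truncation---it is \emph{not} a raw second moment. Only after the Laplace transform is shown to converge to $\exp\{-\lambda\,\cb_\Ren C_i(A,B)D_\infty\}$ does one invoke the diagonal extraction (Lemma~B.1 of \cite{BM19}) to get convergence in probability. Your ``main obstacle'' paragraph correctly diagnoses where the difficulty lies; the resolution, however, requires this Laplace-transform route rather than subtree-wise concentration.
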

The proof of Lemma \ref{smallpart} will be given later in Section \ref{lems}, and the proof of Lemma \ref{largepart} will be in Section \ref{SHnorm}. Now we are ready to prove Proposition \ref{mEx}.
\begin{proof}[Proof of Proposition \ref{mEx}]
Recall that $D_\infty>0$, $\p^*$-a.s. We only need to show that for any $\delta\in(0,1)$, as $n\rightarrow\infty$,
\[
\p^*\left(\frac{\sum_{j\geq 2}R^{\geqslant n^\theta}(\tau_n, j)}{n^{1-\theta}}\geq (1+\delta)\Lambda_0(\theta) D_\infty\textrm{ or }\leq (1-\delta)\Lambda_0(\theta) D_\infty\right)\rightarrow 0.
\]
Observe that for any $\alpha>0$ and $\beta\in(0,1)$,
\begin{align*}
&\p^*\left(\frac{\sum_{j\geq 2}R^{\geqslant n^\theta}(\tau_n, j)}{n^{1-\theta}}\geq (1+\delta)\Lambda_0(\theta) D_\infty\textrm{ or }\leq (1-\delta)\Lambda_0(\theta) D_\infty\right)\\
\leq &\P(\inf_{x\in\T}V(x)<-\alpha)+\p^*\left(\max_{1\leq i\leq \tau_n}|X_i|>c_0 (\log n)^3\right)+\P^*(D_\infty<\beta)\\
&+\p^*\left( \sum_{|x|\leq c_0(\log n)^3}\ind{\mV(x)\geq-\alpha}\ind{\overline{L}_x(\tau_n)\geq n^\theta, E_x^{(n)}\geq 2}\geq (1+\delta)n^{1-\theta}\Lambda_0(\theta) D_\infty\textrm{ or }\leq (1-\delta)n^{1-\theta}\Lambda_0(\theta) D_\infty; D_\infty\ge\beta\right)
\end{align*}
It is known (see \cite{Aid13}) that for any $\alpha>0$, $\P(\inf_{x\in\T}V(x)<-\alpha)\leq e^{-\alpha}$. Note also that $\P^*(D_\infty<\beta)=o_\beta(1)$ as $\beta\downarrow 0$. Therefore,
\begin{align}\label{Obs}
&\p^*\left(\frac{\sum_{j\geq 2}R^{\geqslant n^\theta}(\tau_n, j)}{n^{1-\theta}}\geq (1+\delta)\Lambda D_\infty\textrm{ or }\leq (1-\delta)\Lambda D_\infty\right)\\
\leq & c_2e^{-\alpha}+o_n(1)+o_\beta(1)+\p^*\left( \sum_{|x|\leq c_0(\log n)^3}\ind{\mV(x)\geq-\alpha}\ind{\overline{L}_x(\tau_n)\geq n^\theta, E_x^{(n)}\geq 2}\geq (1+\delta)n^{1-\theta}\Lambda_0(\theta) D_\infty; D_\infty\ge\beta\right)\nonumber\\
&+\p^*\left( \sum_{|x|\leq c_0(\log n)^3}\ind{\mV(x)\geq-\alpha}\ind{\overline{L}_x(\tau_n)\geq n^\theta, E_x^{(n)}\geq 2}\leq (1-\delta)n^{1-\theta}\Lambda_0(\theta) D_\infty; D_\infty\ge\beta, \inf_{x\in\T} V(x)\geq-\alpha\right).\nonumber
%\p^*\left(\frac{1}{n^{1-\theta}}\sum_{|x|\leq c_0(\log n)^3}\ind{x\in A_n(-a_n, -b)}\ind{\overline{L}_x(\tau_n)<n^\theta\textrm{ or } E_x^{(n)}\leq 1}\geq \frac{\delta\Lambda_1\beta}{2}\right)\\
%&+\p^*\left(\frac{1}{n^{1-\theta}}\sum_{|x|\leq c_0(\log n)^3}\ind{x\notin A_n^+(a_n,b)}\ind{\mV(x)\ge-\alpha}\ind{\overline{L}_x(\tau_n)\geq n^\theta, E_x^{(n)}\geq 2}\geq \frac{\delta\Lambda_1 \beta}{2}\right)
\end{align}
On the one hand, for any $b>0$ and $a_n=a\log\log n$ with $a>3$, one has
\begin{align*}
&\p^*\left( \sum_{|x|\leq c_0(\log n)^3}\ind{\mV(x)\geq-\alpha}\ind{\overline{L}_x(\tau_n)\geq n^\theta, E_x^{(n)}\geq 2}\geq (1+\delta)n^{1-\theta}\Lambda_0(\theta) D_\infty; D_\infty>\beta\right)\\
\leq &\p^*\left(\frac{1}{n^{1-\theta}}\sum_{|x|\leq c_0(\log n)^3}\ind{x\notin A_n^+(a_n,b)}\ind{\mV(x)\ge-\alpha}\ind{\overline{L}_x(\tau_n)\geq n^\theta, E_x^{(n)}\geq 2}\geq \frac{\delta\Lambda_0(\theta) \beta}{2}\right)\\
&+\p^*\left( \sum_{|x|\leq c_0(\log n)^3}\ind{\mV(x)\geq-\alpha}\ind{x\in A_n^+(a_n,b)}\ind{\mV(x)\ge-\alpha}\ind{\overline{L}_x(\tau_n)\geq n^\theta, E_x^{(n)}\geq 2}\geq (1+\delta/2)n^{1-\theta}\Lambda_0(\theta) D_\infty\right)\\
\leq&o_n(1)+\p^*\left( \sum_{|x|\leq c_0(\log n)^3}\ind{x\in A_n^+(a_n,b)}\geq (1+\delta/2)n^{1-\theta}\Lambda_0(\theta) D_\infty\right),
\end{align*}
where the last line follows from \eqref{smallpartbadV}. For the second term on the righthand side, taking $b>0$ small so that $e^b<1+\delta/2$ and using \eqref{largepartup} yields that
\[
\p^*\left( \sum_{|x|\leq c_0(\log n)^3}\ind{x\in A_n^+(a_n,b)}\geq (1+\delta/2)n^{1-\theta}\Lambda_0(\theta) D_\infty\right)\rightarrow 0,
\]
as $n$ goes to infinity. On the other hand, observe that
\begin{align*}
&\p^*\left( \sum_{|x|\leq c_0(\log n)^3}\ind{\mV(x)\geq-\alpha}\ind{\overline{L}_x(\tau_n)\geq n^\theta, E_x^{(n)}\geq 2}\leq (1-\delta)n^{1-\theta}\Lambda_0(\theta) D_\infty; D_\infty\ge\beta, \inf_{x\in\T}V(x)\geq-\alpha\right)\\
\leq &\p^*\left(\frac{1}{n^{1-\theta}}\sum_{|x|\leq c_0(\log n)^3}\ind{x\in A_n(-a_n, -b)}\ind{\overline{L}_x(\tau_n)<n^\theta\textrm{ or } E_x^{(n)}\leq 1}\geq \frac{\delta\Lambda_0(\theta)\beta}{2}\right)\\
&+\p^*\left(\sum_{|x|\leq c_0(\log n)^3}\ind{x\in A_n(-a_n, -b)}\leq (1-\delta/2)\Lambda_0(\theta)D_\infty\right)\\
=&o_n(1)
\end{align*}
by \eqref{smallpartbadwalk} and \eqref{largepartlow} with $b>0$ small enough so that $e^{-b}>1-\delta/2$. Going back to \eqref{Obs}, one sees that
\[
\p^*\left(\frac{\sum_{j\geq 2}R^{\geqslant n^\theta}(\tau_n, j)}{n^{1-\theta}}\geq (1+\delta)\Lambda_0(\theta) D_\infty\textrm{ or }\leq (1-\delta)\Lambda_0(\theta) D_\infty\right)\leq c_2e^{-\alpha}+o_\beta(1)+o_n(1).
\]
Letting $n\rightarrow\infty$ then $\alpha\uparrow\infty$ and $\beta\downarrow0$ concludes \eqref{Mex}.
\end{proof}

\section{Proof of Proposition \ref{1Ex}}\label{Ex1}
This section is devoted to proving Proposition \ref{1Ex}. Similarly as above, we have $\p^*$-a.s.,
\[
R_{n^\theta}(\tau_n, 1)=\sum_{\ell=1}^{c_0(\log n)^3}\sum_{|x|=\ell}\ind{\eL_x(\tau_n)\geq n^\theta, E_x^{(n)}=1}+o_n(1).
\]
For $a_n=a\log\log n$ with $a>3$, set
\[
\B_n^\pm:=\{x\in\T:  \MV(x)\geq \log n\pm a_n\}, \textrm{ and } \D_n:=\{x\in\T:\MV(x)-V(x)\in[\theta\log n-a_n,\theta\log n+a_n]\}.
\]
We first show that with high probability, $R_{n^\theta}(\tau_n, 1)\approx\sum_{\ell=1}^{c_0(\log n)^3}\sum_{|x|=\ell}\ind{\eL_x(\tau_n)\geq n^\theta, E_x^{(n)}=1}\ind{z\in \B_n^-}\ind{z\in \D_n}$. This comes from the following lemma whose proof is stated in Section \ref{lems}.
\begin{lem}\label{smallpart1ex}
As $n\uparrow\infty$, we have
\begin{align}
\e\left[\sum_{\ell=1}^{c_0(\log n)^3}\sum_{|x|=\ell}\ind{\eL_x(\tau_n)\geq n^\theta, E_x^{(n)}=1}\ind{\MV(x)< \log n-a_n}\right]=&o(n^{1-\theta}),\\
\e\left[\sum_{\ell=1}^{c_0(\log n)^3}\sum_{|x|=\ell}\ind{\eL_x(\tau_n)\geq n^\theta, E_x^{(n)}=1}\ind{x\notin \D_n}\right]=&o(n^{1-\theta}).
\end{align}
\end{lem}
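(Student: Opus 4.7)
The plan is to start from the explicit quenched identity $\p^\en(E_x^{(n)}=1,\eL_x(\tau_n)\ge n^\theta) = n a_x(1-a_x)^{n-1}b_x^{n^\theta-1}$ recalled in the preliminaries, apply the Many-to-One Lemma to convert $\sum_{|x|=\ell}$ into an expectation against the centred random walk $S$, and use the algebraic identity $e^{S_\ell}A_\ell = 1/H_\ell^{(S)}$, where $A_\ell := 1/\sum_{i=0}^\ell e^{S_i}$ and $H_\ell^{(S)} := \sum_{i=0}^\ell e^{S_i - S_\ell}$, to cancel one factor of $e^{S_\ell}$ coming from many-to-one. The envelope $e^{M_\ell^{(S)} - S_\ell} \le H_\ell^{(S)} \le (\ell+1)e^{M_\ell^{(S)} - S_\ell}$ will translate all bounds into statements about the running maximum $M_\ell^{(S)}$.

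For the first bound, I would drop $b_x^{n^\theta-1}\le 1$ and use that on $\{\MV(x)<\log n - a_n\}$ one has $na_x \ge (\log n)^a/(\ell+1)$, which for $\ell\le c_0(\log n)^3$ diverges like $(\log n)^{a-3}$. Thus $(1-a_x)^{n-1}\le e^{-(n-1)a_x}$ is super-polynomially small in this growing exponent. Combined with $e^{S_\ell}\le e^{M_\ell^{(S)}}< n/(\log n)^a$ on the event, many-to-one gives a per-generation bound of the form $\frac{n}{(\log n)^a}\exp\bigl(-(\log n)^{a-3}/c\bigr)$; summing over the $c_0(\log n)^3$ generations, and sharpening the large-$\ell$ regime with the Kolmogorov-type estimate $\P(M_\ell^{(S)}<\log n - a_n)\lesssim \log n/\sqrt{\ell}$ from the random-walk appendix, leaves a quantity which is $o(n^{1-\theta})$, the choice $a>3$ being what makes the competing polylog factors subordinate.

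For the second bound, I would split $\{x\notin\D_n\}$ into (A) $\{\MV(x)-V(x)>\theta\log n + a_n\}$ and (B) $\{\MV(x)-V(x)<\theta\log n - a_n\}$. In case (A), the constraint forces $1/H_\ell^{(S)}\le n^{-\theta}(\log n)^{-a}$; bounding both $(1-a_x)^{n-1}$ and $b_x^{n^\theta-1}$ by $1$, many-to-one delivers a per-generation contribution of order $n\cdot n^{-\theta}(\log n)^{-a}$, and summation over $\ell\le c_0(\log n)^3$ yields $n^{1-\theta}(\log n)^{3-a}=o(n^{1-\theta})$. In case (B), $H_x\le(\ell+1)n^\theta/(\log n)^a$ implies $b_x^{n^\theta-1}\le \exp\bigl(-(n^\theta-1)(\log n)^a/((\ell+1)n^\theta)\bigr)$, which is bounded by $\exp(-(\log n)^{a-3}/(2c_0))$ on $\ell\le c_0(\log n)^3$; combined with the trivial bound $n\E[1/H_\ell^{(S)}]\le n$ coming from $H_\ell^{(S)}\ge 1$, this yields once more $o(n^{1-\theta})$ after summation.

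The main obstacle is the careful bookkeeping around three competing polylog scales: the depth cutoff $(\log n)^3$, the shift $e^{a_n}=(\log n)^a$ coming from the choice $a_n=a\log\log n$, and the target $n^{1-\theta}$. The Poisson-type factor $na_x e^{-na_x}$ (first bound) and the geometric factor $b_x^{n^\theta-1}$ (case (B) of the second bound) must each absorb the $n^\theta(\log n)^3$ overhead left over from the generation sum and the $e^{S_\ell}$ weight, and the hypothesis $a>3$ is calibrated precisely so that the super-polynomial decay in $(\log n)^{a-3}$, once combined with the ballot-type estimate $\P(M_\ell^{(S)}<L)\lesssim L/\sqrt{\ell}$, dominates this overhead uniformly in $\theta\in(0,1)$.
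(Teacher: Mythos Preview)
Your argument for case (A) of the second bound is correct and matches the paper. However, your treatment of the first bound and of case (B) contains a genuine gap: both produce an estimate of size $n\cdot(\textrm{polylog})\cdot e^{-c(\log n)^{a-3}}$ rather than $n^{1-\theta}\cdot(\textrm{polylog})\cdot e^{-c(\log n)^{a-3}}$, and the extra factor $n^\theta=e^{\theta\log n}$ is \emph{not} absorbed by $e^{-c(\log n)^{a-3}}$ when $3<a\le 4$, since then $(\log n)^{a-3}=o(\log n)$. The Kolmogorov estimate $\P(M_\ell^{(S)}<L)\lesssim L/\sqrt{\ell}$ that you invoke only saves a factor of order $(\log n)^{3/2}$ after summing over $\ell$, which is nowhere near $n^\theta$. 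Since the lemma is stated (and used) for any $a>3$, your proof does not establish it.

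The source of the loss is clear: in the first bound you drop $b_x^{n^\theta-1}$, and in case (B) you bound $n/H_\ell^{(S)}\le n$. Both moves discard precisely the piece needed to cancel the many-to-one weight down to $n^{1-\theta}$. The correct manoeuvre, which the paper uses, is to keep $b_x^{n^\theta-1}$ and group
\[
na_x\,b_x^{n^\theta-1}=n^{1-\theta}e^{-V(x)}\cdot\frac{n^\theta}{H_x}\Bigl(1-\tfrac{1}{H_x}\Bigr)^{n^\theta-1}\le n^{1-\theta}e^{-V(x)}\cdot u\,e^{-u/2},\qquad u:=\frac{n^\theta}{H_x}.
\]
Since $u\mapsto ue^{-u/2}$ is bounded, this already gives $na_x b_x^{n^\theta-1}\le C n^{1-\theta}e^{-V(x)}$, and the $e^{-V(x)}$ cancels exactly against the many-to-one weight $e^{S_\ell}$. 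For the first bound one then uses $(1-a_x)^{n-1}\le e^{-(n-1)a_x}\le e^{-c(\log n)^{a-3}}$ on $\{\MV(x)<\log n-a_n,\ |x|\le c_0(\log n)^3\}$; for case (B) one uses instead that $u\ge (\log n)^{a-3}/c_0$ on that event, so $ue^{-u/2}\le e^{-c(\log n)^{a-3}}$. In both cases the per-generation bound becomes $Cn^{1-\theta}e^{-c(\log n)^{a-3}}$, and summing over $c_0(\log n)^3$ generations gives $o(n^{1-\theta})$ for every $a>3$.
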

Here we introduce the stopping line
\[
\mathcal{L}_r:=\{x\in\T: \max_{y< x}H_y< r\leq H_x\}, \forall r>1.
\]
It is known that in \cite{HuShi16} that 
\[
\p\left(\exists k\leq \tau_n: X_k\in\mathcal{L}_n\right)\rightarrow 0.
\] 
This means that $\p(\{X_k, k\le \tau_n\}\subset\{x\in\T: x<\mathcal{L}_n\}\cup\{\rho^*\})\rightarrow1$. For any $r>1$, define
\[
\L_{r}:=\{x\in\T: \max_{y< x}H_y< r\}.
\]
So, we only need to study $\sum_{\ell=1}^{c_0(\log n)^3}\sum_{|x|=\ell}\ind{\eL_x(\tau_n)\geq n^\theta, E_x^{(n)}=1}\ind{z\in \B_n^-}\ind{z\in \D_n}\ind{z\in\L_n}$. In fact, only the generations of order $(\log n)^2$ should be counted and $\B_n^-$ can be replaced by $\B_n^+$, in view of the following lemma. 
\begin{lem}\label{smallpart1ex+}
As $ \varepsilon\downarrow0$, we have
\begin{align}
\limsup_{n\rightarrow\infty}\frac{1}{n^{1-\theta}}\e\left[\sum_{\ell=1}^{\varepsilon(\log n)^2}\sum_{|x|=\ell}\ind{\eL_x(\tau_n)\geq n^\theta, E_x^{(n)}=1}\ind{x\in \B_n^-}\ind{x\in \D_n, \mV(x)\geq-\alpha}\right]=&o_\varepsilon(1),\label{smallgeneration1ex+}\\
\limsup_{n\rightarrow\infty}\frac{1}{n^{1-\theta}}\e\left[\sum_{\ell=(\log n)^2/\varepsilon}^{c_0(\log n)^3}\sum_{|x|=\ell}\ind{\eL_x(\tau_n)\geq n^\theta, E_x^{(n)}=1}\ind{x\in \B_n^-}\ind{x\in \D_n, \mV(x)\geq-\alpha}\ind{x\in\L_n}\right]=&o_\varepsilon(1).\label{largegeneration1ex+}
\end{align}
For any $\varepsilon\in(0,1)$,
\begin{equation}\label{smallMV1ex+}
\frac{1}{n^{1-\theta}}\e\left[\sum_{\ell=\varepsilon (\log n)^2}^{(\log n)^2/\varepsilon}\sum_{|x|=\ell}\ind{\eL_x(\tau_n)\geq n^\theta, E_x^{(n)}=1}\ind{ \MV(x)\in[\log n-a_n, \log n+a_n], \mV(x)\geq-\alpha, x\in\D_n}\right]=o_n(1)
\end{equation}
\end{lem}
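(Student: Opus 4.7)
The plan is to reduce each of (1), (2), (3) to a probabilistic estimate on the centred random walk $(S_j)_{j\ge 0}$ of the Many-to-One lemma. Starting from the quenched identity
$$\p^\en(\eL_x(\tau_n)\ge n^\theta,\,E_x^{(n)}=1)=na_x(1-a_x)^{n-1}b_x^{n^\theta-1}\le n\frac{e^{-V(x)}}{H_x}\exp\!\Big(-\frac{n^\theta-1}{H_x}\Big),$$
the annealed expectation combined with Many-to-One yields an integral against the walk measure in which $H_x$ is replaced by $\tilde H_\ell:=\sum_{j=0}^{\ell}e^{S_j-S_\ell}$. The decisive input is the elementary inequality
$$\frac{n}{h}\,e^{-n^\theta/h}\le \frac{n^{1-\theta}}{e}\qquad\text{for all }h>0,$$
which is optimised at $h=n^\theta$. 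Writing $\Gamma_\ell$ for the random-walk translation of the event $\{x\in\B_n^-\cap\D_n,\ \mV(x)\ge-\alpha\}$ (augmented with the constraint coming from $\L_n$ in (2) and with the cap $\MV(x)\le\log n+a_n$ in (3)), this gives
$$\e\Big[\sum_{|x|=\ell}\p^\en(\eL_x(\tau_n)\ge n^\theta,E_x^{(n)}=1)\,\ind{\text{said conditions}}\Big]\le \frac{n^{1-\theta}}{e}\,\P(\Gamma_\ell),$$
so the whole problem reduces to controlling $\sum_\ell \P(\Gamma_\ell)$ in each regime.

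The event $\Gamma_\ell$ translates into $\MS_\ell\ge\log n-a_n$ (capped at $\log n+a_n$ in (3)), $\MS_\ell-S_\ell\in[\theta\log n-a_n,\theta\log n+a_n]$ and $\mS_\ell\ge-\alpha$; in (2) it also carries the pathwise bound $\max_{j<\ell}(\MS_j-S_j)<\log n$ coming from $\L_n$. The first two conditions jointly force $S_\ell\ge(1-\theta)\log n-2a_n$. In regime (1), $\ell\le\varepsilon(\log n)^2$ puts the walk in a large-deviation regime: reflecting at level $\log n-a_n$ and combining with a ballot estimate for walks kept above $-\alpha$ gives
$$\P(\Gamma_\ell)\le \frac{Ca_n(1+\alpha)\log n}{\ell^{3/2}}\exp\!\Big(-\frac{(1+\theta)^2(\log n)^2}{2\sigma^2\ell}\Big),$$
whose sum over $\ell\le\varepsilon(\log n)^2$ is $O(e^{-c/\varepsilon})=o_\varepsilon(1)$. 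In regime (3), $\ell\asymp(\log n)^2$ is the Brownian scale: the cap $\MS_\ell\le\log n+a_n$ pins the maximum into a window of size $a_n$, and a joint local limit theorem for $(\MS_\ell,S_\ell)$ under the barrier $\mS_\ell\ge-\alpha$ yields $\P(\Gamma_\ell)\le C(1+\alpha)a_n^2/\ell^{3/2}$; summing over the intermediate range produces $O(a_n^2/\log n)=O((\log\log n)^{2a}/\log n)=o_n(1)$.

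For (2), the barrier condition $\max_{j<\ell}(\MS_j-S_j)<\log n$ together with $\mS_\ell\ge-\alpha$ confines the walk to a strip of width $O(\log n)$ around its running maximum for the entire $\ell$ steps, and a gambler's-ruin / spectral-gap estimate for this confined walk produces $\P(\Gamma_\ell)\le Ce^{-c\ell/(\log n)^2}$; the tail sum for $\ell\ge(\log n)^2/\varepsilon$ is $O(e^{-c/\varepsilon})=o_\varepsilon(1)$. The main technical hurdle is establishing the joint local CLT for $(\MS_\ell,S_\ell)$ under the barrier $\mS_\ell\ge-\alpha$ with the sharp $a_n^2/\ell^{3/2}$ pre-factor required in (3), since only a pre-factor of this form makes the $(\log\log n)^{2a}/\log n$ gain beat the implicit polylogarithmic losses; once that estimate is in place, the three contributions combine to give the lemma.
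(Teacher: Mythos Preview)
Your reduction via Many-to-One and the bound $na_xb_x^{n^\theta-1}(1-a_x)^{n-1}\le e^{-1}n^{1-\theta}e^{-V(x)}$ is exactly how the paper starts, and your treatment of~(3) --- pinning $\MS_\ell$ into a window of width $2a_n$ and claiming a $Ca_n^2/\ell^{3/2}$ local estimate --- matches the paper's decomposition by the first time $j$ at which $S_j=\MS_\ell$.

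There is, however, a genuine arithmetic gap in your argument for~(2). You claim $\P(\Gamma_\ell)\le Ce^{-c\ell/(\log n)^2}$ and that the tail sum over $\ell\ge(\log n)^2/\varepsilon$ is $O(e^{-c/\varepsilon})$. But
\[
\sum_{\ell\ge(\log n)^2/\varepsilon}e^{-c\ell/(\log n)^2}\ =\ \frac{e^{-c/\varepsilon}}{1-e^{-c/(\log n)^2}}\ \sim\ \frac{(\log n)^2}{c}\,e^{-c/\varepsilon},
\]
which diverges as $n\to\infty$; the $\limsup_n$ is $+\infty$, not $o_\varepsilon(1)$. The pure drawdown bound you invoke (via L\'evy's identity the constraint $\max_k(\MS_k-S_k)\le\log n$ alone does give $e^{-c\ell/(\log n)^2}$) ignores the barrier $\mS_\ell\ge-\alpha$ and the terminal localisation $\MS_\ell-S_\ell\in[\theta\log n\pm a_n]$, which are precisely the sources of the missing polynomial prefactor. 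The paper recovers it by (i)~replacing your flat bound $e^{-1}n^{1-\theta}e^{-V(x)}$ by the sharper $c\,n^{1-\theta}e^{-V(x)}\bigl(\tfrac{n^\theta}{H_x}\wedge\tfrac{H_x}{n^\theta}\bigr)$, and (ii)~decomposing by the first time $j$ with $S_j=\MS_\ell$ and using estimates of the form
\[
\P\bigl(\mS_j\ge-\alpha,\ \MS_j=S_j,\ \max_{k\le j}(\MS_k-S_k)\le A\bigr)\ \le\ C\,\frac{1+\alpha}{j}\,e^{-Cj/A^2}
\]
(note the $1/j$) together with $\P(\MS_{\ell-j}\le0,\ -S_{\ell-j}\in[\theta\log n+x,\theta\log n+x+1])\le C(1+\theta\log n)/(\ell-j)^{3/2}$. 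Those prefactors are exactly what make $\sum_{\ell\ge(\log n)^2/\varepsilon}$ finite (the first giving $\int_{1/\varepsilon}^\infty u^{-1}e^{-cu}du$, the second giving $\sqrt{\varepsilon}$).

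A parallel issue affects your (1): your prefactor carries $a_n=a\log\log n$, so your sum reads $\lesssim a_n\int_0^\varepsilon u^{-3/2}e^{-c/u}du\asymp a_n\sqrt{\varepsilon}\,e^{-c/\varepsilon}$, whose $\limsup_n$ is again infinite. The paper handles this by a further case split at $\ell=(\log n)^{1+\delta}$ (pure Chernoff below, the $j$-decomposition above) and by controlling the interval of width $2a_n$ through inequalities of type $\P(\mS_m\ge0,\,S_m\in[r,r+1])\le Cm^{-1}e^{-Cr^2/m}$ summed in $r$, which --- after the convolution in $j$ --- absorb the $a_n$ into the exponential. Your sketch has the right architecture but needs these polynomial prefactors to close.
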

Instead of $\L_n$, we are going to use $\L_{r_n}$ with $r_n=\frac{n}{(\log n)^\gamma}$ to control the quenched variance of $\sum_{\ell=\varepsilon (\log n)^2}^{(\log n)^2/\varepsilon}\sum_{|x|=\ell}\ind{\eL_x(\tau_n)\geq n^\theta, E_x^{(n)}=1}\ind{z\in \B_n^+}\ind{z\in \D_n}\ind{z\in\L_{r_n}}$.
\begin{lem}\label{smallpart1exL}
For any $\varepsilon\in(0,1)$ fixed, $\alpha>0$ and for $\gamma_n=\frac{n}{(\log n)^{\gamma}}$ with fixed $\gamma>0$, we have
\begin{equation}\label{badvalley1ex}
\e\left[\sum_{\ell=\varepsilon (\log n)^2}^{(\log n)^2/\varepsilon}\sum_{|z|=\ell}\ind{\eL_z(\tau_n)\geq n^\theta, E_z^{(n)}=1}\ind{z\in \B^+_n}\ind{z\in\D_n}\ind{\mV(z)\geq-\alpha, \gamma_n\leq \max_{x\leq z} H_z< n}\right]=o(n^{1-\theta}).
\end{equation}
Let $\D_n^K:=\{x\in\T: \MV(x)-V(x)\in[\theta\log n-K, \theta\log n+K]\}$ with large constant $K\geq1$. Then, as $K\rightarrow\infty$,
\begin{equation}\label{badend1ex}
\e\left[\sum_{\ell=\varepsilon (\log n)^2}^{(\log n)^2/\varepsilon}\sum_{|z|=\ell}\ind{\eL_z(\tau_n)\geq n^\theta, E_z^{(n)}=1}\ind{z\in \B^+_n}\ind{z\in\D_n\setminus\D_n^K}\ind{\mV(z)\geq-\alpha, \max_{x\leq z} H_z< n}\right]=o_K(1)n^{1-\theta}.
\end{equation}
\end{lem}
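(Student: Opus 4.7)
The plan is to reduce both bounds to estimates on a one-dimensional centred random walk. The joint quenched law of $(\eL_z(\tau_n),E_z^{(n)})$ described in Section~\ref{facts} gives the exact identity
\[
\p^\en\bigl(\eL_z(\tau_n)\ge n^\theta,\,E_z^{(n)}=1\bigr)=n\,a_z(1-a_z)^{n-1}b_z^{n^\theta-1}.
\]
On the event $z\in\B_n^+$ one has $na_z\le e^{-a_n}=o(1)$, so replacing $(1-a_z)^{n-1}$ by $1$ costs essentially nothing, while $b_z^{n^\theta-1}\le e^{-(n^\theta-1)/H_z}$. Plugging in $a_z=e^{-V(z)}/H_z$ and using the Many-to-One lemma, I would bound each generation by
\[
\e\Bigl[\sum_{|z|=\ell}\ind{\eL_z(\tau_n)\ge n^\theta,\,E_z^{(n)}=1}\,F(V(\rho),\dots,V(z))\Bigr]\le n^{1-\theta}\,\E\bigl[\varphi(n^\theta/\widetilde H_\ell)\,F(S_0,\dots,S_\ell)\bigr],
\]
where $\widetilde H_\ell:=\sum_{k=0}^\ell e^{S_k-S_\ell}$, $\varphi(u):=ue^{-u}$, and $(S_k)$ is the random walk from the Many-to-One lemma. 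After this reduction it remains, for each of the two parts, to bound the right-hand side.

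For \eqref{badvalley1ex} the random-walk events to control are $\MS_\ell\ge\log n+a_n$, $\MS_\ell-S_\ell\in\theta\log n+[-a_n,a_n]$, $\mS_\ell\ge-\alpha$ and $\max_{k\le\ell}\widetilde H_k\ge\gamma_n$. Using $\widetilde H_k\le(k+1)e^{\MS_k-S_k}$ together with $\ell\le\varepsilon^{-1}(\log n)^2$, the last event forces the existence of a generation $k^\star$ at which $\MS_{k^\star}-S_{k^\star}\ge\log n-(\gamma+C)\log\log n$. Since on $\D_n$ the \emph{final} gap $\MS_\ell-S_\ell\approx\theta\log n$ is much smaller, the walk must close a deficit of at least $(1-\theta)\log n-O(\log\log n)$ between times $k^\star$ and $\ell$, without dipping below $-\alpha$ and without setting a new record much above $\MS_{k^\star}$. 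Decomposing over $k^\star$, $S_{k^\star}$ and $\MS_{k^\star}$ and applying standard ballot-type estimates for centred random walks (as recalled in Appendix~A of \cite{AC18}) produces an extra factor of $(\log n)^{-\gamma+O(1)}$; since $\D_n$ forces $\varphi(n^\theta/\widetilde H_\ell)=O(1)$, choosing $\gamma$ large enough delivers the $o(n^{1-\theta})$ bound.

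For \eqref{badend1ex} I would parametrise $s:=\MV(z)-V(z)-\theta\log n$, so that $\D_n\setminus\D_n^K$ corresponds to $K<|s|\le a_n$. On this event $\widetilde H_\ell\asymp e^{\theta\log n+s}$, so
\[
\varphi(n^\theta/\widetilde H_\ell)\le \psi(s):=C\min\bigl(e^{-s},\,e^{-ce^{-s}}\bigr),\qquad\int_{|s|>K}\psi(s)\,ds\xrightarrow[K\to\infty]{}0.
\]
The joint density of $(S_\ell,\MS_\ell,\mS_\ell)$ restricted to $\{\mS_\ell\ge-\alpha,\,\MS_\ell\in\log n+[-a_n,a_n]\}$ is, after Brownian rescaling on the natural scale $\ell\asymp(\log n)^2$, uniformly continuous and bounded; integrating $\psi(s)$ against it and summing over $\ell\in[\varepsilon(\log n)^2,(\log n)^2/\varepsilon]$ yields the bound $n^{1-\theta}\int_{|s|>K}\psi(s)\,ds\cdot(1+o_n(1))$, which is exactly the announced $o_K(1)n^{1-\theta}$.

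The hard part in both statements is obtaining the uniform-in-$\ell$ random-walk path estimate with three simultaneous constraints on $(\mS_\ell,\MS_\ell,\MS_\ell-S_\ell)$ when $\ell$ is of order $(\log n)^2$. For \eqref{badvalley1ex} one additionally needs a two-time ballot-type estimate, controlling the probability that the walk descends by nearly $\log n$ at an intermediate time $k^\star$ and then partially recovers to sit at height $\theta\log n$ below its running maximum at time $\ell$; this is precisely the type of sharp centred-walk estimate developed in the Seneta--Heyde analysis of Section~\ref{SHnorm}, and it is where most of the technical work will go.
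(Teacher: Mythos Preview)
Your Many-to-One reduction and the quenched identity $\p^\en(\eL_z(\tau_n)\ge n^\theta,E_z^{(n)}=1)=na_z(1-a_z)^{n-1}b_z^{n^\theta-1}$ match the paper's starting point, and your scheme for \eqref{badend1ex} is close to the paper's in spirit. There is, however, a real gap in your bound $\cf(n^\theta/\widetilde H_\ell)\le\psi(s)$ (where $\cf(u)=ue^{-u}$ is your $\varphi$) when $s<0$. The asserted relation $\widetilde H_\ell\asymp e^{\theta\log n+s}$ is not deterministic: writing $\widetilde H_\ell=e^{\MS_\ell-S_\ell}\sum_{k\le\ell}e^{S_k-\MS_\ell}$, the second factor is only \emph{typically} $O(1)$. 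For $s\in[-a_n,-K]$ one has merely $n^\theta/\widetilde H_\ell\in[\varepsilon e^{-s}(\log n)^{-2},\,e^{-s}]$, and near $s=-K$ this interval contains $1$, so $\cf$ need not be small. The paper instead uses $\cf(u)\le c\,(u\wedge u^{-1})$: for $s<0$ this yields $\cf\le c\,e^{s}\sum_k e^{S_k-\MS_\ell}$, and the expectation of the random factor $\sum_k e^{S_k-\MS_\ell}$ on the constrained event is then controlled by recycling the estimates \eqref{positivex}--\eqref{negativex} from the proof of Lemma~\ref{smallpart1ex+}. That is where the work actually sits; your Brownian-density heuristic does not bypass it.

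For \eqref{badvalley1ex} your route is genuinely different from the paper's, and the crucial step is unsupported. The paper does \emph{not} decompose over a deep-valley time $k^\star$. It observes instead that $\gamma_n\le\max_k H_k^S<n$ pins $\max_k(\MS_k-S_k)$ into the thin window $[\log n-(\gamma+O(1))\log\log n,\,\log n]$, and then exploits the convergence \eqref{cvgeSMS} (together with \eqref{badend1ex}, which it proves first): the limit of the full expression with constraint $\max_k(\MS_k-S_k)\le\log n+a_n$ is independent of which $a_n=o(\log n)$ one chooses, so subtracting the versions with $a_n=0$ and $a_n=-(\gamma+3)\log\log n$ shows the window contributes $o(1)$, for \emph{every} fixed $\gamma>0$. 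Your claimed ``extra factor of $(\log n)^{-\gamma+O(1)}$'' has no visible source --- the window width is $(\gamma+O(1))\log\log n$, which ballot estimates do not convert into a $(\log n)^{-\gamma}$ gain --- and your fallback ``choosing $\gamma$ large enough'' is strictly weaker than the lemma, which is stated for all $\gamma>0$.
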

Let
\[
\Xi_n(\ell, \B_n^+\cap \D_n\cap \L_{\gamma_n},\alpha):=\sum_{|x|=\ell}\ind{\eL_x(\tau_n)\geq n^\theta, E_x^{(n)}=1}\ind{x\in \B^+_n}\ind{x\in \D_n}\ind{x\in\L_{\gamma_n}}\ind{\mV(x)\geq-\alpha}.
\]
It immediately follows that
\[
\e^\en\left[\Xi_n(\ell, \B^+_n\cap \D_n\cap \L_{\gamma_n},\alpha)\right]=\sum_{|x|=\ell}\p^\en(\eL_x(\tau_n)\geq n^\theta, E_x^{(n)}=1)\ind{x\in \B^+_n}\ind{x\in \D_n}\ind{x\in\L_{\gamma_n}}\ind{\mV(x)\geq-\alpha},
\]
where $\p^\en(\eL_x(\tau_n)\geq n^\theta, E_x^{(n)}=1)=(1+o_n(1))n^{1-\theta} e^{-V(x)}\cf(\frac{n^\theta}{H_x})$ with $\cf(u)=ue^{-u}$. 

Let $\V^\en$ denote the quenched variance. We state the following estimate.
\begin{lem}\label{1exvariance}
Let $0<A<B<\infty$. For $\ell\in[A(\log n)^2, B(\log n)^2]\cap\mathbb{N}$, one has
\begin{equation}
\E[\V^\en(\Xi_n(\ell, \B^+_n\cap \D_n\cap \L_{\gamma_n},\alpha))]\leq c_1\frac{n^{2-2\theta}}{(\log n)^{a\wedge \gamma-4}}.
\end{equation}
\end{lem}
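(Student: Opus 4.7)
My plan is to expand the quenched variance as a double sum of covariances,
\begin{equation*}
\V^\en\bigl(\Xi_n(\ell, \B_n^+\cap\D_n\cap\L_{\gamma_n},\alpha)\bigr) = \sum_{|x|=|y|=\ell} \mathrm{Cov}^\en(\mathbf{I}_x,\mathbf{I}_y)\,\phi_x\phi_y,
\end{equation*}
with $\mathbf{I}_x := \ind{\eL_x(\tau_n)\geq n^\theta,\,E_x^{(n)}=1}$ and $\phi_x$ the environment-measurable indicator of the event $\{x\in\B_n^+\cap\D_n\cap\L_{\gamma_n},\,\mV(x)\geq-\alpha\}$, and to split this sum according to the most recent common ancestor $z:=x\wedge y$, treating separately the diagonal ($x=y$) and the off-diagonal ($x\neq y$, with $z$ of generation $j<\ell$).

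For the diagonal, one uses $\mathrm{Var}^\en(\mathbf{I}_x)\leq\p^\en(\mathbf{I}_x=1)$; the explicit formula recalled in the introduction gives $\p^\en(\mathbf{I}_x=1)=na_x(1-a_x)^{n-1}b_x^{\lceil n^\theta\rceil-1}$, which on $\phi_x$ is of order $n^{1-\theta}e^{-V(x)}\cf(n^\theta/H_x)$ with $\cf(u)=u\ee^{-u}$ bounded. Invoking the Many-to-One lemma, the annealed contribution reduces to $n^{1-\theta}$ times the probability that a centred random walk $(S_k)_{k\leq\ell}$ satisfies $S_\ell\approx(1-\theta)\log n$, $\MS_\ell\geq\log n+a_n$ and $\mS_\ell\geq-\alpha$; the ballot-type estimates recalled in Appendix~A.2 bound this probability by $C(\log n)^{O(1)}\ell^{-3/2}$, so the diagonal contribution is much smaller than $n^{2-2\theta}/(\log n)^{a\wedge\gamma-4}$ when $\ell\asymp(\log n)^2$.

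For the off-diagonal, the key probabilistic input is the decomposition of the walk at its visits to $z$: conditionally on $\en$, the sub-excursions of the walk from $z$ into the two disjoint subtrees rooted at the children of $z$ on the geodesics to $x$ and to $y$ are independent. Since $E_x^{(n)}=E_y^{(n)}=1$ forces two distinct root excursions to be responsible for the visits to $x$ and to $y$, this independence combined with the exact Bernoulli/geometric structure of the hitting probabilities yields
\begin{equation*}
\mathrm{Cov}^\en(\mathbf{I}_x,\mathbf{I}_y)\leq\p^\en(\mathbf{I}_x=1,\mathbf{I}_y=1)\leq C\,\p^\en(\mathbf{I}_x=1)\,\p^\en(\mathbf{I}_y=1),
\end{equation*}
uniformly over pairs with $\mV(x),\mV(y)\geq-\alpha$. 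Summing over ordered pairs $(x,y)$ with $x\wedge y=z$ at generation $j$ and applying a two-spine Many-to-One lemma (Many-to-One along the common spine up to $z$, followed by an independent one-spine branching walk on each of the two subtrees after the split), the annealed contribution becomes an integral over three independent centred random-walk trajectories: a shared path of length $j$ constrained by $\MS<\log\gamma_n=\log n-\gamma\log\log n$ (inherited from $\L_{\gamma_n}$), and two independent paths of length $\ell-j$ with endpoints near $(1-\theta)\log n$, with $\MS-S$ in $[\theta\log n\pm a_n]$, and $\MS\geq\log n+a\log\log n$.

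The main obstacle is the uniform control of this three-walk expectation in the split position $j$, together with the verification that the two post-split legs together contribute a decay of only $(\log n)^{-(a\wedge\gamma)}$ rather than the naive $(\log n)^{-(a+\gamma)}$. Heuristically, after the split each leg sees only one of the two environmental constraints as binding: either the ceiling inherited from $\L_{\gamma_n}$ before $z$, or the excess $a\log\log n$ above $\log n$ required near $x$ or $y$; the ballot-type estimate then selects the smaller of the two exponents. The residual $(\log n)^4$ factor is absorbed by the polynomial overhead of the ballot estimates (the $\ell^{-3/2}$ decay, the $\ell\asymp(\log n)^2$ path length, and the summation over $j\leq\ell$), while the $n^{2-2\theta}$ prefactor arises from $n^2\,e^{-V(x)}e^{-V(y)}\asymp n^{2-2(1-\theta)}$ on $\B_n^+\cap\D_n$.
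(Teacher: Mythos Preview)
Your decomposition into diagonal and off-diagonal terms is correct, and the diagonal is indeed harmless. The off-diagonal argument, however, has a genuine gap.

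The central error is the inequality $\mathrm{Cov}^\en(\mathbf{I}_x,\mathbf{I}_y)\le \p^\en(\mathbf{I}_x=1,\mathbf{I}_y=1)\le C\,\p^\en(\mathbf{I}_x=1)\,\p^\en(\mathbf{I}_y=1)$. The second inequality is essentially sharp here (on $\B_n^+$ one has $na_x\le(\log n)^{-a}\to0$, so $(1-a_x)^{n-1}\approx1$ and the joint probability really is comparable to the product), which means that by bounding the covariance by the joint probability you have thrown away \emph{all} the cancellation. Your bound gives only $\V^\en(\Xi_n)\le C(\e^\en[\Xi_n])^2$, and there is no reason for $\E[(\e^\en[\Xi_n(\ell)])^2]$ to be of order $n^{2-2\theta}/(\log n)^{a\wedge\gamma-4}$: the parameters $a$ and $\gamma$ enter only as $o(\log n)$ perturbations $a_n=a\log\log n$ and $s_n=\gamma\log\log n$ of the random-walk barriers, and such perturbations do \emph{not} change the leading-order ballot estimates by factors of $(\log n)^{-a}$ or $(\log n)^{-\gamma}$. (Your heuristic that ``each leg sees one of the two constraints as binding'' is therefore incorrect; also, the claim that $E_x^{(n)}=E_y^{(n)}=1$ forces two distinct root excursions is false, since a single excursion passing through $x\wedge y$ may visit both.)

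What the paper does instead is to keep the subtraction explicit. Writing $u=x\wedge y$ and $a_{x,y}=\p^\en_\rho(T_x\wedge T_y<T_{\rho^*})$, one has $\e^\en[\mathbf{I}_x\mathbf{I}_y]=n(n-1)a_xa_y(1-a_{x,y})^{n-2}(b_xb_y)^{n^\theta-1}+n(1-a_{x,y})^{n-1}\p^\en(\eL_x(\tau_1)\ge n^\theta,\eL_y(\tau_1)\ge n^\theta)$. After subtracting $\p^\en(\mathbf{I}_x=1)\p^\en(\mathbf{I}_y=1)$, the first piece is controlled by the algebraic bound $(1-a_{x,y})^{n-2}-(1-a_x)^{n-1}(1-a_y)^{n-1}\le na_x+na_y\le 2e^{-a_n}=2(\log n)^{-a}$, which is precisely where $\B_n^+$ is used; the ``same-excursion'' piece is controlled by $\e^\en[\eL_x(\tau_1)\eL_y(\tau_1)]=2H_u e^{V(u)-V(x)-V(y)}$ together with $H_u\le\gamma_n=n/(\log n)^\gamma$, which is precisely where $\L_{\gamma_n}$ is used. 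Both sources of decay are \emph{quenched} and come from the covariance subtraction itself, not from random-walk tail estimates. The remaining two-spine sum $\E\bigl[\sum_u\sum_{x\wedge y=u}e^{-V(x)-V(y)+V(u)}\phi_x\phi_y\bigr]$ is then only $O(\ell^2)=O((\log n)^4)$, giving the stated bound.
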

All these previous lemmas will be proved in Section \ref{lems}. The following lemma states the asymptotic behaviour of the quenched expectation $\e^\en\left[\Xi_n(\ell, \B^+_n\cap \D_n\cap \L_{\gamma_n},\alpha)\right]$. 
\begin{lem}\label{SHnorm1ex}
For any $0<A<B<\infty$ and $a+\gamma>6$, one has
\begin{equation*}
\sum_{\ell=A (\log n)^2}^{B (\log n)^2} \sum_{|x|=\ell}e^{-V(x)}\cf(\frac{n^\theta}{H_x})\ind{x\in \B^+_n\cap\D_n\cap\L_{\gamma_n}}\xrightarrow[n\to\infty]{\P^*} D_\infty \times \int_A^B \mathcal{G}(\frac{1}{\sqrt{u}},\frac{\theta}{\sqrt{u}})\frac{du}{u}.
\end{equation*}
\end{lem}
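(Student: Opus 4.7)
The plan is to prove Lemma \ref{SHnorm1ex} by a Seneta--Heyde norming procedure in the spirit of \cite{BM19}, in three parts: a Many-to-One reduction of the first moment, a diffusive rescaling that identifies the Brownian functional $\mathcal G$, and a spinal truncation that brings out the derivative martingale $D_\infty$.

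Writing $F_n(x):=\cf(n^\theta/H_x)\ind{x\in\B^+_n\cap\D_n\cap\L_{\gamma_n}}$, the Many-to-One Lemma gives, for each $\ell$,
\[
\E\Bigl[\sum_{|x|=\ell}e^{-V(x)}F_n(x)\Bigr]=\E\bigl[\cf(n^\theta/H^S_\ell)\,\ind{E^\ell_n(S)}\bigr],
\]
where $H^S_k:=\sum_{i=0}^k e^{S_i-S_k}$ and $E^\ell_n(S)$ is the random-walk analogue of $\{x\in\B^+_n\cap\D_n\cap\L_{\gamma_n}\}$, namely $\{\MS_\ell\geq\log n+a_n,\ \MS_\ell-S_\ell\in[\theta\log n\pm a_n],\ \max_{k<\ell}H^S_k<\gamma_n\}$. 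Setting $\ell=u(\log n)^2$ with $u\in[A,B]$ and rescaling by $\log n$, Donsker's invariance principle gives $(S_k/\log n,\MS_k/\log n)\Rightarrow(\sigma B_\cdot,\sigma M_\cdot)$ with $M_s=\max_{r\leq s}B_r$, and $E^\ell_n(S)$ translates into explicit Brownian conditions. The refined handling of $H^S_\ell$ is the technical heart: writing $H^S_\ell=e^{\MS_\ell-S_\ell}\sum_{i\leq\ell}e^{S_i-\MS_\ell}$ and using strict descending ladder-height renewal theory for $S$ (which contributes the renewal constant $\cb_\Ren$), the ratio $H^S_\ell/e^{\MS_\ell-S_\ell}$ converges in law to an explicit $O(1)$ functional of the reversed walk near its minimum. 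Combining this with a local limit theorem for the joint law of $(\MS_\ell,S_\ell,\max_{k<\ell}(\MS_k-S_k))$, the $\ell$-th summand is a discrete approximation of $\mathcal G(u^{-1/2},\theta u^{-1/2})$ as a function of $u=\ell/(\log n)^2$, and summing over $\ell$ produces a Riemann sum converging to $\int_A^B\mathcal G(u^{-1/2},\theta u^{-1/2})\,du/u$.

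To upgrade convergence in expectation to convergence in probability with the factor $D_\infty$, fix a large $k\in\mathbb N$ and decompose at generation $k$:
\[
\sum_{\ell}\sum_{|x|=\ell}e^{-V(x)}F_n(x)=\sum_{|u|=k}e^{-V(u)}\Sigma_{n,k}(u),
\]
where $\Sigma_{n,k}(u)$ is the analogous quantity inside the subtree rooted at $u$ (with the potential shifted by $-V(u)$). Each such subtree is again a boundary-case BRW, so the first-moment analysis above applies, combined with a Tanaka-type ballot estimate of the form $\P_a(\MS_\ell\geq\log n,\,\text{no early crossing})\sim a\cdot(\cdots)$ for $a$ bounded, to give $\E[\Sigma_{n,k}(u)\mid\calF_k]\sim V(u)\cdot\int_A^B\mathcal G(u^{-1/2},\theta u^{-1/2})du/u$ to leading order. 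The quenched second-moment bound of Lemma \ref{1exvariance} then forces concentration of $\Sigma_{n,k}(u)$ around its conditional mean, so the full sum equals $D_k\cdot\int_A^B\mathcal G\,du/u$ up to a term converging to zero in probability, and Biggins--Kyprianou's $D_k\to D_\infty$ $\P^*$-almost surely concludes the proof upon letting $k\to\infty$.

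The principal difficulty is the joint control of the path constraint $\L_{\gamma_n}=\{x:\max_{y<x}H_y<\gamma_n\}$ together with the diffusive rescaling: in random-walk terms, one needs a sharp estimate for the joint distribution of $(S_\ell,\MS_\ell,\max_{k<\ell}(\MS_k-S_k))$ with the maximum prescribed in a window of width $O(a_n)$, uniformly in $\ell\asymp(\log n)^2$. The hypothesis $a+\gamma>6$ is dictated precisely by this step, guaranteeing that the stopping-line restriction removes only a polylog-negligible portion of the mass and that the exchange of the limits $n\to\infty$ and $k\to\infty$ in the spinal truncation is justified.
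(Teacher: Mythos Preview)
Your overall architecture matches the paper's: decompose at a fixed generation $k_0$, use Many-to-One plus random-walk local estimates to identify the first-moment contribution of each subtree as asymptotically $\Ren(V(u))e^{-V(u)}\sim\cb_\Ren V(u)e^{-V(u)}$ times the constant, and sum over $|u|=k_0$ to produce $D_{k_0}\to D_\infty$.

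The genuine gap is the concentration step. Lemma \ref{1exvariance} bounds the \emph{quenched} variance $\V^\en(\Xi_n)$, i.e.\ the variance of the random walk's local-time functional conditional on the environment $\en$. Your $\Sigma_{n,k}(u)$ is a purely environmental quantity---it involves no walk at all---so that lemma does not apply to it. What you need is control of the fluctuations of the branching-random-walk functional $\sum_{|x|=\ell}e^{-V(x)}F_n(x)$ inside each subtree, and a direct second moment of that object is not obviously well behaved; this is precisely why Seneta--Heyde norming in the boundary case is delicate ($W_n\to0$ a.s.\ while $\E[W_n]\equiv1$, so moments do not capture the limit).

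The paper replaces your second-moment step by the Laplace-transform method of \cite{BM19}: compute $\E[e^{-\lambda\tilde\chi_3}\mid\calF_{k_0}]$, lower-bound by Jensen, and upper-bound via $e^{-\lambda t}\le1-\lambda_\delta t$ for $t\le\delta$ applied to a truncated version $\hat\chi_3^{(\delta)}$. The substitute for concentration is then the truncation-error estimate $\E_x[\hat\chi_3-\hat\chi_3^{(\delta)}]=o_x(1)\Ren(x)e^{-x}$ of Lemma \ref{firstmomentchi}, equation \eqref{1momchi-}; its proof occupies most of Section \ref{SHnorm} and uses Lyons' spine decomposition under $\widehat{\Q}$ together with fine random-walk inequalities, with the $\wedge1$ truncation used essentially in several terms (e.g.\ $UB_2^{(3)}$). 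A secondary point: Donsker alone is too coarse to produce the $1/\ell$ asymptotics you need; the paper relies on the local convergence \eqref{cvgeSMS}, itself established through a first-hitting-time decomposition for $\MS_\ell$ combined with \eqref{cvgsumeSmSSn} and Fact \ref{mSMScvg}.
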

In fact, because of \eqref{badend1ex}, we only need to prove that 
\begin{equation}\label{SHnorm1exK}
\sum_{\ell=A (\log n)^2}^{B (\log n)^2} \sum_{|x|=\ell}e^{-V(x)}\cf(\frac{n^\theta}{H_x})\ind{x\in \B^+_n\cap\D_n^K\cap\L_{\gamma_n}}\xrightarrow[n\to\infty]{\P^*} C_0(A,B,K)D_\infty, 
\end{equation}   
where $C_0(A,B,K)\in (0,\infty)$ and $\lim_{K\to\infty}C_0(A,B,K)=\int_A^B \mathcal{G}(\frac{1}{\sqrt{u}},\frac{1}{\sqrt{u}},\frac{\theta}{\sqrt{u}})\frac{du}{u}$. The proof of \eqref{SHnorm1exK} is postponed in Section \ref{SHnorm}.  

Let us prove Proposition \ref{1Ex} by use of these lemmas.
\begin{proof}[Proof of Proposition \ref{1Ex}]
Note that for any $\delta>0$ and $\beta>0$,
\begin{align*}
&\p^*\left(|\frac{R_{n^\theta}(\tau_n, 1)}{n^{1-\theta}}- \Lambda_1(\theta)D_\infty|\geq \delta D_\infty \right)\\
\leq &\P^*(\inf_{x\in\T}V(x)<-\alpha)+\p^*\left(\max_{1\leq i\leq \tau_n}|X_i|>c_0 (\log n)^3\right)+\P^*(D_\infty<\beta)+\p^*(\exists k\leq \tau_n, X_k\in\mathcal{L}_n)\\
&+\p^*\left(|\frac{\sum_{\ell=1}^{c_0(\log n)^3}\sum_{|x|=\ell}\ind{\eL_x(\tau_n)\geq n^\theta, E_x^{(n)}=1}\ind{x\in\L_{n}}\ind{\mV(x)\geq-\alpha}}{n^{1-\theta}}- \Lambda_1(\theta)D_\infty|\geq \delta D_\infty , D_\infty\ge\beta\right).
\end{align*}
Here $\p^*(\exists k\leq \tau_n, X_k\in\mathcal{L}_n)=o_n(1)$ according to \cite{HuShi16}. 
By Lemmas \ref{smallpart1ex}, \ref{smallpart1ex+} and \ref{smallpart1exL}, one has
\begin{multline}\label{1Exgoodsite}
\p^*\left(|\frac{R_{n^\theta}(\tau_n, 1)}{n^{1-\theta}}- \Lambda_1(\theta)D_\infty|\geq \delta D_\infty \right)
\leq c_2e^{-\alpha}+o_n(1)+o_\beta(1)\\
+\p^*\left(|\frac{\sum_{\ell=\varepsilon(\log n)^2}^{(\log n)^2/\varepsilon}\Xi_n(\ell, \B^+_n\cap\D_n\cap\L_{\gamma_n},\alpha)}{n^{1-\theta}}- \Lambda_1(\theta)D_\infty|\geq \frac{\delta}{2} D_\infty , D_\infty\ge\beta\right).
\end{multline}
Here,  one sees that
\begin{align*}
&\p^*\left(|\frac{\sum_{\ell=\varepsilon(\log n)^2}^{(\log n)^2/\varepsilon}\Xi_n(\ell, \B^+_n\cap\D_n\cap\L_{\gamma_n},\alpha)}{n^{1-\theta}}- \Lambda_1(\theta)D_\infty|\geq \frac{\delta}{2} D_\infty , D_\infty\ge\beta\right)\\
\leq &\p^*\left(|\frac{\sum_{\ell=\varepsilon(\log n)^2}^{(\log n)^2/\varepsilon}\Xi_n(\ell, \B^+_n\cap\D_n\cap\L_{\gamma_n},\alpha)-\e^\en[\Xi_n(\ell, \B_n^+\cap \D_n\cap \L_{\gamma_n},\alpha)]}{n^{1-\theta}}|\ge \delta\beta/4\right)\\
&+\p^*\left(|\sum_{\ell=\varepsilon(\log n)^2}^{(\log n)^2/\varepsilon}\e^\en[\Xi_n(\ell, \B^+_n\cap\D_n\cap\L_{\gamma_n},\alpha)]-\Lambda_1(\theta)D_\infty|\geq\frac{\delta}{4}D_\infty, D_\infty\geq\beta\right)
\end{align*}
By Chebyshev's inequality and then Cauchy-Schwartz inequality,
\begin{align*}
&\p\left(|\frac{\sum_{\ell=\varepsilon(\log n)^2}^{(\log n)^2/\varepsilon}\Xi_n(\ell, \B^+_n\cap\D_n\cap\L_{\gamma_n},\alpha)-\e^\en[\Xi_n(\ell, \B_n^+\cap \D_n\cap \L_{\gamma_n},\alpha)]}{n^{1-\theta}}|\ge \delta\beta/4\right)\\
\leq& \frac{16}{(\delta\beta)^2n^{2-2\theta}}\e\left[\left(\sum_{\ell=\varepsilon(\log n)^2}^{(\log n)^2/\varepsilon}\Xi_n(\ell, \B^+_n\cap\D_n\cap\L_{\gamma_n},\alpha)-\e^\en[\Xi_n(\ell, \B_n^+\cap \D_n\cap \L_{\gamma_n},\alpha)]\right)^2\right]\\
\leq &\frac{16}{(\delta\beta)^2n^{2-2\theta}}\sum_{\ell=\varepsilon(\log n)^2}^{(\log n)^2/\varepsilon} 1 \sum_{\ell=\varepsilon(\log n)^2}^{(\log n)^2/\varepsilon}\E\left[\V^\en(\Xi_n(\ell, \B^+_n\cap\D_n\cap\L_{\gamma_n},\alpha))\right],
\end{align*}
which is $o_n(1)$ by Lemma \ref{1exvariance} as long as $a\wedge\gamma>8$. On the other hand, 
\begin{align*}
&\p^*\left(|\sum_{\ell=\varepsilon(\log n)^2}^{(\log n)^2/\varepsilon}\e^\en[\Xi_n(\ell, \B^+_n\cap\D_n\cap\L_{\gamma_n},\alpha)]-\Lambda_1(\theta)D_\infty|\geq\frac{\delta}{4}D_\infty, D_\infty\geq\beta\right)\\
\leq &\p^*\left(|\sum_{\ell=\varepsilon(\log n)^2}^{(\log n)^2/\varepsilon}\sum_{|x|=\ell}(1+o_n(1))e^{-V(x)}\cf(\frac{n^\theta}{H_x})\ind{x\in \B^+_n\cap\D_n\cap\L_{\gamma_n}}-\Lambda_1(\theta)D_\infty|\geq\frac{\delta}{4}D_\infty, D_\infty\geq\beta\right)\\
&+\P^*(\inf V(u)< -\alpha).
\end{align*}
We thus deduce from Lemma \ref{SHnorm1ex} that 
$$\limsup_{\varepsilon\to0}\limsup_{n\to\infty}\p^*\left(|\frac{\sum_{\ell=\varepsilon(\log n)^2}^{(\log n)^2/\varepsilon}\Xi_n(\ell, \B^+_n\cap\D_n\cap\L_{\gamma_n},\alpha)}{n^{1-\theta}}- \Lambda_1(\theta)D_\infty|\geq \frac{\delta}{2} D_\infty , D_\infty\ge\beta\right)\le c_2 e^{-\alpha}.$$ Going back to \eqref{1Exgoodsite} and letting $\alpha\to\infty$ and $\beta\downarrow0$, we therefore conclude that for any $\delta>0$.
\[
\limsup_{n\to\infty}\p^*\left(|\frac{R_{n^\theta}(\tau_n, 1)}{n^{1-\theta}}- \Lambda_1(\theta)D_\infty|\geq \delta D_\infty \right)=0.
\]
\end{proof}

%Recall that
%\begin{equation}\label{keyprob}
%\p^\en(\eL_x(\tau_n)\geq n^\theta, E_x^{(n)}=1)=na_xb_x^{n^\theta-1}(1-a_x)^{n-1}.
%\end{equation}
\section{Generalised Seneta-Heyde scaling: proof of Lemmas \ref{largepart} and \ref{SHnorm1ex}}\label{SHnorm}
In this section, we study the following sum: for any $0<A<B<\infty$,
\begin{equation}
\chi_i(A,B, r):=\sum_{Ar^2\leq m\leq Br^2}\sum_{|z|=m}e^{-V(z)}F_i(z,r), \textrm{ for } i=1,2,3;
\end{equation}
where
\begin{align}
F_1(z,r):=&e^{V(z)-(1-\theta)r-b}\ind{\MV(z)-V(z)\leq \theta r+t_r, V(z)\leq (1-\theta)r+b}\\
F_2(z,r):=&e^{V(z)-(1-\theta)r-b}\ind{\MV(z)\leq r+t_r, V(z)\leq (1-\theta)r+b}\\
F_3(z,r):=&\cf(\frac{e^{\theta r}}{H_z})\ind{\MV(z)\geq r+t_r, \max_{y\leq z}(\MV(y)-V(y))\leq r+s_r, \MV(z)-V(z)\in[\theta r-K, \theta r+K]}
\end{align}
with $t_r=o(r)$, $s_r=o(r)$, $K>0$ and $b\in\mathbb{R}$ such that $s_r+6\log r< t_r$. We are going to show that as $r\to\infty$.
\begin{equation}\label{keycvginp}
\chi_i(A,B,r)\xrightarrow{\textrm{ in } \P^*} \Cb_i(A,B)D_\infty, \textrm{ for } i=1,2,3,
\end{equation}
where $\Cb_i(A,B)$ are positive constants which will be determined later. %We mainly follow the idea of \cite{BM19}. %In the following, we only treat $\chi_2$ and $\chi_3$ as the convergence of $\chi_1$ can be obtained from that of $\chi_2$. We will explain it later. 

One can see immediately that Lemma \ref{largepart} is mainly based on the convergences of $\chi_1$ and $\chi_2$ and that Lemma \ref{SHnorm1ex} is based on the convergence of $\chi_3$ with $r=\log n$. To complete the proof of Lemma \ref{largepart}, as $F_1\leq F_2$, we still need to check the following estimate. 
\begin{lem}\label{largepartbadtime}
For any $\alpha>0$, as $\varepsilon\downarrow0$, we have
\begin{align}
\limsup_{n\to\infty}\E\left[\sum_{m=1}^{\varepsilon (\log n)^2}\sum_{|z|=m}e^{-V(z)}F_2(z,\log n)\ind{\mV(z)\geq-\alpha}\right]=&o_\varepsilon(1);\label{largepartsmalltime}\\
\limsup_{n\to\infty}\E\left[\sum_{(\log n)^2/\varepsilon}^{c_0(\log n)^3}\sum_{|z|=m}e^{-V(z)}F_2(z,\log n)\ind{\mV(z)\geq -\alpha}\right]=&o_\varepsilon(1).\label{largepartlargetime}
\end{align}
\end{lem}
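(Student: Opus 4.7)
The plan is to reduce the expectation to a one-dimensional random walk quantity via the Many-to-One lemma and then apply sharp ballot-type estimates in each of the two time regimes.

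Setting $T:=\log n+t_{\log n}$ and $T':=(1-\theta)\log n+b$, note that $e^{-V(z)}F_2(z,\log n)=e^{-T'}\ind{\MV(z)\le T,\,V(z)\le T'}$. The Many-to-One lemma therefore gives
\begin{equation*}
\E\!\left[\sum_{|z|=m}e^{-V(z)}F_2(z,\log n)\ind{\mV(z)\ge-\alpha}\right]\,=\, e^{-T'}\,\E\!\left[e^{S_m}\ind{\MS_m\le T,\,S_m\le T',\,\mS_m\ge-\alpha}\right],
\end{equation*}
so both claims amount to summing this quantity over $m$ in the respective ranges.

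For \eqref{largepartsmalltime}, I would decompose over $S_m\in[a,a+1]$, $-\alpha\le a\le T'$, and use the one-sided ballot estimate with Gaussian correction, which under \eqref{expmomS} is available via Caravenna--Chaumont / Denisov--Wachtel type arguments: for all $a\ge0$, $m\ge1$,
\begin{equation*}
\P\bigl(\mS_m\ge-\alpha,\,S_m\in[a,a+1]\bigr)\le C(1+\alpha)\,\frac{1+a+\alpha}{m^{3/2}}\,e^{-a^2/(3\sigma^2 m)}.
\end{equation*}
Inserting this, one is reduced to bounding $\frac{C(\alpha)}{m^{3/2}}\sum_{a\le T'}(1+a+\alpha)\exp(a-T'-a^2/(3\sigma^2 m))$. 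The exponent $a-a^2/(3\sigma^2 m)$ is concave with maximum at $a^\ast=3\sigma^2 m/2$, and two sub-cases arise. For $m\le 2T'/(3\sigma^2)$ the maximum is interior and a Laplace evaluation gives a bound $\le C(\alpha)(\log n)\,m^{-1}\,e^{-T'/2}$, summable to $O((\log n)(\log\log n)\,n^{-(1-\theta)/2})=o_n(1)$. For $2T'/(3\sigma^2)\le m\le\varepsilon(\log n)^2$ the maximum is pushed to the boundary $a=T'$ and a geometric-ratio argument gives $e^{-T'}\E[e^{S_m}\ind{\cdots}]\le C(\alpha)(\log n)\,m^{-3/2}\,\exp(-(T')^2/(3\sigma^2 m))$. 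Rescaling $m=u(\log n)^2$ turns the sum into $C(\alpha)(1+o_n(1))\int_0^{\varepsilon}u^{-3/2}e^{-(1-\theta)^2/(3\sigma^2 u)}\,du$, which is finite and tends to $0$ as $\varepsilon\downarrow0$ thanks to the super-exponential zero of the integrand at $u=0$.

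For \eqref{largepartlargetime} we have $m\ge T^2/\varepsilon\gg T^2$, so the binding constraint is $\MS_m\le T$. I would use the first-eigenfunction bound for a centred walk killed on exiting $[-\alpha,T]$: for some $c_\ast>0$ (one may take $c_\ast=\pi^2\sigma^2/2$) and all $m\ge T^2$,
\begin{equation*}
\P\bigl(\mS_m\ge-\alpha,\,\MS_m\le T,\,S_m\in[a,a+1]\bigr)\le \frac{C(1+\alpha)}{T^2}\,e^{-c_\ast m/T^2},
\end{equation*}
which follows from estimating $\sin(\pi\alpha/(T+\alpha))\cdot\sin(\pi(a+\alpha)/(T+\alpha))\le C\alpha/T$ in the eigenfunction expansion. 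Summing geometrically in $a\le T'$ yields $e^{-T'}\E[e^{S_m}\ind{\cdots}]\le C(1+\alpha)\,T^{-2}\,e^{-c_\ast m/T^2}$, and summing over $m\ge (\log n)^2/\varepsilon$ gives a bound $\le\frac{C(1+\alpha)}{c_\ast}e^{-c_\ast(\log n)^2/(\varepsilon T^2)(1+o_n(1))}$, whose $\limsup_n$ equals $\frac{C(1+\alpha)}{c_\ast}e^{-c_\ast/\varepsilon}$---evidently $o_\varepsilon(1)$.

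The main obstacle is pinning down the two random-walk inputs under \eqref{expmomS}: the ballot estimate with Gaussian correction (small times) and the spectral decay in the strip (large times). Both are classical under our moment assumptions and should be among the estimates gathered in Appendix \ref{A2}; no structure of the branching random walk beyond the Many-to-One identity enters the argument.
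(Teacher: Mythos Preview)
Your reduction via Many-to-One and the overall strategy are correct, but the paper's argument is considerably more elementary and avoids both of the heavier random-walk inputs you invoke.

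For \eqref{largepartsmalltime}, instead of a Gaussian-corrected ballot estimate plus a Laplace analysis, the paper simply splits according to whether $S_k\le\tfrac{1-\theta}{2}\log n$ (a crude bound summing to $o_n(1)$) or $S_k\in[t,t+1]$ for $\tfrac{1-\theta}{2}\log n\le t\le T'$. For the latter it applies \eqref{summSSbd}, namely $\sum_{k\le\eta r^2}\P(\mS_k\ge-\alpha,S_k\in[r,r+1])\le C(1+\alpha)\eta$: since $t\ge\tfrac{1-\theta}{2}\log n$ forces $\varepsilon(\log n)^2\le\tfrac{4\varepsilon}{(1-\theta)^2}\,t^2$, each $t$-slice is $O(\varepsilon)$, and the geometric weight $e^{t-T'}$ makes the sum over $t$ bounded. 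This gives $O(\varepsilon)$ directly, with no Laplace step.

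For \eqref{largepartlargetime}, the paper does not exploit the strip constraint $\MS_m\le T$ at all: it drops that event and uses the plain ballot bound \eqref{mSSbd}, $\P(\mS_k\ge-\alpha,S_k\in[t,t+1])\le C(1+\alpha)(1+t)k^{-3/2}$. Summing geometrically over $t\le T'$ and then over $k\ge(\log n)^2/\varepsilon$ gives $C(1+\alpha)(\log n)\cdot((\log n)^2/\varepsilon)^{-1/2}=O(\sqrt{\varepsilon})$.

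Your spectral-gap bound for the walk confined to $[-\alpha,T]$ is true and yields the sharper rate $e^{-c/\varepsilon}$, but it is \emph{not} among the estimates in Appendix~\ref{A2}: the closest, \eqref{mSMSvalleybd}, carries the extra constraint $\MS_n=S_n$ and a drawdown barrier rather than a two-sided strip. If you keep your route you must state and prove that estimate separately; the paper's route avoids this because the one-sided bound already sums to $O(\sqrt{\varepsilon})$. (A small side remark: your per-$m$ bound in the first sub-case of \eqref{largepartsmalltime} is slightly off---the term actually grows with $m$ there, roughly like $m^{1/2}e^{3\sigma^2 m/4-T'}$, rather than decaying like $m^{-1}$---but the summed conclusion $o_n(1)$ is unaffected since the sum is dominated by $m\asymp T'$.)
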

To conclude Lemma \ref{SHnorm1ex}, in other words, to get \eqref{SHnorm1exK}, we need to compare $\{z\in \L_{\gamma_n}\}=\{\max_{y\le z}H_y\leq \frac{n}{(\log n)^\gamma}\}$ with $\{\max_{y\leq z}(\MV(y)-V(y))\leq r+s_r\}$. In fact, note that $e^{\MV(y)-V(y)}\le H_y\leq |z| e^{\MV(y)-(y)}$. Thus for $|z|\le B(\log n)^2$ with $n\gg1$, 
\[
\ind{\max_{y\leq z}(\MV(y)-V(y))\leq \log n-(\gamma+3)\log \log n}\le \ind{z\in \L_{\gamma_n}}\leq \ind{\max_{y\leq z}(\MV(y)-V(y))\leq \log n-\gamma \log\log n}
\] 
Note also that in Lemma \ref{SHnorm1ex}, $t_r=a\log\log n$ with $a+\gamma>6$. Therefore, we can deduce Lemma \ref{SHnorm1ex} from \eqref{keycvginp} for $i=3$.

In the following, we prove \eqref{keycvginp} and check Lemma \ref{largepartbadtime} in Section \ref{lems}. Our proof of \eqref{keycvginp} mainly follows the idea of \cite{BM19}.

\textbf{Outline of proof of \eqref{keycvginp}.} It is known that for any $\varepsilon\in(0,1)$, there exists $k_0\geq1$ such that 
\begin{equation}\label{infV}
\P\left(\inf_{n\geq k_0}\inf_{|z|=n}V(z)\geq0\right)\geq 1-\varepsilon,
\end{equation}
with the convention that $\inf\emptyset=\infty$.
For any $r$ such that $Ar^2\geq 2k_0$, let 
\[
\widetilde{\chi}_i(A,B, r, k_0):=\sum_{Ar^2\leq m\leq Br^2}\sum_{|z|=m}\widetilde{F}_i(z,r, k_0)
\]
where $\widetilde{F}_i(z,r,k_0):=F_i(z,r)\ind{\min_{z_0\leq y\leq z}V(y)\geq0}$
%\begin{align*}
%\widetilde{F}_i(z,r):=&e^{V(z)-(1-\theta)r+b}\ind{\mV_{z_0}(z)+v(z_0)\geq0, \MV_{z_0}(z)-V_{z_0}(z)\leq \theta r+t_r, V_{z_0}(z)+V(z_0)\leq (1-\theta)r-b}\\
%\widetilde{F}_2(z,r):=&e^{V(z)-(1-\theta)r-b}\ind{\mV_{z_0}(z)+v(z_0)\geq0, \MV_{z_0}(z)+V(z_0)\leq r+t_r, V_{z_0}(z)+V(z_0)\leq (1-\theta)r+b}\\
%\widetilde{F}_3(z,r):=&f(\frac{e^{\theta r}}{H_z})\ind{\mV_{z_0}(z)+v(z_0)\geq0, \MV(z)\geq r+t_r, \max_{\rho \leq y\leq z}(\MV_{z_0}(y)-V_{z_0}(y))\leq r+s_r, \MV_{z_0}(z)-V_{z_0}(z)\in[\theta r+y, \theta r+y+h]}
%\end{align*}
with $z_0:=z_{k_0}$. It then follows from \eqref{infV} that for any $\varepsilon>0$ and $i=1,2,3$, there exists $k_0\geq1$ such that for any $k\geq k_0$,
\begin{equation}\label{infVchi}
\P\left(\forall r\geq 1, \chi_i(A,B,r)\neq \widetilde{\chi}_i(A,B,r, k)\right)\le 2\varepsilon.
\end{equation}
So, according to \cite{BM19}, it suffices to show that for any $\lambda>0$ and $i=1,2,3$, a.s.,
\begin{equation}\label{keycvg+}
\lim_{k_0\to\infty}\limsup_{r\to\infty}\E[e^{-\lambda \tilde{\chi}_i(A,B,r, k_0 )}\vert \mathcal{F}_{k_0}]=\lim_{k_0\to\infty}\liminf_{r\to\infty}\E[e^{-\lambda \tilde{\chi}_i(A,B,r, k_0 )}\vert \mathcal{F}_{k_0}]=\exp\{-\lambda \Cb_i(A,B)D_\infty\}.
\end{equation}
By Lemma B.1 of \cite{BM19} and a Cantor diagonal extraction argument, this yields the convergence in probability of $\tilde{\chi}_i(A,B, r, k_0(r))$ towards $\Cb_i(A,B)D_\infty$. Then \eqref{keycvginp} follows immediately from \eqref{infVchi}.

Let us check \eqref{keycvg+}. Observe that by Jensen's inequality,
\begin{align}\label{lowerbdSH}
\E[e^{-\lambda \tilde{\chi}_i(A,B,r, k_0)}\vert \mathcal{F}_{k_0}]=&\prod_{|u|=k_0}\E\left[\exp\{-\lambda \sum_{Ar^2\leq m\leq Br^2}\sum_{|z|=m}\ind{z_0=u}e^{-V(z)}\widetilde{F}_i(z,r)\}\Big\vert \calF_{k_0}\right]\nonumber\\
\geq & \exp\left\{-\lambda \sum_{|u|=k_0}\sum_{Ar^2\leq m\leq Br^2}\E\left[\sum_{|z|=m}\ind{z_0=u}e^{-V(z)}\widetilde{F}_i(z,r)\}\vert \calF_{k_0}\right]\right\}\nonumber\\
\geq&\exp\{-\lambda\sum_{|u|=k_0}(1+o_r(1))\E_{V(u)}[\hat{\chi}_i]\}\ind{\max_{|u|=k_0}\MV(u)\leq r^{1/3}, \min_{|u|=k_0}\mV(u)\geq -r^{1/3}},
\end{align}
where $\hat{\chi}_i=\hat{\chi}_i(A,B,r,k_0):=\sum_{Ar^2-k_0\leq m\leq Br^2-k_0}\sum_{|z|=m}e^{-V(z)}\widehat{F}_i(z,r,k_0)$ with
\begin{align*}
\widehat{F}_1(z,r,k_0):=&e^{V(z)-(1-\theta)r-b}\ind{\mV(z)\geq0, \MV(z)-V(z)\leq \theta r+t_r, V(z)\leq (1-\theta)r+b};\\
\widehat{F}_2(z,r,k_0):=&e^{V(z)-(1-\theta)r-b}\ind{\mV(z)\geq0, \MV(z)\leq r+t_r, V(z)\leq (1-\theta)r+b};\\
\widehat{F}_3(z,r,k_0):=&f(\frac{e^{\theta r}}{H_z})\ind{\mV(z)\geq0, \MV(z)\geq r+t_r, \max_{y\leq z}(\MV(y)-V(y))\leq r+s_r, \MV(z)-V(z)\in[\theta r-K,\theta r+K]}.
%\textrm{ and }\widehat{F}_3^*(z,r,k_0):=&f(\frac{e^{\theta r}}{H_z})\ind{\mV(z)\geq0, \MV(z)\geq r+t_r, \max_{y\leq z}(\MV(y)-V(y))\leq r+s_r, \MV(z)-V(z)\in[\theta r-K,\theta r+K]}.
\end{align*}
Let us explain a little the last inequality in \eqref{lowerbdSH}. Note that if $\{\max_{|u|=k_0}\MV(u)\leq r^{1/3}\}$, one has $\MV(z)=\max_{z_0\le y\le z}V(y)$. Thus, for $i=1,2$,
\[
\E\left[\sum_{|z|=m}\ind{z_0=u}e^{-V(z)}\widetilde{F}_i(z,r)\}\vert \calF_{k_0}\right]= \E_{V(u)}\left[\sum_{|z|=m-k_0}e^{-V(z)}\widehat{F}_i(z,r,k_0)\right].
\]
For $i=3$, one can see that given $\{\max_{|u|=k_0}\MV(u)\leq r^{1/3}, \min_{|u|=k_0}\mV(u)\geq-r^{1/3}\}$ and $\{\MV(z)\geq r+t_r, \MV(z)-V(z)\in[\theta r+ d,\theta r+ d+h]\}$, we have moreover $\{\max_{y\leq z}(\MV(y)-V(y))\leq r+s_r\}=\{\max_{z_0\leq y\leq z}(\MV(y)-V(y))\leq r+s_r\}$ and $f(\frac{e^{\theta r}}{H_z})=(1+o_r(1))f(\frac{e^{\theta r}}{\sum_{z_0\le y\le z}e^{V(y)-V(z)}})$ as
\[
\frac{|H_z-\sum_{z_0\le y\le z}e^{V(y)-V(z)}|}{H_z}\leq k_0e^{r^{1/3}-r-t_r}=o_r(1).
\]
This leads to 
\[
\E\left[\sum_{|z|=m}\ind{z_0=u}e^{-V(z)}\widetilde{F}_3(z,r)\}\vert \calF_{k_0}\right]= (1+o_r(1))\E_{V(u)}\left[\sum_{|z|=m-k_0}e^{-V(z)}\widehat{F}_3(z,r,k_0)\right].
\]
We next turn to the upper bound of $\E[e^{-\lambda \tilde{\chi}_i(A,B,r, k_0)}\vert \mathcal{F}_{k_0}]$. For any $\delta\in(0,1)$, let $\lambda_\delta:=\lambda e^{-\lambda \delta}$ and 
\[
\hat{\chi}^{(\delta)}_i=:\sum_{Ar^2-k_0\leq m\leq Br^2-k_0} \sum_{|z|=m}e^{-V(z)}\widehat{F}_i(z,r,k_0)\ind{\sum_{|z|=m}e^{-V(z)}\widehat{F}_i(z,r,k_0)\leq \frac{\delta}{Br^2}}
\]
As a consequence of the fact $e^{-\lambda t}\leq 1-\lambda_\delta t$ for any $t\in[0,\delta]$,
\begin{align}\label{upperbdSH}
&\E[e^{-\lambda \tilde{\chi}_i(A,B,r, k_0)}\vert \mathcal{F}_{k_0}]\nonumber\\
\leq&\prod_{|u|=k_0}\E\left[\exp\{-\lambda \sum_{Ar^2\leq m\leq Br^2}\sum_{|z|=m}\ind{z_0=u}e^{-V(z)}\widetilde{F}_i(z,r)\ind{\sum_{|z|=m}\ind{z_0=u}e^{-V(z)}\widetilde{F}_i(z,r)\leq \frac{\delta}{Br^2}}\}\Big\vert \calF_{k_0}\right]\nonumber\\
\leq &\prod_{|u|=k_0}\left(1-\lambda_\delta \E\left[ \sum_{Ar^2\leq m\leq Br^2}\sum_{|z|=m}\ind{z_0=u}e^{-V(z)}\widetilde{F}_i(z,r)\ind{\sum_{|z|=m}\ind{z_0=u}e^{-V(z)}\widetilde{F}_i(z,r)\leq \frac{\delta}{Br^2}}\Big\vert \calF_{k_0}\right]\right)\nonumber\\
\leq &\exp\left\{-\lambda_\delta\sum_{|u|=k_0}\E\left[ \sum_{Ar^2\leq m\leq Br^2}\sum_{|z|=m}\ind{z_0=u}e^{-V(z)}\widetilde{F}_i(z,r)\ind{\sum_{|z|=m}\ind{z_0=u}e^{-V(z)}\widetilde{F}_i(z,r)\leq \frac{\delta}{Br^2}}\Big\vert \calF_{k_0}\right]\right\}
\end{align}
which as explained above, for $r$ large enough, is bounded by
\[
\exp\left\{-\lambda_\delta\sum_{|u|=k_0}(1+o_r(1))\E_{V(u)}[\hat{\chi}_i^{(\delta/2)}]\right\}+\ind{\max_{|u|=k_0}\MV(u)>r^{1/3}}+\ind{\min_{|u|=k_0}\mV(u)<-r^{1/3}}.
\]
For \eqref{lowerbdSH} and \eqref{upperbdSH}, letting $r\to\infty$ brings out that
\begin{multline}
\liminf_{r\to\infty}\exp\left\{-\lambda\sum_{|u|=k_0}(1+o_r(1))\E_{V(u)}[\hat{\chi}_i]\right\}\leq \liminf_{r\to\infty}\E[e^{-\lambda \tilde{\chi}_i(A,B,r, k_0)}\vert \mathcal{F}_{k_0}]\\\leq\limsup_{r\to\infty}\E[e^{-\lambda \tilde{\chi}_i(A,B,r, k_0)}\vert \mathcal{F}_{k_0}]\leq \limsup_{r\to\infty}\exp\left\{-\lambda_\delta\sum_{|u|=k_0}(1+o_r(1))\E_{V(u)}[\hat{\chi}_i^{(\delta/2)}]\right\}
\end{multline}
Next, we claim that for any $x\geq0$, $\lim_{r\to\infty}\E_{x}[\hat{\chi}_i]=C_i(A,B)\Ren(x)e^{-x}$ with $C_i(A,B)$ a positive constant and $\Ren(\cdot)$ is the renewal function defined in \eqref{renewalf}. Moreover, we stress that for $\delta>0$ and $x\gg1$,
\[
\limsup_{r\to\infty}\E_{x}[\hat{\chi}_i-\hat{\chi}_i^{(\delta)}]=o_{x}(1)\Ren(x)e^{-x}.
\]
These are stated in the following lemma.
\begin{lem}\label{firstmomentchi} 
For any $x\geq0$, $\delta>0$, as $r\to\infty$,
\begin{align}
\lim_{r\to\infty}\E_x\left[\sum_{Ar^2-k_0\leq m\leq Br^2-k_0} \sum_{|z|=m}e^{-V(z)}\widehat{F}_i(z,r,k_0)\right]=&C_i(A,B)\Ren(x)e^{-x},\label{1momchi+}\\
\limsup_{r\to\infty}\E_x\left[\sum_{Ar^2-k_0\leq m\leq Br^2-k_0} \sum_{|z|=m}e^{-V(z)}\widehat{F}_i(z,r,k_0)\ind{\sum_{|z|=m}e^{-V(z)}\widehat{F}_i(z,r,k_0)> \frac{\delta}{Br^2}}\right]=&o_x(1)\Ren(x)e^{-x},\label{1momchi-}
\end{align}
where 
\[
C_1(A,B)=C_2(A,B)=\frac{\cb_+}{\sigma}\int_A^B\mathcal{C}_0(\frac{\theta}{\sqrt{u}},\frac{1-\theta}{\sqrt{u}})\frac{du}{u},
\]
and 
\[
C_3(A,B)=\int_{-K}^{K}\E[\cf(\frac{e^{-s}}{\Hinf_\infty+\Hinf^{(-)}_\infty-1})]ds \int_A^B \mathcal{G}(\frac{1}{\sqrt{u}},\frac{\theta}{\sqrt{u}})\frac{du}{u}.
\]
\end{lem}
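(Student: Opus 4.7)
The plan is to apply the Many-to-One Lemma to reduce $\E_x[\sum_{|z|=m} e^{-V(z)} \widehat{F}_i(z,r,k_0)]$ to an expectation involving the centred random walk $(S_k)$ of variance $\sigma^2$. For $i=1,2$, the factor $e^{V(z)}$ inside $\widehat{F}_i$ cancels the Many-to-One weight $e^{-V(z)}$, leaving
\begin{equation*}
e^{-x}\,e^{-(1-\theta)r-b}\,\P\bigl(\mS_m\geq -x,\ \MS_m-S_m\leq \theta r+t_r,\ S_m\leq (1-\theta)r+b-x\bigr)
\end{equation*}
(with the middle constraint replaced by $\MS_m\leq r+t_r-x$ for $i=2$). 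For $i=3$, the factor $\cf(e^{\theta r}/H_z)$ persists, and under Many-to-One $H_z$ becomes $\sum_{k=0}^m e^{S_k-S_m}$.

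For $i=1,2$ I would set $m=\lfloor ur^2\rfloor$ and invoke the invariance principle for random walks conditioned to stay non-negative: as $m\to\infty$,
\begin{equation*}
\P\bigl(\mS_m\geq -x,\ \MS_m-S_m\leq \theta r+t_r,\ S_m\leq (1-\theta)r+b-x\bigr)\sim \frac{\Ren(x)}{\sigma\sqrt{m}}\,\mathcal{C}_0\bigl(\theta/\sqrt{u},(1-\theta)/\sqrt{u}\bigr),
\end{equation*}
the first factor being the classical Tanaka-type asymptotic for $\P(\mS_m\geq -x)$ and the second arising from the Brownian meander satisfying the rescaled constraints on its maximum and endpoint. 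Summing over $m\in[Ar^2-k_0,Br^2-k_0]$ with $\Delta m\sim r^2\Delta u$ yields a Riemann sum converging to $\frac{\cb_+}{\sigma}\int_A^B \mathcal{C}_0(\theta/\sqrt{u},(1-\theta)/\sqrt{u})\,du/u$, i.e.\ the claimed $C_1(A,B)=C_2(A,B)$; the slowly varying $t_r=o(r)$ and the constant $b$ contribute only lower-order corrections, and $C_1=C_2$ follows because, under $\{S_m\leq (1-\theta)r+b-x\}$, the constraints $\{\MS_m-S_m\leq \theta r+t_r\}$ and $\{\MS_m\leq r+t_r-x\}$ coincide asymptotically.

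For $i=3$ the same skeleton applies, but the factor $\cf(e^{\theta r}/H_z)$ must be treated. I would split the walk at its maximum time $\sigma_m$; reversing the pre-max piece, one obtains a walk conditioned to stay positive whose exponential functional $\sum_{k\leq\sigma_m} e^{-(\MS_m-S_k)}$ converges in law to $\Hinf^{(-)}_\infty$, while the post-max piece directly gives $\sum_{k\geq\sigma_m} e^{-(\MS_m-S_k)}\to \Hinf_\infty$. On $\{\MS_m-S_m\in[\theta r-K,\theta r+K]\}$, writing $\MS_m-S_m=\theta r+s$ gives $e^{\theta r}/H_z\approx e^{-s}/(\Hinf_\infty+\Hinf^{(-)}_\infty-1)$, so integrating over $s\in[-K,K]$ produces the factor $\int_{-K}^K\E[\cf(e^{-s}/(\Hinf_\infty+\Hinf^{(-)}_\infty-1))]\,ds$, while the spatial integral over $u$ becomes $\int_A^B\mathcal{G}(1/\sqrt{u},\theta/\sqrt{u})\,du/u$, matching $C_3(A,B)$.

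Finally, for the truncation \eqref{1momchi-} I would invoke Markov's inequality after a second-moment computation: under the conditioning $\{\mS\geq 0\}$, the tilted second moment of the inner sum supplies an extra factor decaying as $x\to\infty$ (via the probability that two independent spines coexist above $0$ with controlled spread), yielding the desired $o_x(1)\Ren(x)e^{-x}$ bound. The main obstacle will be the $i=3$ case: one must establish the \emph{joint} convergence of the pre- and post-max exponential functionals together with the overshoot $\MS_m-S_m$, and ensure that this convergence is uniform enough in the constraints to interchange the limit with the $s$-integral. The uniform meander estimates used for $i=1,2$ and the moment inputs for the truncation both rely crucially on the integrability hypotheses \eqref{Integrability}--\eqref{MomCond}.
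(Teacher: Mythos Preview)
Your treatment of \eqref{1momchi+} for $i=1,2$ contains a genuine error. You write that the factor $e^{V(z)}$ inside $\widehat{F}_i$ ``cancels the Many-to-One weight $e^{-V(z)}$, leaving $e^{-x}\,e^{-(1-\theta)r-b}\,\P(\cdots)$''. This is not how the Many-to-One lemma works: the weight $e^{-V(z)}$ is what allows passage to the random walk expectation, so it must stay in place. Applying Many-to-One to $\E_x[\sum_{|z|=m}e^{-V(z)}\widehat{F}_2(z,r,k_0)]$ yields
\[
e^{-x}\,\E_x\bigl[e^{S_m-(1-\theta)r-b};\ \mS_m\geq 0,\ \MS_m\leq r+t_r,\ S_m\leq(1-\theta)r+b\bigr],
\]
with the factor $e^{S_m}$ still inside the expectation. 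Your expression $e^{-x}e^{-(1-\theta)r-b}\P(\mS_m\geq -x,\ldots)$ has the wrong order: the probability is $\asymp \Ren(x)/\sqrt{m}$, so summing over $m\in[Ar^2,Br^2]$ and multiplying by $e^{-(1-\theta)r}$ gives something of order $r\,e^{-(1-\theta)r}\to 0$, not a positive constant. The correct mechanism is that the exponential tilt $e^{S_m-(1-\theta)r-b}$ localises $S_m$ near $(1-\theta)r$, producing a $1/m$ asymptotic (this is the content of the paper's estimate \eqref{eSmScvg}), and then $\sum_{m\in[Ar^2,Br^2]}1/m\to\int_A^B du/u$. Your meander heuristic (``Tanaka-type asymptotic for $\P(\mS_m\geq -x)$ times a Brownian meander probability'') misses this localisation entirely.

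For $i=3$ your description of splitting at the maximum and obtaining the joint law of the pre- and post-maximum exponential functionals is qualitatively what the paper does (encoded in \eqref{cvgeSMS}), so that part is on the right track. For \eqref{1momchi-}, however, ``Markov's inequality after a second-moment computation'' is too vague to constitute an argument. The paper's proof passes to the size-biased measure $\widehat{\Q}_x$ via Lyons' change of measure, decomposes along the spine into the spine term $UB_1$ and the brothers' contributions $UB_2$, and then splits $UB_2$ according to whether the brother branches off before or after generation $m/2$; each piece requires a separate random walk estimate, and the $o_x(1)$ comes from bounds like $\E_x[\sum_{j\geq 0}e^{-S_j/4}\ind{\mS_j\geq 0}]=o_x(1)\Ren(x)$. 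Your one-line allusion to ``two independent spines'' does not capture this structure, and in particular gives no indication of how the truncation level $\delta/(Br^2)$ interacts with the random walk barriers for $i=3$.
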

By \eqref{cvgrenewalf}, $\Ren(u)\sim \cb_\Ren u$ as $u\to\infty$. Recall also that the derivative martingale $D_{k_0}=\sum_{|u|=k_0}V(u)e^{-V(u)}$ converges a.s. to some non-negative limit $D_\infty$. As a result, we obtain
\[
\lim_{k_0\to\infty}\lim_{r\to\infty}\E[e^{-\lambda \tilde{\chi}_i(A,B,r, k_0)}\vert \mathcal{F}_{k_0}]=\exp\{-\lambda \cb_\Ren C_i(A,B)D_\infty\}.
\]
By Lemma B.1 of \cite{BM19} and a Cantor diagonal extraction argument, this yields convergence in probability of $\tilde{\chi}_i(A,B,r, k_0(r))$ towards $\cb_\Ren C_i(A,B)D_\infty$. In view of \eqref{infVchi}, we obtain the convergence in probability of $\chi_i(A,B,r)$ towards $\Cb_i(A,B)D_\infty$ under $\P$ (hence under $\P^*$) with $\Cb_i(A,B)=\cb_\Ren C_i(A,B)$. Note that $\int_{\R}\E[\cf(\frac{e^{-s}}{ \Hinf_\infty+\Hinf^{(-)}_\infty-1})]ds=1$. So Lemma \ref{SHnorm1ex} holds and finally Proposition \ref{1Ex} holds with
\[
\Lambda_1(\theta)=\cb_\Ren \int_0^\infty \mathcal{G}(\frac{1}{\sqrt{u}},\frac{\theta}{\sqrt{u}})\frac{\d u}{u}.
\]
And Proposition \ref{mEx} holds with
\[
\Lambda_0(\theta)=\cb_\Ren\frac{\cb_+}{\sigma}\int_0^\infty\mathcal{C}_0(\frac{\theta}{\sqrt{u}},\frac{1-\theta}{\sqrt{u}})\frac{\d u}{u}=\frac{\sqrt{2}}{\sqrt{\pi}\sigma^2}\int_0^\infty\mathcal{C}_0(\frac{\theta}{\sqrt{u}},\frac{1-\theta}{\sqrt{u}})\frac{\d u}{u},
\]
because of \eqref{prodc}.

In order to conclude \eqref{keycvginp}, we only need to prove Lemma \ref{firstmomentchi} mainly for $i=2,3$. 

\begin{proof}[Proof of Lemma \ref{firstmomentchi}]

\textbf{Proof of \eqref{1momchi+}.} By Many-to-one lemma, we have
\[
\E_x[\hat{\chi}_2]=e^{-x}\sum_{Ar^2-k_0\leq m\leq Br^2-k_0}\E_x\left[e^{S_m-(1-\theta)r-b}; \mS_m\geq0, \MS_m\leq r+t_r, S_m\leq (1-\theta)r+b\right]
\]
%\begin{align*}
%\E_x[\hat{\chi}_2]=&e^{-x}\sum_{Ar^2-k_0\leq m\leq Br^2-k_0}\E_x\left[e^{S_m-(1-\theta)r-b}; \mS_m\geq0, \MS_m\leq r+t_r, S_m\leq (1-\theta)r+b\right];\\
%\E_x[\hat{\chi_3}]=&e^{-x}\sum_{Ar^2-k_0\leq m\leq Br^2-k_0}\E_x\left[f(\frac{e^{\theta r}}{H_m^S})\ind{\mS_m\geq0, \MS_m\geq r+t_r, \max_{k\leq m}(\MS_k-S_k)\leq r+s_r, \MS_m-S_m\in[\theta r-t_r, \theta r+t_r]}\right].
%\end{align*}
By \eqref{eSmScvg}, as $r\to\infty$, 
\begin{align*}
\E_x[\hat{\chi}_2]=&\Ren(x)e^{-x}\sum_{Ar^2-k_0\leq m\leq Br^2-k_0}\frac{1+o_r(1)}{m}\mathcal{C}_0(\frac{r}{\sqrt{m}},\frac{(1-\theta)r}{\sqrt{m}})\\
=&\Ren(x)e^{-x}(1+o_r(1))\int_{A-\frac{k_0}{r^2}}^{B-\frac{k_0}{r^2}}\frac{\cb_+}{\sigma}\mathcal{C}_0(\frac{\theta}{\sqrt{u}},\frac{1-\theta}{\sqrt{u}})\frac{du}{u}
\end{align*}
which converges to $\Ren(x)e^{-x}\frac{\cb_+}{\sigma}\int_A^B\mathcal{C}_0(\frac{\theta}{\sqrt{u}},\frac{1-\theta}{\sqrt{u}})\frac{du}{u}$. By \eqref{eSmSMScvg} instead of \eqref{eSmScvg}, we get \eqref{1momchi+} for $i=1$. Moreover, we get that
\[
C_1(A,B)=C_2(A,B)=\int_A^B\frac{\cb_+}{\sigma}\mathcal{C}_0(\frac{\theta}{\sqrt{u}},\frac{1-\theta}{\sqrt{u}})\frac{du}{u}.
\]

For $i=3$, by \eqref{cvgeSMS}, as $r\to\infty$,
\begin{align*}
\E_x[\hat{\chi}_3]=&e^{-x}\Ren(x)\sum_{Ar^2-k_0\leq m\leq Br^2-k_0}\int_{-K}^{K}\E[\cf(\frac{e^{-s}}{\Hinf_\infty+\Hinf^{(-)}_\infty}-1)]ds \frac{1+o_r(1)}{m}\mathcal{G}(\frac{r}{\sqrt{m}}, \frac{\theta r}{\sqrt{m}})\\
\rightarrow&\Ren(x)e^{-x}\int_{-K}^{K}\E[\cf(\frac{e^{-s}}{\Hinf_\infty+\Hinf^{(-)}_\infty-1})]ds \int_A^B \mathcal{G}(\frac{1}{\sqrt{u}},\frac{\theta}{\sqrt{u}})\frac{du}{u}
\end{align*}

\textbf{Proof of \eqref{1momchi-}.} First, by Markov inequality,
\begin{align*}
\E_x[\widehat{\chi}_i-\widehat{\chi}_i^{(\delta)}]=& \sum_{Ar^2-k_0\leq m\leq Br^2-k_0}\E_x\left[\sum_{|z|=m}e^{-V(z)}\widehat{F}_i(z,r,k_0)\ind{\sum_{|z|=m}e^{-V(z)}\widehat{F}_i(z,r,k_0)> \frac{\delta}{Br^2}}\right]\\
\leq & \sum_{m=Ar^2-k_0}^{Br^2-k_0}\E_x\left[\sum_{|z|=m}e^{-V(z)}\widehat{F}_i(z,r,k_0)\left(\frac{Br^2\sum_{|z|=m}e^{-V(z)}\widehat{F}_i(z,r,k_0)}{\delta}\wedge 1\right)\right].
\end{align*}
Note that $\widehat{F}_1\leq \widehat{F}_2$. So, we only need to treat it for $i=2, 3$.
By Lyons' change of measure and Proposition \ref{spinedecomp}, we then get that
\begin{align}
&\E_x[\widehat{\chi}_i-\widehat{\chi}_i^{(\delta)}]\nonumber\\%\leq \E_x\left[\sum_{Ar^2-k_0\leq m\leq Br^2-k_0} \sum_{|z|=m}e^{-V(z)}\widehat{F}_i(z,r,k_0)\left(\frac{Br^2}{\delta}\sum_{|z|=m}e^{-V(z)}\widehat{F}_i(z,r,k_0) \wedge 1\right)\right]\nonumber\\
\leq& \sum_{m=Ar^2-k_0}^{Br^2-k_0}e^{-x}\E_{\widehat{\Q}_x}\left[\widehat{F}_i(w_m,r,k_0)\left([\frac{Br^2}{\delta}\sum_{j=1}^m\sum_{u\in\Omega(w_j)}\sum_{|z|=m, z\geq u}e^{-V(z)}\widehat{F}_i(z,r,k_0)+e^{-V(w_m)}\widehat{F}_i(w_m,r,k_0)]\wedge 1\right)\right]\nonumber\\
\leq &UB_1(A,B,r,i)+UB_2(A,B,r,i),% \sum_{Ar^2-k_0\leq m\leq Br^2-k_0}e^{-x}\E_{\Q_x}\left[\widehat{F}_i(w_m,r,k_0)\left(\frac{Br^2}{\delta}e^{-V(w_m)}\widehat{F}_i(w_m,r,k_0) \wedge 1\right)\right]\\
%&+ \sum_{Ar^2-k_0\leq m\leq Br^2-k_0}e^{-x}\Q_x\left[\widehat{F}_i(w_m,r,k_0)\left(\frac{Br^2}{\delta}(\sum_{j=1}^m\sum_{u\in\Omega(w_j)}\sum_{|z|=m, z\geq u}e^{-V(z)}\widehat{F}_i(z,r,k_0)) \wedge 1\right)\right].
\end{align}
where 
\begin{align*}
UB_1(A,B,r,i):=& \sum_{Ar^2-k_0\leq m\leq Br^2-k_0}e^{-x}\E_{\widehat{\Q}_x}\left[\widehat{F}_i(w_m,r,k_0)\left(\frac{Br^2}{\delta}e^{-V(w_m)}\widehat{F}_i(w_m,r,k_0) \wedge 1\right)\right],\\
UB_2(A,B,r,i):=& \sum_{Ar^2-k_0\leq m\leq Br^2-k_0}e^{-x}\E_{\widehat{\Q}_x}\left[\widehat{F}_i(w_m,r,k_0)\left([\frac{Br^2}{\delta}\sum_{j=1}^m\sum_{u\in\Omega(w_j)}\sum_{|z|=m, z\geq u}e^{-V(z)}\widehat{F}_i(z,r,k_0)]\wedge 1\right)\right].
\end{align*}
Observe that for $i=3$, by Proposition \ref{spinedecomp} and \eqref{mSbd},
\begin{align*}
UB_1(A,B,r,3)\leq &  \sum_{Ar^2-k_0\leq m\leq Br^2-k_0}\frac{Br^2 e^{-x}}{\delta}\E_x\left[e^{-S_m}\ind{\mS_m\geq0, \MS_m\geq r+t_r, \MS_m-S_m\in[\theta r-K,\theta r+K]}\right]\\
\leq & \sum_{Ar^2-k_0\leq m\leq Br^2-k_0}\frac{Br^{2}e^{-(1-\theta)r-t_r+K}}{\delta\sqrt{m}}c_3(1+x)e^{-x}=o_r(1)\Ren(x)e^{-x}.
\end{align*}
Note also that as $\widehat{F}_2\leq 1$, by \eqref{mSSbd},
\begin{align*}
UB_1(A,B,r,2)\leq &\sum_{Ar^2-k_0\leq m\leq Br^2-k_0}e^{-x}\frac{Br^2}{\delta}\E_{\widehat{\Q}_x}\left[\widehat{F}_2(w_k,r,k_0)e^{-V(w_m)}\right]\\
\leq &\sum_{Ar^2-k_0\leq m\leq Br^2-k_0}e^{-x}\frac{Br^2e^{-(1-\theta)r-b}}{\delta}\P_x(\mS_m\geq0,S_m\le (1-\theta)r+b)\\
\leq &\sum_{Ar^2-k_0\leq m\leq Br^2-k_0}e^{-x}\frac{Br^2e^{-(1-\theta)r-b}}{\delta}\frac{c_4(1+x)(1+r)^2}{m^{3/2}}=o_r(1)\Ren(x)e^{-x}.
\end{align*}
Recall that $\mathscr{G}=\sigma\{(w_k, V(w_k))_{k\geq0}, (u, V(u))_{u\in\cup_{k\geq0}\Omega(w_k)}\}$. So,
\begin{multline}\label{sumOmegaF3}
UB_2(A,B,r,i)\leq\\
\sum_{Ar^2-k_0\leq m\leq Br^2-k_0}e^{-x}\E_{\widehat{\Q}_x}\left[\widehat{F}_i(w_m,r,k_0)\left((\frac{Br^2}{\delta}\sum_{j=1}^m\sum_{u\in\Omega(w_j)}\E_{\widehat{\Q}_x}\left[\sum_{|z|=m, z\geq u}e^{-V(z)}\widehat{F}_i(z,r,k_0)\Big\vert \mathscr{G}\right]) \wedge 1\right)\right],
\end{multline}
where for $i=2$ and $u\in\Omega(w_j)$, by branching property at $u$ and then by \eqref{mSSbd},
\begin{align}\label{ub2for2}
&\E_{\widehat{\Q}_x}\left[\sum_{|z|=m, z\geq u}e^{-V(z)}\widehat{F}_2(z,r,k_0)\Big\vert \mathscr{G}\right]\nonumber\\
\leq &e^{-V(u)}\ind{\mV(u)\geq0}\E_{V(u)}[e^{S_{m-j}-(1-\theta)r-b}; \mS_{m-j}\geq0, S_{m-j}\leq (1-\theta)r-b]\nonumber\\
\leq &e^{-V(u)}\ind{\mV(u)\geq0}\frac{c_5(1+V(u))(1+r)}{(m-j+1)^{3/2}}\ind{j<m/2}+e^{-V(u)}\ind{\mV(u)\geq0}\ind{j\ge m/2},
\end{align}
and for $i=3$ and $u\in\Omega(w_j)$,
\begin{align*}
&\E_{\widehat{\Q}_x}\left[\sum_{|z|=m, z\geq u}e^{-V(z)}\widehat{F}_3(z,r,k_0)\vert \mathscr{G}\right]\\
=&\E_{\widehat{\Q}_x}\left[\sum_{|z|=m, z\geq u}e^{-V(z)}\widehat{F}_3(z,r,k_0)\ind{\MV(z)=\MV(u)}\vert \mathscr{G}\right]+\E_{\widehat{\Q}_x}\left[\sum_{|z|=m, z\geq u}e^{-V(z)}\widehat{F}_3(z,r,k_0)\ind{\MV(z)>\MV(u)}\vert \mathscr{G}\right]\\
\leq &e^{-V(u)}\ind{\mV(u)\geq0, \MV(u)-V(u)\leq r+s_r, \MV(u)\geq r+t_r}\P_{V(u)}(\mS_{m-j}\geq0, x-S_{m-j}\in[\theta r-K,\theta r+K])\vert_{x=\MV(u)}\\
+&e^{-V(u)}\ind{\mV(u)\geq0}\P_{V(u)}(\mS_{m-j}\geq0, \MS_{m-j}-S_{m-j}\in[\theta r-K,\theta r+K], \max_{k\leq m-j}(\MS_k-S_k)\leq r+s_r, \MS_{m-j}\geq r+t_r),
\end{align*}
where by \eqref{mSMSMS-Sbd} for $j< m/2$ and $V(u)\leq r/2$, one has
\begin{align}\label{mSMSMS-Sbd+}
&\P_{V(u)}(\mS_{m-j}\geq0, \MS_{m-j}-S_{m-j}\in[\theta r-K,\theta r+K], \max_{k\leq m-j}(\MS_k-S_k)\leq r+s_r, \MS_{m-j}\geq r+t_r)\nonumber\\
\leq & \ind{j\ge m/2}+\ind{j< m/2, V(u)\ge r/2}+c_6(1+V(u))\frac{(1+K^2)(1+r)}{(m-j)^{3/2}}\ind{j< m/2, V(u)\le r/2}.
\end{align}
%Note that if $V(u)\leq r/2$ and $j\le m/2$, one has
%\begin{align*}
%&\P_{V(u)}(\mS_{m-j}\geq0, \MS_{m-j}-S_{m-j}\in[\theta r-K,\theta r+K], \max_{k\leq m-j}(\MS_k-S_k)\leq r+s_r, \MS_{m-j}\geq r+t_r)\\
%\leq & (1+V(u))\frac{(1+2K)(1+\theta r+K)}{(m-j)^{3/2}}.
%\end{align*}
Moreover, by \eqref{mSSbd}, one sees that
\begin{align}\label{ub2for3}
&\E_{\widehat{\Q}_x}\left[\sum_{|z|=m, z\geq u}e^{-V(z)}\widehat{F}_3(z,r,k_0)\vert \mathscr{G}\right]\nonumber\\
\leq &e^{-V(u)}\ind{\mV(u)\geq0, \MV(u)-V(u)\leq r+s_r, \MV(u)\geq r+t_r}[\frac{c_7(1+V(u))(\MV(u)-\theta r+K)(1+2K)}{(m-j)^{3/2}}\wedge 1]\\
&+e^{-V(u)}\ind{\mV(u)\geq0}\ind{j\geq m/2}+e^{-V(u)}\ind{\mV(u)\geq0}\ind{j<m/2, V(u)\ge r/2}\nonumber\\
&+c_6(1+V(u))e^{-V(u)}\ind{\mV(u)\geq0, V(u)\leq r/2, j<m/2}\frac{(1+K^2)(1+ r)}{(m-j)^{3/2}}\nonumber\\
\leq & c_8(1+V(u))^2e^{-V(u)}\ind{\mV(u)\geq0}\frac{(1+K^2)(1+r)}{(m-j)^{3/2}}\ind{j< m/2}+2e^{-V(u)}\ind{\mV(u)\geq0}\ind{j\geq m/2}\nonumber\\
&+e^{-r/4}e^{-V(u)/2}\ind{\mV(u)\geq0}\ind{j<m/2}.
\end{align}
Plugging \eqref{ub2for2} or \eqref{ub2for3} to \eqref{sumOmegaF3} yields that%$\sum_{ar^2-k_0\leq m\leq Ar^2-k_0}e^{-x}\Q_x\left[\widehat{F}_3(z,r,k_0)\left(\frac{r^2}{\delta}(\sum_{j=1}^m\sum_{u\in\Omega(w_j)}\sum_{|z|=m, z\geq u}e^{-V(z)}\widehat{F}_3(z,r,k_0)) \wedge 1\right)\right]$ is bounded by 
\begin{equation}\label{sumOmegaF2bd}
UB_2(A,B,r,2)\leq UB_2^<(A,B,r,2)+UB_2^{\ge}(A,B,r,2)
\end{equation}
where
\begin{align*}
UB_2^<(A,B,r,2):= &\sum_{Ar^2-k_0\leq m\leq Br^2-k_0}e^{-x}\frac{Br^3}{\delta m^{3/2}}\sum_{j=1}^{m/2}\E_{\widehat{\Q}_x}\left[\widehat{F}_2(w_m,r,k_0)\sum_{u\in\Omega(w_j)}(1+V(u))e^{-V(u)}\ind{\mV(u)\geq0}\right]\nonumber\\
UB_2^{\ge}(A,B,r,2):=&\sum_{Ar^2-k_0\leq m\leq Br^2-k_0}e^{-x} \E_{\widehat{\Q}_x}\left[\widehat{F}_2(w_m,r,k_0)\left(\sum_{j=m/2}^m\sum_{u\in\Omega(w_j)}\frac{Br^2}{\delta}e^{-V(u)}\ind{\mV(u)\geq0}\right)\wedge 1\right]
\end{align*}
and that
\begin{equation}\label{sumOmegaF3bd}
UB_2(A,B,r,3)\leq UB_2^{(1)}(A,B,r,3)+UB_2^{(2)}(A,B,r,3)+UB_2^{(3)}(A,B,r,3),
\end{equation}
where
\begin{align*}
%&\Q_x\left[\widehat{F}_3(z,r,k_0)\left(\frac{r^2}{\delta}(\sum_{j=1}^m\sum_{u\in\Omega(w_j)}\sum_{|z|=m, z\geq u}e^{-V(z)}\widehat{F}_3(z,r,k_0)) \wedge 1\right)\right]\\
UB_2^{(1)}(A,B,r,3):=&\sum_{m=Ar^2-k_0}^{Br^2-k_0}e^{-x}\frac{2B(1+K^2)r^3}{\delta m^{3/2}} \E_{\widehat{\Q}_x}\left[\widehat{F}_3(w_m,r,k_0)\left(\sum_{j=1}^{m/2}\sum_{u\in\Omega(w_j)}(1+V(u))^2e^{-V(u)}\ind{\mV(u)\geq0}\right)\right]\nonumber\\
UB_2^{(2)}(A,B,r,3):=&\sum_{Ar^2-k_0\leq m\leq Br^2-k_0}e^{-x}\frac{Br^2}{\delta}e^{-r/4}\E_{\widehat{\Q}_x}\left[\widehat{F}_3(w_m,r,k_0)\left(\sum_{j=1}^{m/2}\sum_{u\in\Omega(w_j)}e^{-V(u)/2}\ind{\mV(u)\geq0}\right)\right]\nonumber\\
UB_2^{(3)}(A,B,r,3):=&\sum_{Ar^2-k_0\leq m\leq Br^2-k_0}e^{-x} \E_{\widehat{\Q}_x}\left[\widehat{F}_3(w_m,r,k_0)\left(\sum_{j=m/2}^m\sum_{u\in\Omega(w_j)}2\frac{Br^2}{\delta}e^{-V(u)}\ind{\mV(u)\geq0}\right)\wedge 1\right].
%&+\Q_x\left[\widehat{F}_3(z,r,k_0)\left(\frac{r^2}{\delta}(\sum_{j=1}^{m/2}\sum_{u\in\Omega(w_j)}(1+K^2)(1+V(u))e^{-V(u)}\ind{\mV(u)\geq0, V(u)\leq r/2}\frac{1+\theta r}{(m-j)^{3/2}}\right)\right]
\end{align*}
In the rest part, we will check that all these terms are $o_x(1)\Ren(x)e^{-x}$ for $r\to\infty$ and then $x\gg1$.

We will first treat $UB_2^<(A,B,r,2)$, $UB_2^{(1)}(A,B,r,3)$ and $UB_2^{(2)}(A,B,r,3)$ in the similar way. For any $u\in\T$, let $\Delta V(u)=V(u)-V(u^*)$ be its displacement. Write $\Delta_+V(u)$ for $\Delta V(u) \vee 0$. Then, 
\[
\sum_{u\in\Omega(w_j)}(1+V(u))e^{-V(u)}\ind{\mV(u)\geq0}\leq\sum_{u\in\Omega(w_j)}(1+V(u))^2e^{-V(u)}\ind{\mV(u)\geq0}\leq e^{-V(w_{j-1})/2}\ind{\mV(w_{j-1})\geq0} V_j^+,
\]
with $V_j^+:=\sum_{u\in\Omega(w_j)}e^{-\Delta V(u)/2}$. Consequently,
\begin{multline*}
UB_2^<(A,B,r,2)\\\leq 
\sum_{m=Ar^2-k_0}^{Br^2-k_0}e^{-x}\frac{Br^3}{\delta m^{3/2}}\sum_{j=1}^{m/2}\E_{\widehat{\Q}_x}\left[e^{V(w_m)-(1-\theta)r-b}\ind{\mV(w_m)\geq0, \MV(w_m)\leq r+t_r, V(w_m)\leq (1-\theta)r+b}e^{-V(w_{j-1})/2}V_j^{+}\right],
\end{multline*}
which by Markov property at time $j$ and then by \eqref{eSmSSsmallbd}, is bounded by 
\begin{align*}
%&UB_2^<(A,B,r,2)\\
%\leq &\sum_{m=Ar^2-k_0}^{Br^2-k_0}e^{-x}\frac{Br^3}{\delta m^{3/2}}\sum_{j=1}^{m/2}\E_{\widehat{\Q}_x}\left[e^{V(w_m)-(1-\theta)r-b}\ind{\mV(w_m)\geq0, \MV(w_m)\leq r+t_r, V(w_m)\leq (1-\theta)r+b}e^{-V(w_{j-1})/2}V_j^{+}\right]\\
&\sum_{m=Ar^2-k_0}^{Br^2-k_0}e^{-x}\frac{Br^3}{\delta m^{3/2}}\sum_{j=1}^{m/2}\E_{\widehat{\Q}_x}\left[\ind{\mV(w_j)\geq0}e^{-V(w_{j-1})/2}V_j^+\E_{V(w_j)}[e^{S_{m-j}-(1-\theta)r+b}\ind{\mS_{m-j}\geq0, S_{m-j}\leq (1-\theta)r-b}]\right]\\
\leq &\sum_{m=Ar^2-k_0}^{Br^2-k_0}e^{-x}\frac{Br^3}{\delta m^{3/2}}\sum_{j=1}^{m/2}\E_{\widehat{\Q}_x}\left[\ind{\mV(w_j)\geq0}e^{-V(w_{j-1})/2}V_j^+(1+V(w_j))\right]\frac{c_9r}{(m-j)^{3/2}}.
%\leq &\sum_{ar^2-k_0\leq m\leq Ar^2-k_0}e^{-x}\frac{Ar}{\delta m^{3/2}}\sum_{j=1}^{m/2}E_x\left[\ind{\mS_{j-1}\geq0}e^{-S_{j-1}/4}\right]\leq e^{-x}\E_x\left[\sum_{j\geq0}e^{-S_j/4}\ind{\mS_j\geq0}\right]=o_x(1)\Ren(x)e^{-x},
\end{align*}
Here $(1+V(w_j))\ind{\mV(w_j)\geq0}\le (1+V(w_{j-1}))\ind{\mV(w_{j-1})\geq0}(1+\Delta_+ V(w_j))$
and then Markov property at time $j-1$ implies that
\begin{multline*}
\E_{\widehat{\Q}_x}\left[\ind{\mV(w_j)\geq0}e^{-V(w_{j-1})/2}V_j^+(1+V(w_j))\right]\\
\leq \E_{\widehat{\Q}_x}\left[\ind{\mV(w_{j-1})\geq0}(1+V(w_{j-1}))e^{-V(w_{j-1})/2}\right]\E_{\widehat{\Q}}[V_1^+(1+V_+(w_1))],
\end{multline*}
where by Proposition \ref{spinedecomp},
\[
\E_{\widehat{\Q}}[V_1^+(1+V_+(w_1))]=\E\left[\sum_{|u|=1}(1+V_+(u))e^{-V(u)}\sum_{|v|=1,v\neq u}e^{-V(v)/2}\right].%\leq \E\left[(\sum_{|u|=1}(1+V_+(v))e^{-V(v)})^2\right].
\]
By Cauchy-Schwartz inequality and \eqref{MomCond}, 
\begin{align*}
\E_{\widehat{\Q}}[V_1^+(1+V_+(w_1))]^2\leq &\E\left[(\sum_{|u|=1}(1+V_+(u))e^{-V(u)})^2\right]\E\left[(\sum_{|u|=1}e^{-V(u)/2})^2\right]\\
\leq &\E\left[(\sum_{|u|=1}(1+V_+(u))e^{-V(u)})^2\right]\E\left[N \sum_{|u|=1}e^{-V(u)}\right]\\
\leq &\E\left[(\sum_{|u|=1}(1+V_+(u))e^{-V(u)})^2\right]\E[N^2]\E\left[(\sum_{|u|=1}e^{-V(u)})^2\right]<\infty.
\end{align*}
Similarly, we also have $\E_{\widehat{\Q}}[V_1^+(1+V_+(w_1))^2]<\infty$.
It follows that
\begin{align*}
&UB_2^<(A,B,r,2)\\
\leq &\sum_{m=Ar^2-k_0}^{Br^2-k_0}e^{-x}\frac{Br^3}{\delta m^{3/2}}\sum_{j=1}^{m/2}\E_{\widehat{\Q}_x}\left[\ind{\mV(w_{j-1})\geq0}(1+V(w_{j-1}))e^{-V(w_{j-1})/2}\right]\frac{c_9r}{(m-j)^{3/2}}\\
\leq &\sum_{Ar^2-k_0\leq m\leq Br^2-k_0}e^{-x}\frac{c_{10}r^4}{\delta m^{3}}\sum_{j=1}^{m/2}\E_x\left[\ind{\mS_{j-1}\geq0}e^{-S_{j-1}/4}\right]\leq e^{-x}\E_x\left[\sum_{j\geq0}e^{-S_j/4}\ind{\mS_j\geq0}\right],%=o_x(1)\Ren(x)e^{-x}
\end{align*}
which by \eqref{sumeSmSbd} shows that $UB_2^<(A,B,r,2)=o_x(1)\Ren(x)e^{-x}$. For $UB_2^{(1)}(A,B,r,3)$, as $\cf(t)\leq 1$, we have
\begin{align}\label{elementUB2for3}
& \E_{\widehat{\Q}_x}\left[\widehat{F}_3(w_m,r,k_0)\left(\sum_{j=1}^{m/2}\sum_{u\in\Omega(w_j)}(1+V(u))^2e^{-V(u)}\ind{\mV(u)\geq0}\right)\right]\\
\leq &\sum_{j=1}^{m/2}\E_{\widehat{\Q}_x}\left[\ind{\mV(w_m)\geq0, \MV(w_m)\geq r+t_r, \max_{k\leq m}(\MV(k)-V(k))\leq r+s_r, \MV(w_m)-V(w_m)\in[\theta r-K,\theta r+K]}e^{-V(w_{j-1})/2}V_{j}^+\right]\nonumber\\
=&\sum_{j=1}^{m/2}\sum_{i=1}^{m}\E_{\widehat{\Q}_x}\left[\ind{\mV(w_m)\geq0, \tau_m^V=i, \MV(w_m)\geq r+t_r, \max_{k\leq m}(\MV(k)-V(k))\leq r+s_r, \MV(w_m)-V(w_m)\in[\theta r-K,\theta r+K]}e^{-V(w_{j-1})/2}V_{j}^+\right]\nonumber
\end{align}
where $\tau^V_m:=\inf\{i\leq m: V(w_i)=\MV(w_m)\}$. 

On the one hand, if $\tau^V_m\ge j+1$, by Markov property at time $j$, one sees that
\begin{align*}
&\sum_{j=1}^{m/2}\sum_{i=j+1}^{m}\E_{\widehat{\Q}_x}\left[\ind{\mV(w_m)\geq0, \tau_m^V=i, \MV(w_m)\geq r+t_r, \max_{k\leq m}(\MV(k)-V(k))\leq r+s_r, \MV(w_m)-V(w_m)\in[\theta r-K,\theta r+K]}e^{-V(w_{j-1})/2}V_{j}^+\right]\\
 &\leq \sum_{j=1}^{m/2}\E_{\widehat{\Q}_x}\left[\ind{\mV(w_{j})\geq0}e^{-V(w_{j-1})/2}V_{j}^+\E_{V(w_{j})}\left[\ind{\mS_{m-j}\geq0, \MS_{m-j}\geq r+t_r, \max_{k\leq m-j}(\MS_k-S_k)\leq r+s_r, \MS_{m-j}-S_{m-j}\in[\theta r-K,\theta r+K]}\right]\right],
%\leq & e^{-r/8}\Q_x\left[\ind{\mV(w_{j})\geq0, \max_{k\leq j}(\MV(w_k)-V(w_k))\leq r+s_r}e^{-V(w_{j})/4}\right]\E[\sum_{|u|=1}e^{-\frac{3}{4}V(u)}\sum_{|v|=1,v\neq u}(1+V_+(v))e^{-V(v)}]\\
%&+\Q_x\left[\ind{\mV(w_{j})\geq0, \max_{k\leq j}(\MV(w_k)-V(w_k))\leq r+s_r}e^{-V(w_{j})/4}\right]\E[(\sum_{|v|=1}(1+V_+(v))e^{-V(v)})^2]\frac{(1+K^2)r}{m^{3/2}}
\end{align*}
which by \eqref{mSMSMS-Sbd} and \eqref{sumeSmSbd} is bounded by
\begin{align*}
&c_{11}\sum_{j=1}^{m/2}\E_{\widehat{\Q}_x}\left[\ind{\mV(w_{j})\geq0}e^{-V(w_{j-1})/2}V_{j}^+(1+V(w_j))\frac{(1+K^2)(1+r)}{(m-j)^{3/2}}\right]\\
\leq &\frac{c_{11}(1+K^2)(1+r)}{m^{3/2}}\E_x\left[\sum_{j\geq1}(1+S_{j-1})e^{-S_{j-1}/2}\ind{\mS_{j-1}\geq0}\right]\E_{\widehat{\Q}}\left[(1+V_+(w_1)V_1^+\right]\\
=&\frac{c_{11}(1+K^2)(1+r)}{m^{3/2}}o_x(1)\Ren(x).
\end{align*}
On the other hand, if $\tau^V_m\leq j$, again by Markov property at time $j$,
\begin{align*}
&\sum_{j=1}^{m/2}\sum_{i=1}^{j}\E_{\widehat{\Q}_x}\left[\ind{\mV(w_m)\geq0, \tau_m^V=i, \MV(w_m)\geq r+t_r, \max_{k\leq m}(\MV(k)-V(k))\leq r+s_r, \MV(w_m)-V(w_m)\in[\theta r-K,\theta r+K]}e^{-V(w_{j-1})/2}V_{j}^+\right]\\
\leq &\sum_{j=1}^{m/2}\E_{\widehat{\Q}_x}\left[\ind{\mV(w_{j})\geq0}e^{-V(w_{j-1})/2}V_{j}^+\P_{V(w_j)}(\mS_{m-j}\geq0, x-S_{m-j}\in[\theta r-K,\theta r+K])\vert_{x=\MV(w_j)}\right]
%\leq & \Q_x\left[\ind{\mV(w_{j+1})\geq0, \tau_{j+1}^V=i, \MV(w_{j+1})\geq r+t_r, \max_{k\leq j+1}(\MV(w_k)-V(w_k))\leq r+s_r}e^{-V(w_j)/2}V_{j+1}^+\frac{(1+K^2)(1+V(w_{j+1}))(1+V(w_i)-\theta r)}{(m-j)^{3/2}}\right]\\ 
\end{align*}
where by \eqref{mSSbd},
\[
\P_{V(w_j)}(\mS_{m-j}\geq0, x-S_{m-j}\in[\theta r-K,\theta r+K])\vert_{x=\MV(w_j)}\leq \frac{c_{12}(1+K^2)(1+V(w_{j}))(1+\MV(w_j)-\theta r)}{(m-j)^{3/2}}
\]
which is bounded by $\frac{c_{13}(1+K^2)(1+V(w_{j}))^2(1+r)}{(m-j)^{3/2}}$ because $\MV(w_{j})-V(w_{j})\leq r+s_r$. Again by Markov property at time $j-1$ and \eqref{sumeSmSbd}, we get that
\begin{multline*}
\sum_{j=1}^{m/2}\sum_{i=1}^{j}\E_{\widehat{\Q}_x}\left[\ind{\mV(w_m)\geq0, \tau_m^V=i, \MV(w_m)\geq r+t_r, \max_{k\leq m}(\MV(k)-V(k))\leq r+s_r, \MV(w_m)-V(w_m)\in[\theta r-K,\theta r+K]}e^{-V(w_{j-1})/2}V_{j}^+\right]\\
\leq \frac{c_{13}(1+K^2)(1+r)}{m^{3/2}}\E_x\left[\sum_{j\geq1}(1+S_{j-1})^2e^{-S_{j-1}/2}\ind{\MS_{j-1}\geq0}\right]\E_{\widehat{\Q}}[V_1^+(1+V_+(w_1))^2]\\
=\frac{c_{13}(1+K^2)(1+r)}{m^{3/2}}o_x(1)\Ren(x).
\end{multline*}
Combining these inequalities and going back to \eqref{elementUB2for3},  we have 
\[
\E_{\widehat{\Q}_x}\left[\widehat{F}_3(w_m,r,k_0)\left(\sum_{j=1}^{m/2}\sum_{u\in\Omega(w_j)}(1+V(u))^2e^{-V(u)}\ind{\mV(u)\geq0}\right)\right]\leq \frac{c_{14}(1+K^2)(1+r)}{m^{3/2}}o_x(1)\Ren(x).
\] This implies that
\begin{equation}\label{UB2for31}
UB_2^{(1)}(A,B,r,3)
\leq \sum_{m=Ar^2-k_0}^{Br^2-k_0}e^{-x}\frac{2B(1+K^2)r^3}{\delta m^{3/2}} \frac{c_{14}(1+K^2)(1+r)}{m^{3/2}}o_x(1)\Ren(x)=o_x(1)\Ren(x)e^{-x}.
\end{equation}

Note that $\sum_{u\in\Omega(w_j)}e^{-V(u)/2}\leq e^{-V(w_{j-1})/2}V_j^+$. So similarly as above, 
\begin{equation}\label{UB2for32}
UB_2^{(2)}(A,B,r,3)=o_x(1)\Ren(x)e^{-x}.
\end{equation}
Let us turn to bound $UB_2^{(3)}(A,B,r,3)$ in \eqref{sumOmegaF3bd}. Let $\mV(w_{[j,m]}):=\min_{j\leq k\le m}V(w_k)$ and $S_{[j,m]}:=\min_{j\le k\le m}S_{k}$. Observe that
\begin{align}\label{UB2for33}
&UB_2^{(3)}(A,B,r,3)=\sum_{Ar^2-k_0\leq m\leq Br^2-k_0}e^{-x}\E_{\widehat{\Q}_x}\left[\widehat{F}_3(w_m,r,k_0)\left(\sum_{j=m/2}^m\sum_{u\in\Omega(w_j)}2 \frac{Br^2}{\delta}e^{-V(u)}\ind{\mV(u)\geq0}\right)\wedge 1\right]\nonumber\\
\leq &\sum_{Ar^2-k_0\leq m\leq Br^2-k_0}e^{-x}\E_{\widehat{\Q}_x}\left[\widehat{F}_3(w_m,r,k_0)\ind{\mV(w_{[m/2-1,m]})\leq 6\log r}\right]\nonumber\\
&+\sum_{Ar^2-k_0\leq m\leq Br^2-k_0}e^{-x} \frac{2Br^2}{\delta}\sum_{j=m/2}^m \E_{\widehat{\Q}_x}\left[\ind{\mV(w_n)\geq 0,\mV(w_{[m/2-1,m]})\geq 6\log r}e^{-V(w_{j-1})}\sum_{u\in\Omega(w_j)}e^{-\Delta V(u)}\right].
%\sum_{Ar^2-k_0\leq m\leq Br^2-k_0}e^{-x} \frac{2Ar^2}{\delta}\sum_{j=m/2}^m \P_x(\mS_j\geq0)e^{-6\log r}.
\end{align}
On the one hand, by Proposition \ref{spinedecomp},
\begin{align*}
&\E_{\widehat{\Q}_x}\left[\widehat{F}_3(z,r,k_0)\ind{\mV(w_{[m/2-1,m]})\leq 6\log r}\right]\\
\leq &\E_{\widehat{\Q}_x}\left[\ind{\mV(w_n)\ge0, \MV(w_n)\ge r+t_r, \max_{k\leq n}(\MV(w_k)-V(w_k))\le r+s_r, \MV(w_n)-V(w_n)\in[\theta r-K,\theta r+K]}\ind{\mV(w_{[m/2-1,m]})\leq 6\log r}\right]\\
\leq &\sum_{j=m/2-1}^{m-1}\P_x(\mS_m\geq0, \MS_m\ge r+t_r, \mS_{[m/2-1,m]}=S_j\le 6\log r, \MS_m-S_m\in[\theta r-K,\theta r+K], \max_{k\leq m}(\MS_k-S_k)\leq r+s_r)
%\leq & \sum_{j=m/2-1}^{m-1}\P_x(\mS_j\ge 0, S_j\le 6\log r)\P(\mS_{m-j}\ge0, \max_{k\le m-j}(\MS_k-S_k)\le r+s_r, \MS_{m-j}-S_{m-j}\in[\theta r-K,\theta r+K])
\end{align*}
Recall that $t_r>s_r+6\log r$. So $\MS_m>\MS_j$. By Markov property at time $j$, this leads to the following inequality:
\begin{align*}
&\sum_{j=m/2-1}^{m-1}\P_x(\mS_m\geq0, \MS_m\ge r+t_r, \mS_{[m/2-1,m]}=S_j\le 6\log r, \MS_m-S_m\in[\theta r-K,\theta r+K], \max_{k\leq m}(\MS_k-S_k)\leq r+s_r)\\
\leq & \sum_{j=m/2-1}^{m-1}\P_x(\mS_j\ge 0, S_j\le 6\log r)\P(\mS_{m-j}\ge0, \max_{k\le m-j}(\MS_k-S_k)\le r+s_r, \MS_{m-j}-S_{m-j}\in[\theta r-K,\theta r+K])\\
\leq &\frac{c_{15}(1+x)(6\log r)^2}{m^{3/2}} \sum_{j=m/2-1}^{m-1}\P(\mS_{m-j}\ge0, \max_{k\le m-j}(\MS_k-S_k)\le r+s_r, \MS_{m-j}-S_{m-j}\in[\theta r-K,\theta r+K])
\end{align*}
where the last inequality comes from \eqref{mSSbd}. Then by \eqref{summSMSMS-Sbd}, one gets that
\[
\E_{\widehat{\Q}_x}\left[\widehat{F}_3(z,r,k_0)\ind{\mV(w_{[m/2-1,m]})\leq 6\log r}\right]\leq \frac{c_{16}(1+K)(1+x)(6\log r)^2}{m^{3/2}}
\]
which ensures that $\sum_{Ar^2-k_0\leq m\leq Br^2-k_0}e^{-x}\E_{\widehat{\Q}_x}\left[\widehat{F}_3(z,r,k_0)\ind{\mV(w_{[m/2-1,m]})\leq 6\log r}\right]=o_r(1)\Ren(x)e^{-x}$.

On the other hand, by Markov property at time $j$,
\begin{align*}
&\sum_{Ar^2-k_0\leq m\leq Br^2-k_0}e^{-x} \frac{2Br^2}{\delta}\sum_{j=m/2}^m \E_{\widehat{\Q}_x}\left[\ind{\mV(w_n)\geq 0,\mV(w_{[m/2-1,m]})\geq 6\log r}e^{-V(w_{j-1})}\sum_{u\in\Omega(w_j)}e^{-\Delta V(u)}\right]\\
\leq &\sum_{Ar^2-k_0\leq m\leq Br^2-k_0}e^{-x} \frac{2Br^{-2}}{\delta}\sum_{j=m/2}^m \E_{\widehat{\Q}_x}\left[\ind{\mV(w_{j-1})\ge 0}e^{-V(w_{j-1})/3}\right]\E_{\widehat{\Q}_x}\left[\sum_{u\in\Omega(w_1)}e^{-V(u)}\right]
\end{align*}
where by Proposition \ref{spinedecomp} and \eqref{sumeSmSbd}, 
\[
\sum_{j=m/2}^m \E_{\widehat{\Q}_x}\left[\ind{\mV(w_{j-1})\ge 0}e^{-V(w_{j-1})/3}\right]=\sum_{j=m/2}^m \E_{x}\left[\ind{\mS_{j-1}\ge 0}e^{-S_{j-1}/3}\right]\leq \E_x[\sum_{j\geq0}e^{-S_j/4}]=o_x(1)\Ren(x).
\]
Moreover by Proposition \ref{spinedecomp} and \eqref{MomCond}, 
\begin{equation*}
\E_{\widehat{\Q}_x}\left[\sum_{u\in\Omega(w_1)}e^{-V(u)}\right]\leq \E\left[(\sum_{|u|=1}e^{-V(u)})^2\right]<\infty.
\end{equation*}
We thus deduce that
\begin{equation}\label{UB2for3c}
\sum_{Ar^2-k_0\leq m\leq Br^2-k_0}e^{-x} \frac{2Br^2}{\delta}\sum_{j=m/2}^m \E_{\widehat{\Q}_x}\left[\ind{\mV(w_n)\geq 0,\mV(w_{[m/2-1,m]})\geq 6\log r}e^{-V(w_{j-1})}\sum_{u\in\Omega(w_j)}e^{-\Delta V(u)}\right]=o_x(1)\Ren(x)e^{-x}.
\end{equation}
Going back to \eqref{UB2for33}, we obtain that $UB_2^{(3)}(A,B,r,3)=o_x(1)\Ren(x)e^{-x}$.

It remains to bound $UB_2^{\ge}(A,B,r,2)$ in \eqref{sumOmegaF2bd}. Similarly as above, observe that
\begin{align*}
&UB_2^{\ge}(A,B,r,2)=\sum_{Ar^2-k_0\leq m\leq Br^2-k_0}e^{-x} \E_{\widehat{\Q}_x}\left[\widehat{F}_2(w_k,r,k_0)\left(\sum_{j=m/2}^m\sum_{u\in\Omega(w_j)}\frac{Br^2}{\delta}e^{-V(u)}\ind{\mV(u)\geq0}\right)\wedge 1\right]\\
\leq &\sum_{Ar^2-k_0\leq m\leq Br^2-k_0}e^{-x} \E_{\widehat{\Q}_x}\left[\widehat{F}_2(w_k,r,k_0)\ind{\mV(w_{[m/2-1,m]})\le 6\log r}\right]\\
&+\sum_{Ar^2-k_0\leq m\leq Br^2-k_0}e^{-x} \sum_{j=m/2}^m\frac{Br^2}{\delta}\E_{\widehat{\Q}_x}\left[\ind{\mV(w_n)\geq 0,\mV(w_{[m/2-1,m]})\geq 6\log r}e^{-V(w_{j-1})}\sum_{u\in\Omega(w_j)}e^{-\Delta V(u)}\right]\\
=&\sum_{Ar^2-k_0\leq m\leq Br^2-k_0}e^{-x} \E_{\widehat{\Q}_x}\left[\widehat{F}_2(w_k,r,k_0)\ind{\mV(w_{[m/2-1,m]})\le 6\log r}\right]+o_x(1)\Ren(x)e^{-x},
\end{align*}
where the last line comes from \eqref{UB2for3c}.

For the first term on the right hand side, by Proposition \ref{spinedecomp},
\begin{align*}
&\sum_{Ar^2-k_0\leq m\leq Br^2-k_0}e^{-x} \E_{\widehat{\Q}_x}\left[\widehat{F}_2(w_k,r,k_0)\ind{\mV(w_{[m/2-1,m]})\le 6\log r}\right]\\
\leq &\sum_{Ar^2-k_0\leq m\leq Br^2-k_0}e^{-x}\sum_{j=m/2-1}^{m} \E_x\left[e^{S_m-(1-\theta)r-b}\ind{\mS_m\geq0, S_m\le (1-\theta )r+b, S_j=\mS_{[m/2-1,m]}\le 6\log r}\right]
\end{align*}
which by Markov property at time $j$, is bounded by
\begin{align*}
&\sum_{Ar^2-k_0\leq m\leq Br^2-k_0}e^{-x}\sum_{j=m/2-1}^{m} \E_x\left[\ind{\mS_j\ge0, S_j\le 6\log r}\E[e^{S_{m-j}-[(1-\theta)r+b-v]}\ind{\mS_{m-j}\ge0, S_{m-j}\leq (1-\theta)r+b-v}]\vert_{v=S_j}\right]\\
\leq &\sum_{Ar^2-k_0\leq m\leq Br^2-k_0}e^{-x}\sum_{k=0}^{6\log r}\sum_{j=m/2-1}^{m}\P_x(\mS_j\ge0,S_j\in[k,k+1])e\E[e^{S_{m-j}-[(1-\theta)r+b-k]}\ind{\mS_{m-j}\ge0, S_{m-j}\leq (1-\theta)r+b-k}].
\end{align*}
Then by \eqref{mSSbd} and by \eqref{sumeSmSSsmallbd}, we have
\begin{align*}
&\sum_{Ar^2-k_0\leq m\leq Br^2-k_0}e^{-x} \E_{\widehat{\Q}_x}\left[\widehat{F}_2(w_k,r,k_0)\ind{\mV(w_{[m/2-1,m]})\le 6\log r}\right]\\
\leq &\sum_{Ar^2-k_0\leq m\leq Br^2-k_0}e^{1-x}\sum_{k=0}^{6\log r}\frac{c_{17}(1+x)(2+k)}{m^{3/2}}\sum_{j=m/2-1}^{m}\E[e^{S_{m-j}-[(1-\theta)r+b-k]}\ind{\mS_{m-j}\ge0, S_{m-j}\leq (1-\theta)r+b-k}]\\
\leq &c\sum_{Ar^2-k_0\leq m\leq Br^2-k_0}e^{1-x}\frac{c_{18}(1+x)(6\log r)^2}{m^{3/2}}=o_r(1)\Ren(x)e^{-x}.
\end{align*}
We hence completes the proof of \eqref{1momchi-}.
%%%%%%%%%%%%%%%%%%%%%%%%%%%%%%%%%%%%%%%

\end{proof}
\section{Proof of Lemmas \ref{smallpart}, \ref{largepartbadtime}, \ref{smallpart1ex}, \ref{smallpart1ex+}, \ref{smallpart1exL} and \ref{1exvariance}}\label{lems}

\begin{proof}[Proof of Lemma \ref{smallpart}]
 It suffices to show that 
\begin{align}\label{meansmallpartbadwalk}
\e_{\eqref{meansmallpartbadwalk}}:=\e\left[\sum_{k=1}^{c_0(\log n)^3}\sum_{|x|=k}\ind{x\in A_n(-a_n, -b)}\ind{\overline{L}_x(\tau_n)<n^\theta\textrm{ or } E_x^{(n)}\leq 1}\right]=&o(n^{1-\theta}).\\
\e_{\eqref{meansmallpartbadV}}:=\e\left[\sum_{k=1}^{c_0(\log n)^3}\sum_{|x|=k}\ind{x\notin A_n^+(a_n,b)}\ind{\mV(x)\geq-\alpha}\ind{\overline{L}_x(\tau_n)\geq n^\theta, E_x^{(n)}\geq 2}\right]=&o(n^{1-\theta}).\label{meansmallpartbadV}
\end{align}
\textbf{Proof of \eqref{meansmallpartbadwalk}.} Observe that
\begin{align*}
%&\e\left[\sum_{k=1}^{c_0(\log n)^3}\sum_{|x|=k}\ind{x\in A_n(-a_n, -b)}\ind{\overline{L}_x(\tau_n)<n^\theta\textrm{ or } E_x^{(n)}\leq 1}\right]\\
\e_{\eqref{meansmallpartbadwalk}}&=\e\left[\sum_{k=1}^{c_0(\log n)^3}\sum_{|x|=k}\ind{x\in A_n(-a_n, -b)}\left(\ind{\overline{L}_x(\tau_n)<n^\theta}+\ind{\overline{L}_x(\tau_n)\geq n^\theta, E_x^{(n)}\leq 1}\right)\right]\\
&=\E\left[\sum_{k=1}^{c_0(\log n)^3}\sum_{|x|=k}\ind{x\in A_n(-a_n, -b)}\left\{\p^\en\left(\overline{L}_x(\tau_n)<n^\theta\right)+\p^\en\left(\overline{L}_x(\tau_n)\geq n^\theta, E_x^{(n)}=1\right)\right\}\right]
\end{align*}
So \eqref{meansmallpartbadwalk} follows the following convergences:
\begin{align}
\e_{\eqref{badwalkmean1}}:=\E\left[\sum_{k=1}^{c_0(\log n)^3}\sum_{|x|=k}\ind{x\in A_n(-a_n, -b)}\p^\en\left(\overline{L}_x(\tau_n)<n^\theta\right)\right]=&o(n^{1-\theta});\label{badwalkmean1}\\
\e_{\eqref{badwalkmean2}}:=\E\left[\sum_{k=1}^{c_0(\log n)^3}\sum_{|x|=k}\ind{x\in A_n(-a_n, -b)}\p^\en\left(\overline{L}_x(\tau_n)\geq n^\theta, E_x^{(n)}=1\right)\right]=&o(n^{1-\theta}).\label{badwalkmean2}.
\end{align}
Note that for $x\in A_n(-a_n,-b)$, we have $n^\theta(1-b_x)=\frac{n^\theta}{H_x}\leq e^{-b}\frac{n e^{-V(x)}}{H_x}=e^{-b} na_x$. So, by \eqref{sumGeosmall} with $\lambda=b$,
\begin{equation}\label{upLsmall}
\p^\en(\overline{L}_x(\tau_n)<n^\theta)\leq e^{-\lambda(\frac{na_x}{1+\lambda}-b_xn^\theta)}\leq e^{-c_{19}(b) \frac{n^\theta}{H_x}},
\end{equation}
with $c_{19}(b):=b(\frac{e^b}{1+b}-1)>0$. Moreover, we see that for $x\in A_n(-a_n,-b)$ with $|x|\leq c_0(\log n)^3$,
\begin{equation}\label{upH}
H_x\leq |x|e^{\MV(x)-V(x)}\leq c_0(\log n)^3 n^{\theta} e^{-a_n}.
\end{equation}
Plugging it into \eqref{upLsmall} yields that for $a_n=a\log\log n$ with $a>3$,
\[
\p^\en(\overline{L}_x(\tau_n)<n^\theta)\leq e^{-c_{20} (\log n)^{a-3}}.
\]
This implies that
\[
%\e\left[\sum_{k=1}^{ c_0(\log n)^3}\sum_{|x|=k}\ind{x\in A_n(-a_n, -b)}\p^\en\left(\overline{L}_x(\tau_n)<n^\theta\right)\right]
\e_{\eqref{badwalkmean1}}\leq  e^{-c_2 (\log n)^{a-3}}\E\left[\sum_{k=1}^{c_0(\log n)^3}\sum_{|x|=k}\ind{x\in A_n(-a_n, -b)}\right]
\]
Then by Many-to-One Lemma, one sees that
\begin{align*}
%&\e\left[\sum_{k=1}^{ c_0(\log n)^3}\sum_{|x|=k}\ind{x\in A_n(-a_n, -b)}\p^\en\left(\overline{L}_x(\tau_n)<n^\theta\right)\right]\\
%\leq  e^{-c_1 (\log n)^{a-3}}\e\left[\sum_{|x|\leq c_0(\log n)^3}\ind{x\in A_n(-a_n, -b)}\right]\\
\e_{\eqref{badwalkmean1}}\leq&e^{-c_2 (\log n)^{a-3}}\sum_{k=1}^{c_0(\log n)^3}\E\left[e^{S_k}\ind{\MS_k-S_k\leq \theta \log n-a_n, S_k\leq (1-\theta)\log n-b}\right]\\
\leq & e^{-c_{20} (\log n)^{a-3}} n^{1-\theta} c_0 (\log n)^3=o(n^{1-\theta}),
\end{align*}
which shows \eqref{badwalkmean1}. On the other hand, for $n\geq 2$ and $x\in A_n(-a_n,-b)$, we could get that
\[
\p^\en\left(\overline{L}_x(\tau_n)\geq n^\theta, E_x^{(n)}=1\right)=n a_x b_x^{n^\theta-1}(1-a_x)^{n-1}\leq n^{1-\theta} e^{-V(x)}e^{-c_{21}\frac{n^\theta}{H_x}}.
\]
In fact, for $c_{21}=1-\frac{1}{2^\theta}$,
\[
n a_x b_x^{n^\theta-1}(1-a_x)^{n-1}=n^{1-\theta}e^{-V(x)} (1-\frac{1}{H_x})^{n^\theta-1} \frac{n^\theta}{H_x}(1-\frac{e^{-V(x)}}{H_x})^{n-1}\leq n^{1-\theta}e^{-V(x)} e^{-c_{21} \frac{n^\theta}{H_x}} \frac{n^\theta}{H_x}e^{-\frac{n}{2}e^{-V(x)}/H_x}
\]
where $ne^{-V(x)}\geq e^b n^\theta\geq n^\theta$ for $x\in A_n(-a_n,-b)$. So $\frac{n^\theta}{H_x}e^{-\frac{n}{2}e^{-V(x)}/H_x}\leq \frac{n^\theta}{H_x}e^{-\frac{n^\theta}{2H_x}}\leq\sup_{t\geq0}te^{-t/2}<1$. Therefore, $\p^\en\left(\overline{L}_x(\tau_n)\geq n^\theta, E_x^{(n)}=1\right)\leq n^{1-\theta} e^{-V(x)}e^{-c_{21}\frac{n^\theta}{H_x}}$. Further, for $x\in A_n(-a_n,-b)$, by \eqref{upH}, we get that
\[
\p^\en\left(\overline{L}_x(\tau_n)\geq n^\theta, E_x^{(n)}=1\right)\leq n^{1-\theta} e^{-V(x)}e^{-c_{22}(\log n)^{a-3}}.
\]
Consequently, 
\begin{equation*}
%\e\left[\sum_{k=1}^{c_0(\log n)^3}\sum_{|x|=k}\ind{x\in A_n(-a_n, -b)}\p^\en\left(\overline{L}_x(\tau_n)\geq n^\theta, E_x^{(n)}=1\right)\right]\\
\e_{\eqref{badwalkmean2}}\leq  n^{1-\theta }e^{-c_{22}(\log n)^{a-3}}\E\left[\sum_{k=1}^{ c_0(\log n)^3}\sum_{|x|=k}\ind{x\in A_n(-a_n, -b)}e^{-V(x)}\right],
\end{equation*}
which, by the Many-to-One Lemma, leads to
\begin{align*}
\e_{\eqref{badwalkmean2}}\leq& n^{1-\theta }e^{-c_{22}(\log n)^{a-3}}\sum_{k=1}^{c_0(\log n)^3}\P\left(\MS_k-S_k\leq \theta \log n-a_n, S_k\leq (1-\theta)\log n-b\right)\\
\leq & n^{1-\theta }e^{-c_{22}(\log n)^{a-3}}c_0(\log n)^3=o(n^{1-\theta}),
\end{align*}
which concludes \eqref{badwalkmean2}.

\noindent\textbf{Proof of \eqref{meansmallpartbadV}.} Observe that
\begin{align*}
%&\e\left[\sum_{k=1}^{c_0(\log n)^3}\sum_{|x|=k}\ind{x\notin A_n^+(a_n,b)}\ind{\mV(x)\geq-\alpha}\ind{\overline{L}_x(\tau_n)\geq n^\theta, E_x^{(n)}\geq 2}\right]\\
\e_{\eqref{meansmallpartbadV}}\leq&\E\left[\sum_{k=1}^{c_0(\log n)^3}\sum_{|x|=k}\ind{\MV(x)>\log n+a_n}\p^\en(\overline{L}_x(\tau_n)\geq n^\theta, E_x^{(n)}\geq 2)\right]\\
&\qquad+\E\left[\sum_{k=1}^{c_0(\log n)^3}\sum_{|x|=k}\ind{\mV(x)\geq-\alpha, \MV(x)\leq \log n+a_n, V(x)>(1-\theta)\log n+b}\p^\en(\overline{L}_x(\tau_n)\geq n^\theta, E_x^{(n)}\geq 2)\right].
\end{align*}
So, to get \eqref{meansmallpartbadV}, we only need to show that 
\begin{align}
\e_{\eqref{badVmean1}}:=\E\left[\sum_{k=1}^{c_0(\log n)^3}\sum_{|x|=k}\ind{\MV(x)>\log n+a_n}\p^\en(\overline{L}_x(\tau_n)\geq n^\theta, E_x^{(n)}\geq 2)\right]=&o(n^{1-\theta});\label{badVmean1}\\
\e_{\eqref{badVmean2}}:=\E\left[\sum_{k=1}^{c_0(\log n)^3}\sum_{|x|=k}\ind{\mV(x)\geq-\alpha,\MV(x)\leq \log n+a_n, V(x)>(1-\theta)\log n+b}\p^\en(\overline{L}_x(\tau_n)\geq n^\theta, E_x^{(n)}\geq 2)\right]=&o(n^{1-\theta}).\label{badVmean2}
\end{align}
Let us begin with \eqref{badVmean1}. For $x\in\T$ such that $\MV(x)>\log n+a_n$ with $a_n=a\log\log n$, we could show that
\[
\p^\en\left(\overline{L}_x(\tau_n)\geq n^\theta, E_x^{(n)}\geq 2\right)\leq n^{1-\theta}(\log n)^{-a} e^{-V(x)}.
\]
In fact, by Markov inequality, one sees that
\begin{align*}
\p^\en\left(\overline{L}_x(\tau_n)\geq n^\theta, E_x^{(n)}\geq 2\right)\leq &\frac{\e^\en[\overline{L}_x(\tau_n); E_x^{(n)}\geq 2]}{n^\theta}=\frac{H_x}{n^\theta}\e^\en\left[E_x^{(n)}; E_x^{(n)}\geq 2\right]\\
=& \frac{H_x}{n^\theta}\left[\e^\en[E_x^{(n)}]-\p^\en(E_x^{(n)}=1)\right]=\frac{H_x}{n^\theta} na_x\left(1-(1-a_x)^{n-1}\right),
\end{align*} where $\frac{H_x}{n^\theta} na_x=n^{1-\theta}e^{-V(x)}$ and $1-(1-a_x)^{n-1}\leq (n-1)a_x$. So,
\[
\p^\en\left(\overline{L}_x(\tau_n)\geq n^\theta, E_x^{(n)}\geq 2\right)\leq n^{1-\theta}e^{-V(x)}(n-1)a_x.
\]
Note that as $\MV(x)>\log n+a_n$ with $a_n=a\log\log n$, we have $(n-1)a_x\leq ne^{-\MV(x)}\leq (\log n)^{-a}$. So, $\p^\en\left(\overline{L}_x(\tau_n)\geq n^\theta, E_x^{(n)}\geq 2\right)\leq n^{1-\theta}(\log n)^{-a} e^{-V(x)}$. It hence follows that for $a>3$,
\begin{align*}
%\E\left[\sum_{k=1}^{c_0(\log n)^3}\sum_{|x|=k}\ind{\MV(x)>\log n+a_n}\p^\en(\overline{L}_x(\tau_n)\geq n^\theta, E_x^{(n)}\geq 2)\right]
\e_{\eqref{badVmean1}}\leq & n^{1-\theta}(\log n)^{-a} \E\left[\sum_{k=1}^{ c_0(\log n)^3}\sum_{|x|=k}e^{-V(x)}\right]\\
=&c_0(\log n)^{3-a} n^{1-\theta}=o(n^{1-\theta}),
\end{align*}
where the last line comes from Many-to-One Lemma. This proves \eqref{badVmean1}.

It remains to prove \eqref{badVmean2}. Observe that
%\e_{\eqref{badVmean2}}\leq \e_{\eqref{badVmean21}}+\e_{\eqref{badVmean22}}
%\end{equation}
\begin{multline*}
%&\E\left[\sum_{k=1}^{c_0(\log n)^3}\sum_{|x|=k}\ind{\mV(x)\geq-\alpha}\ind{\MV(x)\leq \log n+a_n, V(x)>(1-\theta)\log n+b}\p^\en(\overline{L}_x(\tau_n)\geq n^\theta, E_x^{(n)}\geq 2)\right]\\
\e_{\eqref{badVmean2}}\leq \E\left[\sum_{k=1}^{c_0(\log n)^3}\sum_{|x|=k}\ind{\MV(x)\leq \log n-a_n, V(x)>(1-\theta)\log n+b}\p^\en(\overline{L}_x(\tau_n)\geq n^\theta)\right]\\
+\E\left[\sum_{k=1}^{c_0(\log n)^3}\sum_{|x|=k}\ind{\mV(x)\geq-\alpha, \MV(x)\in[\log n-a_n, \log n+a_n], V(x)>(1-\theta)\log n+b}\p^\en(\overline{L}_x(\tau_n)\geq n^\theta, E_x^{(n)}\geq 2)\right].
\end{multline*}
Therefore, we only need to check that
\begin{align}
\e_{\eqref{badV2smallMV}}:=\E\left[\sum_{k=1}^{c_0(\log n)^3}\sum_{|x|=k}\ind{\MV(x)\leq \log n-a_n, V(x)>(1-\theta)\log n+b}\p^\en(\overline{L}_x(\tau_n)\geq n^\theta)\right]\label{badV2smallMV}\\
\E_{\eqref{badV2bdMV}}:=\e\left[\sum_{k=1}^{c_0(\log n)^3}\sum_{|x|=k}\ind{\mV(x)\geq-\alpha, \MV(x)\in[\log n-a_n, \log n+a_n], V(x)>(1-\theta)\log n+b}\p^\en(\overline{L}_x(\tau_n)\geq n^\theta, E_x^{(n)}\geq 2)\right]\label{badV2bdMV}
\end{align}
For $\e_{\eqref{badV2smallMV}}$, as $n^\theta(1- b_x)=\frac{n^\theta}{H_x}\geq e^b na_x$ if $V(x)>(1-\theta)\log n+b$, by \eqref{sumGeolarge} with $\eta=b$, 
\[
\p^\en\left(\overline{L}_x(\tau_n)\geq n^\theta\right)\leq 2 na_x e^{-c_\eta n^\theta b_x}=2 n^{1-\theta}e^{-V(x)} \frac{n^\theta}{H_x}e^{-c_\eta\frac{n^\theta}{H_x}}\leq c_{23} n^{1-\theta} e^{-V(x)} e^{-\frac{c_\eta}{2} \frac{n^\theta}{H_x}},%\leq 2 n^{1-\theta} e^{-V(x)}e^{-c_5 (\log n)^{a-3}}
\]
where we use the fact that $te^{-c_\eta t}\leq c_{24} e^{-c_\eta t/2}$ for any $t>0$ and $c_{24}:=\sup_{t\geq0}te^{-c_\eta/2 t}$. In addition, given $\{\MV(x)\leq \log n-a_n, V(x)>(1-\theta)\log n+b\}$, we get $H_x\leq |x|e^{\MV(x)-V(x)}\leq c_0(\log n)^{3-a}n^\theta$. Thus,
\[
\p^\en\left(\overline{L}_x(\tau_n)\geq n^\theta\right)\leq c_{23}n^{1-\theta} e^{-V(x)}e^{-c_{25} (\log n)^{a-3}}.
\]
This combined with Many-to-One Lemma implies that
\begin{align*}
%&\e\left[\sum_{k=1}^{ c_0(\log n)^3}\sum_{|x|=k}\ind{\MV(x)\leq \log n-a_n, V(x)>(1-\theta)\log n+b}\p^\en(\overline{L}_x(\tau_n)\geq n^\theta)\right]\\
\e_{\eqref{badV2smallMV}}\leq & c_{23} n^{1-\theta}e^{-c_{25} (\log n)^{a-3}} \E\left[\sum_{k=1}^{ c_0(\log n)^3}\sum_{|x|=k}e^{-V(x)}\right]\\
= &c_{23}c_0(\log n)^3e^{-c_{25} (\log n)^{a-3}} n^{1-\theta}=o(n^{1-\theta}),
\end{align*}
which shows \eqref{badV2smallMV}. 

For $\e_{\eqref{badV2bdMV}}$, again, as $n^\theta (1-b_x)=\frac{n^\theta}{H_x}\geq e^b na_x$, by \eqref{sumGeolarge+} with $\eta=b$, one has
\[
\p^\en\left(\overline{L}_x(\tau_n)\geq n^\theta, E_x^{(n)}\geq 2\right)\leq 2(na_x)^2 e^{-c_\eta n^\theta(1-b_x)}= 2n^{2(1-\theta)}e^{-2V(x)} (\frac{n^\theta}{H_x})^2e^{-c_\eta \frac{n^\theta}{H_x}},
\]
which is less than $c_{26} n^{2(1-\theta)}e^{-2V(x)}$ since $(\frac{n^\theta}{H_x})^2e^{-c_\eta \frac{n^\theta}{H_x}}\leq \sup_{t\geq0}t^2e^{-c_\eta t}<\infty$. 
As a result, 
\begin{align*}
%&\e\left[\sum_{k=1}^{ c_0(\log n)^3}\sum_{|x|=k}\ind{\mV(x)\geq-\alpha, \MV(x)\in[\log n-a_n, \log n+a_n], V(x)>(1-\theta)\log n+b}\p^\en(\overline{L}_x(\tau_n)\geq n^\theta, E_x^{(n)}\geq 2)\right]\\
\e_{\eqref{badV2bdMV}}\leq & c_{26} n^{1-\theta}\E\left[\sum_{k=1}^{ c_0(\log n)^3}\sum_{|x|=k}\ind{\mV(x)\geq-\alpha, \MV(x)\in[\log n-a_n, \log n+a_n], V(x)>(1-\theta)\log n+b}e^{-V(x)}e^{(1-\theta)\log n-V(x)} \right]\\
=&c_{26} n^{1-\theta}\sum_{k=1}^{c_0(\log n)^3}\E\left[e^{(1-\theta)\log n-S_k}; \mS_k\geq-\alpha, \MS_k\in[\log n-a_n,\log n+a_n], S_k>(1-\theta)\log n+b\right],
%=& 2n^{1-\theta}\sum_{k=1}^{c_0(\log n)^3}\E\left[e^{(1-\theta)\log n-S_k}f(\frac{n^\theta}{H_k(S)}); \mS_k\geq-\alpha, \MS_k\in[\log n-a_n,\log n+a_n], S_k>(1-\theta)\log n\right],
\end{align*}
where the last equality follows from Many-to-One Lemma. So it suffices to show that
\begin{equation}\label{badV2bdMVmean}
\E_{\eqref{badV2bdMVmean}}:=\sum_{k=1}^{c_0(\log n)^3}\E\left[e^{(1-\theta)\log n-S_k}\ind{\mS_k\geq-\alpha, \MS_k\in[\log n-a_n,\log n+a_n], S_k\geq(1-\theta)\log n}\right]=o_n(1).
\end{equation}
Apparently, $e^{(1-\theta)\log n-S_k}\leq e^{-a_n}$ if $S_k>(1-\theta)\log n+a_n$. As a result, for $a_n=a\log \log n$ with $a>3$,
\[
\E_{\eqref{badV2bdMVmean}}\leq o_n(1)+ \sum_{k=1}^{c_0(\log n)^3}\E_{\eqref{badV2bdMVbdS}}(k),
\]
where 
\begin{equation}\label{badV2bdMVbdS}
\E_{\eqref{badV2bdMVbdS}}(k):=\E\left[e^{(1-\theta)\log n-S_k}\ind{\mS_k\geq-\alpha, \MS_k\in[\log n-a_n,\log n+a_n], S_k\in[(1-\theta)\log n,(1-\theta)\log n+a_n]}\right].
\end{equation}
We only need to show that $\sum_{k=1}^{c_0(\log n)^3}\E_{\eqref{badV2bdMVbdS}}(k)=o_n(1)$. For $1\leq k\leq \varepsilon (\log n)^2$ with $\varepsilon\in(0,1)$ small, by \eqref{summSSbd},
\begin{align*}
\sum_{k=1}^{\varepsilon(\log n)^2}\E_{\eqref{badV2bdMVbdS}}(k)%&\sum_{k=1}^{\varepsilon(\log n)^2}\E\left[e^{(1-\theta)\log n-S_k}\ind{\mS_k\geq-\alpha, \MS_k\in[\log n-a_n,\log n+a_n], S_k\in[(1-\theta)\log n,(1-\theta)\log n+a_n]}\right]\\
\leq &\sum_{r=(1-\theta)\log n}^{(1-\theta)\log n+a_n}e^{(1-\theta)\log n-r}\sum_{k=1}^{\varepsilon(\log n)^2}\P\left(\mS_k\geq-\alpha, S_k\in[r, r+1]\right)\\
\leq & c_{27}(1+\alpha)\varepsilon=o_\varepsilon(1).
\end{align*}
For $k\geq (\log n)^2/\varepsilon$, by \eqref{mSSbd}, one has
\begin{align*}
%&\sum_{k=(\log n)^2/\varepsilon}^{c_0(\log n)^3}\E\left[e^{(1-\theta)\log n-S_k}\ind{\mS_k\geq-\alpha, \MS_k\in[\log n-a_n,\log n+a_n], S_k\in[(1-\theta)\log n,(1-\theta)\log n+a_n]}\right]\\
\sum_{k=(\log n)^2/\varepsilon}^{c_0(\log n)^3}\E_{\eqref{badV2bdMVbdS}}(k)\leq &\sum_{r=(1-\theta)\log n}^{(1-\theta)\log n+a_n}e^{(1-\theta)\log n-r}\sum_{k=(\log n)^2/\varepsilon}^{c_0(\log n)^3}\P\left(\mS_k\geq-\alpha, S_k\in[r, r+1]\right)\\
\leq & c_{28}(1+\alpha)^2\sqrt{\varepsilon}=o_\varepsilon(1).
\end{align*}

It remains to check that $\limsup_{n\to\infty}\sum_{k=\varepsilon(\log n)^2}^{(\log n)^2/\varepsilon}\E_{\eqref{badV2bdMVbdS}}(k)=o_\varepsilon(1)$. By considering the first time that $(S_i)_{0\leq i\leq k}$ hits $\MS_k$, we get that
\begin{align*}
&\sum_{k=\varepsilon(\log n)^2}^{(\log n)^2/\varepsilon}\E_{\eqref{badV2bdMVbdS}}(k)\\%\E\left[e^{(1-\theta)\log n-S_k}\ind{\mS_k\geq-\alpha, \MS_k\in[\log n-a_n,\log n+a_n], S_k\in[(1-\theta)\log n,(1-\theta)\log n+a_n]}\right]\nonumber\\
=&\sum_{k=\varepsilon(\log n)^2}^{(\log n)^2/\varepsilon}\sum_{j=1}^k\E\left[e^{(1-\theta)\log n-S_k}\ind{\mS_k\geq-\alpha, \MS_{j-1}<S_j=\MS_k\in[\log n-a_n,\log n+a_n], S_k\in[(1-\theta)\log n,(1-\theta)\log n+a_n]}\right]\\
\leq &\sum_{k=\varepsilon(\log n)^2}^{(\log n)^2/\varepsilon}\sum_{s=-a_n}^{a_n}\sum_{t=0}^{a_n}e^{-t}\sum_{j=1}^k\E\left[\ind{\mS_k\geq-\alpha, \MS_{j-1}<S_j=\MS_k\in[\log n+s,\log n+s+1], S_k\in[(1-\theta)\log n+t,(1-\theta)\log n+t+1]}\right]
\end{align*}
By Markov property at time $j$, one sees that
\begin{align*}
&%\sum_{s=-a_n}^{a_n}\sum_{t=0}^{a_n}e^{-t}\sum_{j=1}^k
\E[\ind{\mS_k\geq-\alpha, \MS_{j-1}<S_j=\MS_k\in[\log n+s,\log n+s+1], S_k\in[(1-\theta)\log n+t,(1-\theta)\log n+t+1]}]\\
\leq&%\sum_{s=-a_n}^{a_n}\sum_{t=0}^{a_n}e^{-t}\sum_{j=1}^{k-1}
\P(\mS_j\geq-\alpha, \MS_j=S_j\in[\log n+s,\log n+s+1])\P(\MS_{k-j}\leq0, S_{k-j}+\theta\log n\in[t-s-1,t-s+1]).
%\leq & \sum_{j=1}^{(\varepsilon\log n)^2}\P(\mS_j\geq-\alpha, \MS_j=S_j\ge\log n-a_n)\sup_{0\leq t\leq a_n, |s|\leq a_n}\P(\MS_{k-j}\leq0, S_{k-j}+\theta\log n\in[t-s-1,t-s+1])\\
%&+2a_n\sum_{j=(\varepsilon\log n)^{2}}^{k-(\varepsilon\log n)^2}\sup_{0\leq t\leq a_n, |s|\leq a_n}\P(\mS_j\geq-\alpha, \MS_j=S_j-\log n \in[s,s+1])\P(\MS_{k-j}\leq0, S_{k-j}+\theta\log n\in[t-s-1,t-s+1])\\
%&+2a_n\sup_{0\leq t\leq a_n, |s|\leq a_n}\sum_{j=k-(\varepsilon\log n)^2}^{k-1}
\end{align*}
So,
\begin{equation}\label{goodkbadMS}
\sum_{k=\varepsilon(\log n)^2}^{(\log n)^2/\varepsilon}\E_{\eqref{badV2bdMVbdS}}(k)\leq \sum_{k=\varepsilon(\log n)^2}^{(\log n)^2/\varepsilon}\sum_{j=1}^{k-1}\P_{\eqref{badV2cutMS}}(j,k),
\end{equation}
with 
\begin{multline}\label{badV2cutMS}
\P_{\eqref{badV2cutMS}}(j,k)\\
=\sum_{s=-a_n}^{a_n}\sum_{t=0}^{a_n}e^{-t}\P(\mS_j\geq-\alpha, \MS_j=S_j\in[\log n+s,\log n+s+1])\P(\MS_{k-j}\leq0, S_{k-j}+\theta\log n\in[t-s-1,t-s+1])
\end{multline}
Observe that $\sum_{j=1}^{k-1}\P_{\eqref{badV2cutMS}}(j,k)\le\sum_{j=1}^{\varepsilon^2(\log n)^2}+\sum_{j=(\varepsilon \log n)^2}^{k-(\varepsilon \log n)^2}+\sum_{j=k-(\varepsilon \log n)^2}^{k-1}\P_{\eqref{badV2cutMS}}(j,k)$. We bound the three sums separately. First, by \eqref{mSMSlargebd} and \eqref{mSSbd} for $j\leq (\varepsilon \log n)^2\le \varepsilon k$, we have
\begin{align*}
&\sum_{j=1}^{\varepsilon^2(\log n)^2}\P_{\eqref{badV2cutMS}}(j,k)\\
\leq&\sum_{j=1}^{\varepsilon^2(\log n)^2}\P(\mS_j\geq-\alpha, \MS_j=S_j\ge\log n-a_n)\sup_{0\leq t\leq a_n, |s|\leq a_n}\P(\MS_{k-j}\leq0, S_{k-j}+\theta\log n\in[t-s-1,t-s+1])\\
\leq & c_{29}\frac{\log n}{k^{3/2}}\sum_{j=1}^{\varepsilon^2(\log n)^2}\frac{1+\alpha}{j^{1/2} \log n}\leq c_{30} \frac{(1+\alpha)\varepsilon\log n}{k^{3/2}}
\end{align*}
For $(\varepsilon\log n)^{2}\leq j\leq k-(\varepsilon\log n)^{2}$, by \eqref{mSSbd} and \eqref{mSMS-Sbd}, one sees that 
\begin{align*}
\sum_{j=(\varepsilon\log n)^{2}}^{k-(\varepsilon\log n)^2}\P_{\eqref{badV2cutMS}}(j,k)
\leq&2a_n\sum_{j=(\varepsilon\log n)^{2}}^{k-(\varepsilon\log n)^2}\sup_{0\leq t\leq a_n, |s|\leq a_n}\P(\mS_j\geq-\alpha, \MS_j=S_j-\log n \in[s,s+1])\frac{(1+\log n)}{(k-j)^{3/2}}\\
\leq & 2a_n \sum_{j=(\varepsilon\log n)^{2}}^{k-(\varepsilon\log n)^2}\frac{c_{31}(1+\alpha)^4 (\log n)^4}{j^3(k-j)^{3/2}}\leq \frac{c_{32}a_n(1+\alpha)^4}{\varepsilon^4 k^{3/2}}+c_{32}(1+\alpha)^4\frac{(\log n)^3a_n}{\varepsilon k^{3}},
\end{align*}
As $k\geq \varepsilon (\log n)^2$, we get that $\sum_{j=(\varepsilon\log n)^{2}}^{k-(\varepsilon\log n)^2}\P_{\eqref{badV2cutMS}}(j,k) \leq c_{33}(1+\alpha)^4 \frac{a_n}{\varepsilon^4 k^{3/2}}$.

For $k-(\varepsilon \log n)^2\leq j<k$, by \eqref{mSMS-Sbd} and \eqref{summSSbd}, one sees that
\begin{align*}
&\sum_{j=k-(\varepsilon \log n)^2}^{k-1}\P_{\eqref{badV2cutMS}}(j,k)\\
%\leq&2a_n\sup_{t,s}\sum_{j=k-(\varepsilon \log n)^2}^{k-1}\P(\mS_j\geq-\alpha, \MS_j=S_j\in[\log n+s,\log n+s+1])\P(\MS_{k-j}\leq0, S_{k-j}+\theta\log n\in[t-s-1,t-s+1])\\
\leq& \frac{c_{34}a_n(1+\alpha)^4(\log n)^3}{k^3}\sup_{0\leq t\leq a_n, |s|\leq a_n}\sum_{j=1}^{(\varepsilon\log n)^2}\P(\MS_j\leq0, -S_j\in[\theta\log n+s-t-1, \theta\log n+s-t+1])\\
\leq & \frac{c_{35}a_n(1+\alpha)^4(\log n)^3}{k^3}\varepsilon^2.
\end{align*}
As a consequence, 
\[
\sum_{j=1}^{k-1}\P_{\eqref{badV2cutMS}}(j,k)\leq c_{30} \frac{(1+\alpha)\varepsilon\log n}{k^{3/2}}+c_{33}(1+\alpha)^4 \frac{a_n}{\varepsilon^4 k^{3/2}}+\frac{c_{35}a_n(1+\alpha)^4(\log n)^3}{k^3}\varepsilon^2.
\]
Plugging it  into \eqref{goodkbadMS} yields that
\[
\sum_{k=\varepsilon(\log n)^2}^{(\log n)^2/\varepsilon}\E_{\eqref{badV2bdMVbdS}}(k)=o_n(1)+o_\varepsilon(1),
\]
which completes the proof of \eqref{badV2bdMVmean}. We thus conclude \eqref{badV2bdMV} and \eqref{meansmallpartbadV}.
\end{proof}

\begin{proof}[Proof of Lemma \ref{largepartbadtime}]
Let 
\begin{align*}
\E_{\eqref{largepartsmalltime}}:=\E\left[\sum_{m=1}^{\varepsilon (\log n)^2}\sum_{|z|=m}e^{-V(z)}F_2(z,\log n)\ind{\mV(z)\geq-\alpha}\right],\\
\E_{\eqref{largepartlargetime}}:=\E\left[\sum_{m= (\log n)^2/\varepsilon}^{c_0(\log n)^3}\sum_{|z|=m}e^{-V(z)}F_2(z,\log n)\ind{\mV(z)\geq-\alpha}\right].
\end{align*}
Let us bound $\E_{\eqref{largepartsmalltime}}$ first. By Many-to-One Lemma, 
\begin{align*}
E_{\eqref{largepartsmalltime}}=&\sum_{k=1}^{\varepsilon (\log n)^2}\E\left[e^{S_k-(1-\theta)\log n-b}\ind{\mS_k\geq-\alpha, \MS_k\leq \log n+a_n, S_k\leq (1-\theta)\log n+b}\right]\\
\leq &\sum_{k=1}^{\varepsilon (\log n)^2} e^{-\frac{1-\theta}{2}\log n-b}+\sum_{k=1}^{\varepsilon (\log n)^2}\E\left[e^{S_k-(1-\theta)\log n-b}\ind{\mS_k\geq-\alpha, \MS_k\leq \log n+a_n, S_k\in[\frac{1-\theta}{2}\log n, (1-\theta)\log n+b]}\right]\\
\le &o_n(1)+\sum_{t=\frac{1-\theta}{2}\log n}^{(1-\theta)\log n+b}e^{t-(1-\theta)\log n-b}\sum_{k=1}^{\varepsilon (\log n)^2}\P(\mS_k\geq-\alpha, S_k\in [t,t+1]).
\end{align*}
We then deduce from \eqref{summSSbd} that $\E_{\eqref{largepartsmalltime}}=o_n(1)+o_\varepsilon(1)$. This suffices to conclude \eqref{largepartsmalltime}. 

On the other hand, by Many-to-One Lemma,
\begin{align*}
\E_{\eqref{largepartlargetime}}=&\sum_{k= (\log n)^2/\varepsilon}^{c_0(\log n)^3}\E\left[e^{S_k-(1-\theta)\log n-b}\ind{\mS_k\geq-\alpha, \MS_k\leq \log n+a_n, S_k\leq (1-\theta)\log n+b}\right]\\
\leq &\sum_{k= (\log n)^2/\varepsilon}^{c_0(\log n)^3}e^{-\frac{1-\theta}{2}\log n-b}+\sum_{k= (\log n)^2/\varepsilon}^{c_0(\log n)^3}\sum_{t=\frac{1-\theta}{2}\log n}^{(1-\theta)\log n+b}e^{t-(1-\theta)\log n-b}\P(\mS_k\geq-\alpha, S_k\in [t,t+1]).
\end{align*}
By use of \eqref{mSSbd}, we obtain that
\begin{align*}
\E_{\eqref{largepartlargetime}}\leq & o_n(1)+\sum_{k= (\log n)^2/\varepsilon}^{c_0(\log n)^3}\frac{c_{36}(1+\alpha)(1+(1-\theta)\log n+b+\alpha)}{k^{3/2}}=o_n(1)+o_\varepsilon(1).
\end{align*}
This ends the proof of Lemma \ref{largepartbadtime}.
\end{proof}

\begin{proof}[Proof of Lemma \ref{smallpart1ex}]
In fact, as $\p^\en(\eL_x(\tau_n)\geq n^\theta, E_x^{(n)}=1)=na_x(1-a_x)^{n-1}b_x^{n^\theta-1}$, we are going to show that
\begin{align}
\E_{\eqref{1exnotinB-}}:=\sum_{\ell=1}^{c_0(\log n)^3}\E\left[\sum_{|x|=\ell}na_x(1-a_x)^{n-1}b_x^{n^\theta-1}\ind{\MV(x)\leq \log n-a_n}\right]=&o(n^{1-\theta});\label{1exnotinB-}\\
\E_{\eqref{1exnotinD}}:=\sum_{\ell=1}^{c_0(\log n)^3}\E\left[\sum_{|x|=\ell}na_x(1-a_x)^{n-1}b_x^{n^\theta-1}\ind{x\not\in\D_n}\right]=&o(n^{1-\theta}).\label{1exnotinD}
\end{align}
First, observe that if $\MV(x)\leq \log n-a_n$ with $a_n=a\log\log n$ and $|x|\leq c_0(\log n)^3$, then 
\[
na_x(1-a_x)^{n-1}b_x^{n^\theta-1}\leq n^{1-\theta} e^{-V(x)}\frac{n^\theta}{H_x}e^{-\frac{n^\theta}{2H_x}}e^{-(n-1)a_x}\leq n^{1-\theta}e^{-V(x)}e^{-c_{37}(\log n)^{a-3}}
\]
as $a_x\geq \frac{1}{|x|e^{\MV(x)}}$. This follows that
\begin{equation*}
%\sum_{\ell=1}^{c_0(\log n)^3}\E\left[\sum_{|x|=\ell}na_x(1-a_x)^{n-1}b_x^{n^\theta-1}\ind{\MV(x)\leq \log n-a_n}\right]
\E_{\eqref{1exnotinB-}}\leq  n^{1-\theta}e^{-c_{37}(\log n)^{a-3}}\sum_{\ell=1}^{c_0(\log n)^3}\E\left[\sum_{|x|=\ell}e^{-V(x)} \right]
\end{equation*}
which by Many-to-One lemma, is bounded by $n^{1-\theta} c_0(\log n)^3e^{-c_{37}(\log n)^{a-3}}=o(n^{1-\theta})$. This proves \eqref{1exnotinB-}. 

Next, let
\begin{align}
\E_{\eqref{1exnotinD+}}:=&\sum_{\ell=1}^{c_0(\log n)^3}\E\left[\sum_{|x|=\ell}na_x(1-a_x)^{n-1}b_x^{n^\theta-1}\ind{\MV(x)-V(x)>\theta\log n+a_n}\right]\label{1exnotinD+}\\
\E_{\eqref{1exnotinD-}}:=&\sum_{\ell=1}^{c_0(\log n)^3}\E\left[\sum_{|x|=\ell}na_x(1-a_x)^{n-1}b_x^{n^\theta-1}\ind{\MV(x)-V(x)<\theta\log n-a_n}\right]\label{1exnotinD-}.
\end{align}
It is immediate that
\begin{equation}\label{1exbadvalley}
%&\sum_{\ell=1}^{c_0(\log n)^3}\E\left[\sum_{|x|=\ell}na_x(1-a_x)^{n-1}b_x^{n^\theta-1}\ind{x\not\in\D_n}\right]\nonumber\\
\E_{\eqref{1exnotinD}}=\E_{\eqref{1exnotinD+}}+\E_{\eqref{1exnotinD-}}.
%=&\sum_{\ell=1}^{c_0(\log n)^3}\E\left[\sum_{|x|=\ell}na_x(1-a_x)^{n-1}b_x^{n^\theta-1}\ind{\MV(x)-V(x)>\theta\log n+a_n}\right]\nonumber\\
%&+\sum_{\ell=1}^{c_0(\log n)^3}\E\left[\sum_{|x|=\ell}na_x(1-a_x)^{n-1}b_x^{n^\theta-1}\ind{\MV(x)-V(x)<\theta\log n-a_n}\right].
\end{equation}
So, we only need to check that $\E_{\eqref{1exnotinD+}}=o(n^{1-\theta})$ and $\E_{\eqref{1exnotinD-}}=o(n^{1-\theta})$. 

On the one hand, note that $|x|e^{\MV(x)-V(x)}\geq H_x \geq e^{\MV(x)-V(x)}$. If $\MV(x)-V(x)\geq \theta \log n+a_n$, then $\frac{n^\theta}{H_x}\leq (\log n)^{-a}$ and
\[
na_x(1-a_x)^{n-1}b_x^{n^\theta-1}\leq n^{1-\theta} e^{-V(x)}\frac{n^\theta}{H_x}\le n^{1-\theta} e^{-V(x)} (\log n)^{-a}.
\]
This brings out that 
\begin{align}\label{1exlargevalley}
%&\sum_{\ell=1}^{c_0(\log n)^3}\E\left[\sum_{|x|=\ell}na_x(1-a_x)^{n-1}b_x^{n^\theta-1}\ind{\MV(x)-V(x)>\theta\log n+a_n}\right]\nonumber\\
\E_{\eqref{1exnotinD+}}\leq & n^{1-\theta}(\log n)^{-a}\sum_{\ell=1}^{c_0(\log n)^3}\E\left[\sum_{|x|=\ell}e^{-V(x)} \right]=o(n^{1-\theta}).
\end{align}
On the other hand, if $\MV(x)-V(x)\leq \theta\log n-a_n$ and $|x|\le c_0(\log n)^3$, one has $\frac{n^\theta}{H_x}\geq \frac{1}{c_0}(\log n)^{a-3}$ and
\[
na_x(1-a_x)^{n-1}b_x^{n^\theta-1}\leq n^{1-\theta} e^{-V(x)}e^{-\frac{n^\theta}{2H_x}}\leq n^{1-\theta}e^{-V(x)} e^{-c_{38}(\log n)^{a-3}}.
\]
As a consequence,
\begin{align}\label{1exsmallvalley}
%&\sum_{\ell=1}^{c_0(\log n)^3}\E\left[\sum_{|x|=\ell}na_x(1-a_x)^{n-1}b_x^{n^\theta-1}\ind{\MV(x)-V(x)<\theta\log n-a_n}\right]\nonumber\\
\E_{\eqref{1exnotinD-}}\leq &n^{1-\theta}e^{-c(\log n)^{a-3}}\sum_{\ell=1}^{c_0(\log n)^3} \E\left[\sum_{|x|=\ell}e^{-V(x)} \right]=o(n^{1-\theta}).
\end{align}
We then deduce \eqref{1exnotinD} from \eqref{1exbadvalley}, \eqref{1exlargevalley} and \eqref{1exsmallvalley}.
\end{proof}

\begin{proof}[Proof of Lemma \ref{smallpart1ex+}]
Again, as $\p^\en(\eL_x(\tau_n)\geq n^\theta, E_x^{(n)}=1)=na_x(1-a_x)^{n-1}b_x^{n^\theta-1}$, we are going to show that
\begin{align}
\limsup_{n\to\infty}\frac{1}{n^{1-\theta}}\sum_{\ell=1}^{\varepsilon(\log n)^2}\E\left[\sum_{|x|=\ell}na_x(1-a_x)^{n-1}b_x^{n^\theta-1}\ind{x\in\B_n^-\cap \D_n, \mV(x)\geq-\alpha}\right]=&o_\varepsilon(1);\label{smallgeneration1ex++}\\
\limsup_{n\to\infty}\frac{1}{n^{1-\theta}}\sum_{\ell=(\log n)^2/\varepsilon}^{c_0(\log n)^3}\E\left[\sum_{|x|=\ell}na_x(1-a_x)^{n-1}b_x^{n^\theta-1}\ind{x\in\B_n^-\cap \D_n\cap \L_n,\mV(x)\geq-\alpha}\right]=&o_\varepsilon(1);\label{largegeneration1ex++}\\
\frac{1}{n^{1-\theta}}\sum_{\ell=\varepsilon(\log n)^2}^{(\log n)^2/\varepsilon}\E\left[\sum_{|x|=\ell}na_x(1-a_x)^{n-1}b_x^{n^\theta-1}\ind{x\in\D_n}\ind{\MV(x)\in[\log n-a_n,\log n+a_n],\mV(x)\geq-\alpha}\right]=&o_n(1).\label{badmaxV1ex++}
\end{align}
\textbf{Proof of \eqref{smallgeneration1ex++}.} Let 
\[
\E_{\eqref{smallgeneration1ex++}}(\ell):=\E\left[\sum_{|x|=\ell}na_x(1-a_x)^{n-1}b_x^{n^\theta-1}\ind{x\in\B_n^-\cap \D_n, \mV(x)\geq-\alpha}\right].
\]
To get \eqref{smallgeneration1ex++}, we need to bound %$\sum_{\ell=1}^{\varepsilon(\log n)^2}\E_{\eqref{smallgeneration1ex++}}(\ell)=
$\sum_{\ell=1}^{(\log n)^{1+\delta}}\E_{\eqref{smallgeneration1ex++}}(\ell)$ and $\sum_{\ell=(\log n)^{1+\delta}}^{\varepsilon(\log n)^2}\E_{\eqref{smallgeneration1ex++}}(\ell)$ with $\delta\in(0,1)$.

As $na_x(1-a_x)^{n-1}b_x^{n^\theta-1}\leq n^{1-\theta}e^{-V(x)}$, we see that by Many-to-One Lemma,
\begin{align*}
\E_{\eqref{smallgeneration1ex++}}%&\sum_{\ell=1}^{\varepsilon(\log n)^2}\E\left[\sum_{|x|=\ell}na_x(1-a_x)^{n-1}b_x^{n^\theta-1}\ind{x\in\B_n^-\cap \D_n, \mV(x)\geq-\alpha}\right]\\
\leq & n^{1-\theta}\E\left[\sum_{|x|=\ell} e^{-V(x)}\ind{\mV(x)\geq -\alpha, \MV(x)\geq\log n-a_n, \MV(x)-V(x)\in[\theta\log n-a_n,\theta\log n+a_n]},\right]\\
=&n^{1-\theta}\P(\mS_\ell\geq-\alpha, \MS_\ell\geq\log n-a_n, \MS_\ell-S_\ell\in[\theta\log n-a_n,\theta\log n+a_n]).
\end{align*}
For $1\leq\ell\leq (\log n)^{1+\delta}$ with $\delta\in(0,1/3)$, one sees that by \eqref{sumSbd},
\begin{align*}
\sum_{\ell=1}^{(\log n)^{1+\delta}}\E_{\eqref{smallgeneration1ex++}}(\ell)\leq&n^{1-\theta}\sum_{\ell=1}^{(\log n)^{1+\delta}}\P(\mS_\ell\geq-\alpha, \MS_\ell\geq\log n-a_n, \MS_\ell-S_\ell\in[\theta\log n-a_n,\theta\log n+a_n])\\
\leq & n^{1-\theta}\sum_{\ell=1}^{(\log n)^{1+\delta}}\P(S_\ell\geq (1-\theta)\log n-2a_n)\\
\leq &n^{1-\theta}e^{-c_{39}(\log n)^{1-\delta}}=o(n^{1-\theta}).
\end{align*}
For $\ell\geq (\log n)^{1+\delta}$, by considering the first time hitting $\MS_\ell$, one sees that
\begin{align*}
&\frac{1}{n^{1-\theta}}\sum_{\ell=(\log n)^{1+\delta}}^{\varepsilon(\log n)^2}\E_{\eqref{smallgeneration1ex++}}(\ell)\\%\le\sum_{\ell=(\log n)^{1+\delta}}^{\varepsilon(\log n)^2}\P(\mS_\ell\geq-\alpha, \MS_\ell\geq\log n-a_n, \MS_\ell-S_\ell\in[\theta\log n-a_n,\theta\log n+a_n])\\
\le &\sum_{\ell=(\log n)^{1+\delta}}^{\varepsilon(\log n)^2}\sum_{j=1}^{\ell-1}\P(\mS_\ell\geq-\alpha, \MS_{j-1}<S_j=\MS_\ell, S_j\geq \log n-a_n, S_j-S_\ell\in[\theta\log n-a_n,\theta\log n+a_n]),
\end{align*}
which by Markov property at time $j$, is less than
\begin{align*}
&\sum_{\ell=(\log n)^{1+\delta}}^{\varepsilon(\log n)^2}\sum_{j=1}^{\ell-1}\P(\mS_j\geq-\alpha, \MS_j=S_j\geq\log n-a_n)\P(\MS_{\ell-j}\leq 0, -S_{\ell-j}\in[\theta\log n-a_n,\theta\log n+a_n])\\
\leq & \sum_{\ell=(\log n)^{1+\delta}}^{\varepsilon(\log n)^2}\left(\sum_{j=1}^{(\log n)^{1+\delta}/2}\P(S_j\geq\log n-a_n)+\sum_{j=\ell-(\log n)^{1+\delta}/2}^{\ell}\P(-S_{\ell-j}\in[\theta\log n-a_n,\theta\log n+a_n])\right.\\
&+\left.\sum_{j=(\log n)^{1+\delta}/2}^{\ell-(\log n)^{1+\delta}/2}\P(\mS_j\geq-\alpha, \MS_j=S_j\geq\log n-a_n)\P(\MS_{\ell-j}\leq 0, -S_{\ell-j}\in[\theta\log n-a_n,\theta\log n+a_n])\right)
\end{align*}
Again by \eqref{sumSbd}, one has
\[
\sum_{j=1}^{(\log n)^{1+\delta}/2}\P(S_j\geq\log n-a_n)+\sum_{\ell-(\log n)^{1+\delta}/2}^{\ell}\P(-S_{\ell-j}\in[\theta\log n-a_n,\theta\log n+a_n])\leq e^{-c_{40}(\log n)^{1-\delta}}.
\]
On the other hand, by \eqref{mSMSSlargebde} and \eqref{mSSbde}, one has
\begin{align*}
&\sum_{j=(\log n)^{1+\delta}/2}^{\ell-(\log n)^{1+\delta}/2}\P(\mS_j\geq-\alpha, \MS_j=S_j\geq\log n-a_n)\P(\MS_{\ell-j}\leq 0, -S_{\ell-j}\in[\theta\log n-a_n,\theta\log n+a_n])\\
\leq &\sum_{j=(\log n)^{1+\delta}/2}^{\ell-(\log n)^{1+\delta}/2}\frac{c_{41}(1+\alpha)}{\sqrt{j}\log n}e^{-c_{42}\frac{(\log n)^2}{j}}\frac{1}{\ell-j}e^{-c_{43}\frac{(\log n)^2}{\ell-j}}\\
\leq & \frac{2}{\ell}e^{-c_{43}\frac{(\log n)^2}{\ell}}\sum_{j=(\log n)^{1+\delta}/2}^{\ell/2}\frac{c_{41}(1+\alpha)}{\sqrt{j}\log n}e^{-c_{42}\frac{(\log n)^2}{j}}+\frac{c_{41}(1+\alpha)}{\log n\sqrt{\ell/2}}e^{-c_{42}\frac{(\log n)^2}{\ell}}\sum_{j=\ell/2}^{\ell}\frac{1}{\ell-j}e^{-c_{43}\frac{(\log n)^2}{\ell-j}}\\
\leq &\frac{c_{44}(1+\alpha)}{\ell}e^{-c_{45}\frac{(\log n)^2}{\ell}}.
\end{align*}
As a consequence,
\begin{align*}
%&\sum_{\ell=(\log n)^{1+\delta}}^{\varepsilon(\log n)^2}\P(\mS_\ell\geq-\alpha, \MS_\ell\geq\log n-a_n, \MS_\ell-S_\ell\in[\theta\log n-a_n,\theta\log n+a_n])\\
\frac{1}{n^{1-\theta}}\sum_{\ell=(\log n)^{1+\delta}}^{\varepsilon(\log n)^2}\E_{\eqref{smallgeneration1ex++}}(\ell)\leq & \varepsilon(\log n)^2e^{-c_{40}(\log n)^{1-\delta}}+\sum_{\ell=(\log n)^{1+\delta}}^{\varepsilon(\log n)^2}\frac{c_{44}(1+\alpha)}{\ell}e^{-c_{45}\frac{(\log n)^2}{\ell}},
\end{align*}
which converges to $0$ as $n\to\infty$ then $\varepsilon\downarrow 0$.

\textbf{Proof of \eqref{largegeneration1ex++}.} Let 
\[
\E_{\eqref{largegeneration1ex++}}:=\sum_{\ell=(\log n)^2/\varepsilon}^{c_0(\log n)^3}\E\left[\sum_{|x|=\ell}na_x(1-a_x)^{n-1}b_x^{n^\theta-1}\ind{x\in\B_n^-\cap \D_n\cap \L_n,\mV(x)\geq-\alpha}\right]
\]
Note that $na_x(1-a_x)^{n-1}b_x^{n^\theta-1}\leq c_{46} n^{1-\theta}e^{-V(x)}(\frac{n^\theta}{H_x}\wedge \frac{H_x}{n^\theta})$ as $xe^{-x/2}\leq c_{46}(x\wedge \frac{1}{x})$. Therefore, by Many-to-One Lemma,
\begin{align*}
%&\sum_{\ell=(\log n)^2/\varepsilon}^{c_0(\log n)^3}\E\left[\sum_{|x|=\ell}na_x(1-a_x)^{n-1}b_x^{n^\theta-1}\ind{x\in\B_n^-\cap \D_n\cap \L_n,\mV(x)\geq-\alpha}\right]\\
&\E_{\eqref{largegeneration1ex++}}
\leq  c_{46} n^{1-\theta}\sum_{\ell=(\log n)^2/\varepsilon}^{c_0(\log n)^3}\E\left[\sum_{|x|=\ell}e^{-V(x)}(\frac{n^\theta}{H_x}\wedge \frac{H_x}{n^\theta})\ind{x\in\B_n^-\cap \D_n\cap \L_n,\mV(x)\geq-\alpha}\right]\\
=&c_{46} n^{1-\theta}\sum_{\ell=(\log n)^2/\varepsilon}^{c_0(\log n)^3}\E\left[(\frac{n^\theta}{H_\ell^S}\wedge \frac{H_\ell^S}{n^\theta})\ind{\mS_\ell\geq-\alpha, \MS_\ell\geq\log n-a_n, \max_{k\leq \ell}(\MS_k-S_k)\leq \log n, \MS_\ell-S_\ell\in[\theta\log n-a_n,\theta\log n+a_n]}\right]
\end{align*}
where $H_\ell^S:=\sum_{k=0}^\ell e^{S_k-S_\ell}$. Observe that
\begin{align*}
&\E\left[(\frac{n^\theta}{H_\ell^S}\wedge \frac{H_\ell^S}{n^\theta})\ind{\mS_\ell\geq-\alpha, \MS_\ell\geq\log n-a_n, \max_{k\leq \ell}(\MS_k-S_k)\leq \log n, \MS_\ell-S_\ell\in[\theta\log n-a_n,\theta\log n+a_n]}\right]\\
\leq &\sum_{x=-a_n}^{-1}\E\left[\frac{H_\ell^S}{n^{\theta}}\ind{\mS_\ell\geq-\alpha, \MS_\ell\geq\log n-a_n, \max_{k\leq \ell}(\MS_k-S_k)\leq \log n, \MS_\ell-S_\ell\in[\theta\log n+x,\theta\log n+x+1]}\right]\\
&+\sum_{x=0}^{a_n-1}\E\left[\frac{n^{\theta}}{H_\ell^S}\ind{\mS_\ell\geq-\alpha, \MS_\ell\geq\log n-a_n, \max_{k\leq \ell}(\MS_k-S_k)\leq \log n, \MS_\ell-S_\ell\in[\theta\log n+x,\theta\log n+x+1]}\right]
\end{align*}
Note that $H_\ell^S\geq e^{\MS_\ell-S_\ell}$. It follows that
\begin{align*}
&\sum_{x=0}^{a_n-1}\E\left[\frac{n^{\theta}}{H_\ell^S}\ind{\mS_\ell\geq-\alpha, \MS_\ell\geq\log n-a_n, \max_{k\leq \ell}(\MS_k-S_k)\leq \log n, \MS_\ell-S_\ell\in[\theta\log n+x,\theta\log n+x+1]}\right]\\
\leq &\sum_{x=0}^{a_n-1}\E\left[n^{\theta}e^{S_\ell-\MS_\ell}\ind{\mS_\ell\geq-\alpha, \MS_\ell\geq\log n-a_n, \max_{k\leq \ell}(\MS_k-S_k)\leq \log n, \MS_\ell-S_\ell\in[\theta\log n+x,\theta\log n+x+1]}\right]\\
\leq & \sum_{x=0}^{a_n-1}e^{-x}\E\left[\ind{\mS_\ell\geq-\alpha, \MS_\ell\geq\log n-a_n,\max_{k\leq \ell}(\MS_k-S_k)\leq \log n, \MS_\ell-S_\ell\in[\theta\log n+x,\theta\log n+x+1]}\right].
\end{align*}
On the other hand, as $H_\ell^S=\sum_{k=0}^\ell e^{S_k-\MS_\ell}e^{\MS_\ell-S_\ell}$,
\begin{align*}
&\sum_{x=-a_n}^{-1}\E\left[\frac{H_\ell^S}{n^{\theta}}\ind{\mS_\ell\geq-\alpha, \MS_\ell\geq\log n-a_n, \max_{k\leq \ell}(\MS_k-S_k)\leq \log n, \MS_\ell-S_\ell\in[\theta\log n+x,\theta\log n+x+1]}\right]\\
\leq & \sum_{x=-a_n}^{-1}e^x\E\left[\sum_{k=0}^\ell e^{S_k-\MS_\ell}\ind{  \mS_\ell\geq-\alpha, \MS_\ell\geq\log n-a_n, \max_{k\leq \ell}(\MS_k-S_k)\leq \log n, \MS_\ell-S_\ell\in[\theta\log n+x,\theta\log n+x+1]}\right]
\end{align*}
So, it suffices to prove that uniformly for $x\in[-a_n, a_n]$,
\begin{align}\label{positivex}
\sum_{\ell=(\log n)^2/\varepsilon}^{c_0(\log n)^3}\E\left[\ind{\mS_\ell\geq-\alpha, \MS_\ell\geq\log n-a_n,\max_{k\leq \ell}(\MS_k-S_k)\leq \log n, \MS_\ell-S_\ell\in[\theta\log n+x,\theta\log n+x+1]}\right]=&o_{n,\varepsilon}(1)\\
\sum_{\ell=(\log n)^2/\varepsilon}^{c_0(\log n)^3}\E\left[\sum_{k=0}^\ell e^{S_k-\MS_\ell}\ind{\mS_\ell\geq-\alpha, \MS_\ell\geq\log n-a_n, \max_{k\leq \ell}(\MS_k-S_k)\leq \log n, \MS_\ell-S_\ell\in[\theta\log n+x,\theta\log n+x+1]}\right]=&o_{n,\varepsilon}(1).\label{negativex}
\end{align}

First, we consider \eqref{positivex} and let $\E_{\eqref{positivex}}(\ell):=\E\left[\ind{\mS_\ell\geq-\alpha, \MS_\ell\geq\log n-a_n,\max_{k\leq \ell}(\MS_k-S_k)\leq \log n, \MS_\ell-S_\ell\in[\theta\log n+x,\theta\log n+x+1]}\right]$. By considering the first time hitting $\MS_\ell$, we have
\begin{equation}\label{positivexsum}
\sum_{\ell=(\log n)^2/\varepsilon}^{c_0(\log n)^3}\E_{\eqref{positivex}}(\ell)= \sum_{\ell=(\log n)^2/\varepsilon}^{c_0(\log n)^3}\sum_{j=1}^{\ell-1}\E_{\eqref{positivexsum}}(j,\ell),
\end{equation}
where 
\[
\E_{\eqref{positivexsum}}(j,\ell):=\E\left[\ind{\mS_\ell\geq-\alpha, \MS_\ell=S_j>\MS_{j-1}, \MS_\ell\geq\log n-a_n, \max_{k\leq \ell}(\MS_k-S_k)\leq \log n, \MS_\ell-S_\ell\in[\theta\log n+x,\theta\log n+x+1]}\right].
\]
By Markov property at time $j$, It is immediate that
\begin{multline}\label{Markovpositivex}
\E_{\eqref{positivexsum}}(j,\ell)\leq \P\left(\mS_j\geq-\alpha, \max_{k\leq j}(\MS_k-S_k)\leq \log n, \MS_j=S_j\geq\log n-a_n\right)\\
\times\P\left(\MS_{\ell-j}\leq 0, \mS_{\ell-j}\geq-\log n, -S_{\ell-j}\in[\log n+x,\log n+x+1]\right).
\end{multline}
Observe that for $j\leq (\log n)^{1+\delta}$ or $j\geq \ell-(\log n)^{1+\delta}$ with $\delta\in(0,1)$,
\begin{align*}
&\sum_{\ell=(\log n)^2/\varepsilon}^{c_0(\log n)^3}\left[\sum_{j=1}^{(\log n)^{1+\delta}}\E_{\eqref{positivexsum}}(j,\ell)+\sum_{j=\ell-(\log n)^{1+\delta}}^{\ell-1}\E_{\eqref{positivexsum}}(j,\ell)\right]\\
\leq &\sum_{\ell=(\log n)^2/\varepsilon}^{c_0(\log n)^3}\left[\sum_{j=1}^{(\log n)^{1+\delta}}\P(S_j\geq \log n-a_n)+\sum_{j=\ell-(\log n)^{1+\delta}}^{\ell-1}\P(-S_{\ell-j}\geq \theta\log n-a_n)\right].
\end{align*}
By \eqref{sumSbd}, we get that
\begin{equation}
\sum_{\ell=(\log n)^2/\varepsilon}^{c_0(\log n)^3}\left[\sum_{j=1}^{(\log n)^{1+\delta}}\E_{\eqref{positivexsum}}(j,\ell)+\sum_{j=\ell-(\log n)^{1+\delta}}^{\ell-1}\E_{\eqref{positivexsum}}(j,\ell)\right]\leq c_0(\log n)^3 e^{-c_{47}(\log n)^{1-\delta}}=o_n(1).
\end{equation}
For $(\log n)^{1+\delta}\leq j\leq (\log n)^2$, by \eqref{Markovpositivex}, \eqref{mSMSSlargebde} and \eqref{mSSbd}, one has
\begin{align*}
&\sum_{\ell=(\log n)^2/\varepsilon}^{c_0(\log n)^3}\sum_{j=(\log n)^{1+\delta}}^{(\log n)^2}\E_{\eqref{positivexsum}}(j,\ell)\\
\leq &\sum_{\ell=(\log n)^2/\varepsilon}^{c_0(\log n)^3}\sum_{j=(\log n)^{1+\delta}}^{(\log n)^2}\P(\mS_j\geq-\alpha, \MS_j=S_j\geq \log n-a_n)\P(\MS_{\ell-j}\leq0, -S_{\ell-j}-\theta\log n\in[x,x+1])\\
%\leq &\sum_{j=(\log n)^{1+\delta}}^{(\log n)^2}\P(\mS_j\geq-\alpha, \MS_j=S_j\geq \theta\log n-2a_n)\frac{\theta\log n+a_n}{(\ell-j)^{3/2}}\\
\leq &\sum_{\ell=(\log n)^2/\varepsilon}^{c_0(\log n)^3}\sum_{j=(\log n)^{1+\delta}}^{(\log n)^2}c_{48}\frac{1+\alpha}{\sqrt{j}\log n}e^{-c_{49}\frac{(\log n)^2}{j}}\frac{\theta\log n+a_n}{(\ell-j)^{3/2}}\leq \sum_{\ell=(\log n)^2/\varepsilon}^{c_0(\log n)^3}c_{50}(1+\alpha)\frac{\log n}{\ell^{3/2}},
\end{align*}
which is $o_\varepsilon(1)$ as $\varepsilon\downarrow0$.
For $(\log n)^2\leq j\leq \ell-(\log n)^2$, by \eqref{Markovpositivex}, \eqref{mSMSvalleybd} and \eqref{mSSbd}, one has
\begin{align*}
&\sum_{\ell=(\log n)^2/\varepsilon}^{c_0(\log n)^3}\sum_{j=(\log n)^{2}}^{\ell-(\log n)^2}\E_{\eqref{positivexsum}}(j,\ell)\\
&\sum_{\ell=(\log n)^2/\varepsilon}^{c_0(\log n)^3}\sum_{j=(\log n)^{2}}^{\ell-(\log n)^2}\P(\mS_j\geq-\alpha, \max_{k\leq j}(\MS_k-S_k)\leq \log n, \MS_j=S_j)\P(\MS_{\ell-j}\leq0, -S_{\ell-j}-\theta\log n\in[x,x+1])\\
\leq &\sum_{\ell=(\log n)^2/\varepsilon}^{c_0(\log n)^3}\sum_{j=(\log n)^{2}}^{\ell-(\log n)^2}c_{51}\frac{1+\alpha}{j}e^{-c_{52}\frac{j}{(\log n)^2}}\frac{\theta\log n+a_n}{(\ell-j)^{3/2}}\leq \sum_{\ell=(\log n)^2/\varepsilon}^{c_0(\log n)^3}c_{53}(1+\alpha)\frac{\log n}{\ell^{3/2}},
\end{align*}
which is also $o_\varepsilon(1)$.
For $\ell-(\log n)^2\leq j\leq \ell-(\log n)^{1+\delta}$, by \eqref{Markovpositivex}, \eqref{mSMSvalleybd} and \eqref{mSSbde}, one has
\begin{align*}
&\sum_{\ell=(\log n)^2/\varepsilon}^{c_0(\log n)^3}\sum_{j=\ell-(\log n)^2}^{\ell-(\log n)^{1+\delta}}\E_{\eqref{positivexsum}}(j,\ell)\\
\le&\sum_{\ell=(\log n)^2/\varepsilon}^{c_0(\log n)^3}\sum_{j=\ell-(\log n)^2}^{\ell-(\log n)^{1+\delta}}\P(\mS_j\geq-\alpha, \max_{k\leq j}(\MS_k-S_k)\leq \log n, \MS_j=S_j)\P(\MS_{\ell-j}\leq0, -S_{\ell-j}-\theta\log n\in[x,x+1])\\
\leq &\sum_{\ell=(\log n)^2/\varepsilon}^{c_0(\log n)^3}\sum_{j=\ell-(\log n)^2}^{\ell-(\log n)^{1+\delta}}c_{54}\frac{1+\alpha}{j}e^{-c_{55}\frac{j}{(\log n)^2}}\frac{1}{\ell-j}e^{-c_{56}\frac{(\log n)^2}{\ell-j}}\leq \sum_{\ell=(\log n)^2/\varepsilon}^{c_0(\log n)^3}c_{57}\frac{1+\alpha}{\ell}e^{-c_{58}\frac{\ell}{(\log n )^2}}
\end{align*}
which is $o_\varepsilon(1)$. We hence end up with 
\[
\sum_{\ell=(\log n)^2/\varepsilon}^{c_0(\log n)^3}\E_{\eqref{positivex}}(\ell)=o_n(1)+o_\varepsilon(1),
\]
which shows \eqref{positivex}. 

Let us turn to check \eqref{negativex}. Let 
\[
\E_{\eqref{negativex}}(\ell):=\sum_{k=0}^\ell \E\left[e^{S_k-\MS_\ell}\ind{\mS_\ell\geq-\alpha, \MS_\ell\geq\log n-a_n, \max_{k\leq \ell}(\MS_k-S_k)\leq \log n, \MS_\ell-S_\ell\in[\theta\log n+x,\theta\log n+x+1]}\right],
\]
and
\begin{equation}\label{negativexsum}
\E_{\eqref{negativexsum}}(j,k,\ell):=\E\left[e^{S_k-S_j}\ind{\MS_\ell=S_j>\MS_{j-1}}\ind{\mS_\ell\geq-\alpha, \MS_\ell\geq\log n-a_n, \max_{k\leq \ell}(\MS_k-S_k)\leq \log n, \MS_\ell-S_\ell\in[\theta\log n+x,\theta\log n+x+1]}\right].
\end{equation}
By considering the first time hitting $\MS_\ell$, one sees that 
\begin{align}
\sum_{\ell=(\log n)^2/\varepsilon}^{c_0(\log n)^3}\E_{\eqref{negativex}}(\ell)=&\sum_{\ell=(\log n)^2/\varepsilon}^{c_0(\log n)^3}\sum_{k=0}^{\ell}\sum_{j=1}^{\ell-1}\E_{\eqref{negativexsum}}(j,k,\ell)\nonumber\\
=&\sum_{\ell=(\log n)^2/\varepsilon}^{c_0(\log n)^3}\sum_{j=1}^{\ell-1}\sum_{k=0}^{j-1}\E_{\eqref{negativexsum}}(j,k,\ell)+\sum_{\ell=(\log n)^2/\varepsilon}^{c_0(\log n)^3}\sum_{j=1}^{\ell-1}\sum_{k=j}^{\ell}\E_{\eqref{negativexsum}}(j,k,\ell).\label{-xsumk}
\end{align}
For $k\geq j$, by Markov property at time $j$, 
\begin{multline}\label{Markovnegativex+}
\E_{\eqref{negativexsum}}(j,k,\ell)\leq \P(\mS_j\geq-\alpha, S_j=\MS_j\geq \log n-a_n, \max_{1\leq i\leq j}(\MS_i-S_i)\leq \log n)\\
\times\E\left[e^{S_{k-j}}\ind{\MS_{\ell-j}\leq0; -S_{\ell-j}\in[\theta\log n+x,\theta\log n+x+1]}\right].
\end{multline}
Similarly as above, we use different inequalities for different $j$ to bound the second sum on the right hand side of \eqref{-xsumk}. 
\begin{enumerate}
\item For $j\leq (\log n)^{1+\delta}$ with $\delta\in(0,1)$, by \eqref{Markovnegativex+}, \eqref{sumeSMSSbd} and \eqref{sumSbd}, one sees that
\begin{align*}
&\sum_{\ell=(\log n)^2/\varepsilon}^{c_0(\log n)^3} \sum_{j=1}^{(\log n)^{1+\delta}}\sum_{k=j}^\ell\E_{\eqref{negativexsum}}(j,k,\ell)\\
\leq &\sum_{\ell=(\log n)^2/\varepsilon}^{c_0(\log n)^3} \sum_{j=1}^{(\log n)^{1+\delta}}\P(S_j\geq \log n-a_n)\E\left[\sum_{k=0}^{\ell-j}e^{S_k}\ind{\MS_{\ell-j}\leq0; -S_{\ell-j}\in[\theta\log n+x,\theta\log n+x+1]}\right]\\
\leq & \sum_{\ell=(\log n)^2/\varepsilon}^{c_0(\log n)^3} \sum_{j=1}^{(\log n)^{1+\delta}}\P(S_j\geq \log n-a_n)c_{59}\frac{\theta\log n+a_n}{(\ell/2)^{3/2}}\leq c_{60}\sqrt{\varepsilon}e^{-c_{61}(\log n)^{1-\delta}},
\end{align*}
which is $o_n(1)$.
\item For $(\log n)^{1+\delta}\leq j\leq (\log n)^2$, by \eqref{Markovnegativex+}, \eqref{mSMSSlargebde} and \eqref{sumeSMSSbd}, one sees that
\begin{align*}
&\sum_{\ell=(\log n)^2/\varepsilon}^{c_0(\log n)^3}\sum_{j=(\log n)^{1+\delta}}^{(\log n)^2}\sum_{k=j}^\ell\E_{\eqref{negativexsum}}(j,k,\ell)\\
\leq&\sum_{\ell=(\log n)^2/\varepsilon}^{c_0(\log n)^3}\sum_{j=(\log n)^{1+\delta}}^{(\log n)^2}\P(\mS_j\geq-\alpha, S_j=\MS_j\geq \log n-a_n)\E\left[\sum_{k=0}^{\ell-j}e^{S_k}\ind{\MS_{\ell-j}\leq0; -S_{\ell-j}\in[\theta\log n+x,\theta\log n+x+1]}\right]\\
\leq &\sum_{\ell=(\log n)^2/\varepsilon}^{c_0(\log n)^3}\sum_{j=(\log n)^{1+\delta}}^{(\log n)^2}c_{62}\frac{1+\alpha}{\sqrt{j}\log n}e^{-c_{63}\frac{(\log n)^2}{j}}\frac{\theta\log n+a_n}{(\ell-j)^{3/2}}\leq \sum_{\ell=(\log n)^2/\varepsilon}^{c_0(\log n)^3}c_{64}(1+\alpha)\frac{\log n}{\ell^{3/2}},
\end{align*}
which is $o_\varepsilon(1)$.
\item For $(\log n)^2\leq j\leq \ell-(\log n)^2$, by \eqref{Markovnegativex+}, \eqref{mSMSvalleybd} and \eqref{sumeSMSSbd}, one sees that
\begin{align*}
&\sum_{\ell=(\log n)^2/\varepsilon}^{c_0(\log n)^3}\sum_{j=(\log n)^2}^{\ell-(\log n)^2}\sum_{k=j}^\ell\E_{\eqref{negativexsum}}(j,k,\ell)\\
\le&\sum_{\ell=(\log n)^2/\varepsilon}^{c_0(\log n)^3}\sum_{j=(\log n)^2}^{\ell-(\log n)^2}\P(\mS_j\geq-\alpha, S_j=\MS_j, \max_{1\leq i\leq j}(\MS_i-S_i)\leq \log n)\E\left[\sum_{k=0}^{\ell-j}e^{S_k}\ind{\MS_{\ell-j}\leq0; -S_{\ell-j}\in[\theta\log n+x,\theta\log n+x+1]}\right]\\
\leq &\sum_{\ell=(\log n)^2/\varepsilon}^{c_0(\log n)^3}\sum_{j=(\log n)^2}^{\ell-(\log n)^2}c_{65}\frac{1+\alpha}{j}e^{-c_{66}\frac{j}{(\log n)^2}}\frac{\theta\log n+a_n}{(\ell-j)^{3/2}}\leq \sum_{\ell=(\log n)^2/\varepsilon}^{c_0(\log n)^3}c_{67}(1+\alpha)\frac{\log n}{\ell^{3/2}},
\end{align*}
which is $o_\varepsilon(1)$.
\item For $\ell-(\log n)^2\leq j \leq \ell$, by \eqref{Markovnegativex+}, \eqref{mSMSvalleybd} and \eqref{2sumeSmSSbd}, one sees that
\begin{align*}
&\sum_{\ell=(\log n)^2/\varepsilon}^{c_0(\log n)^3}\sum_{j=\ell-(\log n)^2}^{\ell-1}\sum_{k=j}^\ell\E_{\eqref{negativexsum}}(j,k,\ell)\\
\le&\sum_{\ell=(\log n)^2/\varepsilon}^{c_0(\log n)^3}\sum_{\ell-j=1}^{(\log n)^2}\P(\mS_j\geq-\alpha, S_j=\MS_j, \max_{1\leq i\leq j}(\MS_i-S_i)\leq \log n)\E\left[\sum_{k=0}^{\ell-j}e^{S_k}\ind{\MS_{\ell-j}\leq0; -S_{\ell-j}\in[\theta\log n+x,\theta\log n+x+1]}\right]\\
\leq &\sum_{\ell=(\log n)^2/\varepsilon}^{c_0(\log n)^3}\sum_{j=\ell-(\log n)^2}^{\ell-1}c_{68}\frac{1+\alpha}{\ell}e^{-c_{69}\frac{j}{(\log n)^2}}\E\left[\sum_{k=0}^{\ell-j}e^{S_k}\ind{\MS_{\ell-j}\leq0; -S_{\ell-j}\in[\theta\log n+x,\theta\log n+x+1]}\right]\\
\leq &\sum_{\ell=(\log n)^2/\varepsilon}^{c_0(\log n)^3} c_{70}\frac{1+\alpha}{\ell}e^{-c_{71}\frac{\ell}{(\log n)^2}},
\end{align*}
which is $o_\varepsilon(1)$.
\end{enumerate}
Combining all these terms, we get that
\begin{equation}\label{-xk+sum}
\sum_{\ell=(\log n)^2/\varepsilon}^{c_0(\log n)^3}\sum_{j=1}^{\ell-1}\sum_{k=j}^{\ell}\E_{\eqref{negativexsum}}(j,k,\ell)=o_n(1)+o_\varepsilon(1).
\end{equation}
Next, let us bound $\sum_{\ell=(\log n)^2/\varepsilon}^{c_0(\log n)^3}\sum_{j=1}^{\ell-1}\sum_{k=0}^{j-1}\E_{\eqref{negativexsum}}(j,k,\ell)$. For $k<j$, Markov property at time $j$ implies that
\begin{multline*}
\E_{\eqref{negativexsum}}(j,k,\ell)\leq \E\left[e^{S_k-S_j}\ind{\mS_j\geq-\alpha, S_j>\MS_{j-1}, \MS_j\geq\log n-a_n, \max_{0\leq i\leq j}(\MS_i-S_i)\leq \log n}\right]\\
\times\P(\MS_{\ell-j}\leq0, -S_{\ell-j}-\theta\log n\in [x,x+1]).
\end{multline*}
If $\MS_k\leq \frac{1}{2}\log n$, then $\E_{\eqref{negativexsum}}(j,k,\ell)\leq e^{-\frac{1}{2}\log n+a_n}$. 
Therefore,
\begin{multline}
\E_{\eqref{negativexsum}}(j,k,\ell)\leq e^{-\frac{1}{2}\log n+a_n}+\E\left[e^{S_k-S_j}\ind{\mS_j\geq-\alpha, \MS_k\geq \frac{\log n}{2}, S_j>\MS_{j-1}, \MS_j\geq\log n-a_n, \max_{0\leq i\leq j}(\MS_i-S_i)\leq \log n}\right]\\
\times\P(\MS_{\ell-j}\leq0, -S_{\ell-j}-\theta\log n\in [x,x+1]).
\end{multline}
By Markov property at time $k$ and by the fact that $(S_{j}-S_{j-i})_{0\leq i\leq j}$ is distributed as $(S_i)_{0\leq i\leq j}$,
\begin{align*}
&\E\left[e^{S_k-S_j}\ind{\MS_k\geq \frac{1}{2}\log n, \mS_j\geq-\alpha, S_j>\MS_{j-1}, \MS_j\geq\log n-a_n, \max_{0\leq i\leq j}(\MS_i-S_i)\leq \log n}\right]\\
\leq &\E\left[\ind{\mS_K\geq-\alpha,\MS_k\geq \frac{1}{2}\log n,\max_{i\leq k}(\MS_i-S_i)\leq \log n}\E[e^{-S_{j-k}}\ind{S_{j-k}=\MS_{j-k}\geq x_0}]\vert_{x_0=\MS_k-S_k}\right]\\
=&\E\left[\ind{\mS_K\geq-\alpha,\MS_k\geq \frac{1}{2}\log n,\max_{i\leq k}(\MS_i-S_i)\leq \log n}\E[e^{-S_{j-k}}\ind{\mS_{j-k}\geq0, S_{j-k}\geq x_0}]\vert_{x_0=\MS_k-S_k}\right]
\end{align*}
which by \eqref{eSmSSbd} is less than $\frac{c_{72}}{(j-k)^{3/2}}\E\left[e^{(S_k-\MS_k)/2}\ind{\mS_K\geq-\alpha,\MS_k\geq \frac{1}{2}\log n,\max_{i\leq k}(\MS_i-S_i)\leq \log n}\right]$. As a result,
\begin{equation}\label{Markov-x-}
\sum_{\ell=(\log n)^2/\varepsilon}^{c_0(\log n)^3}\sum_{j=1}^{\ell-1}\sum_{k=0}^{j-1}\E_{\eqref{negativexsum}}(j,k,\ell)\leq o_n(1)+\sum_{\ell=(\log n)^2/\varepsilon}^{c_0(\log n)^3}\sum_{j=1}^{\ell-1}\sum_{k=1}^{j-1}\frac{c_{72}}{(j-k)^{3/2}}\E_{\eqref{Markov-x-}}(j,k,\ell),
\end{equation}
where 
\[
\E_{\eqref{Markov-x-}}(j,k,\ell):=\E\left[e^{(S_k-\MS_k)/2}\ind{\mS_K\geq-\alpha,\MS_k\geq \frac{1}{2}\log n,\max_{i\leq k}(\MS_i-S_i)\leq \log n}\right]\P(\MS_{\ell-j}\leq0, -S_{\ell-j}-\theta\log n\in [x,x+1]).
\]
For $j\leq \ell/2$, by \eqref{mSSbd}, we have
\begin{align*}
&\sum_{j=1}^{\ell/2}\sum_{k=1}^{j-1}\frac{c_{72}}{(j-k)^{3/2}}\E_{\eqref{Markov-x-}}(j,k,\ell)
\leq \sum_{j=1}^{\ell/2}\sum_{k=1}^{j-1}\frac{c_{73}}{(j-k)^{3/2}}\E\left[e^{(S_k-\MS_k)/2}\ind{\mS_K\geq-\alpha,\MS_k\geq \frac{1}{2}\log n,\max_{i\leq k}(\MS_i-S_i)\leq \log n}\right]\frac{\log n}{(\ell/2)^{3/2}}\\
\leq &\frac{\log n}{(\ell/2)^{3/2}}\sum_{k=1}^{\ell/2-1}\E\left[e^{(S_k-\MS_k)/2}\ind{\mS_K\geq-\alpha,\MS_k\geq \frac{1}{2}\log n,\max_{i\leq k}(\MS_i-S_i)\leq \log n}\right]\sum_{j=k+1}^{\ell/2}\frac{c_{73}}{(j-k)^{3/2}}\\
\leq & c_{74}\frac{\log n}{\ell^{3/2}}\left[\sum_{k=1}^{(\log n)^2}+\sum_{k=(\log n)^2}^{\ell/2}\E\left[e^{(S_k-\MS_k)/2}\ind{\mS_K\geq-\alpha,\MS_k\geq \frac{1}{2}\log n,\max_{i\leq k}(\MS_i-S_i)\leq \log n}\right]\right]
\end{align*}
By \eqref{eSMSMSbd} for $k\leq (\log n)^2$ and by \eqref{eSMSmSbd} for $k\geq (\log n)^2$, we see that
\begin{align*}
&\sum_{j=1}^{\ell/2}\sum_{k=1}^{j-1}\frac{c_{72}}{(j-k)^{3/2}}\E_{\eqref{Markov-x-}}(j,k,\ell)\\
\leq &c_{75}\frac{\log n}{\ell^{3/2}}\left[\sum_{k=1}^{(\log n)^2}\frac{1+\alpha}{\sqrt{k}\log n}+\sum_{k=(\log n)^2}^{\ell/2}(\frac{(1+\alpha)\log k}{k^{3/2}}+\frac{(1+\alpha)}{k}e^{-c_{76}\frac{k}{(\log n)^2}})\right]
\leq  c_{77} (1+\alpha)\frac{\log n}{\ell^{3/2}}.
\end{align*}
This yields that
\begin{equation}\label{Markov-x-sum1}
\sum_{\ell=(\log n)^2/\varepsilon}^{c_0(\log n)^3}\sum_{j=1}^{\ell/2}\sum_{k=1}^{j-1}\frac{c_{72}}{(j-k)^{3/2}}\E_{\eqref{Markov-x-}}(j,k,\ell)=o_\varepsilon(1).
\end{equation}
For $\ell/2\leq j\leq \ell-(\log n)^2$, by \eqref{mSSbd}, we have
\begin{align*}
&\sum_{j=\ell/2}^{\ell-(\log n)^2}\sum_{k=1}^{j-1}\frac{c_{72}}{(j-k)^{3/2}}\E_{\eqref{Markov-x-}}(j,k,\ell)\\
\leq &\sum_{j=\ell/2}^{\ell-(\log n)^2}\sum_{k=1}^{j-1}\frac{c_{78}}{(j-k)^{3/2}}\E\left[e^{(S_k-\MS_k)/2}\ind{\mS_K\geq-\alpha,\MS_k\geq \frac{1}{2}\log n,\max_{i\leq k}(\MS_i-S_i)\leq \log n}\right]\frac{\log n}{(\ell-j)^{3/2}}
%\leq & \sum_{j=\ell/2}^{\ell-(\log n)^2}\left[\sum_{k=1}^{(\log n)^2}+\sum_{k=(\log n)^2}^{j-1}\frac{c}{(j-k)^{3/2}}\E\left[e^{(S_k-\MS_k)/2}\ind{\mS_K\geq-\alpha,\MS_k\geq \frac{1}{2}\log n,\max_{i\leq k}(\MS_i-S_i)\leq \log n}\right]\right]c\frac{\log n}{\ell^{3/2}}.
\end{align*}
By \eqref{eSMSMSbd} for $k\leq (\log n)^2$ and by \eqref{eSMSmSbd} for $k\geq (\log n)^2$, we see that
\begin{align*}
&\sum_{j=\ell/2}^{\ell-(\log n)^2}\sum_{k=1}^{j-1}\frac{c_{72}}{(j-k)^{3/2}}\E_{\eqref{Markov-x-}}(j,k,\ell)\\
\leq &\sum_{j=\ell/2}^{\ell-(\log n)^2}\left[\sum_{k=1}^{(\log n)^2}\frac{c_{79}}{(j-k)^{3/2}}\frac{1+\alpha}{\sqrt{k}\log n}+\sum_{k=(\log n)^2}^{j-1}\frac{c_{80}}{(j-k)^{3/2}}(1+\alpha)(\frac{\log k}{k^{3/2}}+\frac{e^{-c_{81}\frac{k}{(\log n)^2}}}{k})\right]\frac{\log n}{(\ell-j)^{3/2}}\\
\leq &c_{82}(1+\alpha)(\frac{\log \ell}{\ell^{3/2}}+\frac{e^{-c_{83}\frac{\ell}{(\log n)^2}}}{\ell}).
\end{align*}
This leads to 
\begin{equation}\label{Markov-x-sum2}
\sum_{\ell=(\log n)^2/\varepsilon}^{c_0(\log n)^3}\sum_{j=\ell/2}^{\ell-(\log n)^2}\sum_{k=1}^{j-1}\frac{c}{(j-k)^{3/2}}\E_{\eqref{Markov-x-}}(j,k,\ell)=o_\varepsilon(1).
\end{equation}
For $\ell-(\log n)^2\leq j\leq \ell-1$, by \eqref{eSMSMSbd} for $k\leq (\log n)^2$ and by \eqref{eSMSmSbd} for $k\geq (\log n)^2$, one sees that
\begin{align*}
&\sum_{j=\ell-(\log n)^2}^{\ell-1}\sum_{k=1}^{j-1}\frac{c_{72}}{(j-k)^{3/2}}\E_{\eqref{Markov-x-}}(j,k,\ell)\\
\leq &\sum_{j=\ell-(\log n)^2}^{\ell-1}\left[\sum_{k=1}^{(\log n)^2}\frac{c_{84}}{(j-k)^{3/2}}\frac{1+\alpha}{\sqrt{k}\log n}+\sum_{k=(\log n)^2}^{j-1}\frac{c_{85}}{(j-k)^{3/2}}(1+\alpha)(\frac{\log k}{k^{3/2}}+\frac{e^{-c_{86}\frac{k}{(\log n)^2}}}{k})\right]\\
&\hspace{8cm}\times\P(\MS_{\ell-j}\leq0, -S_{\ell-j}-\theta\log n\in [x,x+1])\\
\leq & c_{87}(1+\alpha)(\frac{\log \ell}{\ell^{3/2}}+\frac{e^{-c_{88}\frac{\ell}{(\log n)^2}}}{\ell})\sum_{j=1}^{(\log n)^2}\P(\MS_j\leq0, -S_j-\theta\log n\in [x,x+1])
%&\leq &c(1+\alpha)(\frac{\log \ell}{\ell^{3/2}}+\frac{e^{-c\frac{\ell}{(\log n)^2}}}{\ell})
\end{align*}
which by \eqref{summSSbd} is less than
\[
c_{89}(1+\alpha)(\frac{\log \ell}{\ell^{3/2}}+\frac{e^{-c_{88}\frac{\ell}{(\log n)^2}}}{\ell}).
\]
Consequently, 
\begin{equation}\label{Markov-x-sum3}
\sum_{\ell=(\log n)^2/\varepsilon}^{c_0(\log n)^3}\sum_{j=\ell-(\log n)^2}^{\ell-1}\sum_{k=1}^{j-1}\frac{c_{72}}{(j-k)^{3/2}}\E_{\eqref{Markov-x-}}(j,k,\ell)=o_n(1)+o_\varepsilon(1).
\end{equation}
In view of \eqref{Markov-x-sum1}, \eqref{Markov-x-sum2} and \eqref{Markov-x-sum3}, we end up with 
\[
\sum_{\ell=(\log n)^2/\varepsilon}^{c_0(\log n)^3}\sum_{j=1}^{\ell-1}\sum_{k=1}^{j-1}\frac{c_{72}}{(j-k)^{3/2}}\E_{\eqref{Markov-x-}}(j,k,\ell)=o_n(1)+o_\varepsilon(1).
\]
This, combined with \eqref{Markov-x-}, \eqref{-xk+sum} and \eqref{-xsumk}, gives \eqref{negativex}. We thus conclude \eqref{largegeneration1ex++}.

\textbf{Proof of \eqref{badmaxV1ex++}.} Let
\[
\E_{\eqref{badmaxV1ex++}}(\ell):=\E\left[\sum_{|x|=\ell}na_x(1-a_x)^{n-1}b_x^{n^\theta-1}\ind{x\in\D_n}\ind{\MV(x)\in[\log n-a_n,\log n+a_n],\mV(x)\geq-\alpha}\right].
\]
We are going to show that 
\begin{equation}
\sum_{\ell=\varepsilon(\log n)^2}^{(\log n)^2/\varepsilon}\E_{\eqref{badmaxV1ex++}}(\ell)=o(n^{1-\theta}).
\end{equation}
Recall that $na_x(1-a_x)^{n-1}b_x^{n^\theta-1}\leq n^{1-\theta}e^{-V(x)}$. It then follows from Many-to-One Lemma that
\begin{align*}
&\sum_{\ell=\varepsilon(\log n)^2}^{(\log n)^2/\varepsilon}\E_{\eqref{badmaxV1ex++}}(\ell)
\leq n^{1-\theta}\sum_{\ell=\varepsilon(\log n)^2}^{(\log n)^2/\varepsilon}\E\left[\sum_{|x|=\ell}e^{-V(x)}\ind{x\in\D_n}\ind{\MV(x)\in[\log n-a_n,\log n+a_n],\mV(x)\geq-\alpha}\right]\\
=&n^{1-\theta}\sum_{\ell=\varepsilon(\log n)^2}^{(\log n)^2/\varepsilon}\P\left(\mS_\ell\geq-\alpha, \MS_\ell\in[\log n-a_n, \log n+a_n], \MS_\ell-S_\ell\in[\theta\log n-a_n,\theta\log n+a_n]\right).
\end{align*}
which by considering the first time hitting $\MS_\ell$ is equal to
\[
n^{1-\theta}\sum_{\ell=\varepsilon(\log n)^2}^{(\log n)^2/\varepsilon}\sum_{j=1}^{\ell-1}\P\left(\mS_\ell\geq-\alpha, \MS_{j-1}<S_j=\MS_\ell\in[\log n-a_n, \log n+a_n], S_j-S_\ell-\theta\log n\in[-a_n, a_n]\right).
\]
So by Markov property at time $j$, we get that
\begin{equation}\label{badMV1ex++sum}
\sum_{\ell=\varepsilon(\log n)^2}^{(\log n)^2/\varepsilon}\E_{\eqref{badmaxV1ex++}}(\ell)\leq n^{1-\theta}\sum_{\ell=\varepsilon(\log n)^2}^{(\log n)^2/\varepsilon}\sum_{j=1}^{\ell-1}\P_{\eqref{badMV1ex++sum}}(j,\ell),
\end{equation}
where
\[
\P_{\eqref{badMV1ex++sum}}(j,\ell):=\P(\mS_j\geq-\alpha,\MS_j=S_j\in[\log n-a_n, \log n+a_n])\P(\MS_{\ell-j}\leq0, -S_{\ell-j}-\theta\log n\in[-a_n, a_n]).
\]
We will divide th sum on the righe hand side of \eqref{badMV1ex++sum} into four parts: $\sum_{\ell=\varepsilon(\log n)^2}^{(\log n)^2/\varepsilon}\sum_{j=1}^{(\log n)^{1+\delta}}$  with $\delta\in(1/2,1)$, $\sum_{\ell=\varepsilon(\log n)^2}^{(\log n)^2/\varepsilon}\sum_{j=(\log n)^{1+\delta}}^{(\log n)^2}$, $\sum_{\ell=\varepsilon(\log n)^2}^{(\log n)^2/\varepsilon}\sum_{j=(\log n)^{2}}^{\ell-(\log n)^2}$ and $\sum_{\ell=\varepsilon(\log n)^2}^{(\log n)^2/\varepsilon}\sum_{j=\ell-(\log n)^2}^{\ell-1}$ and bound them separately.
\begin{enumerate}
\item For $1\leq j\leq (\log n)^{1+\delta}$ with $\delta\in(0,1)$, by \eqref{mSSbd} and \eqref{sumSbd}, one sees that
\begin{align*}
&\sum_{\ell=\varepsilon(\log n)^2}^{(\log n)^2/\varepsilon}\sum_{j=1}^{(\log n)^{1+\delta}}\P_{\eqref{badMV1ex++sum}}(j,\ell)\\
%&\sum_{j=1}^{(\log n)^{1+\delta}}\P(\mS_j\geq-\alpha,\MS_j=S_j\in[\log n-a_n, \log n+a_n])\P(\MS_{\ell-j}\leq0, -S_{\ell-j}\in[\theta\log n-a_n, \theta\log n+a_n])\\
\leq &\sum_{\ell=\varepsilon(\log n)^2}^{(\log n)^2/\varepsilon}\sum_{j=1}^{(\log n)^{1+\delta}}\P(S_j\geq \log n-a_n)c_{90}\frac{a_n\log n}{(\ell-j)^{3/2}}\leq \sum_{\ell=\varepsilon(\log n)^2}^{(\log n)^2/\varepsilon}c_{90}\frac{a_n\log n}{\ell^{3/2}}e^{-c_{91}(\log n)^{1+\delta}}=o_n(1).
\end{align*}
\item For $(\log n)^{1+\delta}\leq j\leq (\log n)^2$ with $\delta\in(1/2,1)$, by \eqref{mSMS-Sbd} and \eqref{mSSbd}, one gets that
\begin{align*}
&\sum_{\ell=\varepsilon(\log n)^2}^{(\log n)^2/\varepsilon}\sum_{j=(\log n)^{1+\delta}}^{(\log n)^2}\P_{\eqref{badMV1ex++sum}}(j,\ell)\\
\leq &\sum_{\ell=\varepsilon(\log n)^2}^{(\log n)^2/\varepsilon}\sum_{j=(\log n)^{1+\delta}}^{(\log n)^2}c_{92}(1+\alpha)^4\frac{a_n(\log n)^3}{j^3}\frac{a_n\log n}{(\ell-j)^{3/2}}\leq \sum_{\ell=\varepsilon(\log n)^2}^{(\log n)^2/\varepsilon}c_{92}(1+\alpha)^4\frac{a_n^2(\log n)^{2(1-\delta)}}{\ell^{3/2}}=o_n(1).
\end{align*}
\item For $(\log n)^2\leq j\leq \ell-(\log n)^2$, by \eqref{mSMSSintervalbd} and \eqref{mSSbd}, one sees that
\begin{align*}
&\sum_{\ell=\varepsilon(\log n)^2}^{(\log n)^2/\varepsilon}\sum_{j=(\log n)^2}^{\ell-(\log n)^2}\P_{\eqref{badMV1ex++sum}}(j,\ell)\\
\leq &\sum_{\ell=\varepsilon(\log n)^2}^{(\log n)^2/\varepsilon}\sum_{j=(\log n)^2}^{\ell-(\log n)^2}\P(\mS_j\geq-\alpha,S_j\in[\log n-a_n, \log n+a_n])\P(\MS_{\ell-j}\leq0, -S_{\ell-j}-\theta\log n\in[-a_n, a_n])\\
\leq &\sum_{\ell=\varepsilon(\log n)^2}^{(\log n)^2/\varepsilon}\sum_{j=(\log n)^2}^{\ell-(\log n)^2}c_{93}\frac{(1+\alpha)a_n}{j^{3/2}}\frac{a_n\log n}{(\ell-j)^{3/2}}\leq \sum_{\ell=\varepsilon(\log n)^2}^{(\log n)^2/\varepsilon}c_{94}(1+\alpha)\frac{a_n^2}{\ell^{3/2}}=o_n(1).
\end{align*}
\item For $\ell-(\log n)^2\leq j\leq \ell-1$, by \eqref{mSMSSintervalbd} and \eqref{summSSbd}, one gets that
\begin{align*}
&\sum_{\ell=\varepsilon(\log n)^2}^{(\log n)^2/\varepsilon}\sum_{j=\ell-(\log n)^2}^{\ell-1}\P_{\eqref{badMV1ex++sum}}(j,\ell)\\
\leq &\sum_{\ell=\varepsilon(\log n)^2}^{(\log n)^2/\varepsilon}c_{95}\frac{(1+\alpha)a_n}{\ell^{3/2}}\sum_{r=\theta\log n-a_n}^{\theta\log n+a_n}\sum_{j=1}^{(\log n)^2}\P(\MS_j\leq0, -S_j\in[r,r+1])\\
\leq &\sum_{\ell=\varepsilon(\log n)^2}^{(\log n)^2/\varepsilon}c_{96}(1+\alpha)\frac{a_n^2}{\ell^{3/2}}=o_n(1).
\end{align*}
\end{enumerate}
Going back to \eqref{badMV1ex++sum}, we deduce that $\sum_{\ell=\varepsilon(\log n)^2}^{(\log n)^2/\varepsilon}\E_{\eqref{badmaxV1ex++}}(\ell)$. This completes the proof of \eqref{badmaxV1ex++}.
\end{proof}

\begin{proof}[Proof of Lemma \ref{smallpart1exL}]

\textbf{Proof of \eqref{badvalley1ex}.}
Recall that for $x\in\B^+n\cap\D_n\cap\L_n$, one has $\p^\en(\eL_x(\tau_n)\geq n^\theta, E_x^{(n)}=1)=na_x(1-a_x)^{n-1}b_x^{n^\theta-1}=(1+o_n(1))n^{1-\theta}e^{-V(x)}f(\frac{n^\theta}{H_x})$ with $\cf(t)=te^{-t}$. It then follows that
\begin{align}\label{1stmoment1ex}
&\frac{1}{n^{1-\theta}}\e\left[\sum_{\ell=\varepsilon (\log n)^2}^{(\log n)^2/\varepsilon}\sum_{|z|=\ell}\ind{\eL_z(\tau_n)\geq n^\theta, E_z^{(n)}=1}\ind{z\in \B^+_n}\ind{z\in\D_n}\ind{\mV(z)\geq-\alpha, \gamma_n\leq \max_{x\leq z} H_z< n}\right]\nonumber\\
=&(1+o_n(1))\sum_{\ell=\varepsilon (\log n)^2}^{(\log n)^2/\varepsilon}\E\left[\sum_{|z|=\ell}e^{-V(z)}\cf(\frac{n^\theta}{H_z})\ind{z\in \B^+_n}\ind{z\in\D_n}\ind{\mV(z)\geq-\alpha, \gamma_n\leq \max_{x\leq z} H_z< n}\right]\nonumber\\
=&(1+o_n(1))\sum_{\ell=\varepsilon (\log n)^2}^{(\log n)^2/\varepsilon}\E\left[\cf(\frac{n^\theta}{H_\ell^S})\ind{\mS_\ell\geq-\alpha, \MS_\ell\geq \log n+a_n, \MS_\ell-S_\ell\in[\theta\log n-a_n,\theta\log n+a_n], \gamma_n\leq \max_{k\leq \ell}H_k^S\leq n}\right]
\end{align}
which is less than
\[
c\sum_{\ell=\varepsilon (\log n)^2}^{(\log n)^2/\varepsilon}\E\left[\cf(\frac{n^\theta}{H_\ell^S})\ind{\mS_\ell\geq-\alpha, \MS_\ell\geq \log n+a_n, \MS_\ell-S_\ell\in[\theta\log n-a_n,\theta\log n+a_n], \log n-r\log\log n-\log \ell\le \max_{k\leq \ell}(\MS_k-S_k)\leq \log n}\right],
\]
as $e^{\MS_k-S_k}\leq H_k^S\leq ke^{\MS_k-S_k}$. To conclude, we only need to show that for any $a_n=o(\log n)$, 
\begin{multline}\label{sumcvgeSMS}
\lim_{n\to\infty}\sum_{\ell=a(\log n)^2}^{A(\log n)^2}\E\left[\cf(\frac{n^\theta}{H_\ell^S})\ind{\mS_\ell\geq-\alpha, \MS_\ell\geq \log n+a_n, \MS_\ell-S_\ell\in[\theta\log n-a_n,\theta\log n+a_n], \max_{k\leq \ell}(\MS_k-S_k)\leq \log n+a_n}\right]\\
=\Ren(\alpha)\int_{a}^{A}\mathcal{G}(\frac{1}{\sqrt{u}},\frac{1}{\sqrt{u}},\frac{\theta}{\sqrt{u}})\frac{du}{u} 
\end{multline}
which follows immediately from \eqref{cvgeSMS} and \eqref{badend1ex}. By comparing the convergences for $a_n=0$ and $a_n=-(r+3)\log\log n$, we obtain what we want. 

\textbf{Proof of \eqref{badend1ex}.} Similarly as \eqref{1stmoment1ex}, we get that
\begin{align*}
&\frac{1}{n^{1-\theta}}\e\left[\sum_{\ell=\varepsilon (\log n)^2}^{(\log n)^2/\varepsilon}\sum_{|z|=\ell}\ind{\eL_z(\tau_n)\geq n^\theta, E_z^{(n)}=1}\ind{z\in \B^+_n}\ind{z\in\D_n\setminus\D_n^K}\ind{\mV(z)\geq-\alpha, \max_{x\leq z} H_z< n}\right]\\
=&(1+o_n(1))\sum_{\ell=\varepsilon (\log n)^2}^{(\log n)^2/\varepsilon}\E\left[\cf(\frac{n^\theta}{H_\ell^S})\ind{\mS_\ell\geq-\alpha, \MS_\ell\geq \log n+a_n, \MS_\ell-S_\ell\in[\theta\log n-a_n,\theta\log n+a_n]\setminus[\theta\log n-K,\theta\log n+K],  \max_{k\leq \ell}H_k^S\leq n}\right].
\end{align*}
Similarly as in the proof of \eqref{largegeneration1ex++}, one has
\begin{align*}
&\E\left[\cf(\frac{n^\theta}{H_\ell^S})\ind{\mS_\ell\geq-\alpha, \MS_\ell\geq \log n+a_n, \MS_\ell-S_\ell\in[\theta\log n-a_n,\theta\log n+a_n]\setminus[\theta\log n-K,\theta\log n+K],  \max_{k\leq \ell}H_k^S\leq n}\right]\\
\leq &
\sum_{\ell=\varepsilon (\log n)^2}^{(\log n)^2/\varepsilon}\sum_{j=1}^{\ell-1}\E\left[(\frac{n^\theta}{H_\ell^S}\wedge\frac{H_\ell^S}{n^\theta})\ind{\mS_\ell\geq-\alpha, \tau_\ell(\MS)=j, \MS_\ell\geq \log n+a_n, \MS_\ell-S_\ell\in[\theta\log n-a_n,\theta\log n+a_n]\setminus[\theta\log n-K,\theta\log n+K],  \max_{k\leq \ell}(\MS_k-S_k)\leq n}\right]\\
\leq &\sum_{\ell=\varepsilon (\log n)^2}^{(\log n)^2/\varepsilon}\sum_{x=-a_n}^{-K}e^x\sum_{j=1}^{\ell-1}\E\left[\sum_{k=0}^{\ell}e^{S_k-\MS_\ell}\ind{\mS_\ell\geq-\alpha, \tau_\ell(\MS)=j, \MS_\ell\geq \log n+a_n, \MS_\ell-S_\ell\in[\theta\log n+x,\theta\log n+x+1],  \max_{k\leq \ell}(\MS_k-S_k)\leq n}\right]\\
&+\sum_{\ell=\varepsilon (\log n)^2}^{(\log n)^2/\varepsilon}\sum_{x=K}^{a_n}e^{-x}\sum_{j=1}^{\ell-1}\E\left[\ind{\mS_\ell\geq-\alpha, \tau_\ell(\MS)=j, \MS_\ell\geq \log n+a_n, \MS_\ell-S_\ell\in[\theta\log n+x,\theta\log n+x+1],  \max_{k\leq \ell}(\MS_k-S_k)\leq n}\right]
\end{align*}
Using the same arguments as for \eqref{positivex} and \eqref{negativex}, one sees that
\[
\E\left[\cf(\frac{n^\theta}{H_\ell^S})\ind{\mS_\ell\geq-\alpha, \MS_\ell\geq \log n+a_n, \MS_\ell-S_\ell\in[\theta\log n-a_n,\theta\log n+a_n]\setminus[\theta\log n-K,\theta\log n+K],  \max_{k\leq \ell}H_k^S\leq n}\right]\leq c_{97}(1+\alpha)e^{-K},
\]
which is $o_K(1)$ as $K\to\infty$.
\end{proof}

\begin{proof}[Proof of Lemma \ref{1exvariance}]
Let us consider the quenched variance of $\Xi_n(\ell,\B_n^+\cap\D_n\cap\L_{\gamma_n},\alpha)$ which is
\begin{align}\label{variancebd}
&\V^\en(\Xi_n(\ell, \B^+_n\cap\D_n\cap\L_{\gamma_n},\alpha))=\e^\en\left[(\Xi_n(\ell,\B_n^+\cap\D_n\cap\L_{\gamma_n},\alpha)-\e^\en[\Xi_n(\ell,\B_n^+\cap\D_n\cap\L_{\gamma_n},\alpha)])^2\right]\nonumber\\
=&\sum_{|x|=\ell}na_x(1-a_x)^{n-1}b_x^{n^\theta-1}[1-na_x(1-a_x)^{n-1}b_x^{n^\theta-1}]\ind{x\in\B_n^+\cap\D_n\cap\L_{\gamma_n}, \mV(x)\geq-\alpha}+\Sigma_{\V}
%&+\sum_{|x|=|y|=\ell, x\neq y}[\p^\en(\eL_x(\tau_n)\geq n^\theta, E_x^{(n)}=1; \eL_y(\tau_n)\geq n^\theta, E_y^{(n)}=1)-\p^\en(\eL_x(\tau_n)\geq n^\theta, E_x^{(n)}=1)\p^\en(\eL_y(\tau_n)\geq n^\theta, E_y^{(n)}=1]\ind{x,y\in\B_n^+\cap\D_n\cap\L_{\gamma_n}, \mV(x)\geq-\alpha, \mV(y)\geq-\alpha}
\end{align}
where
\begin{multline}
\Sigma_{\V}:=\sum_{|x|=|z|=\ell, x\neq z}\ind{x,z\in\B_n^+\cap\D_n\cap\L_{\gamma_n}, \mV(x)\geq-\alpha, \mV(z)\geq-\alpha}\\
\times[\e^\en(\ind{\eL_x(\tau_n)\geq n^\theta, E_x^{(n)}=1}\ind{ \eL_z(\tau_n)\geq n^\theta, E_z^{(n)}=1})-n^2a_xa_z(1-a_x)^{n-1}b_x^{n^\theta-1}(1-a_z)^{n-1}b_z^{n^\theta-1}].
\end{multline}
On the one hand, for the first term on the right hand side of \eqref{variancebd}, as $\ell=\Theta((\log n)^2)$,
\begin{align*}
&\sum_{|x|=\ell}na_x(1-a_x)^{n-1}b_x^{n^\theta-1}[1-na_x(1-a_x)^{n-1}b_x^{n^\theta-1}]\ind{x\in\B_n^+\cap\D_n\cap\L_{\gamma_n}, \mV(x)\geq-\alpha}\\
\leq &\sum_{|x|=\ell}na_x(1-a_x)^{n-1}b_x^{n^\theta-1}\ind{x\in\B_n^+\cap\D_n\cap\L_{\gamma_n}, \mV(x)\geq-\alpha}
\end{align*}
whose expectation under $\E$ is $\Theta(\frac{n^{1-\theta}}{\ell})$ according to \eqref{cvgeSMS} and \eqref{1stmoment1ex}. For $x\neq z$, one sees that $\{E_x^{(n)}=E_z^{(n)}=1\}$ means that either $x$ and $z$ are visited in two different excursions or they are both visited in the same excursion. Let $a_{x,z}:=\p^\en_\rho(T_x\wedge T_z<T_{\rho^*})$. Then,
\begin{align*}
&\e^\en(\ind{\eL_x(\tau_n)\geq n^\theta, E_x^{(n)}=1}\ind{ \eL_z(\tau_n)\geq n^\theta, E_z^{(n)}=1})\\
=& n(n-1)a_xa_z(1-a_{x,z})^{n-2}(b_xb_z)^{n^\theta-1}+n(1-a_{x,z})^{n-1}\p^\en(\eL_x(\tau_1)\geq n^\theta, \eL_z(\tau_1)\geq n^\theta)\\
\leq & n^2 a_xa_z(1-a_{x,z})^{n-2}(b_xb_z)^{n^\theta-1}+n(1-a_{x,z})^{n-1}\e^\en\left[\frac{\eL_x(\tau_1)\eL_z(\tau_1)}{n^{2\theta}}\right]
\end{align*}
Let $u=x\wedge z$ be the latest common ancestor of $x$ and $z$. Say that $u_x$ is the child of $u$ such that $u_x\leq x$ and $u_z$ is the child of $u$ such that $u_z\leq z$. Then
\begin{align*}
\e^\en\left[\eL_x(\tau_1)\eL_z(\tau_1)\right]=&\e^\en\left[\eL_{u_x}(\tau_1)\eL_{u_z}(\tau_1)\right]e^{-V(x)-V(z)+V(u_x)+V(u_z)}\\
=&\e^\en\left[\eL_{u}(\tau_1)(\eL_{u}(\tau_1)+1)\right]e^{-V(x)-V(z)+2V(u)}\\
=& 2H_ue^{-V(u)}\times e^{-V(x)-V(z)+2V(u)}
\end{align*}
It follows that
\begin{align*}
\Sigma_{\V}\leq &\sum_{x\neq z, |z|=|x|=\ell} n^2 a_x a_z (b_xb_z)^{n^\theta-1}[(1-a_{x,z})^{n-2}-(1-a_x)^{n-1}(1-a_z)^{n-1}]\ind{x,z\in \B_n^+\cap\D_n\cap\L_{\gamma_n}, \mV(x)\geq-\alpha, \mV(z)\geq-\alpha}\\
&+\sum_{k=0}^{\ell-1}\sum_{|u|=k}\sum_{|x|=|z|=\ell, x\wedge z=u}2n^{1-2\theta}H_ue^{-V(u)}\times e^{-V(x)-V(z)+2V(u)}\ind{x,z\in \B_n^+\cap\D_n\cap\L_{\gamma_n},\mV(x)\geq-\alpha, \mV(z)\geq-\alpha}
\end{align*}
By Lemma 4.2 of \cite{AC18}, $(1-a_{x,z})^{n-2}-(1-a_x)^{n-1}(1-a_z)^{n-1}\leq na_z+na_x$. Moreover, $a_x\leq e^{-\MV(x)}\leq e^{-\log n-a_n}$ for $x\in\B_n^+$. Consequently,
\begin{align*}
& \sum_{x\neq z, |z|=|x|=\ell} n^2 a_x a_z (b_xb_z)^{n^\theta-1}[(1-a_{x,z})^{n-2}-(1-a_x)^{n-1}(1-a_z)^{n-1}]\ind{x,z\in \B_n^+\cap\D_n\cap\L_{\gamma_n},\mV(x)\geq-\alpha, \mV(z)\geq-\alpha }\\
\leq &n^{2-2\theta}\sum_{k=0}^{\ell-1}\sum_{|u|=k}\sum_{|x|=|z|=\ell, x\wedge z=u} e^{-V(x)-V(z)}[na_z+na_x]\ind{x,z\in \B_n^+\cap\D_n\cap\L_{\gamma_n},\mV(x)\geq-\alpha, \mV(z)\geq-\alpha}\\
\leq & 2e^{-a_n}n^{2-2\theta}\sum_{k=0}^{\ell-1}\sum_{|u|=k}\sum_{|x|=|z|=\ell, x\wedge z=u} e^{-V(x)-V(z)}\ind{x,z\in \B_n^+\cap\D_n\cap\L_{\gamma_n},\mV(x)\geq-\alpha, \mV(z)\geq-\alpha}
\end{align*}
As $x,z\in\L_{\gamma_n}$, $H_u\leq \gamma_n$. So,
\begin{align*}
\Sigma_{\V}\leq & \frac{2n^{2-2\theta}}{(\log n)^a}\sum_{k=0}^{\ell-1}\sum_{|u|=k}\sum_{|x|=|z|=\ell, x\wedge z=u} e^{-V(x)-V(z)}\ind{x,z\in \B_n^+\cap\D_n\cap\L_{\gamma_n},\mV(x)\geq-\alpha, \mV(z)\geq-\alpha}\\
&+2\frac{n^{2-2\theta}}{(\log n)^\gamma}\sum_{k=0}^{\ell-1}\sum_{|u|=k}\sum_{|x|=|z|=\ell, x\wedge z=u}e^{-V(x)-V(z)+V(u)}\ind{x,z\in \B_n^+\cap\D_n\cap\L_{\gamma_n},\mV(x)\geq-\alpha, \mV(z)\geq-\alpha}\\
\leq &\left(\frac{2n^{2-2\theta}}{(\log n)^a}e^\alpha+\frac{2n^{2-2\theta}}{(\log n)^\gamma}\right)\sum_{k=0}^{\ell-1}\sum_{|u|=k}\sum_{|x|=|z|=\ell, x\wedge z=u}e^{-V(x)-V(z)+V(u)}\ind{x,z\in \B_n^+\cap\D_n\cap\L_{\gamma_n},\mV(x)\geq-\alpha, \mV(z)\geq-\alpha},
\end{align*}
since $V(u)\geq\mV(x)\geq-\alpha$. Observe that
\begin{align*}
&\E\left[\sum_{k=0}^{\ell-1}\sum_{|u|=k}\sum_{|x|=|z|=\ell, x\wedge z=u}e^{-V(x)-V(z)+V(u)}\ind{x,z\in \B^+_n\cap\D_n\cap\L_{\gamma_n},\mV(x)\geq-\alpha, \mV(z)\geq-\alpha}\right]\\
\leq&\E\left[\sum_{k=0}^{\ell-1}\sum_{|u|=k}\sum_{\substack{u_z^*=u_x^*=u \\ u_z\neq u_x}}e^{-V(u_x)-\Delta V(u_z)}\sum_{z>u_z, |z|=\ell}e^{-[V(z)-V(u_z)]}\ind{\mV(z)\geq-\alpha}\sum_{x>u_x, |x|=\ell}e^{-[V(x)-V(u_x)]}\ind{\mV(x)\geq-\alpha}\right]\\
=&\E\left[\sum_{k=0}^{\ell-1}\sum_{|u|=k}\sum_{\substack{u_z^*=u_x^*=u \\ u_z\neq u_x}}e^{-V(u_x)-\Delta V(u_z)}\ind{\mV(u_x)\wedge\mV(u_z)\geq-\alpha}\P_{V(u_z)}(\mS_{\ell-1-k}\geq-\alpha)\P_{V(u_x)}(\mS_{\ell-1-k}\geq-\alpha)\right],
\end{align*}
where the last equality follows from Many-to-One Lemma. By \eqref{mSbd}, we deduce that
\begin{align*}
&\E\left[\sum_{k=0}^{\ell-1}\sum_{|u|=k}\sum_{\substack{u_z^*=u_x^*=u \\ u_z\neq u_x}}e^{-V(u_x)-\Delta V(u_z)}\ind{\mV(u_x)\wedge\mV(u_z)\geq-\alpha}\P_{V(u_z)}(\mS_{\ell-1-k}\geq-\alpha)\P_{V(u_x)}(\mS_{\ell-1-k}\geq-\alpha)\right]\\
\leq &c_{98}\E\left[\sum_{k=0}^{\ell-1}\sum_{|u|=k}(1+\alpha+V(u))^2e^{-V(u)}\ind{\mV(u)\geq-\alpha}\sum_{\substack{u_z^*=u_x^*=u \\ u_z\neq u_x}}e^{-\Delta V(u_x)-\Delta V(u_z)}\frac{(1+\Delta_+ V(u_z))(1+\Delta_+ V(u_x))}{\ell-k}\right]\\
\leq &\sum_{k=0}^{\ell-1}\frac{c_{99}}{\ell-k}\E[(1+\alpha+S_k)^2; \mS_k\geq-\alpha]\leq \sum_{k=0}^{\ell-1}c_{100}\frac{k+(1+\alpha)^2}{\ell-k}\leq c_{101}\ell^2.
\end{align*}
We therefore end up with
\[
\Sigma_{\V}\leq \frac{c_{102}n^{2-2\theta}}{(\log n)^{a\wedge r}}\ell^2\leq \frac{c_{103}n^{2-2\theta}}{(\log n)^{a\wedge r-4}}.
\]
which suffices to conclude Lemma \ref{1exvariance}.
\end{proof}

\appendix
\section{Appendix}
\subsection{Proof of Lemma \ref{sumGeo}}\label{A1}
\begin{proof}
We first prove \eqref{sumGeosmall}. Observe that as $b\in(0,1)$, for any $\lambda>0$, by Markov inequality,
\begin{align*}
\P\left(\sum_{i=1}^n\zeta_i\leq A\right)=&\P\left(e^{-\lambda(1-b) \sum_{i=1}^n\zeta_i}\geq e^{-\lambda(1-b) A}\right)\\
\leq & e^{\lambda (1-b)A}\E\left[e^{-\lambda(1-b) \zeta_1}\right]^n
\end{align*}
where $\E[e^{-\lambda(1-b)\zeta_1}]=1-\frac{a(e^{\lambda(1-b)}-1)}{e^{\lambda(1-b)}-b}$. We have $1-x\leq e^{-x}$ for any $x\in[0,1]$. It follows that
\begin{align*}
\P\left(\sum_{i=1}^n\zeta_i\leq A\right)\leq & \exp\{\lambda (1-b)A-na \frac{(e^{\lambda(1-b)}-1)}{e^{\lambda(1-b)}-b}\}\\
=&\exp\{\lambda (1-b)A-na \frac{(e^{\lambda(1-b)}-1)}{(e^{\lambda(1-b)}-1)+(1-b)}\}
\end{align*}
Since $0<1-b\leq \frac{e^{\lambda(1-b)}-1}{\lambda}$, one gets $\frac{(e^{\lambda(1-b)}-1)}{(e^{\lambda(1-b)}-1)+(1-b)}\geq \frac{\lambda}{\lambda+1}$ and then
\[
\P\left(\sum_{i=1}^n\zeta_i\leq A\right)\leq e^{-\lambda(\frac{na}{1+\lambda}-(1-b)A)}, \forall n\geq1.
\]
Let us turn to check \eqref{sumGeolarge} and \eqref{sumGeolarge+}. We only prove \eqref{sumGeolarge}, \eqref{sumGeolarge+} follows from similar arguments. Note that for any $s\in[1,\frac{1}{b})$, Markov inequality implies that
\begin{align*}
\P\left(\sum_{i=1}^n\zeta_i\geq n^\theta\right)\leq & s^{-n^\theta}\E\left[ s^{\sum_{i=1}^n\zeta_i}; \sum_{i=1}^n\ind{\zeta_i\geq1}\geq 1\right]
=\frac{\E[s^{\zeta_1}]^n-\P(\sum_{i=1}^n\ind{\zeta_i\geq1}=0)}{s^{n^\theta}}\\
=&\frac{1}{s^{n^\theta}}\left[(1-a+\frac{a(1-b)s}{1-bs})^n-(1-a)^n\right]\\
\leq & \frac{1}{s^{n^\theta}} \frac{na(1-b)s}{1-bs}(1-a+\frac{a(1-b)s}{1-bs})^{n-1},
\end{align*}
since $(1-a+x)^n-(1-a)^n\leq n x(1-a+x)^{n-1}$ for any $x>0$. Now take $s=\frac{1+\delta b}{(1+\delta)b}$ with some $\delta >0$. Apparently, $s\in [1,\frac{1}{b})$ and for any $\eta\in(0,1)$, there exists $M_\eta>1$ such that $\log(1+\frac{1-b}{(1+\delta)b})\geq (1-\eta/3)\frac{1-b}{(1+\delta)b}$ as long as $\delta b\geq M_\eta $. Consequently, for $\delta\geq M_\eta/b>0$,
\begin{align*}
\P\left(\sum_{i=1}^n\zeta_i\geq n^\theta\right)\leq & na \frac{1+\delta b}{\delta b} (1+\frac{1-b}{(1+\delta)b})^{-n^\theta}(1+\frac{a}{\delta b})^{n-1}\\
\leq &2 (na) e^{-(1-\eta/3)\frac{n^\theta(1-b)}{(1+\delta )b}+n\frac{a}{\delta b}}.
\end{align*}
Now we take $\eta\in(0,1)$ such that $n^\theta(1-b)>na(1+\eta)$ and $\delta=\max\{\frac{M_\eta}{b}, \frac{2}{\eta-\eta^2}\}$ so that
\[
\frac{na}{\delta b}\leq \frac{n^\theta(1-b)}{(1+\eta)\delta b}= \frac{n^\theta(1-b)}{(1+\delta)b}\frac{1+\delta}{\delta(1+\eta)}\leq (1-\eta/2 ) \frac{n^\theta(1-b)}{(1+\delta)b}
\]
This yields that
\[
\P\left(\sum_{i=1}^n\zeta_i\geq n^\theta\right)\leq 2(na)e^{-\frac{\eta}{6(1+\delta)b}n^\theta(1-b)},
\]
where $(1+\delta)b\leq 1+M_\eta+\frac{2}{\eta-\eta^2}$. We hence conclude \eqref{sumGeolarge} with $c_\eta=\frac{\eta}{6(1+M_\eta+\frac{2}{\eta-\eta^2})}\in(0,\infty)$.
\end{proof}

\subsection{Results on one-dimensional random walks}\label{A2}
We state some facts and inequalities on centred random walk $(S_n)_{n\geq0}$ introduced in the Many-to-One Lemma. The proofs are postponed in Section \ref{A3}.

Let $\xi_n:=S_n-S_{n-1}$ for any $n\geq1$. Note that $\E[\xi_1]=0$, $\sigma^2=\E[\xi_1^2]<\infty$. Moreover, by \eqref{Integrability},
\[
\E[e^{-\delta_0\xi_1}+e^{(1+\delta_0)\xi_1}]<\infty.
\]
We start with some well known inequalities (see \cite{AC18} for instance). Recall that $\mS_n=\min_{0\leq k\leq n}S_k$ and $\MS_n=\max_{0\leq k\leq n}S_k$. Note that the inequalities in the following hold also for the random walk $(-S_n)_{n\geq0}$. For any $\alpha\geq0$ and $n\geq1$, we have
\begin{equation}\label{mSbd}
\P(\mS_n\geq-\alpha)\leq \frac{C_4(1+\alpha)}{\sqrt{n}}\textrm{ and } \P(\MS_n\leq \alpha)\leq \frac{C_4(1+\alpha)}{\sqrt{n}}.
\end{equation}
For any $\alpha\geq0$, $b\geq a\geq-\alpha$ and for any $n\geq1$,
\begin{equation}\label{mSSbd}
\P(\mS_n\geq-\alpha, S_n\in[a,b])=\P_\alpha(\mS_n\ge0, S_n\in[\alpha+a,\alpha+b])\leq \frac{C_5(1+\alpha)(1+b+\alpha)(1+b-a)}{n^{3/2}}.
\end{equation}
We define the renewal function associated with the strict descending ladder process as follows:
\begin{equation}\label{renewalf}
\Ren(u):=\sum_{k=0}^\infty\P(S_k<\mS_{k-1}, S_k\geq-u), \forall u\geq0.
\end{equation}
It is known from Renewal theorem that 
\begin{equation}\label{cvgrenewalf}
\frac{1}{u}\Ren(u)\rightarrow\cb_\Ren \textrm{ as }u\to\infty.
\end{equation}
Moreover there exist $0<C_6<C_7<\infty$ such that for any $u\geq0$,
\[
C_6(1+u)\leq \Ren(u)\leq C_7(1+u).
\]
Recall that there exists some positive constant $\cb_+$ such that $\P(\mS_n\geq0)\sim \frac{\cb_+}{\sqrt{n}}$ as $n\to\infty$. According to Lemma 2.1 of \cite{AS14},
\begin{equation}\label{prodc}
\cb_\Ren\cb_+=\sqrt{\frac{2}{\pi\sigma^2}}.
\end{equation}
\begin{fact}
\begin{enumerate}
\item For any $u,\alpha\geq0$ and for any $n\geq1$,
\begin{equation}\label{mSMSbd}
\P_u(\mS_n\geq-\alpha, S_n=\MS_n)\leq \frac{C_8(1+\alpha+u)}{n}.
\end{equation}
\item For any $n\geq1$ and $A>0$, $\alpha\geq0$,
\begin{equation}\label{mSMSlargebd}
\P(\mS_n\geq-\alpha, S_n=\MS_n\geq A)\leq \frac{C_9(1+\alpha)}{A\sqrt{n}}.
\end{equation}
\item For any $B>0$ fixed, there exists $c(B)>0$ such that for any $n\geq1$ and $-B\sqrt{n}\leq -\alpha\le 0<a<b\leq B\sqrt{n}$, 
\begin{equation}\label{mSMSSintervalbd}
\P(S_n\geq-\alpha, S_n\in[a,b])\leq \frac{c(B)(1+\alpha)(b-a)}{n^{3/2}}.
\end{equation}
\item For $A>0$ sufficiently large and any $\lambda>0$, $\alpha\geq0$ and $n\geq1$,
\begin{equation}\label{eSMSmSbd}
\E[e^{\lambda(S_n-\MS_n)}; \max_{1\leq k\leq n}(\MS_k-S_k)\leq A, \mS_n\geq-\alpha]\leq C_{10}(1+\alpha)[\frac{\log n}{n^{3/2}}+\frac{1}{n}e^{-C_{11}n/A^2}].
\end{equation}
\item For any $A\geq1$, $\lambda>0$, $\alpha\ge0$ and $n\geq1$,
\begin{equation}\label{eSMSMSbd}
\E[e^{\lambda(S_n-\MS_n)}; \MS_n\geq A, \mS_n\geq-\alpha]\leq \frac{C_{12}(1+\alpha)}{A\sqrt{n}}.
\end{equation}
\item For $\alpha\ge0$ and $A\geq1$ sufficiently large,
\begin{equation}\label{mSMSvalleybd}
\P(\mS_n\geq-\alpha, \MS_n=S_n,\max_{1\leq k\leq n}(\MS_k-S_k)\leq A)\leq C_{13}\frac{1+\alpha}{n}e^{-C_{14} \frac{n}{A^2}}.
\end{equation}
\item As $x\to\infty$,
\begin{equation}\label{sumeSmSbd}
\E_x\left[\sum_{n=0}^\infty e^{-S_n/4}\ind{\mS_n\geq0}\right]=o_x(1)\Ren(x).
\end{equation}
\end{enumerate}
\end{fact}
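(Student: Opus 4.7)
The plan is to derive all seven estimates from three classical ingredients applied to the centred walk $(S_n)$: (i) the time-reversal identity $(S_n - S_{n-k})_{0\le k\le n} \stackrel{d}{=} (S_k)_{0\le k\le n}$, which converts ``endpoint equals maximum'' events into positivity events; (ii) the Caravenna--Chaumont ballot local limit theorem
\[
\P(\mS_n \ge 0,\, S_n \in \d y) = (1+o(1))\,\tfrac{\cb_\Ren}{\sigma^2 n^{3/2}}\,\Ren_-(y)\, e^{-y^2/(2\sigma^2 n)}\,\d y,
\]
valid uniformly for $y\lesssim\sqrt n$, where $\Ren_-$ is the descending-ladder renewal function; and (iii) Mogulskii's theorem, giving the spectral-gap factor $e^{-cn/A^2}$ for a walk confined to a slab of width $A$. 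All bounds are obtained by bootstrapping on \eqref{mSbd}, \eqref{mSSbd} together with $\Ren(u) \asymp 1+u$ and the identity $\cb_\Ren \cb_+ = \sqrt{2/(\pi\sigma^2)}$.

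For \eqref{mSMSbd}, set $M := u + \alpha$ and apply time-reversal to rewrite the event as $\{\hat S_j \ge 0 \ \forall j \le n,\ \max_j \hat S_j \le \hat S_n + M\}$ for $\hat S_k := S_n - S_{n-k}$. Decomposing by the endpoint $\hat S_n \in [y,y+1]$ and using the ballot density gives a sum of the form $\sum_{y\ge 0}\Ren_-(y) \min(y, M)/n^{3/2}$, which telescopes to $C(1+M)/n$. Estimate \eqref{mSMSlargebd} refines this by adding the constraint $\hat S_n \ge A$: summation of $\Ren_-(y)\varphi(y/\sqrt n)/n^{3/2}$ from $y=A$ yields the tail $C(1+\alpha)/(A\sqrt n)$. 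Estimate \eqref{mSMSSintervalbd} (interpreting the first condition as $\mS_n \ge -\alpha$) is the direct integral of the meander density against $\ind{[a,b]}$, with uniformity on $|\cdot|\le B\sqrt n$ coming from boundedness of $\varphi$ on compact sets.

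For the exponential-moment estimates \eqref{eSMSmSbd} and \eqref{eSMSMSbd}, we condition on $\tau := \inf\{k : S_k = \MS_n\}$. Before $\tau$ the path is a meander above $-\alpha$ ending at height $\MS_n$; after $\tau$ it is a walk starting at $\MS_n$ that stays below $\MS_n$, so $(S_n - \MS_n)$ under this conditioning is a walk staying $\le 0$ of length $n-\tau$. The weight $e^{\lambda(S_n - \MS_n)}$ is integrated via the fluctuation identity $\E[e^{\lambda \mS_k}] = O(1/\sqrt k)$. For \eqref{eSMSmSbd}, the tube constraint $\MS_k - S_k \le A$ contributes $(1+\alpha)e^{-cn/A^2}/n$ by Mogulskii applied to $\MS-S$; in the complementary regime (where the tube is felt only on a short segment near $\tau$) the weight together with the summation $\sum_{k\le n} 1/k$ yields the $\log n/n^{3/2}$ term. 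For \eqref{eSMSMSbd}, the constraint $\MS_n \ge A$ replaces the tube bound by the $1/(A\sqrt n)$ factor of \eqref{mSMSlargebd}. Estimate \eqref{mSMSvalleybd} then follows by splitting $[0,n]=[0,n/2]\cup[n/2,n]$: the Mogulskii bound on the first half combines with \eqref{mSMSbd} applied to the reversed walk on the second half to give the product $(1+\alpha)e^{-cn/A^2}/n$.

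For \eqref{sumeSmSbd}, split the summation at $n_0 := x^2$. For $n \le n_0$, the walk starting at $x$ satisfies $S_n \ge x/2$ except on a Gaussian-deviation event, so $\E_x[e^{-S_n/4}\ind{\mS_n \ge 0}] \le e^{-x/8} + Ce^{-cx^2/n}$, and summing over $n\le n_0$ yields $O(x^2 e^{-x/8})$. For $n > n_0$, the meander local limit theorem gives $\E_x[e^{-S_n/4};\mS_n\ge 0] \le C\Ren(x)\,n^{-3/2}\int_0^\infty (1+y)e^{-y/4}\d y$; summation of the tail from $n_0$ gives $C\Ren(x)/\sqrt{n_0}=C\Ren(x)/x$. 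Since $\Ren(x)\sim \cb_\Ren x$, both pieces are $o_x(1)\Ren(x)$ as $x\to\infty$. The main obstacle is the fine bookkeeping in \eqref{eSMSmSbd}: extracting the sharp $\log n/n^{3/2}$ rather than $1/n$ requires combining the Mogulskii exponential with the summation over $\tau$, and Mogulskii's bound must be applied with uniformity in $A$ as small as a power of $\log n$, which demands the explicit half-slab eigenfunction expansion rather than its asymptotic form.
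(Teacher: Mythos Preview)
The paper does not actually prove this Fact; the sentence immediately after it refers the reader to \cite{AC18}. So there is no in-paper argument to compare against. Your outline invokes the right ingredients (time reversal, Caravenna-type ballot estimates, Mogulskii's small-deviation bound) and is in the correct spirit for all seven items, but two of the sketches contain concrete errors.

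For \eqref{mSMSbd}: the sum you write, $\sum_{y\ge0}\Ren_-(y)\min(y,M)/n^{3/2}$, diverges; even after inserting the Gaussian weight $e^{-cy^2/n}$ from the ballot density, one only recovers $CM/\sqrt n$, not $CM/n$. The clean argument bypasses the endpoint decomposition entirely: after your time reversal the event is $\{\hat{\mS}_n\ge0,\ \hat{\MS}_n-\hat S_n\le M\}$, and one notes that $\{\hat{\mS}_{\lfloor n/2\rfloor}\ge0\}$ depends only on the first half of the increments while $\{\max_{n/2\le k\le n}(\hat S_k-\hat S_n)\le M\}$ depends only on the second half. The product of the two probabilities is at most $Cn^{-1/2}\cdot C(1+M)n^{-1/2}$ by two applications of \eqref{mSbd}.

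For \eqref{sumeSmSbd}: the claim that ``summing over $n\le n_0$ yields $O(x^2e^{-x/8})$'' is false. For $n$ close to $n_0=x^2$ one has $e^{-cx^2/n}\approx e^{-c}$, hence $\sum_{n\le x^2}e^{-cx^2/n}=\Theta(x^2)$, which is not $o_x(1)\Ren(x)$. The fix is to move the cutoff: with $n_0=x^{2-\epsilon}$ the Gaussian-deviation piece sums to at most $n_0\,e^{-cx^{\epsilon}}=o(1)$, while for $n\ge n_0$ the meander bound $C\Ren(x)n^{-3/2}$ sums to $C\Ren(x)x^{-(1-\epsilon/2)}=o(\Ren(x))$. (A direct Green's-function computation for the walk killed at $0$ in fact gives the sharper $O(1)$.)
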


According to \cite{Afa93}, conditioned on $\{\mS_n\geq 0\}$, the rescaled path $(\frac{S_{\lfloor nt\rfloor}}{\sqrt{n}}; 0\leq t\leq 1)$ and $\sum_{i=0}^n e^{-S_n}$ converge jointly in law to a Brownian meander $(m_t, t\in[0,1])$ and a positive random variable $\Hinf_\infty$ which is independent of the Brownian meander. One can refer to \cite{AC18} for more details. Let us state (A.12) of \cite{AC18} here.

\begin{fact}\label{mSMScvg}
Let $\alpha\geq0$, $a,b>0$ fixed and $a_n=o(\sqrt{n})$, $b_n=o(\sqrt{n})$. For any uniformly continuous and bounded function $g:[1,\infty)\rightarrow\R_+$, we have
\begin{equation}
\lim_{n\rightarrow\infty}n\E\left[g(\sum_{j=1}^n e^{S_j-S_n})\ind{\mS_n\geq-\alpha, S_n>\MS_{n-1}, \max_{1\leq i\leq n}(\MS_i-S_i)\leq a\sqrt{n}+a_n, S_n\geq b\sqrt{n}+b_n}\right]=\mathcal{C}_{a,b}\Ren(\alpha)\E[g(\Hinf_\infty)].
\end{equation}
where $\Ren$ is the renewal function and $\mathcal{C}_{a,b}$ is  defined in (3.20) of \cite{AC18}.
\end{fact}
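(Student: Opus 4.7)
Since the statement is precisely (A.12) of \cite{AC18}, my plan is to recall and adapt their argument. The overall machinery is time-reversal of $(S_k)$ combined with the Iglehart--Bolthausen invariance principle for random walks conditioned to stay positive, glued to an asymptotic independence between local and diffusive scales.

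The first step is to apply the duality $\tilde S_k := S_n - S_{n-k}$, $k=0,\ldots,n$, which preserves the law of the path. Under this involution, the functional transforms as
\begin{equation*}
\sum_{j=1}^n e^{S_j-S_n} \;=\; \sum_{k=0}^{n-1} e^{-\tilde S_k};
\end{equation*}
the strict-record event $\{S_n>\MS_{n-1}\}$ becomes $\{\min_{1\le k\le n}\tilde S_k>0\}$; the lower-bound event $\{\mS_n\ge-\alpha\}$ becomes $\{\max_{0\le k\le n}\tilde S_k - \tilde S_n\le\alpha\}$; the range condition $\{\max_{1\le i\le n}(\MS_i-S_i)\le a\sqrt n + a_n\}$ becomes an analogous forward-looking drawdown constraint on $(\tilde S_k)$; the endpoint constraint $\{S_n\ge b\sqrt n + b_n\}$ rewrites as $\{\tilde S_n\ge b\sqrt n+b_n\}$. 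On the dual event $\{\min_{1\le k\le n}\tilde S_k>0\}$, whose probability is of order $\cb_+/\sqrt n$, Iglehart's theorem yields weak convergence of $(\tilde S_{\lfloor nt\rfloor}/(\sigma\sqrt n))_{0\le t\le 1}$ to the Brownian meander $(m_t)_{0\le t\le 1}$, with the range restriction producing in the limit a meander constrained to stay below height $a$.

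To handle the functional $\sum_{k=0}^{n-1} e^{-\tilde S_k}$ jointly with the rescaled path, I would split the sum at a large but fixed cutoff $A$. The head $\sum_{k=0}^{A} e^{-\tilde S_k}$ is a local functional which, conditioned on long-term positivity, converges in law to $\sum_{k=0}^{A} e^{-S_k^+}$ under the walk conditioned to stay strictly positive forever; letting $A\to\infty$ identifies the limit as $\Hinf_\infty$. The tail $\sum_{k>A} e^{-\tilde S_k}$ is $o(1)$ in probability because $\tilde S_k\gtrsim \sqrt k$ uniformly on meander scales, the required exponential moment being supplied by \eqref{expmomS}. Since the meander's diffusive behaviour on time scale $\asymp n$ decouples from the local structure of $(\tilde S_k)$ near $k=0$, one obtains asymptotic independence of $\Hinf_\infty$ and $(m_t)_{0\le t\le 1}$ in the limiting joint law.

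The $\Ren(\alpha)$ prefactor arises from the lower bound $\mS_n\ge-\alpha$: after reversal this is the excess of the running maximum above $\tilde S_n$, and decomposing at the last strict descending ladder time brings out exactly the renewal series \eqref{renewalf} evaluated at $\alpha$. Combining the $1/\sqrt n$ probability from positivity conditioning with a $1/\sqrt n$ Gaussian local CLT factor at the terminal point $\tilde S_n\sim b\sqrt n$ yields the overall $1/n$ scaling, and the meander density integrated against the range-restriction at height $a$ produces the constant $\mathcal{C}_{a,b}$ of (3.20) in \cite{AC18}. The main obstacle is making the joint weak-convergence rigorous in the presence of \emph{both} the range restriction and the strict-record condition while carrying the continuous functional $g(\sum e^{-\tilde S_k})$: this requires uniform integrability of the tail sum along the conditioned law, together with a uniform local CLT at the endpoint $b\sqrt n$, both of which rest on \eqref{Integrability} and the finite-variance hypothesis \eqref{MomCond}.
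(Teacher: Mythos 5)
The paper itself gives no proof of this Fact: it is quoted verbatim as (A.12) of \cite{AC18}, with the remark ``The previous two Facts can be found in \cite{AC18}.'' So there is nothing here to compare your argument against line by line; I can only weigh it against the technique the paper \emph{does} use for its own analogue, the convergence \eqref{cvgsumeSmSSn} proved in Appendix~\ref{A3}.

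Your sketch (time-reversal $\tilde S_k = S_n - S_{n-k}$; Iglehart--Bolthausen convergence to the meander; decoupling of the local functional from the diffusive path; ladder decomposition producing $\Ren(\alpha)$) is the classical route and contains the correct ingredients. The translation of the events under duality is correct (the strict record becomes strict positivity, $\mS_n\ge-\alpha$ becomes the drawdown of $\tilde S$ being bounded by $\alpha$, etc.), and the $1/n$ scaling from a $1/\sqrt n$ positivity cost times a $1/\sqrt n$ local-CLT factor is right. The paper's proof of the sibling result \eqref{cvgsumeSmSSn} takes a somewhat different path: it does not reverse time, but instead cuts the trajectory at a small time $n^\delta$, shows the contribution of paths that dip below $n^{\delta/6}$ after $n^\delta$ is $o(1/n)$, applies the Markov property at $n^\delta$, identifies the bulk via Caravenna--Chaumont estimates (their (5.3) and Theorem~2.4), and recognizes the local functional as converging to $\sum_i e^{-\zeta_i}$ for the Doob $\Ren$-transform $(\zeta_i)$. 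The two strategies are dual and should give the same answer; yours is more in the Iglehart--Bolthausen tradition while the paper's is closer to Caravenna--Chaumont.

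Two places in your sketch need more care if it were to be turned into a proof. First, the limit of the head $\sum_{k\le A}e^{-\tilde S_k}$ under the positivity conditioning must be identified precisely with $\Hinf_\infty = \sum_{k\ge0}e^{-\zeta_k}$ for the $\Ren$-transformed chain; this requires the $h$-transform identification and not merely ``the walk conditioned to stay positive forever'' as a heuristic object (in particular, strict vs.\ non-strict positivity must match and is harmless only when the increment law has no atoms at the relevant points). Second, the asymptotic independence of the local functional and the meander-scale quantities is the genuine analytic content here, and your argument treats it as known; to make the bound on the tail $\sum_{k>A}e^{-\tilde S_k}$ rigorous you need uniform integrability \emph{under the constrained law} carrying simultaneously the drawdown restriction $\max_i(\MS_i - S_i)\le a\sqrt n + a_n$ and the endpoint constraint, not just under the plain positivity conditioning. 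These are not gaps in the sense of wrong ideas, but they are the nontrivial parts and would need explicit estimates of the kind the paper gives for \eqref{cvgsumeSmSSn}.
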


The previous two Facts can be found in \cite{AC18}. The following lemmas state some inequalities that will be proved in Appendix \ref{A3}.

\begin{lem}\label{lemA3}
Let $\alpha\geq0$. There exists $\varepsilon_0\in(0,1)$ such that for $m$ sufficiently large and for any $1\leq r\leq\varepsilon_0 m$, we have
\begin{equation}\label{mSSbde}
\P(\mS_m\geq-\alpha, S_m\in[r,r+1])\leq C_{15}\frac{1+\alpha}{m}e^{-C_{16}\frac{r^2}{m}},
\end{equation}
and 
\begin{equation}\label{mSSlargebde}
\P(\mS_m\geq-\alpha, S_m\geq r)\leq C_{17}\frac{1+\alpha}{r}e^{-C_{18}\frac{r^2}{m}}.
\end{equation}
Moreover,
\begin{equation}\label{mSMSSlargebde}
\P(\mS_m\geq-\alpha, \MS_m=S_m\geq r)\leq C_{19}\frac{1+\alpha}{\sqrt{m} r}e^{-C_{20}\frac{r^2}{m}}.
\end{equation}
\end{lem}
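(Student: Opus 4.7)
The plan is to prove the three inequalities in order: \eqref{mSSbde} via exponential tilting combined with the diffusive ballot bound \eqref{mSSbd}; then \eqref{mSSlargebde} by a geometric summation of \eqref{mSSbde}; and finally \eqref{mSMSSlargebde} by running the same tilting scheme with \eqref{mSMSbd} in place of \eqref{mSSbd}.

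By \eqref{expmomS}, $\psi(\lambda) := \log \E[e^{\lambda \xi_1}]$ is finite and smooth on a neighbourhood of $0$, with $\psi(0)=\psi'(0)=0$ and $\psi''(0)=\sigma^2$. For $\varepsilon_0>0$ sufficiently small and every $1\le r\le\varepsilon_0 m$, the equation $\psi'(\lambda_r) = r/m$ admits a unique solution $\lambda_r \in (0,\delta_0/2)$, and Taylor expansion gives $\lambda_r r - m\psi(\lambda_r) \ge c_0 r^2/m$. Introducing the Cram\'er-tilted measure $\widetilde\P$ with $d\widetilde\P/d\P\rvert_{\mathcal F_m} = e^{\lambda_r S_m - m\psi(\lambda_r)}$, the increments remain i.i.d.\ under $\widetilde\P$ with mean $r/m$, variance $\widetilde\sigma^2 \asymp \sigma^2$, and exponential moments. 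The tilt identity produces
\[
\P(\mS_m \ge -\alpha,\, S_m \in [r,r+1]) \le e^{-c_0 r^2/m}\, \widetilde\P\bigl(\mS_m\ge -\alpha,\, S_m\in[r,r+1]\bigr).
\]
In the centered coordinates $\widetilde S_k := S_k - kr/m$, which form a centered random walk under $\widetilde\P$, the event rewrites as the pinning $\widetilde S_m \in [0,1]$ subject to the linearly decreasing lower barrier $\widetilde S_k \ge -\alpha - kr/m$. Splitting at $\lfloor m/2\rfloor$ via the Markov property, applying \eqref{mSSbd} to $\widetilde S$ on $[0,m/2]$ (which supplies a factor $C(1+\alpha)(1+r)/m$ from the ballot estimate combined with the bridge endpoint pinning), and a local CLT with Cram\'er correction on $[m/2,m]$ (which supplies a further factor $C/\sqrt{m}$), yields the sharp moderate-deviation bound
\[
\P(\mS_m\ge -\alpha,\, S_m\in[r,r+1]) \le C(1+\alpha)(1+r)\, m^{-3/2}\, e^{-c_0 r^2/m}.
\]
The elementary inequality $(1+r)e^{-c_0 r^2/m} \le C\sqrt{m}\, e^{-(c_0/2) r^2/m}$ (factor out $e^{-c_0 r^2/(2m)}$ and use $\sup_{x\ge 0}(1+x)e^{-c_0 x^2/2}<\infty$) absorbs the $(1+r)/\sqrt m$ factor into the exponential, giving \eqref{mSSbde}.

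Inequality \eqref{mSSlargebde} follows by summation: for $r\le \varepsilon_0 m$,
\[
\P(\mS_m\ge -\alpha,\, S_m\ge r) \le \sum_{r'\ge r} \frac{C(1+\alpha)}{m} e^{-C_{16}(r')^2/m} \le \frac{C(1+\alpha)}{m}\cdot\frac{m}{r}\, e^{-C_{16}r^2/m},
\]
where we use $(r')^2 \ge rr'$ for $r'\ge r$ to control the geometric sum; for $r > \varepsilon_0 m$, the bound reduces to a standard Cram\'er tail estimate combined with \eqref{mSbd}. For \eqref{mSMSSlargebde}, repeat the tilting scheme localized to $S_m\in[s,s+1]$ for each $s\ge r$; the additional constraint $\MS_m = S_m$ is a ladder-epoch condition that, via \eqref{mSMSbd} used in place of \eqref{mSSbd}, contributes one extra factor of $1/\sqrt{m}$ in the ballot step. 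Summing over $s\ge r$ exactly as for \eqref{mSSlargebde} yields the stated bound $C(1+\alpha)/(\sqrt{m}\, r)\, e^{-C_{20} r^2/m}$.

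The main technical obstacle is the two-step Markov splitting under the tilted measure with the moving lower barrier $\widetilde S_k \ge -\alpha - kr/m$: this barrier is most restrictive at $k=0$ (value $-\alpha$) and irrelevant as $k\to m$, so a direct application of \eqref{mSSbd} (which assumes a constant barrier) requires conditioning on $\widetilde S_{m/2}$ and a careful bridge-type estimate, complemented by a moderate-deviation local CLT of the form $\widetilde\P(\widetilde S_{m/2}\in[a,a+1]) \le C m^{-1/2} e^{-c a^2/m}$ valid uniformly in $|a|\le \varepsilon_0 m$ (standard under \eqref{expmomS} via Petrov's local limit theorems with Cram\'er condition). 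The delicate point is matching the Gaussian tail from the right half with the ballot factor from the left half so that the $(1+r)$ term appears with the correct exponent for the moderate-deviation regime.
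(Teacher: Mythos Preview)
Your global-tilting strategy is natural, but the step ``applying \eqref{mSSbd} to $\widetilde S$ on $[0,m/2]$ supplies a factor $C(1+\alpha)(1+r)/m$'' does not go through as written. After tilting, the constraint $\mS_m\ge-\alpha$ becomes $\widetilde S_k\ge-\alpha-kr/m$ for the centered walk $\widetilde S$, and \eqref{mSSbd} only handles a \emph{constant} barrier. Replacing the moving barrier by its weakest value $-\alpha-r/2$ on $[0,m/2]$ gives a ballot factor $(1+\alpha+r/2)/\sqrt{m/2}$, not $(1+\alpha)/\sqrt{m}$; combined with the local CLT factor $C/\sqrt m$ on $[m/2,m]$ you obtain $C(1+\alpha+r)/m$, and after multiplying by $e^{-c_0r^2/m}$ the term $r/m\cdot e^{-c_0r^2/m}$ only reduces to $C m^{-1/2}e^{-(c_0/2)r^2/m}$, missing the target $C(1+\alpha)/m$ by a full factor $\sqrt m$. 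Your last paragraph correctly identifies this as ``the main technical obstacle'' and ``delicate'', but the bridge-type estimate you allude to is never carried out; in effect the key bound $\widetilde\P(\mS_{m/2}\ge-\alpha)\le C(1+\alpha)(m^{-1/2}+r/m)$ uniformly in the tilt would be needed, and that is a separate (non-trivial) statement, not a consequence of \eqref{mSSbd}.

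The paper avoids this difficulty by a different decomposition. For $r\le A\sqrt m$ it uses \eqref{mSSbd} directly (no tilt needed; the exponential is harmless there). For $A\sqrt m\le r\le\varepsilon_0 m$ it first discards large jumps, then waits for the hitting time $T^+_{\sqrt m+\alpha}$ of level $\sqrt m+\alpha$: the gambler's-ruin estimate $\P_\alpha(T^+_{\sqrt m+\alpha}<T_0^-)\le C(1+\alpha)/\sqrt m$ extracts the entire ballot factor under the \emph{original} measure, with no barrier left afterwards. The tilt is then applied only to the barrier-free walk after $T^+$, where Berry--Esseen under $\P^{(t)}$ with $t\asymp r/j$ gives $\max_{j\le m}\P(S_j\in[x,x+1])\le C m^{-1/2}e^{-cr^2/m}$ for $x$ of order $r$. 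The product yields $C(1+\alpha)/m\cdot e^{-cr^2/m}$ cleanly. For \eqref{mSMSSlargebde}, the paper does not re-run any tilting: it simply splits at time $m/2$, uses the independence of $(S_m-S_{m-i})_{i\le m/2}$ from the first half, and combines \eqref{mSSlargebde} with \eqref{mSbd} on the two halves. This is considerably simpler than repeating the tilting scheme with the ladder constraint, which in your approach would inherit the same moving-barrier issue.
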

%{\color{blue}Only check them for $r\geq A\sqrt{m}$.}
\begin{lem}
\begin{enumerate}
%\item For any $n\geq1$ and $\alpha>0$,
%\begin{equation}\label{mSS2bd}
%\E[S_n^2; \mS_n\geq -\alpha]\leq c(1+\alpha)\sqrt{n}.
%\end{equation}
\item For $\delta\in[0,1)$ and $A\geq1$ sufficiently large,
\begin{equation}\label{sumSbd}
\sum_{k=1}^{A^{1+\delta}}\P(S_k\geq A)\leq e^{-C_{21} A^{1-\delta}}.
\end{equation}
\item Let $\alpha\geq0$, for any $n\geq1$ and $r\geq0$,
\begin{equation}\label{mSMS-Sbd}
\P(\mS_n\geq-\alpha, \MS_n=S_n\in[r,r+1])\leq C_{22}(1+\alpha)^4 \frac{(1+r)^3}{n^3}
\end{equation}
\item Let $\eta>0$, $\alpha\geq0$. For $r$ sufficiently large, one has
\begin{equation}\label{summSSbd}
\sum_{1\leq k\leq \eta r^2}\P(\mS_k\geq-\alpha, S_k\in[r,r+1])\leq C_{23}(1+\alpha)\eta.
%\begin{cases}
%\frac{c_{20}(1+\alpha)\eta n}{r^2}\textrm{ if }\eta\in(0,\epsilon_0];\\
%c'_{20}(1+\alpha)^2 (\frac{\epsilon}{a^2}+\frac{b}{\sqrt{\epsilon}})\textrm{ if }\eta>\epsilon_0,
%\end{cases}
\end{equation}
%where $\epsilon_0$ is the same as in Lemma \ref{lemA3}. 
Moreover,
\begin{equation}\label{2sumeSmSSbd}
\sum_{k=1}^{\eta r^2}\E\left[\sum_{i=0}^{k}e^{-S_i}; \mS_k\geq 0, S_k\in[r,r+1]\right]\leq C_{24}\eta.
\end{equation}
%{\color{blue}In fact, we only need to take $S_i\leq 2\log n$ and use Markov property at time $k$.}
\item For any $x\geq0$ and $n\geq1$,
\begin{equation}\label{sumeSMSSbd}
\E\left[\sum_{k=0}^n e^{S_k}; \MS_n\leq 0, S_n\in[-x-1,-x]\right]\leq C_{25}\frac{1+x}{n^{3/2}}.
\end{equation}
\item For any $A\geq 0$, $\alpha\geq0$ and $n\geq1$, 
\begin{equation}\label{eSmSSbd}
\E_\alpha[e^{-S_n}; \mS_n\geq0, S_n\geq A]\leq \frac{C_{26}(1+\alpha)}{n^{3/2}}e^{-A/2}.
\end{equation}
\item For any $\alpha, A>0$ and $n\geq1$,
\begin{equation}\label{eSmSSsmallbd}
\E_\alpha[e^{S_n-A}; \mS_n\geq0, S_n\leq A]\leq C_{27}\frac{(1+\alpha)(1+A)}{n^{3/2}}.
\end{equation}
\item There exists $c\in\R_+^*$ such that for any $A>0$,
\begin{equation}\label{sumeSmSSsmallbd}
\sum_{n\ge 0}\E[e^{S_n-A}; \mS_n\ge 0, S_n\le A]<C_{28}.
\end{equation}
\item For $\alpha\ge0$, $a, b, c>0$, $K\geq1$, $n\leq Ar^2$ with $A>0$,  
\begin{equation}\label{mSMSMS-Sbd}
\P(\mS_n\geq-\alpha, \MS_n\geq a r, \max_{k\leq n}(\MS_k-S_k)\leq b r, \MS_n-S_n\in[cr-K,cr+K])\leq C_{29}(1+\alpha)\frac{(1+K^2)r}{n^{3/2}}
\end{equation}
\item For $a,b,\eta>0$ and $r\gg1$ sufficiently large, 
\begin{equation}\label{summSMSMS-Sbd}
\sum_{k=1}^{\eta r^2}\P(\mS_k\geq0, \MS_k\geq ar, \MS_k-S_k\in[br, br+1])\leq C_{30}(a,b)\eta^{3/2}.
\end{equation}
\end{enumerate}
\end{lem}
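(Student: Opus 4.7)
The plan is to establish each inequality in the lemma by combining a small number of standard random-walk techniques: Chernoff/exponential Markov for the large-deviation bound \eqref{sumSbd}; the fluctuation estimates already collected in the first part of the Appendix (\eqref{mSbd}--\eqref{mSMSvalleybd}) combined with the time-reversal identity $(S_j)_{0\le j\le n}\stackrel{d}{=}(S_n-S_{n-j})_{0\le j\le n}$, which converts constraints on the running maximum into constraints on the running minimum of the reversed walk; and a splitting of the time index into two regimes, a ``Gaussian'' regime $k\lesssim r^2$ where the bounds of type \eqref{mSSbde} apply, and a ``ballot'' regime $k\gtrsim r^2$ where the polynomial bounds of type \eqref{mSSbd}, \eqref{mSMSSintervalbd} apply.

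For \eqref{sumSbd}, applying Markov's inequality to $e^{\lambda S_k}$ and optimising $\lambda$ using the exponential integrability \eqref{expmomS} gives a bound exponentially small in $A^{1-\delta}$ once $k\le A^{1+\delta}$. For \eqref{mSMS-Sbd} I would decompose according to the first hitting time $j$ of $\MS_n$: on $[0,j]$ apply \eqref{mSMSbd} to control $\P(\mS_j\ge -\alpha,\,\MS_j=S_j\ge r)$, time-reverse on $[j,n]$ and apply \eqref{mSSbd} to the reversed walk, then sum the product over $j$, which produces the $(1+r)^3/n^3$ tail. The summation estimates \eqref{summSSbd}--\eqref{2sumeSmSSbd} follow by splitting at $k\asymp\epsilon r^2$: the small-$k$ piece is absorbed into an exponential factor via \eqref{mSSbde}, while the large-$k$ piece reduces to integrating $k^{-3/2}$ against a polynomial prefactor and yields the announced $O(\eta)$ bound. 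The exponential-weighted estimates \eqref{sumeSMSSbd}--\eqref{sumeSmSSsmallbd} are handled by the same method, after using the simple trick $e^{-S_n}\le e^{-A/2}e^{-S_n/2}$ on $\{S_n\ge A\}$ to pull out the desired exponential factor, with the remaining $e^{-S_n/2}$ absorbed via a layered decomposition on $\{\lfloor S_n\rfloor=\ell\}$ together with \eqref{mSSbd}.

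The main technical obstacle is the pair \eqref{mSMSMS-Sbd}--\eqref{summSMSMS-Sbd}, which controls a path simultaneously constrained to stay above $-\alpha$, to reach at least $ar$ at its maximum, to keep $\max_k(\MS_k-S_k)\le br$, and to end within $[cr-K,cr+K]$ below the global maximum. The plan is to split at the first hitting time $\tau$ of $\MS_n$: on $[0,\tau]$ use \eqref{mSMSlargebd} to produce a factor $C(1+\alpha)/((ar)\sqrt{\tau})$; time-reverse on $[\tau,n]$ so that the conditions become $\MS_{n-\tau}\le 0$, $-S_{n-\tau}\in[cr-K,cr+K]$, $\max_{k}(\MS_k-S_k)\le br$, and apply \eqref{mSSbd} to $-S$ to produce a factor $C(1+K^2)(1+r)/(n-\tau)^{3/2}$; convolving in $\tau$ then gives \eqref{mSMSMS-Sbd}. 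For \eqref{summSMSMS-Sbd}, a naive summation of this bound over $k\le\eta r^2$ only yields $O(1)$, so the improved $\eta^{3/2}$ bound must come from retaining, for $k\le\epsilon r^2$, the Gaussian factor $e^{-cr^2/k}$ provided by \eqref{mSMSSlargebde} (which makes the small-$k$ contribution negligible) and, for $k\in[\epsilon r^2,\eta r^2]$, from the sharp ballot bound \eqref{mSMSSintervalbd}, whose explicit integration against the $O(\sqrt\eta)$-length window then produces the extra $\eta^{1/2}$. The delicate bookkeeping in this final split is where essentially all the technical weight concentrates; everything else reduces to the standard toolkit collected earlier in the Appendix.
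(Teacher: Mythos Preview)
Most of your plan is sound and matches the paper --- Chernoff for \eqref{sumSbd}, layer decomposition plus \eqref{mSSbd} for \eqref{sumeSMSSbd}--\eqref{eSmSSsmallbd}, and the first-hitting-of-max decomposition for \eqref{mSMSMS-Sbd}. Two items, however, have genuine gaps. For \eqref{mSMS-Sbd}, decomposing by the first hitting time of $\MS_n$ is vacuous: the event forces $\MS_n=S_n$, so the maximum is attained only at time $n$ and your decomposition collapses. The paper instead splits at the \emph{deterministic} midpoint $n/2$ and applies \eqref{mSSbd} twice (once on each half, using time-reversal), each factor contributing $n^{-3/2}$ together with a power of $(1+r+\alpha)$. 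For \eqref{sumeSmSSsmallbd}, summing the single-$n$ bound \eqref{eSmSSsmallbd} over $n$ produces $O(1+A)$, not a bound uniform in $A$; the paper identifies $\sum_{n\ge0}\P(\mS_n\ge0,\,S_n\in dx)$ with the renewal measure $\Ren^-(dx)$ of the weak ascending ladder heights and obtains $\int_0^A e^{x-A}\Ren^-(dx)\le C$ from the uniform boundedness of $\Ren^-([x,x+1])$. You need this renewal-theoretic input, not the pointwise ballot estimate.

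Two further points. For \eqref{summSSbd}, integrating the crude $k^{-3/2}$ bound over $[\epsilon r^2,\eta r^2]$ yields $O(\epsilon^{-1/2})$, not $O(\eta)$; the paper splits at $r^{3/2}$, uses \eqref{sumSbd} below, and keeps the exponential factor $e^{-cr^2/k}$ from \eqref{mSSbde} all the way up to $\eta r^2$ --- it is this factor that produces the $\eta$. For \eqref{summSMSMS-Sbd}, your route is more complicated than necessary: after the first-hitting decomposition the paper simply bounds the double sum by a product of single sums,
\[
\sum_{k\le\eta r^2}\sum_{j<k}\cdots\ \le\ \Bigl(\sum_{j\le\eta r^2}\P(\mS_j\ge0,\MS_j=S_j\ge ar)\Bigr)\Bigl(\sum_{m\le\eta r^2}\P(\MS_m\le0,-S_m\in[br,br+1])\Bigr),
\]
the first factor being $O(\sqrt{\eta})$ by summing \eqref{mSMSlargebd} and the second $O(\eta)$ by \eqref{summSSbd} applied to $-S$, giving $\eta^{3/2}$ directly without any Gaussian/ballot splitting.
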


The following lemma focus on asymptotic results that we need. 
\begin{lem}
Let $\alpha\ge0$. Then the following convergences hold.
\begin{enumerate}
\item For any continuous and bounded function $g:[0,\infty)\rightarrow\mathbb{R}_+$, the following convergence holds uniformly for $x,y$ in any compact set of $(0,\infty)$ and for $z=o(\sqrt{n})$, $h>0$,
\begin{multline}\label{cvgsumeSmSSn}
\E\left[g(\sum_{i=1}^n e^{-S_i})\ind{\mS_n\geq-\alpha, \MS_n\leq x\sqrt{n}, S_n\in[y\sqrt{n}+z, y\sqrt{n}+z+h)}\right]=\frac{\cb_+h\Ren(\alpha)}{\sigma n}\E_\alpha[g(e^\alpha \Hinf_\infty-1)]\mathcal{C}_0(x-y,y)+\frac{o_n(1)}{n},
\end{multline}
where 
\begin{equation}\label{defconstantC}
\mathcal{C}_0(a,b)=\varphi(\frac{b}{\sigma})\P(\overline{R}_1-R_1\leq \frac{a}{\sigma}\vert R_1=\frac{b}{\sigma}),
\end{equation}
and $\cb_+=\lim_{n\rightarrow\infty}\sqrt{n}\P(\mS_n\geq0)$.
\item Let $a,b>0$ be fixed constants. For $F(x,y)=\frac{x}{y}e^{-x/y}$ with $x\in\R$ and $y\geq1$ and for $a_n=o(\sqrt{n})$, $a'_n=o(\sqrt{n})$ and fixed $K>0$, we have
\begin{multline}\label{cvgeSMS}
\lim_{n\to\infty}n\E\left[F(e^{b\sqrt{n}-(\MS_n-S_n)},\sum_{i=0}^n e^{S_i-\MS_n})\ind{\mS_n\geq-\alpha, \MS_n\geq a\sqrt{n}+a_n, \max_{0\leq k\leq n}(\MS_k-S_k)\leq a\sqrt{n}+a'_n, \MS_n-S_n\in[b\sqrt{n}-K, b\sqrt{n}+K]} \right]\\
=\mathcal{G}(a,b)\Ren(\alpha)\int_{-K}^K\E[(F(e^{-s}, \Hinf_\infty+\Hinf^{(-)}_\infty-1)]ds,
\end{multline}
where 
\begin{equation}\label{defconstantg}
\mathcal{G}(a,b):=\int_0^1 \mathcal{C}_{\frac{a}{\sqrt{u}},\frac{a}{\sqrt{u}}}\frac{\cb_-}{\sigma}\mathcal{C}_0(\frac{a-b}{\sqrt{1-u}},\frac{b}{\sqrt{1-u}})\ind{a>b}\frac{du}{u(1-u)}
\end{equation}
 with $\mathcal{C}_{a,b}$ defined in (3.20) of \cite{AC18}, $\cb_-:=\lim{n\to\infty}\sqrt{n}\P(\MS_n\le 0)$ and $\Hinf^{(-)}_\infty:=\sum_{k=0}^\infty e^{-\zeta_k^{(-)}}$ with $(\zeta_k^{(-)})_{k\geq0}$ the Markov chain obtained from the reflected walk $-S$. Moreover, this convergences holds uniformly for $a,b$ in any compact set of $(0,\infty)$.
\end{enumerate}
\end{lem}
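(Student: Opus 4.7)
\textbf{Proof plan for \eqref{cvgeSMS}.} The natural approach is to split the path $(S_k)_{0\le k\le n}$ at the first time $\tau=\tau_n^V:=\inf\{k\le n:S_k=\MS_n\}$ it hits its running maximum, and to show that the pre-$\tau$ and post-$\tau$ pieces decouple asymptotically. Writing $j=\tau$ and $S'_k:=S_{j+k}-S_j$ for $0\le k\le n-j$, the event in the indicator translates into the following constraints on the two pieces:
\begin{align*}
\text{(pre)}\quad &\mS_j\ge-\alpha,\ S_j=\MS_j\ge a\sqrt{n}+a_n,\ \max_{k\le j}(S_j-S_k)\le a\sqrt{n}+a'_n,\\
\text{(post)}\quad &\MS'_{n-j}\le 0,\ \mS'_{n-j}\ge -(a\sqrt{n}+a'_n),\ -S'_{n-j}\in[b\sqrt{n}-K,b\sqrt{n}+K],
\end{align*}
and the sum inside $F$ factorises as $\sum_{i=0}^n e^{S_i-\MS_n}=1+\bigl(\sum_{i=0}^{j-1}e^{S_i-S_j}\bigr)+\bigl(\sum_{k=1}^{n-j}e^{S'_k}\bigr)$, while $e^{b\sqrt{n}-(\MS_n-S_n)}=e^{b\sqrt{n}+S'_{n-j}}$. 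By the Markov property at time $j$ the two pieces are independent.

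The second step is to apply, for each fixed $u\in(0,1)$ and $j=\lfloor un\rfloor$, the two available limit theorems. For the pre-block of length $j$, Fact~\ref{mSMScvg} with the test function $g$ depending on $\sum_{i=0}^{j-1}e^{S_i-S_j}$ gives
\begin{equation*}
j\,\E\bigl[g(\cdot)\,\mathbf{1}_{\text{pre}}\bigr]\longrightarrow \mathcal{C}_{a/\sqrt u,a/\sqrt u}\,\Ren(\alpha)\,\E[g(\Hinf_\infty-1)],
\end{equation*}
so the pre-block contributes the factor $\mathcal{C}_{a/\sqrt u,a/\sqrt u}$ together with the law of $\Hinf_\infty-1$. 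For the post-block, we reverse sign: $-S'$ has the same law as $S$, and the constraints become $\mS\ge 0$, $\MS\le a\sqrt{n}+a'_n$, endpoint in $[b\sqrt n-K,b\sqrt n+K]$. Applying \eqref{cvgsumeSmSSn} to the walk $-S'$ with the test function involving $\sum_{k=1}^{n-j}e^{S'_k}$ and integrating the location of the endpoint against the factor $F(e^{-\eta},\cdot)$ for $\eta\in[-K,K]$ yields
\begin{equation*}
(n-j)\,\E\bigl[\cdots\mathbf{1}_{\text{post}}\bigr]\longrightarrow\frac{\cb_-}{\sigma}\mathcal{C}_0\!\Bigl(\tfrac{a-b}{\sqrt{1-u}},\tfrac{b}{\sqrt{1-u}}\Bigr)\mathbf{1}_{\{a>b\}}\int_{-K}^{K}\E\bigl[F(e^{-\eta},\Hinf_\infty^{(-)}+\bullet)\bigr]\,d\eta,
\end{equation*}
where $\cb_-$ replaces $\cb_+$ because we applied the cited statement to $-S$, and the indicator $\mathbf{1}_{\{a>b\}}$ appears because the dual walk cannot reach height $b\sqrt n$ without exceeding $a\sqrt n$ otherwise. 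Independence of the two blocks then turns $\bullet$ into an independent copy of $\Hinf_\infty-1$, producing $\Hinf_\infty+\Hinf_\infty^{(-)}-1$ inside $F$.

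In the third step, I multiply by $n$, observe that $n\cdot j^{-1}(n-j)^{-1}=[u(1-u)n]^{-1}$, and recognise the sum $\sum_{j=1}^{n-1}$ as a Riemann sum of mesh $1/n$ for the integral defining $\mathcal{G}(a,b)$:
\begin{equation*}
n\,\E[\cdots]\;\longrightarrow\;\Ren(\alpha)\Bigl(\int_0^1\mathcal{C}_{a/\sqrt u,a/\sqrt u}\,\frac{\cb_-}{\sigma}\,\mathcal{C}_0\!\bigl(\tfrac{a-b}{\sqrt{1-u}},\tfrac{b}{\sqrt{1-u}}\bigr)\mathbf{1}_{\{a>b\}}\,\frac{du}{u(1-u)}\Bigr)\int_{-K}^{K}\E\bigl[F(e^{-s},\Hinf_\infty+\Hinf_\infty^{(-)}-1)\bigr]ds,
\end{equation*}
which is the desired identity. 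The main obstacle is ensuring the convergence of the two block expectations is uniform enough in $u$ to justify passing the Riemann sum to an integral, and in particular controlling the contributions from $u$ close to $0$ or $1$, where one of the two pieces is very short. This is handled by using the non-asymptotic tail estimates \eqref{mSMSSlargebde}, \eqref{mSMSMS-Sbd}, \eqref{summSMSMS-Sbd}, \eqref{sumeSMSSbd}, together with a truncation at $\varepsilon\le u\le 1-\varepsilon$ and a direct bound showing the boundary contribution is $o_\varepsilon(1)$; the boundedness of $F$ and of $e^{b\sqrt n-(\MS_n-S_n)}\le e^K$ provides the domination needed. A secondary technical point is that $F$ depends jointly on the endpoint and on the full sum $\Sigma_n$, so joint convergence (not merely convergence of marginals) of the pre- and post-block limits is needed; this is supplied by the independence coming from Markov at $\tau$ combined with the joint convergences of each block supplied by Fact~\ref{mSMScvg} and \eqref{cvgsumeSmSSn}.
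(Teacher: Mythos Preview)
Your plan for \eqref{cvgeSMS} matches the paper's proof essentially step for step: decompose at the first hitting time of $\MS_n$, apply the Markov property to decouple, use Fact~\ref{mSMScvg} on the pre-block and \eqref{cvgsumeSmSSn} (for the reflected walk, hence $\cb_-$) on the post-block, then pass to a Riemann sum in $u=j/n$ after truncating $u\in[\epsilon,1-\epsilon]$ and bounding the tails. One small slip: the pre-block constraint inherited from $\max_{k\le n}(\MS_k-S_k)\le a\sqrt n+a'_n$ is the running drawdown $\max_{k\le j}(\MS_k-S_k)$, not $\max_{k\le j}(S_j-S_k)$; this is exactly the quantity appearing in Fact~\ref{mSMScvg}, so the argument goes through unchanged once you write the correct constraint. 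The paper also handles the $x$-dependence of $F$ by first discretising the endpoint interval $[b\sqrt n-K,b\sqrt n+K)$ into subintervals of width $h$ before applying \eqref{cvgsumeSmSSn}, which is what your ``integrating the location of the endpoint'' amounts to; just be explicit that the uniform continuity of $F$ in both arguments is what lets you freeze $e^{b\sqrt n+S'_{n-j}}$ at $e^{-\ell h}$ on each subinterval.
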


The following result is a direct consequence of \eqref{cvgsumeSmSSn}.
\begin{cor}
Let $\alpha\ge0$ and $a,b>0$. For $a_n=o(\sqrt{n})$ and $b_n=o(\sqrt{n})$, the following convergence holds.
\begin{align}
\lim_{n\to\infty}n\E\left[e^{S_n-b\sqrt{n}-b_n}; \mS_n\geq-\alpha, \MS_n-S_n\leq a\sqrt{n}+a_n, S_n\leq b\sqrt{n}+b_n \right]&=\frac{\cb_+\Ren(\alpha)}{\sigma} \mathcal{C}_0(a,b);\label{eSmScvg}\\
\lim_{n\to\infty}n\E\left[e^{S_n-b\sqrt{n}-b_n}; \mS_n\geq-\alpha, \MS_n \leq (a+b)\sqrt{n}+a_n, S_n\leq b\sqrt{n}+b_n \right]&= \frac{\cb_+\Ren(\alpha)}{\sigma}\mathcal{C}_0(a,b).\label{eSmSMScvg}
\end{align}
where $\mathcal{C}_0(a,b)=\varphi(\frac{b}{\sigma})\P(\overline{R}_1-R_1\leq \frac{a}{\sigma}\vert R_1=\frac{b}{\sigma})$ as in \eqref{Mexconstant}.
\end{cor}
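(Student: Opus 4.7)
The plan is to derive both limits from \eqref{cvgsumeSmSSn} applied with $g\equiv 1$, by a slicing argument in the $S_n$ variable that turns the exponential weight $e^{S_n-b\sqrt{n}-b_n}$ into a geometric series. Fix $h>0$ small and an integer $K\geq 1$. For $k\geq 0$, write the slice $J_k:=[b\sqrt{n}+b_n-(k+1)h,\,b\sqrt{n}+b_n-kh)$; on $J_k$ one has $e^{S_n-b\sqrt{n}-b_n}\in[e^{-(k+1)h},e^{-kh})$.

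Consider first \eqref{eSmSMScvg}. The event $\{\mS_n\ge-\alpha,\,\MS_n\le(a+b)\sqrt{n}+a_n,\,S_n\in J_k\}$ is exactly of the form required by \eqref{cvgsumeSmSSn} with $x=(a+b)+a_n/\sqrt{n}$, $y=b$, $z=b_n-(k+1)h=o(\sqrt{n})$; since $\mathcal{C}_0$ is continuous and the convergence is uniform on compact subsets of $(0,\infty)^2$, the probability of this event equals $\frac{\cb_+ h\,\Ren(\alpha)}{\sigma n}\mathcal{C}_0(a,b)+\frac{o_n(1)}{n}$. Multiplying by the slice-wise bounds on $e^{S_n-b\sqrt{n}-b_n}$, summing over $0\leq k<K$ and letting $n\to\infty$ sandwiches $\liminf$ and $\limsup$ of $n$ times the truncated integral between $\tfrac{e^{-h}(1-e^{-Kh})}{1-e^{-h}}\cdot\tfrac{\cb_+\Ren(\alpha)}{\sigma}\mathcal{C}_0(a,b)\cdot h$ and $\tfrac{1-e^{-Kh}}{1-e^{-h}}\cdot\tfrac{\cb_+\Ren(\alpha)}{\sigma}\mathcal{C}_0(a,b)\cdot h$. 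Sending first $K\to\infty$ then $h\to 0$, together with $\frac{h}{1-e^{-h}}\to 1$, gives the claimed limit $\frac{\cb_+\Ren(\alpha)}{\sigma}\mathcal{C}_0(a,b)$.

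For \eqref{eSmScvg} the only new point is that $\{\MS_n-S_n\le a\sqrt{n}+a_n\}$ is not itself of the form $\{\MS_n\le x\sqrt{n}\}$. On the slice $\{S_n\in J_k\}$, however, it is sandwiched as
\[
\{\MS_n\le(a+b)\sqrt{n}+a_n+b_n-(k+1)h\}\subset\{\MS_n-S_n\le a\sqrt{n}+a_n\}\subset\{\MS_n<(a+b)\sqrt{n}+a_n+b_n-kh\}.
\]
Applying \eqref{cvgsumeSmSSn} to the outer events (with $y=b$, $z=b_n-(k+1)h$, and $x=(a+b)+O(1/\sqrt{n})$ in each case) yields the same limiting constant $\mathcal{C}_0(a,b)$ by continuity, and the remainder of the argument is identical to the first case.

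The main technical point will be controlling the tail $k\geq K$ uniformly in $n$ in order to justify sending $K\to\infty$ after $n\to\infty$: using \eqref{mSSbd} one bounds the probability of the event intersected with the $k$-th slice by $C(1+\alpha)(1+b\sqrt{n})(1+h)/n^{3/2}=O(h/n)$, so the tail contribution to $n$ times the expectation is $O\!\bigl(h\sum_{k\ge K}e^{-kh}\bigr)=O(e^{-Kh})$ uniformly in $n$, which vanishes first as $K\to\infty$ and remains small as $h\to 0$. With this tail estimate, the interchange of the limits $n\to\infty$, $K\to\infty$, $h\to 0$ is legitimate because $K$ is held fixed when first letting $n\to\infty$, so only finitely many $o_n(1)/n$ errors are summed at that stage.
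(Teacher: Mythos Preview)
Your argument is correct and is precisely the natural derivation the paper has in mind when it calls the corollary ``a direct consequence of \eqref{cvgsumeSmSSn}'': slice in $S_n$, apply \eqref{cvgsumeSmSSn} with $g\equiv 1$ on each slice, and for \eqref{eSmScvg} sandwich $\{\MS_n-S_n\le a\sqrt{n}+a_n\}$ between two events of the form $\{\MS_n\le x\sqrt{n}\}$ on each slice. One harmless slip: the per-slice bound from \eqref{mSSbd} is $C(1+\alpha)(1+b\sqrt{n})(1+h)/n^{3/2}=O((1+h)/n)$, not $O(h/n)$, so the tail (times $n$) is $O\!\bigl(e^{-Kh}/(1-e^{-h})\bigr)$ rather than $O(e^{-Kh})$; since you correctly send $K\to\infty$ with $h$ fixed before letting $h\to 0$, this does not affect the conclusion.
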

\subsection{Proofs of \eqref{mSSbde} - \eqref{cvgeSMS}}\label{A3}

\begin{proof}[Proof of \eqref{mSSbde}] This is given in Lemma B6 of \cite{AD14} when $\alpha=0$ and the increments are bounded. Let us prove the general case.

For $1\le r\le A\sqrt{m}$ with $A>10$ fixed, by \eqref{mSSbd}, it is clear that
\[
\P(\mS_m\ge-\alpha, S_m\in [r,r+1])\leq \frac{C_5(1+\alpha)(1+r+\alpha)}{m^{3/2}}\leq C_{31}\frac{1+\alpha}{m}e^{-C_{32}r^2/m}.
\]
It suffices to show \eqref{mSSbde} for $A\sqrt{m}\le r\le \epsilon_0m$. For any $x\in\R$, let
\[
T_x^+:=\inf\{k\ge0: S_k\ge x\},\textrm{ and } T_x^-:=\inf\{k\ge0: S_k<x\}.
\]
Then it is known that for any $0\le x\le y$, 
\[
\P_x(T_y^+<T_0^-)\leq C_{33}\frac{x+1}{y+1}.
\]
Recall that the increments of $S$ are $\xi_k, k\geq0$ which have finite exponential moments. Therefore, one has
\begin{align*}
&\P(\mS_m\ge-\alpha, S_m\in[r,r+1])\le  \P(\max_{k\leq m}\xi_k\ge r/2)+\P(\mS_m\ge-\alpha, S_m\in[r,r+1], S_{T^+_{\sqrt{m}}}\le \sqrt{m}+\frac{r}{2})\\
\le & C_{34} me^{-\delta_0 r/2}+\sum_{j=1}^{m}\P_\alpha(\mS_m\ge 0, S_m\in[r+\alpha,r+\alpha+1], T_{\sqrt{m}+\alpha}^+=j, S_j\in[\sqrt{m}+\alpha,r/2+\sqrt{m}+\alpha]).
\end{align*}
By Markov property at $T^+_{\sqrt{m}}$, 
\begin{align*}
&\sum_{j=1}^{m}\P_\alpha(\mS_m\ge 0, S_m\in[r+\alpha,r+\alpha+1], T_{\sqrt{m}+\alpha}^+=j, S_j\in[\sqrt{m}+\alpha,r/2+\sqrt{m}+\alpha])\\
\le&\sum_{j=1}^{m-1}\P_\alpha(\mS_j\ge0, T_{\sqrt{m}+\alpha}^+=j)\max_{\sqrt{m}+\alpha\le x\le \sqrt{m}+\alpha+r/2}\P(S_{m-j}\in [r+\alpha-x, r+\alpha-x+1])\\
\le &\P_\alpha(T_{\sqrt{m}+\alpha}^+<T_0^-)\max_{1\leq j\le m}\max_{r/3\le x\le r}\P(S_j\in[x,x+1])\le C_{35}\frac{\alpha+1}{\sqrt{m}+\alpha+1}\max_{1\leq j\le m}\max_{r/3\le x\le r}\P(S_j\in[x,x+1]).
\end{align*}
On the one hand, for $j\ge Kr$ with $K\ge1$ fixed and $r\gg1$, by Chernoff's bound,
\[
\max_{1\le j< Kr}\max_{r/3\le x\le r}\P(S_j\in [x,x+1])\le \max_{1\le j< Kr}\P(S_j\ge r/3)\le e^{-C_{36} r}.
\] 
On the other hand, for $Kr\le j\le m$, we use the following change of measure 
\[
\P^{(t)}((S_1,\cdots,S_j)\in\cdot)=E[e^{tS_j-j\phi_S(t)}; (S_1,\cdot,S_j)\in\cdot]
\]
with $\phi_S(t):=\log \E[e^{t\xi_1}]$. The probability $\P^{(t)}$ is well defined when $\phi_S(t)<\infty$. The corresponding expectation is denoted by $E^{(t)}$. It hence follows that for $t\in(-\delta_0/2,\delta_0/2)$,
\begin{align*}
\P(S_j\in[x,x+1])=&\E^{(t)}[e^{-tS_j+j\phi_S(t)}; S_j\in[x,x+1]]\\
\le & e^{-tx+j\phi_S(t)}\P^{(t)}(S_j\in[x,x+1])\le e^{-tr/3+C_{37} j t^2}\P^{(t)}(S_j\in[x,x+1]),
\end{align*}
as $\phi_S(t)\le C_{37} t^2$ for $|t|\le\delta_0/2$. Let us take $t=t_j=\frac{r}{6C_{37}j}$ so that $e^{-tr/3+C_{37} j t^2}\leq e^{-\frac{r^2}{36 C_{37} j}}$. Moreover, as under $\P^{(t)}$, $(S_k)$ is a random walk with i.i.d. increments and $\E^{(t)}[e^{s S_1}]<\infty$ for $s\in(0,\delta_0/2)$, Berry-Esseen theorem shows that there exists $C$ such that for $Kr\le j\le m$,
\[
\P^{(t)}(S_j\in[x,x+1])\leq \frac{C}{\sqrt{j}}.
\]
As a result, 
\[
\max_{Kr\le j\le m}\max_{r/3\le x\le r}\P(S_j\in [x,x+1])\le \max_{Kr\le j\le m}\frac{C}{\sqrt{j}}e^{-\frac{r^2}{36C_{37}j}}\le \frac{C}{\sqrt{m}}e^{-\frac{r^2}{36C_{37}m}},
\]
as long as $r\geq  A\sqrt{m}$ with $A\ge \sqrt{18C_{37}}$. We thus end up with
\[
\P(\mS_m\ge-\alpha, S_m\in[r,r+1])\le C_{34} me^{-\delta_0 r/2}+C_{35}\frac{\alpha+1}{\sqrt{m}+\alpha+1}\left(e^{-C_{36} r}\vee\frac{C}{\sqrt{m}}e^{-\frac{r^2}{36C_{37}m}}\right)%\leq \frac{C_{38}(1+\alpha)}{m}e^{-C_{38}r^2/m}.
\]
which suffices to obtain \eqref{mSSbde}.
\end{proof}

\begin{proof}[Proof of \eqref{mSSlargebde}]
Observe that by \eqref{mSSbde} and Chernoff's bound,
\begin{align*}
\P(\mS_m\geq-\alpha, S_m\geq r)\leq &\sum_{t=r}^{\epsilon_0 m}\P(\mS_m\ge-\alpha, S_m\in[t,t+1])+\P(S_m\ge \epsilon_0 m)\\
\leq &\sum_{t=r}^{\epsilon_0 m}C_{15}\frac{1+\alpha}{m}e^{-C_{16}\frac{t^2}{m}}+e^{-C_{38}m}\\
\leq &C_{17} \frac{1+\alpha}{r}e^{-C_{18}r^2/m}.
\end{align*}
\end{proof}

\begin{proof}[Proof of \eqref{mSMSSlargebde}]
Note that $(S_m-S_{m-i})_{0\leq i\leq m/2}$ is an independent copy of $(S_i)_{0\leq i\leq m}$. So, by \eqref{mSSlargebde} and \eqref{mSbd},
\begin{align*}
&\P(\mS_m\ge-\alpha, S_m=\MS_m\geq r)\\
\le &\P(\mS_{m/2}\ge-\alpha, S_{m/2}\ge r/2)\P(\mS_{m/2}\ge0)+\P(\mS_{m/2}\ge-\alpha)\P(\mS_{m/2}\ge0, S_{m/2}\ge r/2)\\
\le & C_{19} \frac{1+\alpha}{\sqrt{m} r}e^{-C_{20}r^2/m}.
\end{align*}
\end{proof}

\begin{proof}[Proof of \eqref{sumSbd}]
Because of \eqref{expmomS}, for $\lambda\in(0, 1+\delta_0)$ and $k\geq1$,
\[
\P(S_k\geq A)\leq e^{-\lambda A}\E[e^{\lambda S_k}]=e^{-\lambda A+k \phi_S(\lambda)},
\]
where $\phi_S(\lambda)=\log \E[e^{\lambda S_1}]$. Note that $\phi_S'(0)=\E[S_1]=0$ and $\phi_S(\lambda)\leq C_{37} \lambda^2$ for $\lambda\in (0,\delta_0/2)$ small. By taking $\lambda=\frac{1}{2C_{37}A^{\delta}}$ with $A$ sufficiently large, we have
\begin{align*}
\sum_{1\leq k\leq A^{1+\delta}}\P(S_k\geq A)\leq &\sum_{1\leq k\leq A^{1+\delta}}e^{-\lambda A+k  \phi_S(\lambda)}\\
\leq & \sum_{1\leq k\leq A^{1+\delta}}e^{-\lambda A+C_{37} k\lambda^2}\leq A^{1+\delta}e^{-\frac{A^{1-\delta}}{4C_{37}}}
\end{align*}
which suffices to conclude \eqref{sumSbd} for $\delta\in(0,1)$. In particular, for $\delta=0$, we can take $C_{37}>1/\delta_0$ so that \eqref{sumSbd} holds.
\end{proof}

\begin{proof}[Proof of \eqref{mSMS-Sbd}]
Observe that by Markov property at time $n/2$,
\begin{align*}
&\P(\mS_n\geq-\alpha, \MS_n=S_n\in[ r,r+1])\leq  \E[\ind{\mS_{n/2}\geq-\alpha, S_{n/2}\leq r+1}\P(\MS_{n/2}=S_{n/2}\in [r-x,r-x+1)\vert_{x=S_{n/2}}]\\
=& \E[\ind{\mS_{n/2}\geq-\alpha,S_{n/2}\leq r+1}\P(\mS_{n/2}\geq0, S_{n/2}\in [r-x,r-x+1])\vert_{x=S_{n/2}}]
\end{align*}
which by \eqref{mSSbd}, is bounded by
\begin{equation*}
C_{39}\E[\ind{\mS_{n/2}\geq-\alpha,S_{n/2}\leq r+1}\frac{(2+r-S_{n/2})}{n^{3/2}}]%=c\sum_{t=-\alpha}^{r}\frac{2+r-t}{n^{3/2}}\P(\mS_{n/2}\geq-\alpha, S_{n/2}\in [t,t+1])\\
\leq  C_{39}\frac{(2+r+\alpha)}{n^{3/2}}\P(\mS_{n/2}\geq-\alpha,S_{n/2}\leq r+1)
\end{equation*}
which by \eqref{mSSbd} implies that
\[
\P(\mS_n\geq-\alpha, \MS_n=S_n\in[ r,r+1])\leq C_{40}\frac{(1+\alpha)(2+r+\alpha)^3}{n^{3}}\leq C_{40} (1+\alpha)^4\frac{(1+r)^3}{n^3}.
\]
This completes the proof of \eqref{mSMS-Sbd}.
\end{proof}

\begin{proof}[Proof of \eqref{summSSbd}]
By use of \eqref{sumSbd} and \eqref{mSSbde}, we see that for $r\geq \eta^{-2}$ sufficiently large,
\begin{align*}
\sum_{1\leq k\le\eta r^2}\P(\mS_k\geq-\alpha, S_k\in[r,r+1])\leq &\sum_{k=1}^{r^{3/2}}\P(S_k\geq r)+\sum_{k=r^{3/2}}^{\eta r^2}\P(\mS_k\geq-\alpha, S_k\in[r,r+1])\\
\leq & e^{-C_{21}r^{1/2}}+\sum_{k=r^{3/2}}^{\eta r^2}C_{15}\frac{1+\alpha}{k}e^{-C_{16}\frac{r^2}{k}}\leq C_{41}(1+\alpha)\eta
\end{align*}
as $\sum_{k=r^{3/2}}^{\eta r^2}\frac{1}{k}e^{-C_{16}\frac{r^2}{k}}\leq \int_{r^{3/2}}^{\eta r^2+1}\frac{2}{x}e^{-C_{16}r^2/x}dx\leq \int_{\frac{1}{2\eta}}^{\sqrt{r}}\frac{1}{t}e^{-C_{16}t}dt\leq \frac{2\eta}{C_{16}}$.
\end{proof}

\begin{proof}[Proof of \eqref{2sumeSmSSbd}]
It is immediate that
\begin{align*}
\sum_{1\leq k\le\eta r^2}\E\left[\sum_{i=0}^{k}e^{-S_i}; \mS_k\geq 0, S_k\in[r,r+1]\right]\le& \sum_{1\leq k\le\eta r^2}k\P(\mS_k\geq0, S_k\in[r,r+1])\\
\le &r^{3/2}\sum_{k=1}^{r^{3/2}}\P(S_k\geq r)+\sum_{k=r^{3/2}}^{\eta r^2}k\P(\mS_k\geq-\alpha, S_k\in[r,r+1])
\end{align*}
which by \eqref{sumSbd} and \eqref{mSSbde}, is bounded by $C_{42}\eta$.
\end{proof}

\begin{proof}[Proof of \eqref{sumeSMSSbd}]
In fact, we only need to check that
\[
\sum_{k=1}^{n-1}\E[e^{-S_k}; \mS_n\geq0, S_n\in[x,x+1]]\leq C_{43}\frac{1+x}{n^{3/2}}.
\]
By Markov property time at time $k$ and then by \eqref{mSSbd}, one sees that
\begin{align*}
\sum_{k=1}^{n-1}\E[e^{-S_k}; \mS_n\geq0, S_n\in[x,x+1]]=&\sum_{k=1}^{n-1}\E[e^{-S_k}\ind{S_k\ge0}\P_{S_k}(\mS_{n-k}\ge0, S_{n-k}\in[x,x+1])]\\
\leq &\sum_{k=1}^{n-1}\frac{C_5(2+x)}{(n-k)^{3/2}}\E[(1+S_k)e^{-S_k}\ind{S_k\ge0}]\\
\leq &\sum_{k=1}^{n-1}\frac{C_{44}(1+x)}{(n-k)^{3/2}}\sum_{t=0}^\infty \frac{(1+t)^2e^{-t}}{k^{3/2}}\leq C_{45}(1+x)\sum_{k=1}^{n-1}\frac{1}{k^{3/2}(n-k)^{3/2}},
\end{align*}
which is less than $C_{46}(1+x)n^{-3/2}$.
\end{proof}

\begin{proof}[Proof of \eqref{eSmSSbd}]
Observe that by \eqref{mSSbd},
\begin{align*}
\E_\alpha[e^{-S_n}; \mS_n\geq0, S_n\geq A]\leq &\sum_{t=A}^\infty e^{-t}\P_\alpha(\mS_n\geq0, S_n\in [t,t+1])\\
\leq &\sum_{t=A}^\infty e^{-t}\frac{C_5(1+\alpha)(2+t)}{n^{3/2}}\leq C_{47} \frac{1+\alpha}{n^{3/2}}e^{-A/2}.
\end{align*}
\end{proof}

\begin{proof}[Proof of \eqref{eSmSSsmallbd}]
Note that by \eqref{mSSbd},
\begin{align*}
\E_\alpha[e^{S_n-A}; \mS_n\geq0, S_n\leq A]\leq &\sum_{t=0}^Ae^{t+1-A}\P_\alpha(\mS_n\geq0, S_n\in [t,t+1])\\
\leq &C_5\frac{1+\alpha}{n^{3/2}}\sum_{t=0}^A (2+t)e^{t+1-A}\leq C_{48}\frac{(1+\alpha)(1+A)}{n^{3/2}}.
\end{align*}
\end{proof}

\begin{proof}[Proof of \eqref{sumeSmSSsmallbd}]
In fact, by setting $\tau^-:=\inf\{k\geq0: S_k<0\}$ and $\Ren^-(dx)$ the renewal measure associated with the weak ascending ladder process of $(S_n)_{n\geq0}$, we have
\begin{align*}
\sum_{n\ge 0}\E[e^{S_n-A}; \mS_n\ge 0, S_n\le A]=&\E\left[\sum_{n=0}^{\tau^--1}e^{S_n-A}\ind{S_n\le A}\right]\\
=&\int_0^A e^{x-A}\Ren^-(dx)\leq C_{49},
\end{align*}
because there exists a constant $\cb_\Ren^->0$ such that for any $h>0$, $\Ren^-([x,x+h])\sim \cb_\Ren^- h$ as $x\to\infty$.
\end{proof}

\begin{proof}[Proof of \eqref{mSMSMS-Sbd}]
Let 
\[
\P_{\eqref{mSMSMS-Sbd}}:=\P(\mS_n\geq-\alpha, \MS_n\geq a r, \max_{k\leq n}(\MS_k-S_k)\leq b r, \MS_n-S_n\in[cr-K,cr+K]).
\]
By considering the first time hitting $\MS_n$ and by Markov property,
\begin{align*}
\P_{\eqref{mSMSMS-Sbd}}=&\sum_{j=1}^{n-1}\P(\mS_n\geq-\alpha, S_j=\MS_n\geq a r, S_j-S_n\in[cr-K,cr+K])\\
\leq & \sum_{j=1}^{n-1}\P(\mS_j\geq-\alpha, S_j=\MS_j\geq ar)\P(\MS_{n-j}\le 0, -S_{n-j}\in[cr-K,cr+K])
\end{align*} 
which by \eqref{mSMSlargebd} and by \eqref{mSSbd} for $(-S_n)_{n\geq0}$ is bounded by
\[
\sum_{j=1}^{n-1}\frac{C_{50}(1+\alpha)}{\sqrt{j} ar}\frac{(1+cr+K)(1+2K)}{(n-j)^{3/2}}.
\]
which is bounded by $\frac{C_{29}(1+\alpha)(1+K)^2(1+r)}{n^{3/2}}$ as $n\leq Ar^2$.
\end{proof}

\begin{proof}[Proof of \eqref{summSMSMS-Sbd}]
By considering the first time hitting $\MS_n$ and by Markov property, we have
\begin{align*}
&\sum_{k=1}^{\eta r^2}\P(\mS_k\geq0, \MS_k\geq ar, \MS_k-S_k\in[br, br+1])\\
\leq &\sum_{k=1}^{\eta r^2}\sum_{j=1}^{k-1}\P(\mS_k\ge 0, S_j=\MS_j\ge ar)\P(\MS_{k-j}\le 0, -S_{k-j}\in [br,br+1])\\
\leq &\sum_{j=1}^{\eta r^2}\P(\mS_k\ge 0, S_j=\MS_j\ge ar)\sum_{k=1}^{\eta r^2}\P(\MS_k\leq 0, -S_k\in[br,br+1]),
\end{align*}
which by \eqref{mSMSlargebd} and by \eqref{summSSbd} for $(-S_n)_{n\geq0}$ is bounded by
\[
\sum_{j=1}^{\eta r^2}\frac{C_{51}}{\sqrt{j} ar}\frac{\eta}{b^2}\leq C_{30}(a,b)\eta^{3/2}.
\]
\end{proof}

\begin{proof}[Proof of \eqref{cvgsumeSmSSn}]
Let
\[
\E_{\eqref{cvgsumeSmSSn}}:=\E\left[g(\sum_{i=1}^n e^{-S_i})\ind{\mS_n\geq-\alpha, \MS_n\leq x\sqrt{n}, S_n\in[y\sqrt{n}+z, y\sqrt{n}+z+h]}\right].
\]
Let $\delta\in(0,1/2)$.
\[
\E_{\eqref{cvgsumeSmSSn}}=\E\left[g(\sum_{i=1}^n e^{-S_i})\ind{\mS_n\geq-\alpha, \mS_{[n^\delta, n]}\geq n^{\delta/6}, \MS_{n^\delta}\leq n^{\delta}, \MS_n\leq x\sqrt{n}, S_n\in[y\sqrt{n}+z, y\sqrt{n}+z+h]}\right]+Error_\eqref{cvgsumeSmSSn}
\]
where
\begin{align*}
Error_\eqref{cvgsumeSmSSn}\le &||g||_\infty\E\left[\ind{\mS_n\geq-\alpha, \mS_{[n^\delta, n]}\leq n^{\delta/6}, S_n\in[y\sqrt{n}+z, y\sqrt{n}+z+h]}\right]\\
&+||g||_\infty\E\left[\ind{\mS_n\geq-\alpha, \MS_{n^\delta}\geq n^{\delta}, S_n\in[y\sqrt{n}+z, y\sqrt{n}+z+h]}\right]
\end{align*}
First, let us check that $Error_\eqref{cvgsumeSmSSn}=o_n(\frac{1}{n})$. On the one hand, by \eqref{sumSbd}, 
\begin{align*}
\E\left[\ind{\mS_n\geq-\alpha, \MS_{n^\delta}\geq n^{\delta}, S_n\in[y\sqrt{n}+z, y\sqrt{n}+z+h]}\right]\leq &\P(\MS_{n^\delta}\ge n^{\delta})\\
\leq & \sum_{k=1}^{n^\delta}\P(S_k\geq n^\delta)\le e^{-C_{21} n^\delta}=o_n(\frac{1}{n}).
\end{align*}
On the other hand, by considering the first time hitting $\mS_{[n^\delta, n]}$, 
\begin{align*}
&\E\left[\ind{\mS_n\geq-\alpha, \mS_{[n^\delta, n]}\leq n^{\delta/6}, S_n\in[y\sqrt{n}+z, y\sqrt{n}+z+h]}\right]\\
\leq & \sum_{j=n^\delta}^{n-1} \E\left[\ind{\mS_j\geq-\alpha, S_j\leq n^{\delta/6}}\P(\mS_{n-j}\geq 0, S_{n-j}\in[y\sqrt{n}+z-t, y\sqrt{n}+z-t+h])\vert_{t=S_j}\right]\\
\leq & \sum_{j=n^\delta}^{n-\sqrt{n}}\frac{C_5(1+\alpha)(1+\alpha+n^{\delta/6})^2}{j^{3/2}}\frac{C_5(1+h)(1+y\sqrt{n}+z+\alpha+h)}{(n-j)^{3/2}}\\
&+\sum_{j=n-\sqrt{n}}^{n-1}\frac{C_5(1+\alpha)(1+\alpha+n^{\delta/6})^2}{j^{3/2}}\P(S_{n-j}\geq y\sqrt{n}+z-n^{\delta/6})
\end{align*}
where the last inequality follows from \eqref{mSSbd}. By \eqref{sumSbd},
\[
\E\left[\ind{\mS_n\geq-\alpha, \mS_{[n^\delta, n]}\leq n^{\delta/6}, S_n\in[y\sqrt{n}+z, y\sqrt{n}+z+h]}\right]\leq C_{52}(1+\alpha)^4 n^{-\delta/6-1}+C_{52}(1+\alpha)^3 n^{\delta/3-3/2}e^{-c_{53}\sqrt{n}}
\]
which is $o_n(\frac{1}{n})$. It remains to prove the convergence of
\begin{equation}
\E_{\eqref{cvgsumeSmSSn}}^+:=\E\left[g(\sum_{i=1}^n e^{-S_i})\ind{\mS_n\geq-\alpha, \mS_{[n^\delta, n]}\geq n^{\delta/6}, \MS_{n^\delta}\leq n^{\delta}, \MS_n\leq x\sqrt{n}, S_n\in[y\sqrt{n}+z, y\sqrt{n}+z+h)}\right].
\end{equation}
As $\mS_{[n^\delta, n]}\geq n^{\delta/6}$ and $g$ is uniformly continuous on any compact set of $[0,\infty)$,
\begin{equation}\label{unifcong}
g(\sum_{i=1}^n e^{-S_i})=g(\sum_{i=1}^{n^\delta}e^{-S_i})+o_n(1).
\end{equation}
In fact, we need to work on $\{\sum_{i=1}^n e^{-S_i}\leq K\}$ with $K>0$ fixed. It is easy to check that 
\begin{align*}
&\E[(\sum_{i=1}^n e^{-S_i})\ind{\mS_n\geq-\alpha, S_n\in[y\sqrt{n}+z, y\sqrt{n}+z+h)}]\\
\leq &\sum_{i=1}^{n-1}\E_\alpha[e^{\alpha-S_i}\ind{\mS_i\ge0}\P_{S_i}(\mS_{n-i}\geq0, S_{n-i}\in[y\sqrt{n}+z, y\sqrt{n}+z+h))]+e^{-y\sqrt{n}-z}\\
\leq &\sum_{i=1}^{n-1}\frac{C_{54}(1+\alpha)(1+y\sqrt{n}+z+h)e^\alpha}{i^{3/2}(n-i)^{3/2}}\leq \frac{C_{55}}{n}.
\end{align*}
So, 
\[
\E_\eqref{cvgsumeSmSSn}=\E\left[g(\sum_{i=1}^n e^{-S_i})\ind{\mS_n\geq-\alpha, \MS_n\leq x\sqrt{n}, S_n\in[y\sqrt{n}+z, y\sqrt{n}+z+h), \sum_{i=1}^n e^{-S_i}\leq K}\right]+\frac{o_K(1)}{n}.
\]
Let us work directly with \eqref{unifcong}. By \eqref{mSSbd}, it is clear that 
\begin{align*}
&o_n(1)\P\left(\mS_n\geq-\alpha, \mS_{[n^\delta, n]}\geq n^{\delta/6}, \MS_{n^\delta}\leq n^{\delta}, \MS_n\leq x\sqrt{n}, S_n\in[y\sqrt{n}+z, y\sqrt{n}+z+h)\right)\\
\leq & o_n(1)\P\left(\mS_n\ge-\alpha, S_n\in[y\sqrt{n}+z, y\sqrt{n}+z+h)\right)\\
\leq & o_n(1) \frac{C_5(1+\alpha)(1+h)(1+y\sqrt{n}+z+\alpha+h)}{n^{3/2}}=o_n(\frac{1}{n}).
\end{align*}
It then follows that
\begin{align}\label{EA26}
&\E_{\eqref{cvgsumeSmSSn}}^+=\E\left[g(\sum_{i=1}^{n^\delta} e^{-S_i})\ind{\mS_n\geq-\alpha, \mS_{[n^\delta, n]}\geq n^{\delta/6}, \MS_{n^\delta}\leq n^{\delta}, \MS_n\leq x\sqrt{n}, S_n\in[y\sqrt{n}+z, y\sqrt{n}+z+h)}\right]+o_n(\frac{1}{n})\nonumber\\
=&\E\left[g(\sum_{i=1}^{n^\delta} e^{-S_i})\ind{\mS_n\geq-\alpha, \MS_{n^\delta}\leq n^{\delta}, \MS_n\leq x\sqrt{n}, S_n\in[y\sqrt{n}+z, y\sqrt{n}+z+h)}\right]+o_n(\frac{1}{n})\nonumber\\
=&\E\left[g(\sum_{i=1}^{n^\delta} e^{-S_i})\ind{\mS_{n^\delta}\geq-\alpha, \MS_{n^\delta}\leq n^{\delta}}\P_{S_{n^\delta}}\left(\mS_{n-n^\delta}\geq-\alpha, \MS_{n-n^\delta}\leq x\sqrt{n}, S_{n-n^\delta}-y\sqrt{n}-z\in[0,h)\right)\right]+o_n(\frac{1}{n}).
\end{align}
where the last equality is obtained by Markov property at time $n^\delta$. For $t=S_{n^\delta}\in[-\alpha, n^\delta]$, one sees that
\begin{align*}
&\P_{t}\left(\mS_{n-n^\delta}\geq-\alpha, \MS_{n-n^\delta}\leq x\sqrt{n}, S_{n-n^\delta}-y\sqrt{n}-z\in[0,h)\right)\\
=&\P_{t+\alpha}\left( \MS_{n-n^\delta}\leq x\sqrt{n}+\alpha\vert\mS_{n-n^\delta}\geq 0, S_{n-n^\delta}-y\sqrt{n}-z+\alpha\in[0,h)\right)\\
&\times\P_{t+\alpha}(\mS_{n-n^\delta}\ge0, S_{n-n^\delta}-y\sqrt{n}-z \in[\alpha,\alpha+h)).
\end{align*}
By (5.3) of \cite{CC13}, 
\[
\P_{t+\alpha}(\mS_{n-n^\delta}\ge0, S_{n-n^\delta}-y\sqrt{n}-z \in[\alpha,\alpha+h))=\frac{\cb_+}{\sigma n}\Ren(t+\alpha)(\psi(\frac{y}{\sigma})h+o_n(1)),
\]
where the constant $\cb_+=\lim_{n\rightarrow \infty}\sqrt{n}\P(\mS_n\ge0)\in(0,\infty)$. Moreover, in the spirit of Theorem 2.4 of \cite{CC13}, we can say that 
\[
\P_{t+\alpha}\left( \MS_{n-n^\delta}\leq x\sqrt{n}+\alpha\vert\mS_{n-n^\delta}\geq 0, S_{n-n^\delta}-y\sqrt{n}-z+\alpha\in[0,h)\right)\rightarrow \P(\overline{R}_1\leq \frac{x}{\sigma}\vert R_1=\frac{y}{\sigma})
\]
uniformly for $(x,y)$ in a compact set of $(0,\infty)^2$. In fact, in Theorem 2.4 of \cite{CC13}, the \textit{Hypothesis} 2.2 is needed for the density of increments. However, in this work, as we consider $\{S_n\in[y,y+h]\}$ instead of $\{S_n=y\}$, the \textit{Hypothesis} 2.2 is not necessary. As a result, 
\begin{multline*}
\P_{t}\left(\mS_{n-n^\delta}\geq-\alpha, \MS_{n-n^\delta}\leq x\sqrt{n}, S_{n-n^\delta}-y\sqrt{n}-z\in[0,h)\right)\vert_{t=S_{n^\delta}}\\
=\frac{\cb_+}{n\sigma}\Ren(S_{n^\delta}+\alpha)\psi(\frac{y}{\sigma})h\P(\overline{R}_1\leq \frac{x}{\sigma}\vert R_1=\frac{y}{\sigma})(1+o_n(1)).
\end{multline*}
Plugging it into \eqref{EA26} yields that
\begin{align*}
\E_{\eqref{cvgsumeSmSSn}}^+=&\E\left[g(\sum_{i=1}^{n^\delta} e^{-S_i})\ind{\mS_{n^\delta}\geq-\alpha, \MS_{n^\delta}\leq n^{\delta}}\frac{\cb_+}{n\sigma}\Ren(S_{n^\delta}+\alpha)\psi(\frac{y}{\sigma})h\P(\overline{R}_1\leq \frac{x}{\sigma}\vert R_1=\frac{y}{\sigma})(1+o_n(1))\right]+o_n(\frac{1}{n})\\
=&\E_\alpha\left[g(\sum_{i=1}^{n^\delta} e^{\alpha-S_i})\ind{\mS_{n^\delta}\geq 0}\Ren(S_{n^\delta})\right]\frac{\cb_+}{n\sigma}\psi(\frac{y}{\sigma})h\P(\overline{R}_1\leq \frac{x}{\sigma}\vert R_1=\frac{y}{\sigma})(1+o_n(1))+o_n(\frac{1}{n})\\
=&\Ren(\alpha)\E_\alpha\left[g(\sum_{i=1}^{n^\delta}e^{\alpha-\zeta_i})\right]\frac{\cb_+}{n\sigma}\psi(\frac{y}{\sigma})h\P(\overline{R}_1\leq \frac{x}{\sigma}\vert R_1=\frac{y}{\sigma})(1+o_n(1))+o_n(\frac{1}{n})
\end{align*}
where $(\zeta_i)_{i\geq0}$ is a Markov chain taking values in $\R_+$, satisfying $\P_\alpha(\zeta_0=\alpha)=1$,  with transition probability $p(x,dy)=\ind{y>0}\frac{\Ren(y)}{\Ren(x)}\P_x(S_1\in dy)$. It is known that for any $\delta\in(0,1/2)$ small, $\P_\alpha$-a.s., $\zeta_n\geq n^{1/2-\delta}$ for $n\gg1$. So $\Hinf_\infty=\sum_{i=0}^\infty e^{-\zeta_i}$ is a positive random variable taking values in $\R_+$. It is obvious that
\[
\sum_{i=1}^{n^\delta}e^{\alpha-\zeta_i}\rightarrow \sum_{i=1}^{\infty}e^{\alpha-\zeta_i}=e^{\alpha}\Hinf_\infty-1.
\]
As $g$ is bounded, one obtains \eqref{cvgsumeSmSSn} by dominated convergence.
\end{proof}

\begin{proof}[Proof of \eqref{cvgeSMS}]
Let 
\[
\E_{\eqref{cvgeSMS}}:=\E\left[F(e^{b\sqrt{n}-(\MS_n-S_n)},\sum_{i=0}^n e^{S_i-\MS_n})\ind{\mS_n\geq-\alpha, \MS_n\geq a\sqrt{n}+a_n, \max_{0\leq k\leq n}(\MS_k-S_k)\leq a\sqrt{n}+a'_n, \MS_n-S_n\in[b\sqrt{n}-K, b\sqrt{n}+K)} \right].
\]
By considering the first time hitting $\MS_n$, we have
\begin{multline}
\E_{\eqref{cvgeSMS}}=\sum_{j=1}^{n-1}\E\left[F(e^{b\sqrt{n}-(\MS_n-S_n)},\sum_{i=0}^n e^{S_i-\MS_n})\ind{\mS_n\geq-\alpha, \MS_{j-1}<S_j, S_j=\MS_n\geq a\sqrt{n}+a_n}\right.\\
\left.\times\ind{\max_{0\leq k\leq n}(\MS_k-S_k)\leq a\sqrt{n}+a'_n, \MS_n-S_n\in[b\sqrt{n}-K, b\sqrt{n}+K)} \right].
\end{multline}
By Markov property at time $j$, this is equal to $\sum_{j=1}^{n-1}\sum_{\ell=-K/h}^{K/h-1}\E_{\eqref{cvgeSMS}}(j,n,\ell)$ where
\begin{multline}
\E_{\eqref{cvgeSMS}}(j,n,\ell):=\E\left[F(e^{b\sqrt{n}+R_{n-j}},\sum_{i=0}^j e^{S_i-S_j}+\sum_{k=1}^{n-j}e^{R_k})\ind{\mS_j\geq-\alpha, \MS_{j-1}<S_j, S_j\geq a\sqrt{n}+a_n, \max_{0\leq k\leq j}(\MS_k-S_k)\leq a\sqrt{n}+a'_n}\right.\\
\left.\times\ind{\max_{k\leq n-j}R_k\le0, \min_{0\leq k\leq n-j}(-R_k)\leq (a\sqrt{n}+a'_n)\wedge(\alpha+S_j), -R_{n-j}\in[b\sqrt{n}+\ell h, b\sqrt{n}+\ell h+h)} \right]
\end{multline}
with $(R_{k})_{k\geq0}$ is an independent copy of the random walk $(S_k)_{k\geq0}$. %In fact, according to \eqref{badend1ex} and its proof in Section \ref{lems}, we only need to take the sum $\sum_{j=1}^{n-1}\sum_{\ell=-K}^K$ with $K\geq1/h$ fixed. 
First, let us prove that for $n\gg1$,
\[
\sum_{j\le\epsilon n\textrm{ or } j\ge n-\epsilon n}\sum_{\ell=-K/h}^{K/h-1}n\E_{\eqref{cvgeSMS}}(j,n,\ell)=o_\epsilon(1)
\]
For $j\le \epsilon n$, similarly to \eqref{Markovpositivex}, by \eqref{sumSbd}, \eqref{mSMSlargebd} and \eqref{mSSbd} one has
\begin{align*}
&\sum_{j\le\epsilon n}\sum_{\ell=-K/h}^{K/h-1}\E_{\eqref{cvgeSMS}}(j,n,\ell)%\leq &\sum_{j\le\epsilon n\textrm{ or } j\ge n-\epsilon n}\P(\mS_j\geq-\alpha, S_j\geq a\sqrt{n}+a_n)\P(\MS_{n-j}\le0, -S_{n-j}\in [c\sqrt{n}+\ell h,c\sqrt{n}+\ell h+h])\\
\leq  \sum_{j\leq n^{3/4}}\P(S_j\geq a\sqrt{n}+a_n)\\
&\qquad+\sum_{j=n^{3/4}}^{\delta n}\P(\mS_j\geq-\alpha, \MS_j=S_j\geq a\sqrt{n}+a_n)\P(\MS_{n-j}\le0, -S_{n-j}\in [b\sqrt{n}-K,b\sqrt{n}+K])\\
\leq &e^{-C_{56}n^{1/4}}+\sum_{j=n^{3/4}}^{\epsilon n}\frac{C_{57}(1+\alpha)}{\sqrt{j}(a\sqrt{n}+a_n)}\frac{(1+b\sqrt{n}+K)}{(n-j)^{3/2}}=\frac{o_n(1)+o_\epsilon(1)}{n}.
\end{align*}
For $j\ge n-\epsilon n$, by \eqref{mSMSbd} and \eqref{summSSbd}, one has
\begin{align*}
&\sum_{j=n-\epsilon n}^n\sum_{\ell=-K/h}^{K/h-1}\E_{\eqref{cvgeSMS}}(j,n,\ell)\\
\leq&\sum_{j=1}^{\epsilon n}\P(\mS_{n-j}\geq-\alpha, \MS_{n-j}=S_{n-j}\geq a\sqrt{n}+a_n)\P(\MS_{j}\le0, -S_{j}\in [b\sqrt{n}-K,b\sqrt{n}+K])\\
\leq &\frac{C_{58}(1+\alpha)(1+2K)}{n}\epsilon=\frac{o_\epsilon(1)}{n}.
\end{align*}
Thus, it remains to study $\sum_{j=\epsilon n}^{n-\epsilon n}\sum_{\ell=-K/h}^{K/h-1}n\E_{\eqref{cvgeSMS}}(j,n,\ell)$. Recall that $F(x,y)=\frac{x}{y}e^{-x/y}$ with $x>0$ and $y\geq1$. So, for any fixed $h>0$
\[
\sup_{x>0, y\ge1}|F(xe^h,y)-F(x,y)|\leq 2(e^h-1)\textrm{ and } \sup_{x>0, y\ge1}|F(x,y+h)-F(x,y)|\leq 2 h.
\]
Therefore, on $-R_{n-j}\in[b\sqrt{n}+\ell h, b\sqrt{n}+\ell h+h)$,
\[
F(e^{b\sqrt{n}+R_{n-j}},\sum_{i=0}^j e^{S_i-S_j}+\sum_{k=1}^{n-j}e^{R_k})=F(e^{-\ell h},\sum_{i=0}^j e^{S_i-S_j}+\sum_{k=1}^{n-j}e^{R_k})+o_h(1).
\]
Moreover, let $(S_k^{(-)})_{k\geq0}$ be the random walk distributed as the reflected walk $-S$, and independent of $S$. Observe that for $\ell\in[-K,K]$ with $K\geq1/h$ fixed integer
\begin{multline*}
\E_{\eqref{cvgeSMS}}(j,n,\ell)=\E\left[\ind{\mS_j\geq-\alpha, \MS_{j-1}<S_j, S_j\geq a\sqrt{n}+a_n, \max_{0\leq k\leq j}(\MS_k-S_k)\leq a\sqrt{n}+a'_n}\right.\\
\left.\times\E[(F(e^{-\ell h},t+\sum_{k=1}^{n-j}e^{-S_k^{(-)}})+o_h(1))\ind{\mS_{n-j}^{(-)}\ge0, \MS_{n-j}^{(-)}\le (a\sqrt{n}+a'_n)\wedge(\alpha+s), S_{n-j}^{(-)}-c\sqrt{n}\in[\ell h,\ell h+h)}]\vert_{t=\sum_{i=0}^j e^{S_i-S_j},s=S_j}\right].
\end{multline*}
By use of \eqref{cvgsumeSmSSn} for $S^{(-)}$, one sees that $\epsilon n\leq j\leq n-\epsilon n$ with $\epsilon\in(0,1/2)$, for $n\gg1$,
\begin{multline*}
(n-j)\E[(F(e^{-\ell h},t+\sum_{k=1}^{n-j}e^{-S_k^{(-)}})+o_h(1))\ind{\mS_{n-j}^{(-)}\ge0, \MS_{n-j}^{(-)}\le (a\sqrt{n}+a'_n)\wedge(\alpha+s), S_{n-j}^{(-)}-b\sqrt{n}\in[\ell h,\ell h+h)}]\\
=\frac{\cb_-h}{\sigma}\E[(F(e^{-\ell h}, t+\Hinf^{(-)}_\infty-1)+o_n(1))]\mathcal{C}_0(a\wedge \frac{s}{\sqrt{n}}-b,b)+o_n(1),
\end{multline*}
where $\cb_-:=\lim_{n\to\infty}\sqrt{n}\P(\MS_n\le0)$ and $\Hinf^{(-)}_\infty:=\sum_{k=0}^\infty e^{-\zeta_k^{(-)}}$ with $(\zeta_k^{(-)})_{k\geq0}$ the Markov chain obtained from the reflected walk. It follows that
\begin{multline*}
n\E_{\eqref{cvgeSMS}}(j,n,\ell)=\frac{n}{n-j}\E\left[\ind{\mS_j\geq-\alpha, \MS_{j-1}<S_j, S_j\geq a\sqrt{n}+a_n, \max_{0\leq k\leq j}(\MS_k-S_k)\leq a\sqrt{n}+a'_n}\right.\\
\left.\times\left(\frac{\cb_-h}{\sigma}\E[F(e^{-\ell h}, \sum_{i=0}^je^{S_i-S_j}+\Hinf^{(-)}_\infty-1)+o_n(1)]\mathcal{C}_0(a-b,b)+o_n(1)\right)\right]
\end{multline*}
which by Fact \ref{mSMScvg} is equal to
\[
\frac{n}{(n-j)j}\mathcal{C}_{\frac{a\sqrt{n}}{\sqrt{j}},\frac{a\sqrt{n}}{\sqrt{j}}}\Ren(\alpha)\frac{\cb_-h}{\sigma}\E[(F(e^{-\ell h}, \Hinf_\infty+\Hinf^{(-)}_\infty-1)]\mathcal{C}_0(a-b,b)+o_n(\frac{1}{n}).
\]
This leads to
\begin{align*}
&\sum_{j=\epsilon n}^{n-\epsilon n}\sum_{\ell=-K/h}^{K/h-1}n\E_{\eqref{cvgeSMS}}(j,n,\ell)\\
=& \sum_{j=\epsilon n}^{n-\epsilon n}\sum_{\ell=-K}^K\frac{n}{j(n-j)}\mathcal{C}_{\frac{a\sqrt{n}}{\sqrt{j}},\frac{a\sqrt{n}}{\sqrt{j}}}\Ren(\alpha)\frac{\cb_-h}{\sigma}\E[(F(e^{-\ell h}, \Hinf_\infty+\Hinf^{(-)}_\infty-1)]\mathcal{C}_0(a-b,b)+o_n(1)\\
=&\int_{\epsilon}^{1-\epsilon}\mathcal{C}_{\frac{a}{\sqrt{t}},\frac{a}{\sqrt{t}}}\frac{dt}{t(1-t)}\frac{\cb_-\Ren(\alpha)}{\sigma}\mathcal{C}_0(a-b,b)\int_{-K}^{K}\E[(F(e^{-s}, \Hinf_\infty+\Hinf^{(-)}_\infty-1)]ds+o_n(1)+o_h(1)Kh
\end{align*}
Letting $n\to\infty$ then letting $h\to0$ and $\epsilon\to0$, we conclude \eqref{cvgeSMS}.
\end{proof}

\bibliographystyle{alpha}
\bibliography{thbiblio.bib}

\begin{thebibliography}{LPP96}

\bibitem[AC18]{AC18}
P.~Andreoletti and X.~Chen.
\newblock Range and critical generations of a random walk on galton-watson
  trees.
\newblock {\em Ann. Inst. Henri Poincar\'e}, 54(1):466--513, 2018.

\bibitem[AD14]{AD14}
P.~Andreoletti and P.~Debs.
\newblock Spread of visited sites of a random walk along the generations of a
  branching process.
\newblock {\em Electron. J. Probab.}, 19(42):22 pp, 2014.

\bibitem[AD20]{AD18+}
P.~Andreoletti and R.~Diel.
\newblock The heavy range of randomly biased walks on trees.
\newblock {\em Stoch. Proc. Appl.}, 130(2):962--999, 2020.

\bibitem[Afa93]{Afa93}
V.~I. Afanasyev.
\newblock A limit theorem for a critical branching process in a random
  environment.
\newblock {\em Diskret. Mat.}, 5:45--58, 1993.

\bibitem[Aid13]{Aid13}
E.~Aid\'ekon.
\newblock Convergence in law of the minimum of a branching random walk.
\newblock {\em Ann. probab.}, 41:1362--1426, 2013.

\bibitem[AS14]{AS14}
E.~Aidekon and Z.~Shi.
\newblock The seneta-heyde scaling for the branching random walk.
\newblock {\em The Annals of Probability}, 42(3):959--993, 2014.

\bibitem[BK04]{BigKyp}
J.~D. Biggins and A.E. Kyprianou.
\newblock Measure change in multitype branching.
\newblock {\em Adv. Appl. Probab.}, 36:544--581, 2004.

\bibitem[BM19]{BM19}
P.~Boutaud and P.~Maillard.
\newblock A revisited proof of the Seneta-Heyde norming for branching random
  walks under optimal assumptions.
\newblock {\em Electron. J. Probab.}, 24(99):22pp, 2019.

\bibitem[CC13]{CC13}
F.~Caravenna and L.~Chaumont.
\newblock An invariance principle for random walk bridges conditioned to stay
  positive.
\newblock {\em Electron. J. Probab.}, 18(60):32 pp, 2013.

\bibitem[CdR]{CdR19+}
X.~Chen and L.~de~Raph\'elis.
\newblock The most visited edges of randomly biased random walks on a supercritical Galton-Watson tree under the quenched law.
\newblock Preprint.

\bibitem[Far11]{Faraud}
G.~Faraud.
\newblock A central limit theorem for random walk in a random environment on
  marked galton-watson trees.
\newblock {\em Electron. J. Probab.}, 16(6):174--215, 2011.

\bibitem[FHS12]{FHS11}
G.~Faraud, Y.~Hu, and Z.~Shi.
\newblock Almost sure convergence for stochastically biased random walks on
  trees.
\newblock {\em Probab. Theory Relat. Fields}, 154:621--660, 2012.

\bibitem[HS07]{HS07}
Y.~Hu and Z.~Shi.
\newblock Slow movement of random walk in random environment on a regular tree.
\newblock {\em Ann. probab.}, 35(5):1978--1997, 2007.

\bibitem[HS15]{HuShi16+}
Y.~Hu and Z.~Shi.
\newblock The most visited sites of biased random walks on trees.
\newblock {\em Electron. J. Probab.}, 20(62):14pp, 2015.

\bibitem[HS16]{HuShi16}
Y.~Hu and Z.~Shi.
\newblock The slow regime of randomly biased walks on trees.
\newblock {\em Ann. probab.}, 44(6):3893--3933, 2016.

\bibitem[LP92]{LyonPema}
R.~Lyons and R.~Pemantle.
\newblock Random walk in a random environment and first-passage percolation on
  trees.
\newblock {\em Ann. probab.}, 20:125--136, 1992.

\bibitem[LPP95]{LPP95}
R.~Lyons, R.~Pemantle, and Y.~Peres.
\newblock Ergodic theory on galton-watson trees: Speed of random walk and
  dimension of harmonic measure.
\newblock {\em Ergodic Theory Dynam. Systems}, \textbf{15}:\ 593--619, 1995.

\bibitem[LPP96]{LPP96}
R.~Lyons, R.~Pemantle, and Y.~Peres.
\newblock Biased random walks on galton-watson trees.
\newblock {\em Probab. Theory Related Fields}, 106:249--264, 1996.

\bibitem[Lyo90]{Lyons90}
R.~Lyons.
\newblock Random walks and percolation on trees.
\newblock {\em Ann. Probab.}, 18:931--958, 1990.

\bibitem[Lyo92]{Lyons92}
R.~Lyons.
\newblock Random walks, capacity and percolation on trees.
\newblock {\em Ann. Probab.}, 20:2043--2088, 1992.

\bibitem[Lyo97]{Lyons97}
R.~Lyons.
\newblock A simple path to biggins' martingale convergence for branching random
  walk.
\newblock In {\em Classical and Modern Branching Processes (Minneapolis, MN,
  1994). IMA Vol. Math. Appl.}, volume~84, pages 217--221. Springer, New York,
  1997.

\end{thebibliography}
\end{document}